\newtheorem{thm}{Theorem}
\newtheorem{lemma}[thm]{Lemma}
\newtheorem{prop}[thm]{Proposition}
\newtheorem{eg}{Example}
\newcommand*{\vertbar}{\rule[-1ex]{0.5pt}{2.5ex}}
\newcommand*{\horzbar}{\rule[.5ex]{2.5ex}{0.5pt}}
\DeclareMathOperator*{\tr}{tr}
\def\hat{\widehat}
\newcommand{\tb}[1]{\textcolor{blue}{(TB: #1)}}
\newcommand{\ab}[1]{\textcolor{orange}{(AB: #1)}}
\begin{document}
\openup .01em

\title{Tests of Missing Completely At Random based on sample covariance matrices}
\author{Alberto Bordino and Thomas B. Berrett}
\affil{Department of Statistics, University of Warwick}
\date{}

\maketitle
\begin{abstract}
     We study the problem of testing whether the missing values of a potentially high-dimensional dataset are Missing Completely at Random (MCAR). We relax the problem of testing MCAR to the problem of testing the compatibility of a collection of covariance matrices, motivated by the fact that this procedure is feasible when the dimension grows with the sample size. Our first contributions are to define a natural measure of the incompatibility of a collection of correlation matrices, which can be characterised as the optimal value of a Semi-definite Programming (SDP) problem, and to establish a key duality result allowing its practical computation and interpretation. By analysing the concentration properties of the natural plug-in estimator for this measure, we propose a novel hypothesis test, which is calibrated via a bootstrap procedure and demonstrates power against any distribution with incompatible covariance matrices. By considering key examples of missingness structures, we demonstrate that our procedures are minimax rate optimal in certain cases. We further validate our methodology with numerical simulations that provide evidence of validity and power, even when data are heavy tailed. Furthermore, tests of compatibility can be used to test the feasibility of positive semi-definite matrix completion problems with noisy observations, and thus our results may be of independent interest.
\end{abstract}

\section{Introduction}
\label{Sec:Intro}

Incomplete data are a common occurrence in almost all areas of statistical application, and the mechanisms leading to such data are diverse. For example, subjects in a survey may choose not to respond to certain questions, leading to missing values, or a practitioner may wish to combine data collected in different studies, where different variables were recorded in each. With incomplete data, traditional approaches become unreliable or even inapplicable, leading to a significant effect on the conclusions that can be drawn from the data. The most common approaches to dealing with missing values are to remove any incomplete observations, and thus to perform a complete-case analysis, or to replace any missing entry with a representative value, using an imputation method \citep[e.g.][]{yates33, van2011mice, buhlmann_impuation_2011}. However, the validity of such procedures, and the choice of an appropriate one, depends crucially on the mechanism that determines the missingness. Mechanisms have traditionally been classified as Missing Completely At Random (MCAR), Missing At Random (MAR) and Missing Not At Random (MNAR) \citep[e.g.][]{little2002statistical} according to the dependence structure between the variables themselves and their missingness, with such assumptions being required to link observations to targets of inference.

The typical formal setting is to suppose that we observe independent and identically distributed copies of a random object $X \circ \Omega$, where $X$ takes values in some product space $\mathcal{X} = \prod_{j=1}^d \mathcal{X}_j$, where $\Omega$ takes values in $\{0,1\}^d$ and where we define the operator $\circ$ by
\[(x \circ \omega)_j = 
\begin{cases}
x_j \quad \text{ if } \omega_j = 1, \\
\text{NA} \quad \text{ if } \omega_j = 0. \\
\end{cases}
\]
The assumptions named above then control the dependence between the data $X$ and the missingness indicator $\Omega$. The simplest case of MCAR is when these are independent, denoted $X \perp \!\!\! \perp \Omega$, so that the data we observe is representative of the population, even if it is incomplete\footnote{This scenario is sometimes referred to as \emph{everywhere MCAR}, which should not be confused with \emph{realised MCAR} (see \cite{seaman13} for a more detailed discussion on this distinction)}. Under MCAR we can often employ statistical methodologies that are easy to interpret and make good use of all incomplete data, with solid theoretical guarantees having been developed in various modern statistical problems such as high-dimensional regression \citep{loh2012high}, 
high-dimensional or sparse principal component analysis \citep{zhu2019, elsner2019}, classification \citep{cai2018high,sell2023nonparametric}, and precision matrix and changepoint estimation \citep{follain2021}. MCAR also allows for the use of a simple complete-case analysis which, in certain cases, such as when we have small sample sizes, can be preferable to complex procedures \citep[e.g.][]{milosevic23}.  However, if MCAR does not hold, which is common in practice, alternative methods may be required. 

Hypothesis tests can be used to guide practitioners in deciding whether or not missingness assumptions are reasonable. The goal of this work is to study the problem of testing the hypothesis of MCAR, which has been the subject of much research in the missing data literature. Most prior work has been developed within the context of parametric models. For example, \cite{little1988test} works under the hypothesis that the data are Gaussian in the setting that all pairs of variable are observed together (see Section \ref{sec:simul} for further details). \cite{fuchs1982maximum} considers discrete data in the setting that a large number of complete cases are available. In both cases the methods are likelihood ratio tests, with the MLEs calculated using the EM algorithm \citep{rubin77EM} and validity and power guarantees based on classical asymptotics. More recently, \citet{berrett2022optimal} provided a nonparametric formulation of the problem and methodology that was proved to be widely powerful under minimal assumptions. 
The key insight of~\citet{berrett2022optimal} is to relate the problem of testing MCAR to the problem of testing \emph{compatibility}, for which we now recall the definition. For $S \subseteq [d]:=\{1,\ldots,d\}$ denote by $\{\Omega=\mathbbm{1}_S\}$ the event that $X_j$ is observed if and only if $j \in S$, write $\mathbb{S}=\{S : \mathbb{P}(\Omega=\mathbbm{1}_S) >0\}$ for the set of all possible observation patterns and write $P_S$ for the distribution of the observation $X_S|\{\Omega = \mathbbm{1}_S \}$. We say that the collection $(P_S : S \in \mathbb{S})$ is compatible if there exists a distribution $P$ on $\mathcal{X}$ with marginal distribution $P_S$ on $\mathcal{X}_S$ for all $S \in \mathbb{S}$.
Under MCAR, the distribution $P_S$ is equal to the marginal distribution of the population distribution $ \mathcal{L}(X)$ on $\mathcal{X}_S := \prod_{j \in S} \mathcal{X}_j$, so it must be the case that $(P_S : S \in \mathbb{S})$ is compatible. Hence, if $P_{\mathbb{S}}:=(P_S : S \in \mathbb{S})$ is \emph{incompatible}, then the data cannot be MCAR. In fact, it is shown~\cite[][Proposition~1]{berrett2022optimal} that this reasoning is tight in that it is not possible to rule out MCAR based on observations of $X \circ \Omega$ if $P_\mathbb{S}$ is compatible. In general, fully testing the compatibility of a collection of distributions requires us to look at complex interactions between the distributions, and methods for doing so will have sample complexity that is exponential in the dimension $d$. Our work aims to provide methods that are valid and powerful without strong assumptions while being effective as the dimension grows.


Our methodology will be based on testing the compatibility of collections of covariance matrices, which can be estimated consistently even for large $d$. Earlier studies have employed the covariance matrix to assess MCAR. As briefly discussed above, \cite{little1988test} studied a likelihood ratio test of MCAR, effectively examining the homogeneity of means and covariances under the assumption of normality. However, Little expressed scepticism about its effectiveness unless the sample size is exceptionally large and the assumption of normality holds. This scepticism was further validated in simulations by \cite{kim2002tests}, who also developed a test for consistency of means and covariances based on generalised least squares. Both of these approaches work by comparing the sample covariance matrix associated to a given missingness pattern to the corresponding submatrix of an estimated complete covariance matrix. More recently, \cite{jamshidian2010tests} developed $k$-sample tests of the equality of covariance matrices, given complete data, based on Hawkins' test \citep{Hawkins1981}. Using empirical evidence, they then argued that these tests could be combined with imputation techniques to test the homogeneity of covariance matrices calculated using incomplete data. These methodologies can be effective when the corresponding assumptions are met and when a complete covariance matrix can be consistently estimated. 


Our method works by directly checking the compatibility of the observed sample covariance matrices, making no assumptions on the form of the underlying distributions and not requiring the estimation of a complete covariance matrix. In particular, this second point means that our test can be applied with any collection $\mathbb{S}$ of missingness patterns. More precisely, at the population level, we will consider $\Sigma_{\mathbb{S}} = (\Sigma_S:S \in \mathbb{S})$, a collection of suitably-normalised covariance matrices $\Sigma_S$ associated to the law of $X_S|\{\Omega = \mathbbm{1}_S\}$, and design a statistical test to check if $\Sigma_{\mathbb{S}}$ is compatible, meaning that each $\Sigma_S$ can be obtained by marginalising a general $d \times d$ positive-semi-definite matrix $\Sigma$, i.e.~$(\Sigma)_S = \Sigma_S$. If MCAR holds then for each $S \in \mathbb{S}$ we must have $\mathrm{Cov}(X_S |\Omega = \mathbbm{1}_S) = (\mathrm{Cov}(X))_S$, the block of the covariance matrix of $X$ corresponding to the variables in $S$, so that the collection $(\Sigma_S : S \in \mathbb{S})$ must be compatible. Hence, if we can reject the hypothesis $H_0: \Sigma_{\mathbb{S}} \textit{ compatible}$, then we can reject the hypothesis of MCAR. See Figure \ref{fig:methodology} for a pictorial summary of the key concepts so far. 

More generally, one can consider the problem of testing the compatibility of moments of order $p \geq 1$ and, if it is found that these moments are incompatible, one can reject MCAR. For $p = 1$, this problem reduces to testing the compatibility of mean vectors, which essentially boils down to testing the equality of means. This has been studied in the statistical literature for over a century, and we refer to existing methods for solving this problem \citep[e.g.][]{wilks1946means, little1988test}. In order to have power against a wider range of alternatives, while limiting the complexity of the testing procedure, we restrict attention in this work to the natural $p=2$ problem. Here there are still various ways in which compatibility can fail. For example, we can rule out $H_0$ if $\Sigma_{\mathbb{S}}$ is \emph{inconsistent}, in the sense that that there are two observation patterns $S_1, S_2 \in \mathbb{S}$ for which $(\Sigma_{S_1})_{S_1 \cap S_2} \neq (\Sigma_{S_2})_{S_1 \cap S_2}$, meaning that there exists a pair of variables whose covariance takes different values in different observation patterns. Testing the consistency of covariance matrices reduces to testing the equality of smaller covariance matrices, which has again been previously studied \citep[e.g.][]{Hawkins1981}. The corresponding nonparametric problem of testing the consistency of distributions was studied by \cite{li2015nonparametric,michel2021pklm} in the context of testing MCAR. However, it is possible to test more than consistency. As a concrete example, consider the case where $d=3$, where $\mathbb{S}=\{\{1,2\},\{1,3\},\{2,3\} \}$, and where
\[
	\Sigma_{ij} = \begin{pmatrix} 1 & \rho_{ij} \\ \rho_{ij} & 1 \end{pmatrix}
\]
with $\rho_{23}=\rho_{13}=-\rho_{12}=\rho$. Then $\Sigma_{\mathbb{S}}$ is compatible if and only if $\rho \leq 1/2$, even though it is always consistent. This is because the only matrix $\Sigma$ such that $(\Sigma)_S = \Sigma_S$ for all $S \in \mathbb{S}$ is \[
\Sigma = \begin{pmatrix}
    1 & -\rho & \rho \\
    -\rho & 1 & \rho \\
    \rho & \rho & 1
\end{pmatrix},
\]
whose eigenvalues are $1+\rho$ and $1-2\rho$, which therefore is not positive semi-definite if $\rho > 1/2$. 
\begin{figure}
    \centering
    \includegraphics[scale=0.4]{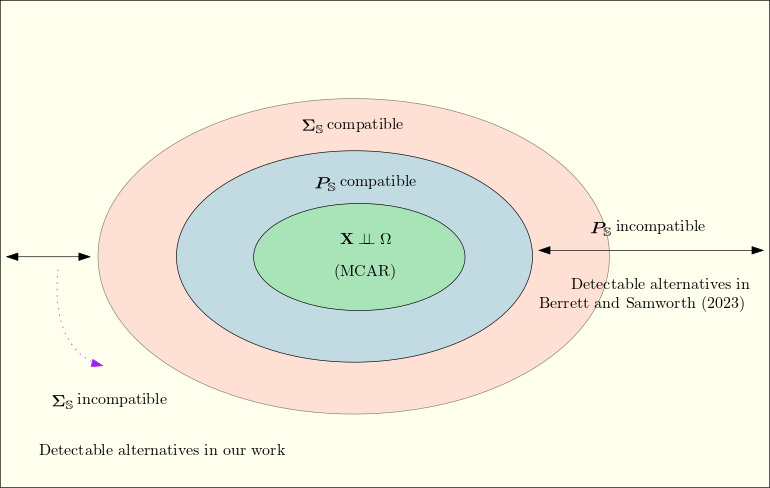}
    \caption{Our framework. We relax the methodology in \cite{berrett2022optimal} and consider (suitably normalised) covariance matrices instead of full distributions. The price we pay is to create an extra ring (red area) in which we cannot detect departure from $H_0$ just by looking at $\Sigma_\mathbb{S}$. For example, if the collection of third-moment tensors were inconsistent, but $\Sigma_\mathbb{S}$ were compatible, we would not be able to reject MCAR, although $P_\mathbb{S}$ would be incompatible.}
    \label{fig:methodology}
\end{figure}

The above example is relatively simple because any pair of variables is observed together so that the full covariance matrix can be estimated. However, we can characterise compatibility for any $\mathbb{S}$ (see Proposition \ref{Prop:ConsCompatCheck}). While the compatibility of distributions can be characterised using linear programming \citep[e.g.][]{kellerer1984duality}, characterising the compatibility of covariance matrices requires ideas from semi-definite programming (SDP), which studies linear optimisation problems over spectrahedra \citep[e.g.][]{blekherman2012sdp, boyd_vandeberghe}. If $\Sigma_{\mathbb{S}}$ is consistent, compatibility is equivalent to the feasibility of a positive semi-definite matrix completion problem, where we observe a partial symmetric matrix $A = (a_{ij})$ for positions $(i, j)$ in a certain set of edges, and aim to construct a positive semi-definite completion of $A$. This problem is extensively studied owing to its widespread applications in diverse fields such as probability, statistics, systems engineering and geophysics; see, for example, \cite{laurent2009matrix} and the references therein for an introduction to the topic. Statistical questions associated with such problems are relatively under-explored, though we mention the recent work~\cite{waghmare2022completion} that provides estimated completions of covariance operators in settings where completions always exist. A distinct but related problem that has received more attention in the statistics literature is low-rank matrix completion. In particular, significant contributions \citep{candes_recht_2009, candes2010completion, recht2011} have been made in the realm of convex optimization, where a low-rank matrix is recovered from partial observations after introducing a nuclear norm penalty. In our work we make no low-rank assumptions and our main interest is in answering the question of whether or not a positive semi-definite completion exists.

We now briefly outline our main contributions. In Section \ref{sec:bootstrap_test}, we introduce a data-driven bootstrap test for MCAR, detailed in Algorithm \ref{alg:Corrbootstrap_test}. We also state our main result (Theorem \ref{thm:bootstrap_guarantees}), that this procedure is uniformly valid over expanding subsets of the null hypothesis, excluding distributions close to the boundary, and uniformly powerful against alternatives separated from the null hypothesis. Additionally, we establish its asymptotic validity over the entire null hypothesis in a specific example (Proposition \ref{prop:cycle_boundary}). This test is based on a numerical measure, $R(\Sigma_\mathbb{S})$, which quantifies the incompatibility of a collection of correlation matrices $\Sigma_\mathbb{S}$. We define this measure in Section \ref{Sec:cov_compatibility} and establish key properties, including a useful and interpretable dual representation (Proposition~\ref{Prop:CorrDuality}). In Section \ref{Sec:oracle_test}, we shift our focus to the empirical estimation of this index and analyse its concentration properties. Assuming non-singularity of the correlation matrices, $\Sigma_S \succeq c I_{|S|}$ for all $S \in \mathbb{S}$, we present an oracle test based on knowledge of $c > 0$, and we state a result on its validity and power (Theorem~\ref{thm:oracle_test}). This result is crucial for the proof of Theorem \ref{thm:bootstrap_guarantees}. In Section \ref{sec:minimax_LB} we analyse the performance of our oracle test in various examples and show that its separation rate is near-minimax optimal in some cases, while studying properties of the associated semi-definite programmes. In Section \ref{sec:simul} we validate our methodology in numerical experiments. Section \ref{sec:proof} contains the proofs of our main results. The Appendix contains background and auxiliary results.

We conclude the introduction with some notation that is used throughout the paper. In general, we will denote covariance matrices by $\Omega$ and correlation matrices (or other suitably normalised covariance matrices) by $\Sigma$. For $d \in \mathbb{N}$, we write $[d] := \{1, \cdots, d\}$, and indicate with $|B|$ the cardinality of the set $B$. Given $a, b \geq 0$, we write $a \lesssim b$
to mean that there exists a universal constant $C > 0$ such that $a \leq Cb$. 
We use $a \wedge b$ for $\min\{a,b\}$, and $a \vee b$ for $\max\{a,b \}$. We will denote with $\boldsymbol{0}_d$ the null vector of dimension $d$, with $\boldsymbol{1}_d$ the all-one vector, with $\boldsymbol{e}_j$ the $j$-th element of the canonical basis of $\mathbb{R}^d$, with $\boldsymbol{O}_{d_1, d_2}$ the zero matrix of dimension $d_1 \times d_2$, with $\boldsymbol{O}_{d} := \boldsymbol{O}_{d,d}$, and with $I_d$ the identity matrix of dimension $d$. We will omit the subscript with the dimension $d$ when it is clear from the context. For symmetric matrices $A, B$ of dimension $d$, we write $A \succeq 0$ to mean that $A$ is positive semi-definite, write $A \succeq B$ to mean that $A - B \succeq 0$, write $\operatorname{diag}(A)$ to indicate the vector whose elements coincide with the diagonal entries of the matrix $A$, and $\operatorname{diag}(\boldsymbol{v})$ for a vector $\boldsymbol{v}=(v_1,\ldots,v_d)$ to indicate a diagonal matrix with diagonal elements equal to $v_i$. We will indicate the trace of $A$ with $\operatorname{tr}(A)$, the determinant with either $|A|$ or $\operatorname{det}(A)$, and the minimum and maximum eigenvalues of $A$ with $\lambda_{\mathrm{max}}(A)$ and $\lambda_{\mathrm{min}}(A)$, respectively. We use $\| \cdot \|_p$ for the $l_p$-norm of a vector. We will use  $\|\cdot\|_*$ for nuclear norm, or Schatten-1 norm of a matrix, $\|\cdot\|_2$ for the spectral norm, and $\|\cdot\|_F$ for the Frobenius norm. For random elements $X, Y$, we write $X \perp \!\!\! \perp Y$ to mean that $X$ and $Y$ are independent. For $\sigma>0$, a random variable $X$ with mean $\mu=\mathbb{E}[X]$ is said to be $\sigma$-subgaussian if
\[
\mathbb{E}\left[e^{\lambda(X-\mu)}\right] \leq e^{\sigma^2 \lambda^2 / 2} \quad \text { for all } \lambda \in \mathbb{R},
\]
while, for $(\nu,\alpha) \in (0,\infty)^2$, it is said to be $(\nu,\alpha)$-subexponential if
$$
\mathbb{E}\left[e^{\lambda(X-\mu)}\right] \leq e^{\frac{\nu^2 \lambda^2}{2}} \quad \text { for all }|\lambda|<\frac{1}{\alpha}.
$$
A random vector $X$ in $\mathbb{R}^n$ is said to be $\sigma$-subgaussian if every one-dimensional projection, i.e. $v^TX$ with $v \in \mathbb{R}^n$ and $\|v\|=1$, is $\sigma$-subgaussian in the sense defined above.

\section{Statistical setting and bootstrap test of MCAR}\label{sec:bootstrap_test}
We aim to test $X \perp \!\!\! \perp \Omega$ by examining the collection of suitably normalised covariance matrices across different missingness patterns, and we will see in the sequel (e.g.~Section~\ref{sec:minimax_LB}) that the structure of these patterns has a strong influence of the complexity of the problem. Instead of assuming that we have access to i.i.d. copies from $X \circ \Omega$, it is convenient to condition on the realisations of $\Omega$, so that the collection of patterns $\mathbb{S}$ and the samples sizes $(n_S : S \in \mathbb{S})$ are fixed.
This does not impose a significant constraint, and all the theoretical results can be adapted to the unconditional model. We then assume that for each $S \in \mathbb{S}$ we have access to an independent sample
\[
    X_{S,1},\ldots,X_{S,n_S} \overset{\text{i.i.d.}}{\sim} P_S
\]
for some sample size $n_S$ and some distribution $P_S$ with mean $\mu_S$, correlation matrix $\Sigma_S$, vector of variances $\sigma_S^2$, and covariance matrix $\Omega_{S} = \operatorname{diag}^{1/2}( \sigma_{S}^2) \cdot \Sigma_{S} \cdot \operatorname{diag}^{1/2}(\sigma_{S}^2)$. We write $\hat{\mu}_S$, $\hat{\Sigma}_S$, $\hat{\sigma}_S^2$ and $\hat{\Omega}_S$ for the sample mean, the sample correlation matrix, the vector of sample variances and the sample covariance matrix, respectively, of $X_{S,1},\ldots,X_{S,n_S}$ for each $S \in \mathbb{S}$. Additionally, we write $\hat{\mu}_\mathbb{S} = (\hat{\mu}_S: S \in \mathbb{S})$, $\hat{\Sigma}_\mathbb{S} = (\hat{\Sigma}_S : S \in \mathbb{S})$, $\hat{\sigma}_\mathbb{S}^2 = (\hat{\sigma}_S^2 : S \in \mathbb{S})$ and $\hat{\Omega}_\mathbb{S} = (\hat{\Omega}_S : S \in \mathbb{S})$ for the collections of these estimators.

We now propose a bootstrap test of MCAR that can be applied without any knowledge of unknown parameters. This will be based on the incompatibility index $R(\cdot) \in [0,1]$ defined in Section \ref{Sec:cov_compatibility}, which acts on correlation matrices and characterises compatibility, in the sense that $R(\Sigma_\mathbb{S}) = 0$ if and only if $\Sigma_\mathbb{S}$ is compatible, and its regularised version $R_z(\cdot)$ defined in Equation \eqref{eq:reg_R}. Algorithm \ref{alg:Corrbootstrap_test}, which is implemented the R-package \texttt{MCARtest} \citep{berrett2022MCARtest}, tackles the testing problem $H_0: R(\Sigma_\mathbb{S}) = 0$ using as test statistic the plug-in estimate $R(\hat{\Sigma}_\mathbb{S})$, and is calibrated through a bootstrap procedure. As already outlined in Section \ref{Sec:Intro}, rejection of $H_0$ is sufficient to reject MCAR.

\begin{algorithm}[!htbp]
\caption{MCAR bootstrap test checking compatibility of correlation matrices}\label{alg:Corrbootstrap_test}
\begin{algorithmic}[1]
\State Given data $X_\mathbb{S}$, discard all patterns $S \in \mathbb{S}$ such that $n_S \leq |S|+1$.
\State Compute $\hat{\Sigma}_{\mathbb{S}} = \operatorname{SampleCorr}X_\mathbb{S}$ and $\hat{c} = \min_{S \in \mathbb{S}} \lambda_{\mathrm{min}}(\hat{\Sigma}_S)$.
\State Compute $R(\hat{\Sigma}_\mathbb{S})$ and the dual decomposition $\hat{\Sigma}_\mathbb{S} = (1-R(\hat{\Sigma}_\mathbb{S}))\hat{Q}_\mathbb{S} + R(\hat{\Sigma}_\mathbb{S})\hat{\Sigma}'_\mathbb{S}$.
\If{$R(\hat{\Sigma}_\mathbb{S}) \geq 3/4$}
    \State \Return  $p_R = 0$.
\Else{}
    \State For all $S \in \mathbb{S}$, for all $i \in [n_S]$ \textbf{do} $\tilde{X}_{S,i} := \hat{Q}_S^{1/2}\hat{\Sigma}_S^{-1/2}\mathrm{diag}^{-1/2}(\hat{\sigma}_S^2) (X_{S,i} - \hat{\mu}_S)$.
    \For{$b \in [B]$} 
        \State For all $S \in \mathbb{S}$, let $\tilde{X}_{S}^{(b)} = (\tilde{X}_{S,i}^{(b)} : i \in [n_S])$ be a nonparametric bootstrap sample from $\tilde{X}_{S}$. 
        \State Compute $\hat{\Sigma}_{\mathbb{S},b} = \operatorname{SampleCorr}\tilde{X}_\mathbb{S}^{(b)}$.
        \State Compute $R_{\hat{c}/2}(\hat{\Sigma}_{\mathbb{S},b})$.
\EndFor
\State \Return  $p_R := (1+B)^{-1}(1 + \sum_{b=1}^B \mathbbm{1}\{R_{\hat{c}/2}(\hat{\Sigma}_{\mathbb{S},b}) \geq R(\hat{\Sigma}_\mathbb{S})\})$.
\EndIf
\end{algorithmic}
\end{algorithm}

The intuition behind the procedure is as follows. From Proposition \ref{Prop:CorrDuality} in Section \ref{Sec:cov_compatibility} we can write $\hat{\Sigma}_\mathbb{S} = (1-R(\hat{\Sigma}_\mathbb{S}))\hat{Q}_\mathbb{S} + R(\hat{\Sigma}_\mathbb{S})\hat{\Sigma}'_\mathbb{S}$, where $\hat{Q}_\mathbb{S}$ can be thought as the closest compatible sequence of correlation matrices to $\hat{\Sigma}_\mathbb{S}$, and can be computed at the same time as the test statistic $R(\hat{\Sigma}_\mathbb{S})$, and $\hat{\Sigma}_\mathbb{S}'$ is an arbitrary sequence of correlation matrices. If $R(\hat{\Sigma}_\mathbb{S}) \geq 3/4$, we reject the null hypothesis outright, as there is strong evidence against it. Otherwise, we proceed by calibrating our test using a bootstrap procedure. We transform the original data by calculating $\tilde{X}_{S} := \hat{Q}_S^{1/2}\hat{\Sigma}_S^{-1/2}\mathrm{diag}^{-1/2}(\hat{\sigma}_S^2) (X_S - \hat{\mu}_S)$ for all $S \in \mathbb{S}$. This transformation means that the sample correlation matrices of $\tilde{X}_\mathbb{S}:=(\tilde{X}_S : S \in \mathbb{S})$ are given by $\hat{Q}_\mathbb{S}$, which is compatible. Fixing $B \in \mathbb{N}_+$, for each $b \in [B]$ and $S \in \mathbb{S}$ we generate $\tilde{X}_{S}^{(b)}$ as a nonparametric bootstrap sample from $\tilde{X}_{S}$ and calculate the sample correlation matrix $\hat{\Sigma}_{S,b} = \operatorname{SampleCorr}(\tilde{X}_{S}^{(b)})$. Then, for each $b \in [B]$ we compute the corresponding test statistic $R_{\hat{c}/2}(\hat{\Sigma}_{S,b} : S \in \mathbb{S})$. These will serve as surrogates of the null, and will be employed to generate a $p$-value of the form $p_R := (1+B)^{-1}(1 + \sum_{b=1}^B \mathbbm{1}\{R_{\hat{c}/2}(\hat{\Sigma}_{\mathbb{S},b}) \geq R(\hat{\Sigma}_\mathbb{S})\})$. The reason why we are forced to use the regularised version $R_{\hat{c}/2}(\cdot)$ at the bootstrap level instead of the standard $R(\cdot)$, and why we treat the case $R(\hat{\Sigma}_\mathbb{S}) \geq 3/4$ separately, is that it is notably challenging to characterise the spectrum of $\hat{Q}_\mathbb{S}$. This approach might inflate the size of our test in general. However, this inflation is vanishingly small since $R_{\hat{c}/2}(\hat{\Sigma}_{\mathbb{S},1}) \mid X_\mathbb{S}  \overset{\mathbb{P}}{\rightarrow} R(\hat{\Sigma}_{\mathbb{S},1}) \mid X_\mathbb{S}$, as demonstrated, for instance, in the proof of Proposition \ref{prop:cycle_boundary}.

The theoretical properties of Algorithm \ref{alg:Corrbootstrap_test} are based on analysing the concentration properties of the plug-in estimator  $R(\hat{\Sigma}_\mathbb{S})$ and the bootstrap statistics $R_{\hat{c}/2}(\hat{\Sigma}_{\mathbb{S},b})$, which require several technical insights. These are developed in Section~\ref{Sec:oracle_test}.  In light of the results presented therein, we are able to prove that our bootstrap test is uniformly valid over expanding subsets of the null, and uniformly powerful over alternatives separated from the null. To this end, let
\[
\bar{\mathcal{P}}_{\mathbb{S}}(0) := \{P_\mathbb{S} \in \mathcal{P}_\mathbb{S} : R(\Sigma_\mathbb{S}) = 0\}.
\]
For $P_\mathbb{S} \in \mathcal{P}_{\mathbb{S}}$ and $\epsilon > 0$, define 
\[
B_\epsilon(P_\mathbb{S}) = \{P'_\mathbb{S} \in  \mathcal{P}_{\mathbb{S}} : \max_{S \in \mathbb{S}}\|\Sigma_S - \Sigma^\prime_S \|_2 \leq \epsilon \} \quad \text{ and } \quad \bar{\mathcal{P}}_{\mathbb{S}}(0)^{-\epsilon} = \{P_\mathbb{S} \in \bar{\mathcal{P}}_{\mathbb{S}}(0) : B_\epsilon(P_\mathbb{S}) \subseteq \bar{\mathcal{P}}_{\mathbb{S}}(0)\}. 
\]

\begin{thm}\label{thm:bootstrap_guarantees}
    Suppose we observe $X_{S,1}, \ldots, X_{S, n_S} \overset{\text{i.i.d.}}{\sim} P_S$ for each $S \in \mathbb{S}$ independently, where each $P_S$ is $\nu$-subgaussian with mean $\mu_S$. Denote by $\Sigma_\mathbb{S}$ the collection of population correlation matrices and by $\Omega_\mathbb{S}$ the collection of population covariance matrices, and assume that $\Sigma_{\mathbb{S}} \succeq_\mathbb{S} cI_\mathbb{S}$ for $c >0$. 
    Let $\sigma_\mathrm{min}^2 := \min\limits_{j \in [d]} \min\limits_{S \in \mathbb{S}_j}(\Omega_S)_{jj}$ and define 
\begin{align}\label{eq:critical_oracle}
        C_{\alpha, c} = \frac{K_1 \nu^4}{c \sigma^4_{\mathrm{min}}} \max_{S \in \mathbb{S}} \sqrt{\frac{|S| + \log(|\mathbb{S}|/\alpha)}{n_S}}. 
    \end{align}
\begin{enumerate}
    \item[(i)] Then, for a universal constant $K_1 >0$ chosen sufficiently large, for all $\alpha \in (0,1)$, if $ C_{\alpha, 2} \leq 1$,  we have
    \[
    \sup_{P_\mathbb{S} \in \bar{\mathcal{P}}_{\mathbb{S}}(0)^{-C_{\alpha, 2}}} \mathbb{P}_{P_\mathbb{S}}\left\{ p_R \leq \alpha \right\} \leq \alpha.
    \]
    \item[(ii)] Moreover, if $\alpha \in (0,1)$ and $\beta \in (0, 1 -\alpha)$ are such that $B \geq 2(1-\alpha)/\alpha$ and $\tilde{C}_{\alpha, \beta} \leq 1/2$, and if $P_\mathbb{S} \in \mathcal{P}_\mathbb{S}$ is such that $R(\Sigma_\mathbb{S}) > 2\tilde{C}_{\alpha, \beta}$, then we have $\mathbb{P}_{P_\mathbb{S}}\left\{p_R > \alpha \right\} \leq \beta$, where 
\begin{align*}
    \tilde{C}_{\alpha, \beta} & = \max\left( \max_{S \in \mathbb{S}}\frac{K_2 \nu^5}{\sigma_{\mathrm{min}}^5 c^{3/2}}\sqrt{\frac{\log(n_S) \log(|\mathbb{S}|/\alpha\beta)\{|S| + \log(|\mathbb{S}|n_S/\alpha\beta)  \}}{n_S}}, \right. \\
    & \hspace{5cm} \left. \max_{S \in \mathbb{S}}\frac{K_3 \nu^6}{\sigma_{\mathrm{min}}^6 c^2}\frac{\{|S| + \log(|\mathbb{S}|/\alpha\beta)\} \log^2(|\mathbb{S}|n_S/\alpha\beta)}{n_S}, C_{\alpha \beta, c} \right),
\end{align*}
and $K_2, K_3 > 0$ are sufficiently large universal constants.
\end{enumerate}
      
\end{thm}
First, observe that the initial part of the result implies asymptotic validity across the entire null hypothesis, except at its boundary. Additionally, compared to the oracle test given in Theorem \ref{thm:oracle_test} in Section~\ref{Sec:oracle_test}, the separation required to achieve uniform power includes just an extra logarithmic term in $n_S$ and $|\mathbb{S}|$, being of the order
\[
\tilde{C}_{\alpha, \beta} \lesssim \max_{S \in \mathbb{S}} \sqrt{ \frac{|S| \log n_S \log(|\mathbb{S}|/\alpha\beta)}{n_S}},
\]
when $c$, $\sigma^2_\mathrm{min}$ and $\nu^2$ are fixed. This, combined with Theorems \ref{thm:cycle_minimax} and \ref{block_minimax}, also demonstrates that our bootstrap test is essentially minimax optimal up to logarithmic factors when $\mathbb{S}$ is either a $d$-cycle or a block $3$-cycle. 

Regarding the behaviour at the boundary of the null hypothesis, it is important to note that classical bootstrap procedures can be inconsistent for certain parameter values on the boundary of the associated parameter spaces \citep{andrews2000bootstrap, samworth03restoring, cavaliere17bootstrap}.  While alternative approaches like the $m \text{ out of } n$ bootstrap can restore consistency in these cases, they may still be outperformed by the inconsistent standard bootstrap in practice. For example, this phenomenon is highlighted in \cite{samworth03restoring} through an analysis of the parametric bootstrap in the context of the Hodges and Stein estimators. As for the bootstrap test given in Algorithm \ref{alg:Corrbootstrap_test}, its asymptotic validity on the boundary crucially depends on the geometrical properties of the null hypothesis, which are in turn induced by the structure of the collection of missingness patterns $\mathbb{S}$. While this is generally very complex, we can still demonstrate the asymptotic validity of the bootstrap test over the entire null hypothesis in certain specific cases, such as for the $d$-cycle.  

To this end, recall that for bivariate random vectors $(X_1, Y_1), \ldots, (X_n, Y_n) \overset{\mathrm{i.i.d.}}{\sim} P$ with mean $(\xi, \eta)$, variances $(\sigma^2, \tau^2)$, and finite fourth moments, we have that $\sqrt{n}(\hat{\rho} - \rho) \overset{d}{\rightarrow} N(0, \gamma^2(P))$ \citep[Example 5.4.3]{lehmann99largesample}, where $\rho$ and $\hat{\rho}$ are Pearson correlation and its sample version, respectively. Here, the definition of $\gamma^2(P)$ is as follows: let $(S)_{ij} = \sigma_{ij}$ with
\begin{align}\label{eq:def_gammaPearson}
\sigma_{11} & =\operatorname{Var}[(X-\xi)(Y-\eta)]=E\left[(X-\xi)^2(Y-\eta)^2-\rho^2 \sigma^2 \tau^2\right], \nonumber \\
\sigma_{12} & =\sigma_{21}=\operatorname{Cov}\left[(X-\xi)(Y-\eta),(X-\xi)^2\right]=E\left[(X-\xi)^3(Y-\eta)-\rho \sigma^3 \tau\right],\nonumber \\
\sigma_{22} & =\operatorname{Var}(X-\xi)^2=E(X-\xi)^4-\sigma^4, \nonumber \\
\sigma_{13} & =\sigma_{31}=\operatorname{Cov}\left[(X-\xi)(Y-\eta),(Y-\eta)^2\right]=E\left[(X-\xi)(Y-\eta)^3\right]-\rho \sigma \tau^3, \nonumber \nonumber \\
\sigma_{23} & =\sigma_{32}=\operatorname{Cov}\left[(X-\xi)^2,(Y-\eta)^2\right]=E\left[(X-\xi)^2(Y-\eta)^2\right]-\sigma^2 \tau^2, \nonumber \\
\sigma_{33} & =\operatorname{Var}(Y-\eta)^2=E(Y-\eta)^4-\tau^4 .
\end{align}
Let $f(u,v,w) = u/\sqrt{v w}$, and define $\gamma^2(P) = a^T S a$, where $a = \left( \partial f / \partial u, \; \partial f / \partial v, \; \partial f / \partial w \right)
$ evaluated at $u=\rho \sigma \tau, v=\sigma^2, w=\tau^2$. 

\begin{prop}\label{prop:cycle_boundary}
    Consider a $d$-cycle, i.e.~$\mathbb{S}=\{\{1,2\},\{2,3\},\ldots,\{d,1\}\}$, and write $\Sigma_{\mathbb{S}}:= (\Sigma_{\{1,2\}}, \cdots, \Sigma_{ \{d,1\}})$ for the collection of $2 \times 2$ correlation matrices with
\[
\Sigma_{j,j+1} = \begin{pmatrix} 1 & \cos \theta_{j} \\ \cos \theta_{j} & 1 \end{pmatrix},
\]
where $\theta_j \in [0, \pi]$ for all $j \in [d]$. Let $P_{\{j, j+1\}}$ be the distribution of $(X_j, X_{j+1}) \mid \Omega = \mathbbm{1}_{\{j, j+1\}}$, and suppose  $P_{\{j, j+1\}}$ has finite fourth moments for all $j \in [d]$.  Assume that $\min\{1+\cos\theta_j , 1 - \cos\theta_j \} \geq c$ for all $j \in [d]$ for a universal constant $c \in (0,1)$, and suppose there exists $j \in [d]$ such that $\gamma^2(P_{\{j, j + 1 \}}) > 0$. For all $P_\mathbb{S} \in \bar{\mathcal{P}}_{\mathbb{S}}(0)$ and $\alpha < 1/2$, we have
    \[
    \lim_{n_\mathbb{S} \rightarrow \infty} \lim_{B \rightarrow \infty} \mathbb{P}_{P_\mathbb{S}}\left\{p_R \leq \alpha \right\} \leq \alpha.
    \]
    Furthermore, if $P_\mathbb{S} \in \partial \bar{\mathcal{P}}_{\mathbb{S}}(0):= \bar{\mathcal{P}}_{\mathbb{S}}(0) \setminus \cup_{\epsilon > 0} \bar{\mathcal{P}}_{\mathbb{S}}(0)^{-\epsilon}$, the probability of rejection tends exactly to $\alpha$.
\end{prop}

The condition $\gamma^2(P_{{j, j + 1 }}) > 0$ ensures that $R(\hat{\Sigma}_\mathbb{S})$ and $R_{\hat{c}/2}(\hat{\Sigma}_{\mathbb{S},b})$ do not have a degenerate limit, and holds for all sufficiently regular distributions. For instance, in the case of elliptical distributions, we have $\gamma^2(P) = (1+\kappa)(1-\rho^2)^2 > 0$ (see Theorem 5.1.6 in \cite{Muirhead1982AspectsOM}), since $\rho^2 < 1$ by assumption, and the kurtosis $\kappa$ is guaranteed to satisfy $\kappa > -1$ \citep{kurtosisElliptical86}.

\section{Measure of incompatibility for covariance matrices}
\label{Sec:cov_compatibility}

In this section we develop our index $R(\cdot)$ of the incompatibility of population covariance matrices, which will be defined as the optimal value of a semi-definite programme. Standardising the covariance matrices is necessary to have a well-posed problem, and we choose to work with correlation matrices because this is most natural from a statistical point of view. Other standardisation are possible, though, each leading to a different measure of incompatibility, with different properties. In Appendix \ref{sec:test_cov} we introduce another measure of incompatibility based on a different standardisation, analyse its properties, and derive an oracle test of MCAR based on its estimation from data. In order to define and study the properties of $R(\cdot)$, we must first introduce some basic algebraic objects for collections of symmetric and positive semi-definite matrices. Our key notation is collected in Table~\ref{Table:Notation} below. \\

\begin{table}[!ht]
    \centering
    \setlength\extrarowheight{2pt}
   \begin{tabularx}{\textwidth}{|c|X|X|}
        \hline
        \textbf{Notation} & \textbf{Definition} & \textbf{Meaning}\\
        \hline
        $\mathbb{S}$ & A subset of the power set of $[d]$ & Set of all missingness patterns \\
        $\mathbb{S}_j$ & $\{S \in \mathbb{S} : j \in S\}$ & Set of all patterns that contain $j$ \\
        $\mathbb{S}_{jj'}$ & $\{S \in \mathbb{S} : j, j' \in S\}$ & Set of all patterns that contain $(j,j')$ \\
         $\mathcal{M} \equiv \mathcal{M}_d$  & $\{ X \in \mathbb{R}^{d \times d} : X = X^T\}$  & Space of symmetric matrices \\
         $\mathcal{M}_\mathbb{S}$ & $\{ (X_S : S \in \mathbb{S}) : X_S \in \mathcal{M}_{|S|} \text{ for all } S \in \mathbb{S}\}$ & Space of collections of symmetric matrices \\
         $\langle X,Y \rangle$  & $ \mathrm{tr}(X Y)$ for $X,Y \in \mathcal{M}$ & Frobenius inner product \\
          $\langle X_\mathbb{S}, Y_\mathbb{S} \rangle_\mathbb{S}$ & $\sum_{S \in \mathbb{S}} \mathrm{tr}(X_S Y_S)$ for $X_\mathbb{S}, Y_\mathbb{S} \in \mathcal{M}_\mathbb{S}$ & Sum of Frobenius inner products \\
         $\mathcal{P}^*$ & $\{ \Sigma \in \mathcal{M} : \Sigma \succeq 0 \}$ & Cone of PSD matrices \\
        $\mathcal{P}$ & $ \{ \Sigma \in \mathcal{P}^* : \mathrm{diag}(\Sigma)=I_d \}$ & Set of correlation matrices \\
        $\Sigma_\mathbb{S} \succeq_\mathbb{S} 0$ & $\Sigma_\mathbb{S} \succeq_\mathbb{S} 0$ if and only if $\Sigma_S \succeq 0$ for all $S \in \mathbb{S}$ & Loewner order for collections of matrices \\
         $\mathcal{P}_\mathbb{S}^*$ & $\{ \Sigma_\mathbb{S} \in \mathcal{M}_\mathbb{S} : \Sigma_\mathbb{S} \succeq_\mathbb{S} 0 \}$ & Set of collections of PSD matrices \\
         $\mathcal{P}_\mathbb{S}$ & $\{ \Sigma_\mathbb{S} \in \mathcal{P}_\mathbb{S}^* : \mathrm{diag}(\Sigma_S)=I_{|S|} \, \text{for all } S \in \mathbb{S} \}$ & Set of collections of correlation matrices \\
         $A$ & $A : \mathcal{M} \rightarrow \mathcal{M}_\mathbb{S}$ with $ (A X)_S = (X_{jj'})_{j,j' \in S}$ & Marginalisation operator on matrices \\
         $\mathcal{P}_\mathbb{S}^{0,*}$ & $ \{ A \Sigma : \Sigma \in \mathcal{P}^*\}$ &  Set of compatible collections of PSD matrices \\
         $\mathcal{P}_\mathbb{S}^{0}$ & $\{ A \Sigma : \Sigma \in \mathcal{P}\}$ &  Set of compatible collections of correlation matrices \\
          $\mathcal{Y}$ & $\{\operatorname{diag}(v) : v \in \mathbb{R}^d \text{ and } \sum_{j = 1}^d v_j = 0\}$ & Space of diagonal matrices with null trace \\
        $\boldsymbol{O}_\mathbb{S}$ & $(\boldsymbol{O}_{|S|} : S \in \mathbb{S})$ & Collection of zero matrices \\
        $I_{\mathbb{S}}$ &  $(I_{|S|} : S \in \mathbb{S})$ & Collection of identity matrices \\
        \hline
    \end{tabularx}
    \caption{Table of definitions commonly used in the main text.}
    \label{Table:Notation}
\end{table}

Crucially, we say that an element of $\mathcal{P}_\mathbb{S}^*$ is \emph{compatible} if and only if it is an element of $\mathcal{P}_{\mathbb{S}}^{0,*}$.
It turns out that we can characterise compatibility through the adjoint of the linear operator $A$ defined in Table~\ref{Table:Notation} above, which maps a symmetric matrix $X$ into a collection of symmetric matrices indexed by $\mathbb{S}$ according to $ (A X)_S = (X_{jj'})_{j,j' \in S}$ for all $S \in \mathbb{S}$.
\begin{prop}\label{Prop:BasicLinearProperties}
The adjoint operator $A^* : \mathcal{M}_\mathbb{S} \rightarrow \mathcal{M}$ of $A$ is given by
	\[
		(A^* X_\mathbb{S})_{jj'} = \sum_{S \in \mathbb{S}} \mathbbm{1}_{\{j,j' \in S\}} (X_S)_{jj'} = \sum_{S \in \mathbb{S}_{jj'}} (X_S)_{jj'},
	\]
	where we recall that $\mathbb{S}_{jj'} = \{S \in \mathbb{S} : j,j' \in S\} = \mathbb{S}_j \cap \mathbb{S}_{j'}$.
\end{prop}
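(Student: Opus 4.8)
The plan is to verify the defining relation of the adjoint directly in coordinates. By definition, $A^*$ is the unique linear map $\mathcal{M}_\mathbb{S} \to \mathcal{M}$ satisfying $\langle A X, X_\mathbb{S}\rangle_\mathbb{S} = \langle X, A^* X_\mathbb{S}\rangle$ for all $X \in \mathcal{M}$ and all $X_\mathbb{S} \in \mathcal{M}_\mathbb{S}$, so it suffices to compute the left-hand side, massage it into the form $\langle X, Z\rangle$ for an explicit symmetric matrix $Z$, and then appeal to non-degeneracy of the Frobenius inner product on $\mathcal{M}$.

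First I would fix $X \in \mathcal{M}$ and $X_\mathbb{S} = (X_S : S\in\mathbb{S}) \in \mathcal{M}_\mathbb{S}$ and expand, using $\operatorname{tr}(MN) = \sum_{j,j'} M_{jj'} N_{j'j}$ together with the symmetry of each $X_S$ and of $(AX)_S$, and the defining relation $((AX)_S)_{jj'} = X_{jj'}$ for $j,j' \in S$,
\[
\langle A X, X_\mathbb{S}\rangle_\mathbb{S} = \sum_{S\in\mathbb{S}} \operatorname{tr}\bigl((AX)_S X_S\bigr) = \sum_{S\in\mathbb{S}} \sum_{j,j'\in S} \bigl((AX)_S\bigr)_{jj'} (X_S)_{jj'} = \sum_{S\in\mathbb{S}} \sum_{j,j'\in S} X_{jj'} (X_S)_{jj'}.
\]
Next I would interchange the order of summation, rewriting the sum over $\{(S,j,j') : S\in\mathbb{S},\, j,j'\in S\}$ as a sum over $j,j'\in[d]$ of a sum over $\{S\in\mathbb{S} : j,j'\in S\} = \mathbb{S}_{jj'} = \mathbb{S}_j \cap \mathbb{S}_{j'}$, which yields
\[
\langle A X, X_\mathbb{S}\rangle_\mathbb{S} = \sum_{j,j'\in[d]} X_{jj'} \sum_{S\in\mathbb{S}_{jj'}} (X_S)_{jj'} = \sum_{j,j'\in[d]} X_{jj'}\, Z_{jj'} = \operatorname{tr}(X Z) = \langle X, Z\rangle,
\]
where $Z$ denotes the matrix with entries $Z_{jj'} := \sum_{S\in\mathbb{S}_{jj'}} (X_S)_{jj'}$, which is precisely the candidate in the statement. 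Since $\mathbb{S}_{jj'} = \mathbb{S}_{j'j}$ and each $X_S$ is symmetric, $Z_{jj'} = Z_{j'j}$, so $Z \in \mathcal{M}$; as the identity $\langle AX, X_\mathbb{S}\rangle_\mathbb{S} = \langle X, Z\rangle$ holds for every $X \in \mathcal{M}$ and $\langle\cdot,\cdot\rangle$ is non-degenerate on $\mathcal{M}$, uniqueness of the adjoint gives $A^* X_\mathbb{S} = Z$. (Equivalently, one can test the identity against the standard basis $\{E_{jj}\} \cup \{E_{jj'} + E_{j'j} : j < j'\}$ of $\mathcal{M}$ and read off the entries of $A^* X_\mathbb{S}$ directly.)

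I do not anticipate a genuine obstacle: the computation is routine, and the only point deserving a moment's care is that $A$ and $A^*$ act between spaces of \emph{symmetric} matrices, so one must check both that the formula for $Z$ returns a symmetric matrix (it does, by symmetry of each $X_S$ and of the index set $\mathbb{S}_{jj'}$) and that restricting the bilinear pairing to symmetric $X$ still pins down the symmetric matrix $Z$ uniquely; both are immediate.
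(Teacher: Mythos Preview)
Your proof is correct and follows essentially the same approach as the paper: expand $\langle AX, X_\mathbb{S}\rangle_\mathbb{S}$ in coordinates, swap the order of summation, and read off the entries of $A^* X_\mathbb{S}$. You add a little extra care (checking $Z$ is symmetric and invoking non-degeneracy of the Frobenius pairing), but the argument is the same.
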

Now, the following proposition fully characterises compatibility in terms of the non-negativity of a collection of linear functionals. 
\begin{prop}
\label{Prop:ConsCompatCheck}
For $\Sigma_\mathbb{S} \in \mathcal{P}_\mathbb{S}^*$ we have $\Sigma_\mathbb{S} \in \mathcal{P}_\mathbb{S}^{0,*}$ if and only if
\[
	\langle X_\mathbb{S}, \Sigma_\mathbb{S} \rangle_\mathbb{S} \geq 0 \quad \text{for all } X_\mathbb{S} \in \mathcal{M}_\mathbb{S} \text{ satisfying } A^* X_\mathbb{S} \succeq 0.
\]
\end{prop}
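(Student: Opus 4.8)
The plan is to read the claimed equivalence as a conic-duality statement. Write $K := \mathcal{P}_\mathbb{S}^{0,*} = A(\mathcal{P}^*)$ for the image of the (closed, convex, self-dual) cone $\mathcal{P}^*$ of PSD matrices under the marginalisation operator $A$; this is a non-empty convex cone, and $K \subseteq \mathcal{P}_\mathbb{S}^*$ because principal submatrices of a PSD matrix are PSD. The right-hand condition says precisely that $\Sigma_\mathbb{S}$ lies in the dual cone of $(A^*)^{-1}(\mathcal{P}^*) = \{X_\mathbb{S} \in \mathcal{M}_\mathbb{S} : A^* X_\mathbb{S} \succeq 0\}$, so the proposition amounts to identifying this dual cone with $K$. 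I would prove the two inclusions separately.

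The inclusion of $K$ in the set of $\Sigma_\mathbb{S}$ satisfying the right-hand condition is immediate: if $\Sigma_\mathbb{S} = A\Sigma$ with $\Sigma \succeq 0$ and $A^* X_\mathbb{S} \succeq 0$, then the adjoint identity of Proposition~\ref{Prop:BasicLinearProperties} gives
\[
\langle X_\mathbb{S}, \Sigma_\mathbb{S} \rangle_\mathbb{S} = \langle A^* X_\mathbb{S}, \Sigma \rangle = \operatorname{tr}\big((A^* X_\mathbb{S})^{1/2}\,\Sigma\,(A^* X_\mathbb{S})^{1/2}\big) \ge 0,
\]
since the trace of a conjugated PSD matrix is non-negative.

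For the converse I would argue by contraposition via a separating-hyperplane argument, which forces me first to establish that $K$ is closed — the one genuinely non-trivial point, since linear images of closed convex cones need not be closed. I would dispatch this by a boundedness argument: after the harmless reduction to the case $\bigcup_{S \in \mathbb{S}} S = [d]$ (an index lying in no pattern is invisible to $A$ and $A^*$ and can be re-attached to any completion by a standard-basis row and column), any sequence $\Gamma^{(n)} \succeq 0$ with $A\Gamma^{(n)}$ convergent has all its diagonal entries bounded — each $\Gamma^{(n)}_{jj}$ equals an observed, hence convergent, entry — and therefore all its entries bounded, via $|\Gamma^{(n)}_{jj'}| \le (\Gamma^{(n)}_{jj}\Gamma^{(n)}_{j'j'})^{1/2}$; a subsequential limit then furnishes a PSD completion of the limiting sequence, so $K$ is closed.

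Granting closedness, suppose the right-hand condition holds but, for contradiction, $\Sigma_\mathbb{S} \notin K$. Separation of the point $\Sigma_\mathbb{S}$ from the closed convex cone $K$ yields $X_\mathbb{S} \in \mathcal{M}_\mathbb{S}$ with $\langle X_\mathbb{S}, \Sigma_\mathbb{S} \rangle_\mathbb{S} < 0$ and $\langle X_\mathbb{S}, A\Gamma \rangle_\mathbb{S} \ge 0$ for every $\Gamma \succeq 0$ (the cone structure of $K$ upgrades a single separating inequality to non-negativity on all of $K$). The latter reads $\langle A^* X_\mathbb{S}, \Gamma \rangle \ge 0$ for all $\Gamma \succeq 0$, i.e.\ $A^* X_\mathbb{S} \succeq 0$ by self-duality of $\mathcal{P}^*$, so $X_\mathbb{S}$ is an admissible test direction violating the right-hand condition — a contradiction. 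The main obstacle, as flagged, is the closedness of $K = A(\mathcal{P}^*)$; everything else is routine conic duality.
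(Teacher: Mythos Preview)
Your proof is correct and takes a genuinely different route from the paper. The paper augments the completion problem with an extra scalar variable $\gamma_{0,0}$ so that the constraints become homogeneous, then applies a \emph{strict} SDP Farkas lemma (Proposition~\ref{Prop:farkas_lemma} in the appendix): the system $\sum x_i A_i \succ 0$ is infeasible iff a non-zero PSD $Y$ annihilates all the $A_i$. Working with the open PSD cone lets the paper separate a subspace from an interior, which sidesteps any closedness question for the image cone; the price is the somewhat artificial augmentation and a final passage from $A^*X_\mathbb{S}\succ 0$ to $A^*X_\mathbb{S}\succeq 0$ by approximation.

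You instead attack head-on the one genuine obstruction --- closedness of $A(\mathcal{P}^*)$ --- with a clean compactness argument (observed diagonals bound all entries via the $2\times 2$ minor inequality), and then invoke point--cone separation directly. Your route is arguably more transparent: it names the obstacle explicitly and dispatches it with an elementary bound, whereas the paper hides the same difficulty inside the Farkas machinery and the augmentation trick. The paper's route, on the other hand, packages everything into a standard SDP alternative theorem, which may appeal to readers already fluent in that language. Both arguments ultimately rest on the same separating-hyperplane core.
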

The proof can be found in Section \ref{sec:proof}. This is an extension of well-known characterisation of the feasibility of positive semi-definite matrix completion \citep[e.g.][]{laurent2009matrix}. Indeed, when $\Sigma_\mathbb{S}$ is consistent, we can show that $\langle X_\mathbb{S}, \Sigma_\mathbb{S} \rangle_\mathbb{S} = \langle A^* X_\mathbb{S}, \Sigma \rangle$, where $\Sigma$ is the $d \times d$ symmetric matrix with $\Sigma_{jj'}=(\Sigma_S)_{jj'}$ for all $S \in \mathbb{S}_{jj'}$, if $\mathbb{S}_{jj'} \neq \emptyset$, and $\Sigma_{jj'}=0$ if $\mathbb{S}_{jj'} = \emptyset$. Here $\Sigma$ can be thought of as a partial matrix that is padded with zeros in unobserved positions. Since $A^* X_\mathbb{S}$ is also zero in these positions, the value of $\Sigma$ there is arbitrary. Now our characterisation reduces to checking that $\langle A^* X_\mathbb{S}, \Sigma \rangle \geq 0$ for all $X_\mathbb{S} \in \mathcal{M}_\mathbb{S}$ satisfying $A^* X_\mathbb{S} \succeq 0$, which is equivalent to checking $\langle X, \Sigma \rangle \geq 0$ for all $X \in \mathcal{M}$ with $X_{jj'}=0$ if $\mathbb{S}_{jj'} = \emptyset$, which coincides with (4) in \cite{laurent2009matrix}. 

Proposition~\ref{Prop:ConsCompatCheck} provides a characterisation of compatibility, but in order to assess the significance of departures from the null hypothesis and thus to define hypothesis tests, we will need a numerical measure of incompatibility. A natural way to do this is to minimise $\langle X_\mathbb{S}, \Sigma_\mathbb{S} \rangle_\mathbb{S}$ over a subset of $\{ X_\mathbb{S} \in \mathcal{M}_\mathbb{S} : A^* X_\mathbb{S} \succeq 0\}$ that still characterises compatibility, but gives finite minimal values. First, observe that checking the compatibility of covariance matrices is equivalent to checking the consistency of the variances $\sigma^2_S = \operatorname{diag}(\mathrm{Cov}(X_S|\Omega=\mathbbm{1}_S))$ for $S \in \mathbb{S}$, and the compatibility of the correlation matrices $\operatorname{Corr}(X_S|\Omega = \mathbbm{1}_S)$ for $S \in \mathbb{S}$. Here, we will focus on the latter issue, and postpone the discussion of testing the consistency of variances to Section~\ref{sec:simul}. This is because this problem essentially reduces to testing the equality of variances, which is well understood in the statistical literature \citep{brown12variances, miao09levene}.  
Now, whenever $\Sigma_\mathbb{S}$ is a collection of correlation matrices we define
\begin{align}
\label{Eq:Primal}
	R(\Sigma_\mathbb{S}) &:= \sup\biggr\{-\frac{1}{d} \langle \Sigma_\mathbb{S}, X_\mathbb{S} \rangle_\mathbb{S} : X_\mathbb{S}+X_\mathbb{S}^0 \succeq_\mathbb{S} 0,  A^* X_\mathbb{S} + Y \succeq 0 \text{ for some } Y \in \mathcal{Y} \biggr\} \nonumber \\
    &= 1- \frac{1}{d}\inf\{ \langle \Sigma_\mathbb{S}, Y_\mathbb{S} \rangle_\mathbb{S} : Y_\mathbb{S} \succeq_\mathbb{S} 0,  A^* Y_\mathbb{S} + Y \succeq I_d \text{ for some } Y \in \mathcal{Y} \}
\end{align}
where $X_\mathbb{S}^0 = (X_S^0 : S \in \mathbb{S})$ with $X_S^0 = \operatorname{diag}\left(1 /\left|\mathbb{S}_j\right|: j \in S\right)$, and where $\mathcal{Y}$ is the set of diagonal $d \times d$ matrices with trace equal to zero. We refer to $R(\cdot)$ as an index of incompatibility, borrowing the terminology from \cite{berrett2022optimal}. The objective function of this optimisation problem is a one-to-one mapping of the linear functional appearing in our characterisation of compatibility. Moreover, by choosing $Y=\boldsymbol{O}$ and noting that $X_\mathbb{S}^0 \succ_{\mathbb{S}} 0$, we can see that for any $X_\mathbb{S}$ that satisfies $A^* X_\mathbb{S} \succeq 0$, the collection $\epsilon X_\mathbb{S}$ is feasible for $\epsilon>0$ sufficiently small. Thus, by Proposition~\ref{Prop:ConsCompatCheck} we have that $R(\Sigma_\mathbb{S})>0$ whenever $\Sigma_\mathbb{S}$ is incompatible. On the other hand, when $\Sigma_\mathbb{S} = A\Sigma$ is compatible and $(X_\mathbb{S},Y)$ is feasible we have $\langle \Sigma_\mathbb{S}, X_\mathbb{S} \rangle_\mathbb{S} = \langle \Sigma, A^* X_\mathbb{S} \rangle =  \langle \Sigma, A^* X_\mathbb{S} + Y \rangle \geq 0$, where the second equality holds because $\Sigma$ has a constant diagonal. Combining this with the observation that $X_\mathbb{S} = \boldsymbol{O}_\mathbb{S}$ is feasible, we see that $R(\Sigma_\mathbb{S})=0$ when $\Sigma_\mathbb{S}$ is compatible. 

In the above argument we did not use the specific form of the lower bound $X_\mathbb{S} \succeq_\mathbb{S} -X_\mathbb{S}^0$ anywhere, and it would also have been possible to optimise over the restricted set of $X_\mathbb{S}$ that are feasible with $Y=\boldsymbol{O}$. The specific choice of the feasible set in the definition of $R(\Sigma_\mathbb{S})$ was made because it leads to an interpretable dual formulation. While there exist semi-definite programs for which strong duality does not hold, Slater's condition is satisfied in our problem, so we do not encounter such issues (see Appendix~\ref{sec:SDP} for an introduction to the theory of semi-definite programming). This is formalised in the result below. 
\begin{prop}
\label{Prop:CorrDuality}
For $\Sigma_\mathbb{S} \in \mathcal{P}_\mathbb{S}$ we have
\begin{align}
\label{Eq:Dual}
    R(\Sigma_{\mathbb{S}}) &=\inf \{ \epsilon \in [0,1] : \Sigma_\mathbb{S} \in (1-\epsilon) \mathcal{P}_\mathbb{S}^0 + \epsilon \mathcal{P}_\mathbb{S} \} \nonumber \\
    &= 1-\frac{1}{d} \sup\{ \mathrm{tr}(\Sigma) : A\Sigma \preceq_\mathbb{S} \Sigma_\mathbb{S}, \, \Sigma_{11}=\ldots=\Sigma_{dd}, \, \Sigma \succeq 0 \}.
\end{align}
\end{prop}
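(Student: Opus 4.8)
The plan is to recognise the infimum appearing in the second line of \eqref{Eq:Primal} as a semi-definite program, to compute its Lagrangian dual, to check that Slater's condition holds so that strong duality applies, and finally to rewrite the resulting dual program in the geometric form given by the first line of \eqref{Eq:Dual}. Throughout I would work with the representation $R(\Sigma_\mathbb{S}) = 1 - p^*/d$, where $p^* := \inf\{\langle\Sigma_\mathbb{S},Y_\mathbb{S}\rangle_\mathbb{S} : Y_\mathbb{S}\succeq_\mathbb{S}0,\ Y\in\mathcal{Y},\ A^*Y_\mathbb{S}+Y\succeq I_d\}$, which is the form already exhibited in \eqref{Eq:Primal}.

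\emph{Deriving the dual.} I would introduce a matrix multiplier $\Sigma\succeq0$ for the constraint $A^*Y_\mathbb{S}+Y-I_d\succeq0$ and form the partial Lagrangian $\langle\Sigma_\mathbb{S},Y_\mathbb{S}\rangle_\mathbb{S} - \langle\Sigma,A^*Y_\mathbb{S}+Y-I_d\rangle$, to be minimised over $Y_\mathbb{S}\succeq_\mathbb{S}0$ and $Y\in\mathcal{Y}$. Using the adjoint identity $\langle\Sigma,A^*Y_\mathbb{S}\rangle=\langle A\Sigma,Y_\mathbb{S}\rangle_\mathbb{S}$ of Proposition~\ref{Prop:BasicLinearProperties}, this equals $\mathrm{tr}(\Sigma) + \inf_{Y_\mathbb{S}\succeq_\mathbb{S}0}\langle\Sigma_\mathbb{S}-A\Sigma,Y_\mathbb{S}\rangle_\mathbb{S} + \inf_{Y\in\mathcal{Y}}\bigl(-\langle\Sigma,Y\rangle\bigr)$. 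The first infimum is $0$ if $A\Sigma\preceq_\mathbb{S}\Sigma_\mathbb{S}$ and $-\infty$ otherwise; the second, being the infimum of a linear functional over the subspace $\mathcal{Y}$, is $0$ if $\langle\Sigma,\mathrm{diag}(v)\rangle=0$ for every $v$ with $\boldsymbol{1}^\top v=0$ — i.e.\ if $\Sigma_{11}=\dots=\Sigma_{dd}$ — and $-\infty$ otherwise. Hence the dual function equals $\mathrm{tr}(\Sigma)$ on the feasible set below and the dual value is $d^* := \sup\{\mathrm{tr}(\Sigma) : \Sigma\succeq0,\ A\Sigma\preceq_\mathbb{S}\Sigma_\mathbb{S},\ \Sigma_{11}=\dots=\Sigma_{dd}\}$. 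Weak duality gives $p^*\ge d^*$, and $\Sigma=\boldsymbol{O}$ is dual feasible, so $d^*\ge0$ and $p^*$ is finite.

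\emph{Slater and strong duality.} To invoke the strong duality theorem for semi-definite programs (Appendix~\ref{sec:SDP}) it suffices to produce a point that strictly satisfies the conic inequalities while respecting the affine constraint $Y\in\mathcal{Y}$. I would take $Y_S=tI_{|S|}$ for each $S\in\mathbb{S}$ and $Y=\boldsymbol{O}$; then $Y_\mathbb{S}\succ_\mathbb{S}0$, $Y\in\mathcal{Y}$, and Proposition~\ref{Prop:BasicLinearProperties} gives $A^*Y_\mathbb{S}+Y = t\,\mathrm{diag}(|\mathbb{S}_j| : j\in[d])$, which is strictly larger than $I_d$ as soon as $t>1/\min_j|\mathbb{S}_j|$ (using that every coordinate lies in at least one pattern). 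Thus Slater's condition holds, $p^*=d^*$, and we obtain $R(\Sigma_\mathbb{S}) = 1 - \tfrac1d\sup\{\mathrm{tr}(\Sigma) : A\Sigma\preceq_\mathbb{S}\Sigma_\mathbb{S},\ \Sigma_{11}=\dots=\Sigma_{dd},\ \Sigma\succeq0\}$, which is the second line of \eqref{Eq:Dual}.

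\emph{Geometric reformulation and main obstacle.} It remains to identify $1-\tfrac1d d^*$ with $\epsilon^*:=\inf\{\epsilon\in[0,1] : \Sigma_\mathbb{S}\in(1-\epsilon)\mathcal{P}_\mathbb{S}^0+\epsilon\mathcal{P}_\mathbb{S}\}$, via a correspondence between dual-feasible matrices and admissible splittings. In one direction, a dual-feasible $\Sigma$ has common diagonal value $c$ with $c\ge0$ (positivity) and $c\le1$ (because $\Sigma_\mathbb{S}-A\Sigma\succeq_\mathbb{S}0$ has diagonal $1-c$); writing $\Sigma_\mathbb{S}=A\Sigma+(\Sigma_\mathbb{S}-A\Sigma)$ and normalising the two summands shows $\Sigma_\mathbb{S}\in c\,\mathcal{P}_\mathbb{S}^0+(1-c)\mathcal{P}_\mathbb{S}$, so $\epsilon=1-c=1-\tfrac1d\mathrm{tr}(\Sigma)$ is admissible. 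In the other direction, a splitting $\Sigma_\mathbb{S}=(1-\epsilon)AR+\epsilon Q_\mathbb{S}$ with $R\in\mathcal{P}$, $Q_\mathbb{S}\in\mathcal{P}_\mathbb{S}$ makes $\Sigma=(1-\epsilon)R$ dual feasible with $\mathrm{tr}(\Sigma)=(1-\epsilon)d$. Taking suprema and infima, and noting that $\Sigma_\mathbb{S}\in\mathcal{P}_\mathbb{S}$ always (so $\epsilon=1$ is admissible and $\epsilon^*$ is well defined), yields $1-\tfrac1d d^*=\epsilon^*$; combined with strong duality this is exactly \eqref{Eq:Dual}. The Lagrangian computation is routine; I expect the delicate points to be the verification of Slater's condition — which tacitly needs $\min_j|\mathbb{S}_j|\ge1$, i.e.\ that every coordinate is observed in some pattern — and the degenerate cases $c=0$ and $c=1$ in the reformulation, where the normalisations $\Sigma/c$ and $(\Sigma_\mathbb{S}-A\Sigma)/(1-c)$ must be replaced by the direct arguments $\Sigma=\boldsymbol{O}$ and $\Sigma_\mathbb{S}-A\Sigma=\boldsymbol{O}_\mathbb{S}$, respectively.
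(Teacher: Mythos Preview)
Your proposal is correct and follows essentially the same strategy as the paper: identify the infimum in \eqref{Eq:Primal} as an SDP, compute its dual, verify Slater's condition to obtain strong duality, and then match the dual with the geometric mixture formulation. The only differences are organisational --- you derive the dual via a direct Lagrangian calculation whereas the paper writes both problems out in explicit standard SDP form with block-diagonal variables, and your Slater point $Y_S=tI_{|S|}$ (giving strict inequality in both conic constraints) is slightly cleaner than the paper's choice $Y_\mathbb{S}=X_\mathbb{S}^0$; your handling of the boundary cases $c\in\{0,1\}$ in the geometric reformulation is also made more explicit than in the paper.
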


This result shows that our measure of incompatibility $R(\Sigma_\mathbb{S})$
can be interpreted as the smallest amount of perturbation $\epsilon$ that a compatible collection  of correlation matrices must be corrupted by to result in the input collection  $\Sigma_{\mathbb{S}}$. It is immediate from this representation that $R(\Sigma_\mathbb{S})$ takes values in $[0,1]$. Moreover, it follows from Slater's condition that the optimal value of the dual problem is attained. Thus, writing $\lambda^* = 1- R(\Sigma_{\mathbb{S}})$, we can write $\Sigma_{\mathbb{S}}$ as 
\begin{align}\label{eq:dual_decomposition}
    \Sigma_{\mathbb{S}} = \lambda^*A\Sigma + (1-\lambda^*)\Sigma_{\mathbb{S}}',
\end{align}
where $\Sigma \in \mathcal{P}$ and $\Sigma_{\mathbb{S}}' \in \mathcal{P}_\mathbb{S}$. By the maximality of $\lambda^* = 1-R(\Sigma_\mathbb{S})$, it must be the case that $R(\Sigma_{\mathbb{S}}')=1$. 
Indeed, if this were not the case, it would be possible to write $\Sigma_{\mathbb{S}}' = \lambda'A\Sigma' + (1-\lambda')\Sigma_{\mathbb{S}}''$ for some $\lambda' \in (0,1]$, which would contradict the fact that $\lambda^*$ is optimal. 
This argument shows that, whenever $\mathbb{S}$ is such that there exists an incompatible collection  $\Sigma_\mathbb{S}$, the maximal value $R(\Sigma_\mathbb{S})=1$ is attainable, so that the quantity $R(\Sigma_\mathbb{S})$ is on an interpretable scale between compatibility at one extreme and maximal incompatibility at the other. 

We remark that this dual interpretation of $R(\cdot)$ aligns with a similar representation of the incompatiblity of collections of distributions defined by~\cite{berrett2022optimal}. In this earlier work it is shown that the incompatibility of collections of distributions can be understood through linear programming techniques. Our work here, however, shows that we must consider the more complex problem of semi-definite programming to understand the incompatibility of collections of covariance matrices. Despite this additional complexity, since Slater's condition is satisfied for our problem, the primal-dual interior point method has a computational complexity which is polynomial in the number of constraints and the dimension of the variable square matrix (Section 6.4.1. of \cite{nesterov94}, Section 5.7. of \cite{boyd_vandeberghe}). This ensures that $R(\Sigma_\mathbb{S})$ can be always computed efficiently without additional assumptions. 
We conclude this section with some basic properties of $R(\cdot)$ that will be used in later proofs.
\begin{prop}\label{prop:properties}
    The following hold:
    \begin{enumerate}
        \item[(i)] $R(\cdot)$ is convex.
        \item[(ii)] $R(\cdot)$ is continuous.
        \item[(iii)] Suppose $\mathbb{S} \subseteq \mathbb{S}'$ and $\Sigma_{\mathbb{S}} \subseteq_{\mathbb{S}} \Sigma_{\mathbb{S}'}$, where the inclusion $\subseteq_{\mathbb{S}}$ means that every correlation matrix in $\Sigma_{\mathbb{S}}$ is also in $\Sigma_{\mathbb{S}'}$. Then $R(\Sigma_{\mathbb{S}}) \leq R(\Sigma_{\mathbb{S}'})$. 
    \end{enumerate}
\end{prop}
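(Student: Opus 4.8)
The plan is to establish each of the three properties primarily by exploiting the variational representations of $R(\cdot)$ in~\eqref{Eq:Primal} and~\eqref{Eq:Dual}, rather than working directly with the definition of compatibility.

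For (i), convexity, I would use the supremum representation in~\eqref{Eq:Primal}: $R(\Sigma_\mathbb{S}) = \sup_{(X_\mathbb{S},Y)} \bigl(-\tfrac1d\langle \Sigma_\mathbb{S}, X_\mathbb{S}\rangle_\mathbb{S}\bigr)$ over a feasible set that does not depend on $\Sigma_\mathbb{S}$. Since for each fixed feasible $(X_\mathbb{S},Y)$ the map $\Sigma_\mathbb{S} \mapsto -\tfrac1d\langle \Sigma_\mathbb{S}, X_\mathbb{S}\rangle_\mathbb{S}$ is linear (hence convex) in $\Sigma_\mathbb{S}$, the pointwise supremum over this family is convex. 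This is the standard fact that a supremum of affine functions is convex, and it is essentially immediate once the representation is in place. (One should note the domain $\mathcal{P}_\mathbb{S}$ is convex, so the statement is meaningful.)

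For (iii), monotonicity under enlarging $\mathbb{S}$, I would use the other representation, $R(\Sigma_\mathbb{S}) = \inf\{\epsilon \in [0,1] : \Sigma_\mathbb{S} \in (1-\epsilon)\mathcal{P}_\mathbb{S}^0 + \epsilon\mathcal{P}_\mathbb{S}\}$. The key observation is that restriction is compatible with the decomposition: if $\Sigma_{\mathbb{S}'} = (1-\epsilon) A'\Sigma + \epsilon\,\Sigma_{\mathbb{S}'}'$ realises a feasible $\epsilon$ for $\mathbb{S}'$ (where $A'$ is the marginalisation operator for $\mathbb{S}'$), then discarding the patterns in $\mathbb{S}'\setminus\mathbb{S}$ gives $\Sigma_\mathbb{S} = (1-\epsilon)A\Sigma + \epsilon\,\Sigma_\mathbb{S}'$, where now $A\Sigma \in \mathcal{P}_\mathbb{S}^0$ and the restricted sequence $\Sigma_\mathbb{S}'$ lies in $\mathcal{P}_\mathbb{S}$ because each of its matrices is a correlation matrix. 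Hence every $\epsilon$ feasible for $\Sigma_{\mathbb{S}'}$ is feasible for $\Sigma_\mathbb{S}$, so the infimum for $\Sigma_\mathbb{S}$ is no larger, giving $R(\Sigma_\mathbb{S}) \le R(\Sigma_{\mathbb{S}'})$. The mild subtlety to check is that the same $\Sigma \in \mathcal{P}$ serves both, which is exactly what the inclusion hypothesis $\Sigma_\mathbb{S} \subseteq_\mathbb{S} \Sigma_{\mathbb{S}'}$ guarantees.

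For (ii), continuity, the main obstacle is that $R(\cdot)$ is defined on the boundary-rich set $\mathcal{P}_\mathbb{S}$ of sequences of correlation matrices, so one cannot simply invoke ``finite convex functions are continuous on open sets''. I would proceed as follows: first, combining (i) with the fact that $R$ is finite on $\mathcal{P}_\mathbb{S}$ and bounded (it takes values in $[0,1]$, as noted after Proposition~\ref{Prop:CorrDuality}), convexity plus local boundedness yields local Lipschitz continuity, hence continuity, at every point in the relative interior of $\mathcal{P}_\mathbb{S}$. To handle boundary points of $\mathcal{P}_\mathbb{S}$ (sequences containing singular correlation matrices), I would argue directly from the dual representation $R(\Sigma_\mathbb{S}) = 1 - \tfrac1d\sup\{\tr(\Sigma) : A\Sigma \preceq_\mathbb{S} \Sigma_\mathbb{S},\ \Sigma_{11}=\dots=\Sigma_{dd},\ \Sigma\succeq0\}$: the feasible region is a spectrahedron depending on $\Sigma_\mathbb{S}$ through the linear matrix inequalities $A\Sigma \preceq_\mathbb{S}\Sigma_\mathbb{S}$, and since Slater's condition holds for the input sequence $\Sigma_\mathbb{S}$ (as stated before Proposition~\ref{Prop:CorrDuality}), the optimal value of such a parametric SDP varies continuously in $\Sigma_\mathbb{S}$ — this is a standard stability result for the optimal value of linear conic programs under perturbations that preserve strict feasibility, which can be proved by a sandwiching argument using small scalings of a Slater point. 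I expect this boundary continuity, i.e. making the parametric-SDP stability argument precise, to be the most delicate part, and if a fully self-contained argument is desired I would instead upper- and lower-bound $R(\Sigma_\mathbb{S})$ near a boundary point by $R$ evaluated at nearby interior points obtained by shrinking each correlation matrix toward the identity (using the convexity bound from part (i) together with the explicit decomposition~\eqref{eq:dual_decomposition} to control the error), and let the shrinkage parameter tend to zero.
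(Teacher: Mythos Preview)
For (i) and (iii), your approach coincides with the paper's: convexity from the primal representation~\eqref{Eq:Primal} as a supremum of affine functionals over a fixed feasible set, and monotonicity from the dual by restricting an optimal decomposition (equivalently, an optimal $\Sigma$ in~\eqref{Eq:Dual}) for $\mathbb{S}'$ to the patterns in $\mathbb{S}$. One small imprecision in your (iii): when $d=|\cup_{S\in\mathbb{S}}S|<d'=|\cup_{S\in\mathbb{S}'}S|$, it is not literally ``the same $\Sigma\in\mathcal{P}$'' that serves both systems --- one restricts the optimal $d'\times d'$ matrix to the relevant $d$ coordinates, and this restriction remains in $\mathcal{P}$ because principal submatrices of correlation matrices are correlation matrices. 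The paper makes this restriction step explicit but otherwise argues identically.

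For (ii), the paper takes a much shorter route than you: it parameterises $\mathcal{P}_\mathbb{S}$ by the off-diagonal entries as a closed subset of $\mathbb{R}^s$, $s=\sum_{S\in\mathbb{S}}\binom{|S|}{2}$, and invokes Rockafellar's Corollary~10.1.1 (a convex function finite on all of $\mathbb{R}^s$ is continuous). Your caution about boundary points is well placed, because the hypothesis of that corollary is not actually met: the primal formula~\eqref{Eq:Primal} does extend $R$ to a convex function on all of $\mathbb{R}^s$, but this extension is $+\infty$ outside $\mathcal{P}_\mathbb{S}$. For instance with $d=2$, $\mathbb{S}=\{\{1,2\}\}$ and off-diagonal entry $\rho>1$, the feasible choice $X_{\{1,2\}}=t\bigl(\begin{smallmatrix}1&-1\\-1&1\end{smallmatrix}\bigr)$, $Y=0$, gives objective $t(\rho-1)\to\infty$. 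So the cited result only delivers continuity on the interior of the elliptope product, and the paper's argument is, strictly speaking, incomplete at singular $\Sigma_\mathbb{S}$. Your proposed route --- continuity on the relative interior from convexity plus boundedness, then boundary continuity via parametric-SDP stability of the dual~\eqref{Eq:Dual} (whose feasible correspondence is compact-valued and always contains $\Sigma=0$) --- is a sound way to close this gap and is genuinely more careful than what the paper writes down.
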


It is interesting to observe that property (iii) says that $R$ is monotone with respect to the inclusion operator, so that additional information can only make a collection less compatible.  


\section{Concentration bound and an oracle test}\label{Sec:oracle_test}
Having introduced our population-level measure of incompatibility, in this section we analyse the concentration properties of the plug-in estimator  $ R(\hat{\Sigma}_\mathbb{S})$. This will give us tools that enable us to prove theoretical guarantees (Theorem \ref{thm:bootstrap_guarantees}) for the bootstrap procedures defined in Section \ref{sec:bootstrap_test}. Furthermore, it will lead naturally to the definition of the oracle test defined in Theorem \ref{thm:oracle_test}. Now,
the analysis of $R(\hat{\Sigma}_\mathbb{S})$ is challenging, as it is defined as the optimal value of a semi-definite program with an unbounded feasible set. In fact, without further assumptions, it is not possible to restrict attention to a compact feasible set. On the other hand, most statistical techniques for the analysis of suprema of empirical processes require feasible sets to be totally bounded so that, for example, covering arguments can be applied.

Fortunately, under the assumption that $\Sigma_\mathbb{S} \succ_\mathbb{S} 0$, our dual problem~\eqref{Eq:Dual} is strictly feasible and hence Slater's condition implies that the optimal value is attained in the primal problem~\eqref{Eq:Primal}. This assumption is reasonable in many areas of application, and similar assumptions of invertibility have been used frequently in the literature \citep{buhlmann2006lasso, cai2011precision}. In fact, if we assume the stronger condition that $\Sigma_\mathbb{S} \succeq_\mathbb{S} c I_\mathbb{S}$ for some $c>0$, we will see that the optimal value is always attained in a compact set whose size depends on $c$. 
Indeed, the strict feasibility of the dual problem~\eqref{Eq:Dual} implies that there exists $X_{\mathbb{S}} \in \mathcal{M}_\mathbb{S}$ such that $X_\mathbb{S} + X_\mathbb{S}^0 \succeq_\mathbb{S} 0$ and $R(\Sigma_{\mathbb{S}}) = -d^{-1} \langle X_{\mathbb{S}}, \Sigma_{\mathbb{S}} \rangle_{\mathbb{S}}$. This in turn implies that
\[
    \sum_{S \in \mathbb{S}}\|X_S + X_S^0\|_* 
 = \langle X_{\mathbb{S}} + X_{\mathbb{S}}^0, I_{\mathbb{S}} \rangle_{\mathbb{S}} \leq \frac{1}{c} \langle X_{\mathbb{S}} + X_{\mathbb{S}}^0, \Sigma_{\mathbb{S}} \rangle_{\mathbb{S}} = \frac{d}{c}\{1-R(\Sigma_\mathbb{S})\} \leq \frac{d}{c},
\]
so that we have a bound on the sum of the nuclear norms of the matrices in the sequence $X_\mathbb{S} + X_\mathbb{S}^0$. In finding the optimal value of the primal problem~\eqref{Eq:Primal}, then, we may restrict attention to
\begin{equation}
\label{Eq:CompactFeasibleSet}
    \mathcal{F}_c := \biggl\{ X_\mathbb{S} \in \mathcal{M}_\mathbb{S} :  X_{\mathbb{S}} + X_{\mathbb{S}}^0 \succeq_{\mathbb{S}} 0 , \sum_{S \in \mathbb{S}}\|X_S + X_S^0\|_* \leq d/c,  A^* X_\mathbb{S} + Y \succeq 0 \text{ for some } Y \in \mathcal{Y} \biggr\},
\end{equation}
which is compact. Formally, defining the regularised version of $R(\cdot)$ as
\begin{align}\label{eq:reg_R}
    R_z(\Sigma_\mathbb{S}) := \sup \left\{-d^{-1} \langle X_{\mathbb{S}}, \Sigma_{\mathbb{S}} \rangle_{\mathbb{S}} : X_\mathbb{S} \in \mathcal{F}_z \right\} \text{ for } z \leq 1,
\end{align}
we have just shown that $R_c(\Sigma_\mathbb{S}) = R(\Sigma_\mathbb{S})$ whenever $\Sigma_\mathbb{S} \succeq_\mathbb{S} c I_\mathbb{S}$. This regularised index of incompatibility is used to compute the bootstrap statistic $R_{\hat{c}/2}(\hat{\Sigma}_{\mathbb{S}, 1})$ in Algorithm \ref{alg:Corrbootstrap_test}, and is such that $R_{z_1}(\Sigma_\mathbb{S}) \geq R_{z_2}(\Sigma_\mathbb{S})$ if $z_2 \geq z_1$.

Before moving on to describe how to construct a statistical test under this new assumption, we give a brief discussion of the norm on $\mathcal{M}_\mathbb{S}$ defined by
\[
    \|X_{\mathbb{S}}\|_{*, \mathbb{S}} := \sum_{S \in \mathbb{S}}\|X_S\|_*,
\]
which reduces to $\sum_{S \in \mathbb{S}} \operatorname{tr}(X_S)$ in case that $X_{\mathbb{S}} \succeq_{\mathbb{S}} 0$. For each $S \in \mathbb{S}$, the nuclear norm $\|X_S\|_*$ can be thought of as the $\ell_1$ norm applied to the eigenvalues of $X_S$. As these are then summed to give $\|X_{\mathbb{S}}\|_{*, \mathbb{S}}$, we see that $\| \cdot \|_{*, \mathbb{S}}$ can be thought of as an $\ell_1$ norm on $\mathcal{M}_\mathbb{S}$. It is easy to see that the dual norm of $\|\cdot\|_{*, \mathbb{S}}$ with respect to the inner product $\langle \cdot, \cdot \rangle_{\mathbb{S}}$ is 
\[
\|X_{\mathbb{S}}\|_{2, \mathbb{S}} := \max_{S \in \mathbb{S}}\|X_S\|_2,
\]
 where $\|\cdot\|_2$ is the usual spectral norm of a matrix. This follows after writing the sequence of matrices in block-diagonal form, and allows us to derive the following generalisation of Holder's inequality in the space of matrix collections,
 \begin{equation}
 \label{Eq:Holder}
 |\langle X_{\mathbb{S}}, Y_{\mathbb{S}} \rangle_{\mathbb{S}}| \leq \|X_{\mathbb{S}}\|_{*, \mathbb{S}} \|Y_{\mathbb{S}}\|_{2, \mathbb{S}} = \sum_{S \in \mathbb{S}}\|Y_S\|_* \cdot \max_{S \in \mathbb{S}}\|X_S\|_2.
 \end{equation}
 This inequality will be used in the proof of the following result, which provides valid critical values for the test statistic $R(\hat{\Sigma}_{\mathbb{S}})$ and gives conditions under which the resulting test has large power.

\begin{thm}\label{thm:oracle_test}
    Suppose that the assumptions of Theorem \ref{thm:bootstrap_guarantees} hold, and recall the definition of $C_{\alpha, c} \equiv C_\alpha$ from~\eqref{eq:critical_oracle}. Then, for a universal constants $K_1 > 0$ chosen sufficiently large, for all $\alpha \in (0,1)$, the test that rejects $H_0 : R(\Sigma_\mathbb{S}) = 0$ if and only if $R(\hat{\Sigma}_{\mathbb{S}}) \geq C_{\alpha}$ has Type I error bounded by $\alpha$. Moreover, for $\beta \in (0,1-\alpha)$, if $R(\Sigma_\mathbb{S}) > C_{\alpha} + C_{\beta}$, then~$\mathbb{P}\{R(\hat{\Sigma}_{\mathbb{S}}) \leq C_{\alpha}\} \leq \beta$.
\end{thm}

In proving this result we give concentration inequalities for the random quantities $R(\hat{\Sigma}_\mathbb{S})$. The analysis of $R(\hat{\Sigma}_\mathbb{S})$ is crucially based on the fact that, under $H_0$ and in light of the inequality~\eqref{Eq:Holder}, we can control the oscillation $|R(\hat{\Sigma}_\mathbb{S}) - R(\Sigma_\mathbb{S})|$ using $\max_{S \in \mathbb{S}} \|\hat{\Sigma}_{S} - \Sigma_{S}\|_2$, where the $\Sigma_{S}$ are the Pearson population correlation matrices and $\hat{\Sigma}_{S}$ are the corresponding Pearson sample correlation matrices. To this end, we derive a tail bound for the spectral norm $\|\hat{\Sigma} - \Sigma\|_2$, where $\Sigma$ is the population correlation matrix and $\hat{\Sigma}$ is the sample correlation matrix of complete data, which may be of independent interest. This can be found in Section \ref{sec:proof}.  


As well as providing a critical value for our test, Theorem~\ref{thm:oracle_test} also gives upper bounds on the minimax separation rate for this testing problem. When $c>0$, $\sigma^2_\mathrm{min}$ and $\nu>0$ are fixed, our analysis gives an upper bound on the minimax rate of the order 
\[
C_{\alpha}\lesssim \max_{S \in \mathbb{S}}\sqrt{\frac{|S| + \log(|\mathbb{S}|/\alpha)}{n_S}}.
\]
whenever $n_S \gtrsim |S|$ for all $S \in \mathbb{S}$. This is our main regime of interest, and we see in our examples in Section~\ref{sec:minimax_LB} below that reliable testing is only possible when sample sizes are large compared with dimensions, up to logarithmic factors.

We conclude this section by illustrating the behaviour of this bound in certain examples where the expression for $C_\alpha$ can be simplified. The corresponding upper bounds on the minimax separation rate will be complemented by lower bounds in Section~\ref{sec:minimax_LB} to follow. 

\begin{eg}\label{ex:cycle_separation} In the $d$-cycle example, with $d \geq 3$ and $\mathbb{S}= \{ \{1,2\} , \{2,3\} ,\ldots, \{d-1,d\}, \{d,1\}\}$, we have $|S|=2$ for all $S \in \mathbb{S}$ and $|\mathbb{S}| = d$ so that 
\[
C_{\alpha} \lesssim  \max\limits_{S \in \mathbb{S}} \sqrt{\frac{\log(d/\alpha)}{n_S}} =   \sqrt{\frac{\log(d/\alpha)}{\min\limits_{S \in \mathbb{S}} n_S}}.
\]
Combined with Theorem~\ref{thm:cycle_minimax} below, this reveals that, in this specific example, testing the compatibility of the correlation matrices is no harder than testing the consistency of the variances, up to constant factors.
\end{eg}

\begin{eg}\label{block_separation}
    Consider the block-$3$-cycle $\mathbb{S} = \{[2d], [d]\cup([3d]\setminus[2d]),[3d]\setminus[d]\}$, with $d \geq 1$. Then
         \[
C_{\alpha} \lesssim  \max\limits_{S \in \mathbb{S}} \sqrt{\frac{d + \log(1/\alpha)}{n_S}} =  \sqrt{\frac{d + \log(1/\alpha)}{\min\limits_{S \in \mathbb{S}}  n_S}}.
\]
We prove the minimax optimality, up to logarithmic factors, of this rate in Theorem \ref{block_minimax}. In particular, this shows that the optimal separation rates for this testing problem are not significantly faster than the optimal rates for the estimation of $\Sigma_\mathbb{S}$ with operator norm loss.
\end{eg}

\begin{eg}\label{ex:all_but_one_separation}  Consider $\mathbb{S} = \operatorname{Pow}([d])$,  where $\operatorname{Pow}(\cdot)$ stands for the power set.  This corresponds to the case where we observe all possible missingness patterns from a dataset of dimension $d$.  In this case,  we have $\max_{S \in \mathbb{S}} |S| = d$ and $\log |\mathbb{S}| = d \log 2$, so that 
\[
C_{\alpha} \lesssim  \max\limits_{S \in \mathbb{S}} \sqrt{\frac{d + \log(1/\alpha)}{n_S}} =  \sqrt{\frac{d  +\log(1/\alpha)}{\min\limits_{S \in \mathbb{S}} n_S}}.
\]
This shows that $C_{\alpha}$ is at worse of the order $\sqrt{d/n}$.
\end{eg}

\section{Optimality and examples}\label{sec:minimax_LB}
In this section, we assess the optimality of the oracle test given in Theorem \ref{thm:oracle_test}, in the settings of Examples~\ref{ex:cycle_separation} and \ref{block_separation}, i.e.~when $\mathbb{S}= \{ \{1,2\} , \{2,3\} ,\ldots, \{d-1,d\}, \{d,1\}\}$, the $d$-cycle, and when $\mathbb{S} = \{[2d], [d]\cup([3d]\setminus[2d]),[3d]\setminus[d]\}$, the block $3$-cycle. These collections $\mathbb{S}$ provide examples where our methodology is provably near rate-optimal. For a given dimension $d$, these two examples further demonstrate the range of optimal rates that can arise for different collections $\mathbb{S}$. Assuming for simplicity that $n_S=n$ for all $S \in \mathbb{S}$, we will see that the optimal rate in the $d$-cycle case is $\{\log(d)/n\}^{1/2}$, while for the block 3-cycle it is $(d/n)^{1/2}$ up to logarithmic factors. Together, these results show that the structure of $\mathbb{S}$ can have a significant effect on the difficulty of the problem.

We will characterise the optimality of a testing procedure using the minimax framework, where we aim at finding the smallest separation between the null and the alternative hypotheses such that there exists a test that can distinguish between $H_0$ and $H_1$ up to a given level of error. More precisely, given $\rho>0$, we are interested in testing
\[
         H_0: R(\Sigma_\mathbb{S}) = 0 \quad \text{ vs. } \quad H_1: R(\Sigma_\mathbb{S}) > \rho, 
         \]
and our goal is to find the smallest value of $\rho$ such that there exists a test with uniform error control. Write $\Psi \equiv \Psi_\mathbb{S}(n_\mathbb{S})$ for the set of all tests, that is measurable functions of the data $(X_{S,i} : S \in \mathbb{S}, i \in [n_S])$ taking values in $\{0,1\}$. Recall that  $\bar{\mathcal{P}}_\mathbb{S}(0)$ denotes the set of all collections of distributions on $(\mathbb{R}^S : S \in \mathbb{S})$ such that the associated correlation matrices satisfy $R(\Sigma_\mathbb{S}) = 0$, and write $\mathcal{P}_\mathbb{S}(\rho)$ for the set of all collections of distributions on $(\mathbb{R}^S : S \in \mathbb{S})$ such that the associated correlation matrices satisfy $R(\Sigma_\mathbb{S}) > \rho$. Given a  collection of distributions $P_\mathbb{S} = (P_S : S \in \mathbb{S})$ on $(\mathbb{R}^S : S \in \mathbb{S})$ and a collection of sample sizes $n_\mathbb{S}=(n_S : S \in \mathbb{S})$, we write $P_\mathbb{S}^{\otimes n_\mathbb{S}}$ for the distribution of the entire dataset $(X_{S,i} : S \in \mathbb{S}, i \in [n_S])$ when each observation is independent and $X_{S,i} \sim P_S$ for each $i \in [n_S]$ and $S \in \mathbb{S}$. For a fixed $\eta \in (0,1)$ we may then define the minimax separation to be
\[
\rho^* \equiv \rho_\mathbb{S}^*(n_\mathbb{S},\eta) := \inf\left\{\rho > 0: \inf_{\varphi \in \Psi} \biggl( \sup_{P_{\mathbb{S},0} \in \bar{\mathcal{P}}_{\mathbb{S}}(0)} P_{\mathbb{S},0}^{\otimes n_{\mathbb{S}}}(\varphi = 1) + \sup_{P_{\mathbb{S},1} \in \mathcal{P}_{\mathbb{S}}(\rho)} P_{\mathbb{S},1}^{\otimes n_{\mathbb{S}}} (\varphi = 0) \biggr) \leq \eta \right\}.
\]
In our analysis we take $\eta=3/4$, but this is an arbitrary choice and any constant value in $(0,1)$ would result in the same qualitative behaviour. In common with previous work on minimax testing, we prove lower bounds on $\rho^*$ by constructing suitable (prior) distributions $\mu_0, \mu_1$ whose support is contained in $\bar{\mathcal{P}}_{\mathbb{S}}(0), \mathcal{P}_{\mathbb{S}}(\rho)$, respectively. In our proofs it will be sufficient to consider mean-zero Gaussian distributions with suitable priors over their covariance matrices. Having chosen these priors we can bound the minimal error probability by writing
\begin{align*}
    \sup_{P_{\mathbb{S},0} \in \bar{\mathcal{P}}_{\mathbb{S}}(0)} P_{\mathbb{S},0}^{\otimes n_{\mathbb{S}}}(\varphi = 1)  + \sup_{P_{\mathbb{S},1} \in \mathcal{P}_{\mathbb{S}}(\rho)} P_{\mathbb{S},1}^{\otimes n_{\mathbb{S}}} (\varphi = 0) &\geq \mathbb{E}_{\mu_0} P_{\mathbb{S},0}^{\otimes n_{\mathbb{S}}}(\varphi = 1) + \mathbb{E}_{\mu_1} P_{\mathbb{S},1}^{\otimes n_{\mathbb{S}}}(\varphi = 0) \\
    & \geq 1 - \operatorname{TV}\bigl( \mathbb{E}_{\mu_0}P_{\mathbb{S},0}^{\otimes n_{\mathbb{S}}} , \mathbb{E}_{\mu_1}P_{\mathbb{S},1}^{\otimes n_{\mathbb{S}}} \bigr),
\end{align*}
where 
$\mathbb{E}_{\mu_i}P_{\mathbb{S},i}^{\otimes n_{\mathbb{S}}}$ denotes the mixture distribution of the dataset resulting from generating $P_{\mathbb{S},i} \sim \mu_i$ then, conditionally on  $P_{\mathbb{S},i}$, generating the data. Then, the idea behind this method of finding a lower bound on $\rho^*$ is to find priors $\mu_0, \mu_1$  with the largest separation possible such that no test can successfully distinguish between $\mathbb{E}_{\mu_0}P_{\mathbb{S},0}^{\otimes n_{\mathbb{S}}}$ and $\mathbb{E}_{\mu_1}P_{\mathbb{S},1}^{\otimes n_{\mathbb{S}}}$.

\subsection{Cycles}\label{subsec:cycles}
Recall that we refer to $\mathbb{S}$ as a $d$-cycle when $\mathbb{S} = \mathbb{S}_d:=\{\{1,2\},\ldots, \{d,1\}\}$. In this subsection, additions in subscripts of the form $(j, j+1)$ for $j \in [d]$ are intended modulo $d$, where $d$ in the size of the cycle, so that $(0,1)$ and $(d,d+1)$ are equivalent to $(d,1)$. We also write $\Sigma_{\mathbb{S}_d}:= (\Sigma_{\{1,2\}}, \cdots, \Sigma_{ \{d,1\}})$ for a collection of $2 \times 2$ correlation matrices with
\[
\Sigma_{j,j+1} = \begin{pmatrix} 1 & \cos \theta_{j} \\ \cos \theta_{j} & 1 \end{pmatrix},
\]
and $\theta_j \in [0, \pi]$ for all $j \in [d]$.

Now, recall that Theorem \ref{thm:oracle_test} implies an upper bound on the minimax separation of the form $\rho^* \lesssim \sqrt{\log d/n}$. While it is straightforward to show a lower bound of $\rho^* \gtrsim 1/\sqrt{n}$ using a standard Le Cam two-point argument, matching the logarithmic dependence on $d$ in the numerator is technically challenging. Nonetheless, we will argue that $\sqrt{\log d/n}$ is the optimal rate of convergence in the combined problem where we test both the compatibility of the correlation matrices and the consistency of the variances.

To this end, we now define an analogous index of inconsistency for the collection of variances. Writing $\sigma_\mathbb{S}^2=(\sigma_S^2 : S \in \mathbb{S})$ for the collection of individual variances, we fix our units of measurement such that 
\[
\bar{\operatorname{av}}_j(\sigma^2_{\mathbb{S}}) :=  |\mathbb{S}_j|^{-1} \sum_{S \in \mathbb{S}_j} \sigma_{S,j}^2 = 1, 
\]
for all $j \in [d]$, where $\sigma_{S,j}^2$ is the $j$-th element of $\sigma_{S}^2$. This is a natural constraint, analogous to the standardisation of variables in complete-data problems, that does not remove information that may be present in the individual variances. For such $\sigma_\mathbb{S}^2$, define 
\[
V(\sigma_{\mathbb{S}}^2) := 1 - \min_{j \in [d]}\min_{S \in \mathbb{S}_j} \sigma_{S,j}^2 = \max_{j \in [d]} \max_{S \in \mathbb{S}_j} \left(1 - \sigma_{S,j}^2\right). 
\]
Under the hypothesis $\bar{\operatorname{av}}_j(\sigma^2_{\mathbb{S}}) = 1$ for all $j \in [d]$, it is clear that $V(\sigma_{\mathbb{S}}^2) = 0$ if and only if $\sigma^2_{S,j} = 1$ for all $j \in [d]$, $S \in \mathbb{S}_j$. On the other hand, we have $V(\sigma_{\mathbb{S}}^2) > 0$, if and only if there exists at least one variance strictly less than $1$. It is clear from the definition that $V$ is bounded by one, and that this extreme value is attainable when $\mathbb{S}$ is non-trivial and there exists $j$ such $\sigma_{S,j}^2=0$ for some $S \in \mathbb{S}_j$. The following result gives a dual representation for $V(\sigma^2_\mathbb{S})$, providing justification for our specific measure of inconsistency.
\begin{prop}\label{prop:measure_consistency}
    If $\bar{\operatorname{av}}_j(\sigma^2_{\mathbb{S}}) = 1$ for all $j \in [d]$, then
    \[
    V(\sigma_{\mathbb{S}}^2) = \inf\left\{ \epsilon \in [0,1] : \sigma_{\mathbb{S}}^2 = (1-\epsilon)A_{V} \boldsymbol{1}_d + \epsilon {\sigma'}_{\mathbb{S}}^2 \text{ with } \bar{\operatorname{av}}_j(\sigma'^2_{\mathbb{S}}) = 1 \text{ for all } j \in [d] \right\},
    \]
    where $(A_V \sigma^2)_S = (\sigma^2_k)_{k \in S}$.
\end{prop}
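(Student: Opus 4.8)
The plan is to prove the two matching inequalities between $V(\sigma^2_{\mathbb{S}})$ and the right-hand side infimum, which I will denote $\tilde V(\sigma^2_{\mathbb{S}})$, by direct construction; no duality machinery is needed here, since once we work coordinatewise over the indices $(S,j)$ with $S\in\mathbb{S}_j$ everything reduces to elementary scalar bookkeeping. Throughout I will use that a sequence of individual variances is admissible precisely when its entries are non-negative and satisfy $\bar{\operatorname{av}}_j(\cdot)=1$ for all $j$, and that the normalisation $\bar{\operatorname{av}}_j(\sigma^2_{\mathbb{S}})=1$ forces $m:=\min_{j\in[d]}\min_{S\in\mathbb{S}_j}\sigma^2_{S,j}\in[0,1]$ (non-negativity of variances gives $m\ge 0$, and the fact that $1$ is an average of the numbers $\{\sigma^2_{S,j}:S\in\mathbb{S}_j\}$ gives $m\le 1$), so that $V(\sigma^2_{\mathbb{S}})=1-m\in[0,1]$ is itself a legitimate value of the optimisation variable $\epsilon$.

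For the bound $\tilde V(\sigma^2_{\mathbb{S}})\le V(\sigma^2_{\mathbb{S}})$, I would exhibit a feasible decomposition at $\epsilon:=V(\sigma^2_{\mathbb{S}})=1-m$. If $\epsilon=0$, then $m=1$ and, combined with the averaging constraint, every $\sigma^2_{S,j}$ equals $1$, so $\sigma^2_{\mathbb{S}}=A_V\boldsymbol{1}_d$ and any admissible $\sigma'^2_{\mathbb{S}}$ (e.g.\ $A_V\boldsymbol{1}_d$ itself) works. If $\epsilon>0$, set $\sigma'^2_{S,j}:=\bigl(\sigma^2_{S,j}-(1-\epsilon)\bigr)/\epsilon$ for each $j\in[d]$ and $S\in\mathbb{S}_j$. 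Then $\sigma^2_{\mathbb{S}}=(1-\epsilon)A_V\boldsymbol{1}_d+\epsilon\,\sigma'^2_{\mathbb{S}}$ by construction; non-negativity of $\sigma'^2_{S,j}$ follows from $\sigma^2_{S,j}\ge m=1-\epsilon$; and a one-line computation, using $|\mathbb{S}_j|^{-1}\sum_{S\in\mathbb{S}_j}\sigma^2_{S,j}=1$, gives $\bar{\operatorname{av}}_j(\sigma'^2_{\mathbb{S}})=\bigl(1-(1-\epsilon)\bigr)/\epsilon=1$ for every $j$. Hence $\epsilon=V(\sigma^2_{\mathbb{S}})$ is feasible for the infimum, which also shows the infimum is attained.

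For the reverse bound $V(\sigma^2_{\mathbb{S}})\le\tilde V(\sigma^2_{\mathbb{S}})$, I would take any feasible $\epsilon\in[0,1]$ with $\sigma^2_{\mathbb{S}}=(1-\epsilon)A_V\boldsymbol{1}_d+\epsilon\,\sigma'^2_{\mathbb{S}}$ and $\sigma'^2_{\mathbb{S}}$ admissible, and observe that for every $j\in[d]$ and $S\in\mathbb{S}_j$ we have $\sigma^2_{S,j}=(1-\epsilon)+\epsilon\,\sigma'^2_{S,j}\ge 1-\epsilon$, because $\epsilon\ge 0$ and $\sigma'^2_{S,j}\ge 0$. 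Minimising over $(S,j)$ yields $m\ge 1-\epsilon$, i.e.\ $V(\sigma^2_{\mathbb{S}})=1-m\le\epsilon$; taking the infimum over all feasible $\epsilon$ then gives $V(\sigma^2_{\mathbb{S}})\le\tilde V(\sigma^2_{\mathbb{S}})$. Combining the two inequalities completes the proof.

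The argument is short, and the only genuine care required — the "main obstacle", such as it is — is the handling of the degenerate endpoints: one must treat $\epsilon=0$ separately in the construction to avoid dividing by zero, and one must invoke the normalisation $\bar{\operatorname{av}}_j(\sigma^2_{\mathbb{S}})=1$ both to ensure $m\le 1$ (so that $\epsilon=V(\sigma^2_{\mathbb{S}})$ lies in $[0,1]$) and to check that the residual $\sigma'^2_{\mathbb{S}}$ produced in the decomposition is itself admissible. Everything else is routine coordinatewise manipulation.
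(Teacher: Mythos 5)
Your proof is correct and rests on the same observation as the paper's: a decomposition $\sigma^2_{\mathbb{S}} = (1-\epsilon)A_V\boldsymbol{1}_d + \epsilon\,\sigma'^2_{\mathbb{S}}$ with $\sigma'^2_{\mathbb{S}}$ non-negative and correctly normalised exists if and only if $\epsilon \geq 1-\min_{j\in[d]}\min_{S\in\mathbb{S}_j}\sigma^2_{S,j}$. The paper packages this as a one-line chain of equalities on $1-V(\sigma^2_{\mathbb{S}})$ after reparametrising $\epsilon\mapsto 1-\epsilon$, whereas you split it into two inequalities with the explicit residual $\sigma'^2_{S,j}=(\sigma^2_{S,j}-(1-\epsilon))/\epsilon$ and handle the $\epsilon=0$ endpoint separately; the content is the same.
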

This result gives a dual representation for $V(\sigma^2_\mathbb{S})$, which is analogous to Proposition \ref{Prop:CorrDuality} and leads to a similar interpretation, based on the idea of finding the smallest perturbation to make the collection consistent. \\
We now characterise the optimal rate of convergence in the combined problem where we test both the compatibility of the correlation matrices and the consistency of the variances. Together, the following results show that the oracle test based on the plug-in estimator $R(\hat{\Sigma}_\mathbb{S}) + V(\hat{\sigma}^2_\mathbb{S})$ is optimal up to constant factors, and further imply that testing the consistency of the variances captures the essential statistical difficulty of the problem in the case of a $d$-cycle.

\begin{thm}\label{thm:combined_oracle_test}
   Suppose that the assumptions of Theorem \ref{thm:bootstrap_guarantees} hold, and further assume that the sequence of variances $\sigma^2_\mathbb{S}$ satisfies  $\bar{\operatorname{av}}_j(\sigma^2_{\mathbb{S}}) = 1$ for all $j \in [d]$. 
    Define  $C_{\alpha}^{(R + V)} = \max\{2C_{\alpha/2} , C_{\alpha}^{(V)}\}$, with $C_\alpha \equiv C_{\alpha, c}$ as in \eqref{eq:critical_oracle} and
\[
C_{\alpha}^{(V)} = K_2 \nu^2 \sqrt{\frac{\log\left( \sum_{S \in \mathbb{S}} |S|/\alpha\right)}{\min_{S \in \mathbb{S}} n_S}},
\]
for a universal constant $K_2 > 0$. Then, for $K_1, K_2 > 0$ chosen sufficiently large, for all $\alpha \in (0,1)$, if  $C_{\alpha}^{(R+V)} \leq \min\{ 2, 16\nu^2\}$, the test that rejects $H_0 : R(\Sigma_\mathbb{S}) + V(\sigma^2_\mathbb{S})= 0$ if and only if $R(\hat{\Sigma}_{\mathbb{S}}) + V(\hat{\sigma}^2_\mathbb{S}) \geq C_{\alpha}^{(R+V)}$ has Type I error bounded by $\alpha$. Moreover, for all $\beta \in (0,1-\alpha)$, if $C_{\beta}^{(R+V)} \leq \min\{ 2, 16\nu^2\}$ and $R(\Sigma_\mathbb{S}) + V(\sigma^2_\mathbb{S}) > C_{\alpha}^{(R + V)} + C_{\beta}^{(R + V)}$, then $\mathbb{P}\{R(\hat{\Sigma}_{\mathbb{S}}) + V(\hat{\sigma}^2_\mathbb{S}) \leq C_{\alpha}^{(R + V)}\} \leq \beta.$
\end{thm}

\begin{thm}\label{thm:cycle_minimax}
    Let $\mathbb{S} = \mathbb{S}_d$ for $d \geq 3$, with sample sizes $n_{\mathbb{S}} = (n_1, \ldots, n_d)$, and consider testing
    \[
         H_0^\prime: R(\Sigma_\mathbb{S}) + V(\sigma^2_\mathbb{S}) = 0 \quad \text{ vs. } \quad H_1^\prime: R(\Sigma_\mathbb{S}) + V(\sigma^2_\mathbb{S}) > \rho, 
         \]
    for $\rho > 0$. Call $\rho^*_{R+V}$ the minimax separation of this testing problem. There exists a universal constant $c_1 > 0$ such that 
    \[
    \rho_{R+V}^* \geq c_1 \sqrt{\frac{\log d}{\min_{j \in [d]}n_j}}.
    \]
\end{thm}

Furthermore, we can give a relatively explicit expression for $R(\cdot)$ for a general $d$-cycle.
\begin{prop}\label{prop:KKT_cycle}
Let $\Sigma$ be the optimum solution to the dual problem~\eqref{Eq:Dual}, and let $\boldsymbol{\varphi}^* = (\varphi_1^*, \ldots, \varphi_d^*)$ be such that $\Sigma_{j,j+1} = \cos\varphi_j^*$ for each $j \in [d]$. Then:
\begin{itemize}
    \item[(i)] $R(\Sigma_{\mathbb{S}_d}) = |\rho_j - (1-R(\Sigma_{\mathbb{S}_d}))\cos\varphi_j^*|, \text{ for all } j \in [d]$;
\item[(ii)]$\boldsymbol{\varphi}^* = (\varphi_1^*, \ldots, \varphi_d^*)$ is unique, and $\boldsymbol{\varphi}^*(\theta_1, \ldots, \theta_d)$ is continuous for varying $(\theta_1, \ldots, \theta_d) \in [0, \pi]^d$;
    \item[(iii)] if $\theta_1 = \max_{j \in [d]} \theta_j$, with $\theta_2,\ldots,\theta_d \leq \pi/2$, then \[
    1-R(\Sigma_{\mathbb{S}_d}) = \frac{1-\epsilon_j \cos\theta_j}{1-\epsilon_j \cos\varphi_j^*}, \quad \text{ for all } j \in [d],
    \]
    where $\boldsymbol{\epsilon}_d = (\epsilon_1, \ldots, \epsilon_d) = (-1, +\boldsymbol{1}_{d-1})$. Also, $\varphi_1^* = \sum_{j = 2}^d \varphi_j^*$.
\end{itemize}
\end{prop}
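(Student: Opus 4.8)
The plan is to combine the strong duality of Proposition~\ref{Prop:CorrDuality} with the cycle-specific tools already developed. Write $R=R(\Sigma_{\mathbb{S}_d})$ and $\lambda^*=1-R$. In \eqref{Eq:Dual} the optimal $\Sigma$ has constant diagonal and trace $d\lambda^*$, so each diagonal entry equals $\lambda^*$; hence for $R<1$ we may write $\Sigma=\lambda^*\tilde P$ with $\tilde P\in\mathcal{P}$ and set $\tilde P_{j,j+1}=\cos\varphi_j^*$ (when $R=1$, $\Sigma=0$ and $\cos\varphi_j^*=0$). The constraint $A\Sigma\preceq_{\mathbb{S}}\Sigma_{\mathbb{S}_d}$ says precisely that for every $j$ the residual $2\times2$ block has both diagonal entries equal to $R$ and off-diagonal entry $\rho_j-\lambda^*\cos\varphi_j^*$, so $|\rho_j-\lambda^*\cos\varphi_j^*|\le R$, and the residual sequence equals $R\,\Sigma'_{\mathbb{S}_d}$ for some $\Sigma'_{\mathbb{S}_d}\in\mathcal{P}_{\mathbb{S}_d}$; by the maximality argument following Proposition~\ref{Prop:CorrDuality} (for the decomposition \eqref{eq:dual_decomposition}) we have $R(\Sigma'_{\mathbb{S}_d})=1$. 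To finish (i) I would prove the elementary fact that a $d$-cycle with $R(\cdot)=1$ has all correlations of modulus $1$: if some correlation had modulus $<1$, contracting the remaining modulus-$1$ edges via Proposition~\ref{prop:reduction_2} and normalising signs via Proposition~\ref{prop:reduciton3} reduces to a $3$-cycle with correlations $(\cos\theta,+1,+1)$, $|\cos\theta|<1$, for which Example~\ref{ex:explicit_theta_zero} gives $R=\sin^2(\theta/2)<1$, a contradiction. Hence $|\rho_j-\lambda^*\cos\varphi_j^*|=R$ for all $j$, which is (i); writing $\rho_j-\lambda^*\cos\varphi_j^*=R\epsilon_j$ with $\epsilon_j\in\{-1,+1\}$, the same reduction records $\prod_j\epsilon_j=-1$.

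For the uniqueness in (ii): if $\Sigma^{(1)},\Sigma^{(2)}$ are both optimal in \eqref{Eq:Dual} then so is $\tfrac12(\Sigma^{(1)}+\Sigma^{(2)})$, and applying (i) to all three forces $\rho_j-\Sigma^{(1)}_{j,j+1}$ and $\rho_j-\Sigma^{(2)}_{j,j+1}$ to be both $R$ or both $-R$, whence $\Sigma^{(1)}_{j,j+1}=\Sigma^{(2)}_{j,j+1}$ and, since $\cos$ is injective on $[0,\pi]$, $\boldsymbol{\varphi}^{(1)}=\boldsymbol{\varphi}^{(2)}$. Continuity of $\boldsymbol{\varphi}^*(\theta_1,\dots,\theta_d)$ is then the standard argmax argument: $[0,\pi]^d$ is compact, $R(\cdot)$ is continuous (Proposition~\ref{prop:properties}(ii)), and the feasible set of \eqref{Eq:Dual} depends continuously on $(\theta_1,\dots,\theta_d)$, so along any $\theta^{(n)}\to\theta$ every subsequential limit of $\boldsymbol{\varphi}^*(\theta^{(n)})$ is optimal at $\theta$ and hence equals $\boldsymbol{\varphi}^*(\theta)$.

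For (iii), reduce via Proposition~\ref{prop:reduciton3} to the stated configuration, where \eqref{eq:barrett_charcterisation} reads $2\theta_1\le\sum_j\theta_j$, so incompatibility means $\theta_1>\sum_{j\ge2}\theta_j$. Given (i), once the sign pattern is shown to be $\boldsymbol{\epsilon}_d=(-1,+1,\dots,+1)$ the displayed identity is pure algebra: $\rho_j-\lambda^*\cos\varphi_j^*=R\epsilon_j$ gives $\epsilon_j[(1-R)\cos\varphi_j^*-\cos\theta_j]=-R$, i.e.\ $1-R=(1-\epsilon_j\cos\theta_j)/(1-\epsilon_j\cos\varphi_j^*)$. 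To pin down $\boldsymbol{\epsilon}_d$ and the relation $\varphi_1^*=\sum_{j\ge2}\varphi_j^*$, the plan is: (a) for the signs $\epsilon_1=-1$, $\epsilon_j=+1$ $(j\ge2)$, define $\varphi_j(e)\in[0,\pi]$ by $\cos\varphi_1(e)=(\cos\theta_1+e)/(1-e)$ and $\cos\varphi_j(e)=(\cos\theta_j-e)/(1-e)$; (b) observe $g(e):=\varphi_1(e)-\sum_{j\ge2}\varphi_j(e)$ is continuous and (in the non-degenerate case) strictly decreasing with $g(0)=\theta_1-\sum_{j\ge2}\theta_j>0$ and $g\le0$ at the right end of its domain, so $g(e_0)=0$ for a unique $e_0$; (c) verify that all Barrett inequalities in \eqref{eq:barrett_charcterisation} hold for $\varphi(e_0)$ --- this is immediate, since $\sum_{j\ge2}\varphi_j(e_0)=\varphi_1(e_0)\le\pi$ bounds every relevant partial sum by $\pi\le\frac{|K|-1}{2}\pi$ --- so that $\Sigma:=(1-e_0)\tilde P$, with $\tilde P$ the cycle-correlation matrix of angles $\varphi(e_0)$, is feasible in \eqref{Eq:Dual} and hence $R\le e_0$; (d) certify $R=e_0$ by exhibiting the matching primal solution $Y_{\mathbb{S}}$ of \eqref{Eq:Primal} with blocks $Y_j=c_jv_jv_j^T$, $v_j=(1,-\epsilon_j)^T$, $c_j\ge0$, so that $Y_{\mathbb{S}}\succeq_{\mathbb{S}}0$ and $\langle Y_j,\,R\,w_jw_j^T\rangle=0$ for the rank-one residual block $w_j=(1,\epsilon_j)^T$, choosing the $c_j$ and a trace-zero diagonal $Y$ with $A^*Y_{\mathbb{S}}+Y\succeq I_d$, equality on the range of $\Sigma$, and $\langle\Sigma_{\mathbb{S}_d},Y_{\mathbb{S}}\rangle_{\mathbb{S}}=d(1-e_0)$. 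Strong duality then gives $R=e_0$, and uniqueness from (ii) gives $\boldsymbol{\varphi}^*=\varphi(e_0)$, in particular $\varphi_1^*=\sum_{j\ge2}\varphi_j^*$.

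The main obstacle is step (d): solving the linear system for the $c_j$ and the diagonal $Y$ that realise the KKT conditions of the cycle SDP, and checking $c_j\ge0$; this is what simultaneously excludes every competing sign pattern. The Barrett verification in (c) and the bookkeeping of the degenerate cases $R\in\{0,1\}$ and $\theta_j\in\{0,\pi\}$ --- where the quotients in (iii) must be read as the cleared identities $(1-R)(1-\epsilon_j\cos\varphi_j^*)=1-\epsilon_j\cos\theta_j$ --- are routine.
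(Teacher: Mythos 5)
Your strategy is genuinely different from the paper's. They reformulate $1/\lambda^*$ as a finitely-constrained minimax problem in the angles $\varphi$, extract (i) from the KKT stationarity and complementary-slackness conditions (showing exactly one Barrett multiplier $\mu_K$ is non-zero), and prove (iii) by induction on $d$ whose base case compares the four candidate sign patterns directly when $d=3$; continuity of the sign vector $\epsilon(\theta_d)$ then drives the induction. You instead read (i) off the dual decomposition~\eqref{eq:dual_decomposition} by arguing the residual is maximally incompatible, and plan to settle (iii) by exhibiting the optimal $\Sigma$ explicitly and certifying it via a primal complementary-slackness solution. If completed, your route would be at least as clean as the paper's and avoids the somewhat terse $d=3$ inequality-system argument there.

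There are, however, two genuine gaps. In (i), your claim that contracting the modulus-$1$ edges of the residual cycle "reduces to a $3$-cycle with correlations $(\cos\theta,+1,+1)$" only covers the case where exactly one correlation of $\Sigma'_{\mathbb{S}_d}$ has modulus $<1$. Proposition~\ref{prop:reduction_2} removes only the $|\rho|=1$ edges, so in general you are left with a $d'$-cycle ($d'\ge3$) having two or more modulus-$<1$ correlations, which is not of the stated form; the contrapositive "some $|\Sigma'_{j,j+1}|<1$ implies $R(\Sigma'_{\mathbb{S}_d})<1$" is true but needs a separate dual certificate (e.g.\ $\Sigma=cI$ when all moduli are $<1$, or $\Sigma=c\,vv^T$ with a sign vector $v$ when exactly one $\pm1$ edge remains) in each remaining case. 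Your (ii) rests on (i) and inherits the gap. The larger gap, which you name yourself, is step (d) of (iii): you establish dual feasibility of $\varphi(e_0)$ in step (c) and hence $R\le e_0$, but without the matching primal certificate (the $c_j\ge0$ and trace-zero $Y$ with $A^*Y_\mathbb{S}+Y\succeq I_d$, equality on the range of $\Sigma$, and the right objective value) you cannot conclude $R=e_0$, which is exactly what rules out competing sign patterns and delivers $\varphi_1^*=\sum_{j\ge2}\varphi_j^*$. As it stands the proposal is an outline of a viable alternative proof, not a proof.
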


Observe that part (ii) only says that the entries of $\Sigma$ corresponding to the cycle pattern are unique, not the whole $\Sigma$ itself. Indeed, given the unique optimal $\boldsymbol{\varphi}^*$, there may exist infinitely many positive semi-definite completions. In fact, if $A$ is a partial symmetric matrix admitting a positive semi-definite completion, then there exists a unique positive semi-definite completion with maximum determinant~\citep[][Theorem~2]{GRONE1984109}. For a general sequence of angles $(\theta_1, \ldots, \theta_d)$, it is sufficient to use the transformation given in  Proposition \ref{prop:reduciton3} in Appendix \ref{sec:extra_prop_R} to reduce to the case where at most one angle is larger than $\pi/2$, choose $\boldsymbol{\epsilon}_d$ as outlined above, and perform the inverse transformation to obtain the signs for the original $(\theta_1, \ldots, \theta_d)$. As an immediate corollary of this, it is easy to see that, under the same set of hypotheses, we have  \[
 1-R(\Sigma_{\mathbb{S}_d}) = \frac{1+ \cos\theta_1}{1+ \cos\varphi_1^*},
\]
where $\varphi_1^*$ is the solution of \[
\begin{cases}
    \varphi_1^* = \sum_{j = 2}^d \varphi_j^* \\
    \\
    \cos \varphi_j^* = 1 - \frac{1-\cos\theta_j}{1+\cos\theta_1}(1+\cos\varphi_1^*), \quad \text{ for all }  j \in \{2, \ldots, d\}.
\end{cases}
\]
For further properties of $R(\cdot)$ in the case of a $d$-cycle, we refer the reader to Appendix \ref{sec:extra_prop_R}.

\subsection{Block cycles}\label{subsec:block}

So far, we have studied with particular care the case of a $d$-cycle, which is a relatively simple high-dimensional setting, since it is a collection of $d$ two-dimensional distributions. We now describe an evolution of this setting, where we consider a block-matrix version of the $3$-cycle. In this case the number of variables per missingness pattern is large and we will see that the minimax separation rates are correspondingly much larger than in the $d$-cycle, though the number of variables is of the same order.
\begin{thm}\label{block_minimax}
        Let $\mathbb{S} = \{[2d], [d]\cup([3d]\setminus[2d]),[3d]\setminus[d]\}$ for some $d \geq 1$. Writing $n_{\mathbb{S}} = (n_1, n_2, n_3)$ for the sample sizes within each pattern, there exists a universal constant $c_1 > 0$ such that
        \[
    \rho^* \geq c_1 \sqrt{\frac{d}{(n_1 \wedge n_2) \log^4 (ed)}}
    \]
    whenever $n_1 \wedge n_2 \geq d/2$. 
\end{thm}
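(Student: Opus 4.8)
The plan is to prove this by Le Cam's method of fuzzy hypotheses, exactly in the form set up just before the subsection. Write $A=[d]$, $B=[2d]\setminus[d]$, $C=[3d]\setminus[2d]$, so that $S_1=A\cup B$, $S_2=A\cup C$, $S_3=B\cup C$; for this $\mathbb{S}$ the three patterns between them determine every block of the assembled $3d\times 3d$ correlation matrix, so a sequence $\Sigma_{\mathbb{S}}$ is automatically consistent and its compatibility is equivalent to positive semi-definiteness of that assembled matrix. I will construct priors $\mu_0$ supported on $\mathcal{P}_{\mathbb{S}}(0)$ and $\mu_1$ supported on $\mathcal{P}_{\mathbb{S}}(\rho)$ with $\rho\asymp\{d/((n_1\wedge n_2)\log^4(ed))\}^{1/2}$, and bound $\operatorname{TV}\bigl(\mathbb{E}_{\mu_0}P_{\mathbb{S},0}^{\otimes n_{\mathbb{S}}},\mathbb{E}_{\mu_1}P_{\mathbb{S},1}^{\otimes n_{\mathbb{S}}}\bigr)\le 1/4$; by the displayed inequality preceding the subsection the minimal sum of error probabilities is then at least $1-\operatorname{TV}\ge 3/4=\eta$, giving $\rho^*\ge\rho$. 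Following the remark there, all distributions will be mean-zero Gaussians with random covariances; the within-block correlations will be $I_d$, and the only randomness will be carried by a symmetric $d\times d$ matrix $\Gamma$ (with i.i.d.\ rescaled entries), truncated to the event $\{\|\Gamma\|_2\le \kappa\sqrt{d\log(ed)}\}$ so that the perturbed pattern matrices remain positive semi-definite and so that moments can be controlled later — this truncation is one source of the logarithmic factors.

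The construction perturbs a configuration sitting on the boundary of compatibility. Concretely, the cross-correlation blocks between $A,B$ and $A,C$ are taken of the form $\tfrac12 R$ and $-\tfrac12 R'$ for near-identity rotations $R,R'$ built from a perturbation $\gamma\Gamma$, and the $B,C$ block is $\tfrac12 I_d$; with $\Gamma=0$ this is a ``frustrated'' block $3$-cycle exactly at the compatibility threshold (compare the scalar $d=3$ example in the introduction, which is compatible iff $\rho\le 1/2$). The scale $\gamma$ is calibrated so that, under $\mu_1$, the assembled matrix acquires a negative eigenvalue on a $\Theta(d)$-dimensional subspace, while under $\mu_0$ the perturbation is chosen (using the extra degrees of freedom in $R,R'$ and a mean shift) so that it stays positive semi-definite; we arrange $V(\sigma_{\mathbb{S}}^2)=0$ automatically by keeping all variances equal to $1$. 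To verify that $R(\Sigma_{\mathbb{S}})$ has the right order on the support of $\mu_1$ I would use the dual representation of Proposition~\ref{Prop:CorrDuality},
\[
R(\Sigma_{\mathbb{S}})=1-\tfrac1{3d}\sup\bigl\{\operatorname{tr}\Sigma:\ A\Sigma\preceq_{\mathbb{S}}\Sigma_{\mathbb{S}},\ \Sigma_{11}=\dots=\Sigma_{3d,3d},\ \Sigma\succeq 0\bigr\},
\]
and show that any feasible $\Sigma$ has trace at most $3d$ minus a quantity of order $\gamma\|\Gamma\|_*$, which is $\gtrsim \gamma d^{3/2}$ with high probability; hence $R(\Sigma_{\mathbb{S}})\gtrsim \gamma\sqrt d\asymp\rho$ on the support of $\mu_1$. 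Compatibility (hence $R=0$) under $\mu_0$ is checked directly by exhibiting the assembled matrix as a positive semi-definite completion.

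The main work, and the main obstacle, is the total-variation bound. After discarding every coordinate whose law is a fixed Gaussian under both priors — this includes all of pattern $S_3$ — the problem reduces to comparing two mixtures over $\Gamma$ of products of Gaussians observed through patterns $S_1$ and $S_2$ with $n_1$ and $n_2$ samples; by the symmetry of the prior the marginal law within each pattern is the same under $\mu_0$ and $\mu_1$, so the divergence lives entirely in the dependence across patterns, and I would bound it by a second-moment ($\chi^2$) argument against the product-of-marginals reference, using $\operatorname{TV}\le\tfrac12\sqrt{\chi^2}$. Carrying out the Gaussian integrals reduces this to controlling an exponential moment of the form $\mathbb{E}\exp\{c\,(n_1\wedge n_2)\,\Psi(\Gamma,\Gamma')\}$, where $\Gamma,\Gamma'$ are independent copies and $\Psi$ is a mean-zero degree-two polynomial in their entries (a trace of a product of the two random matrices, times the perturbation scale); the crucial estimate is that, after truncation, $\Psi(\Gamma,\Gamma')$ concentrates with sub-exponential tails, so the exponential moment is $1+o(1)$ precisely at the calibrated perturbation scale, i.e.\ precisely when $\rho$ is of the stated order. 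The delicate points are (i) that one must not crudely replace the two dependent batches of data by independent noisy observations of $\Gamma$ — the informative quantity is exactly their joint fluctuation, and over-estimating it would give only the weaker rate $n^{-1/2}$ — and (ii) that the Hanson–Wright / Gaussian-chaos control of $\Psi(\Gamma,\Gamma')$ together with the operator-norm truncation is what costs the four powers of $\log(ed)$. The hypothesis $n_1\wedge n_2\ge d/2$ is used only to ensure $\rho\lesssim 1$, so that the perturbed correlation matrices are legitimate; assembling the pieces gives $\rho^*\ge c_1\{d/((n_1\wedge n_2)\log^4(ed))\}^{1/2}$.
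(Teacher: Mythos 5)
Your route diverges substantially from the paper's, and the step on which it rests is not established. The paper's construction takes $\Sigma_{\mathbb S}$ of the form in Proposition~\ref{prop:block_3_cycle} with cross blocks $\pm P$ for $P=U^{T}\Lambda U$, $U$ Haar-distributed and the eigenvalues $\Lambda$ drawn from the moment-matching pair $(\nu_0,\nu_1)$ of Juditsky--Nemirovski type; the per-pattern marginals are then \emph{not} equal under $\mu_0$ and $\mu_1$ --- they merely agree to $M$ moments --- and the proof hinges on Proposition~\ref{prop:block_3_cycle} for the separation and on Lemmas~\ref{lemma:haar} and~\ref{lemma:determinant} for the total-variation bound. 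Your construction instead asks for the per-pattern mixture laws $\mathbb E_{\mu_i}P_{S_1}^{\otimes n_1}$ and $\mathbb E_{\mu_i}P_{S_2}^{\otimes n_2}$ to be \emph{identical} across $i\in\{0,1\}$, with all the distinguishing information carried by cross-pattern dependence. That is not a cosmetic difference: it is the entire foundation of your argument, and you give no concrete mechanism for achieving it. ``Symmetry of the prior'' is not enough --- you would need the marginal law of $R$ (and, separately, of $R'$) to coincide under $\mu_0$ and $\mu_1$ while the joint law of $(R,R')$ makes the assembled matrix PSD under $\mu_0$ and $\Theta(\rho)$-incompatible under $\mu_1$. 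In particular, ``a mean shift'' cannot help: a mean shift leaves the covariance unchanged, so it contributes nothing to positive semi-definiteness or to $R(\cdot)$, which is purely a covariance functional. Without a concrete construction, the whole reduction to a ``detect dependence'' $\chi^2$ problem does not get off the ground.

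A second, independent gap is the claimed origin of the logarithmic factors. You attribute the $\log^4(ed)$ to Hanson--Wright control of a degree-two chaos together with an operator-norm truncation of $\Gamma$. In the paper the $\log^4$ enters for a completely different reason: the moment-matching degree is forced to be $M\gtrsim\log d$ to make $\lceil d/2\rceil\,2^{-(M-1)/2}$ small, and the separation scales as $b/(4M^2)\asymp M^{-2}\sqrt{d/n}$, producing exactly $M^{-2}=\log^{-2}d$ inside the bound on $\rho$ and hence $\log^{-4}$ inside the square root. Your sketch does not contain anything that produces $M^{-2}$ (or its analogue) quantitatively; a truncation plus Gaussian-chaos tail argument would typically cost at most $\mathrm{polylog}$ factors of a form you would need to pin down before asserting they match. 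Finally, your argument makes no provision for small $d$: the paper handles $d<42$ (where the conditioning-on-$\xi$/Hoeffding step fails) by a separate two-point argument on variance consistency, and your sketch would need a similar case. So the plan as written has a correct outer skeleton (priors over covariances, $\chi^2$/TV lower bound, use of the dual characterization of $R$), but the two load-bearing innovations --- exactly matching per-pattern marginals, and deriving $\log^4$ from chaos truncation --- are unsubstantiated and, in the first case, likely unachievable in the form stated.
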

This result shows that, up to logarithmic factors in $d$, the minimax separation rates for this testing problem are the same as the minimax estimation rates for estimating $\Sigma_\mathbb{S}$ in the operator norm distance. This is related to the fact that $R(\Sigma_\mathbb{S})$ is a non-smooth functional of $\Sigma_\mathbb{S}$. Indeed, the following result shows that we can construct examples of $\Sigma_\mathbb{S}$ such that $R(\Sigma_\mathbb{S})$ can be bounded below using the function $x \mapsto \max(0,x)$; see below for more discussion of the relevant literature.
\begin{prop}\label{prop:block_3_cycle}
    Consider the set of patterns $\mathbb{S} = \{[2d], [d]\cup([3d]\setminus[2d]),[3d]\setminus[d]\}$ for some $d \geq 1$, and suppose that \[
        \Sigma_{\mathbb{S}} = \Bigg\{\begin{pmatrix}
            I_d & P \\
            P^T & I_d
        \end{pmatrix},\begin{pmatrix}
            I_d & -P \\
            -P^T & I_d
        \end{pmatrix}, \begin{pmatrix}
            I_d & \beta I_d \\
            \beta I_d & I_d
        \end{pmatrix}\Bigg\},
        \]
        for some $P \in \mathbb{R}^{d \times d}$ such that $\|P\|_2 \leq 1$ and some $\beta \in [0,1]$. Then:
        \begin{itemize}
            \item[(i)] $R(\Sigma_{\mathbb{S}}) = 0$ if and only if $\|P\|_2^2 \leq \frac{1-\beta}{2}$,
            \item[(ii)] $R(\Sigma_{\mathbb{S}}) \geq \frac{3}{4d}\sum_{j = 1}^d (\sigma_j^2(P) -  \frac{1-\beta}{2})_+$, where $\sigma_j(P)$ is the $j$-th singular value of $P$.
        \end{itemize}
\end{prop}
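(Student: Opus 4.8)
Throughout write $A=[d]$, $B=[2d]\setminus[d]$, $C=[3d]\setminus[2d]$ for the three size-$d$ blocks, so that the patterns are $S_1=A\cup B$, $S_2=A\cup C$, $S_3=B\cup C$ (the ``block $3$-cycle''). Since every pair of distinct blocks lies in exactly one pattern and every coordinate lies in exactly two, we have $|\mathbb{S}_j|=2$ for all $j$, the sequence $\Sigma_\mathbb{S}$ is automatically consistent, and its only possible completion is forced to be $\Sigma=\bigl(\begin{smallmatrix} I_d & P & -P\\ P^{T} & I_d & \beta I_d\\ -P^{T} & \beta I_d & I_d\end{smallmatrix}\bigr)$. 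Because $R(\Sigma_\mathbb{S})=0$ if and only if $\Sigma_\mathbb{S}$ is compatible, part (i) reduces to deciding when $\Sigma\succeq 0$. I would compute the Schur complement of the leading block $I_d$, which equals $\bigl(\begin{smallmatrix} I_d-P^{T}P & \beta I_d+P^{T}P\\ \beta I_d+P^{T}P & I_d-P^{T}P\end{smallmatrix}\bigr)$, and invoke the elementary fact (seen by conjugating with $\tfrac1{\sqrt2}\bigl(\begin{smallmatrix} I & I\\ I & -I\end{smallmatrix}\bigr)$) that $\bigl(\begin{smallmatrix} X & Y\\ Y & X\end{smallmatrix}\bigr)$ with $X,Y$ symmetric is PSD iff $X+Y\succeq 0$ and $X-Y\succeq 0$. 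Here $X+Y=(1+\beta)I_d\succeq 0$ automatically, while $X-Y=(1-\beta)I_d-2P^{T}P\succeq 0$ is exactly $\|P\|_2^2\le(1-\beta)/2$, which gives (i).

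For the lower bound (ii) I would exhibit a single feasible point of the primal program~\eqref{Eq:Primal} defining $R$ and evaluate $-\tfrac1{3d}\langle\Sigma_\mathbb{S},X_\mathbb{S}\rangle_\mathbb{S}$ (note $X_S^0=\tfrac12 I_{2d}$ since $|\mathbb{S}_j|=2$). The certificate should be built from the directions witnessing $\Sigma\not\succeq 0$: writing the SVD $P=\sum_{j=1}^d\sigma_j u_jv_j^{T}$ and $J:=\{j:\sigma_j^2>(1-\beta)/2\}$, put $w_j=(-2\sigma_j u_j;\,v_j;\,-v_j)\in\mathbb{R}^{3d}$, fix a scalar $t\in(0,1]$, and aim for $A^{*}X_\mathbb{S}=G:=t\sum_{j\in J}w_jw_j^{T}$ (with $Y=0$). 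The blocks of $G$ are $G_{AA}=4t\sum_J\sigma_j^2 u_ju_j^{T}$, $G_{BB}=G_{CC}=t\sum_J v_jv_j^{T}$, $G_{AB}=-2t\sum_J\sigma_j u_jv_j^{T}$, $G_{AC}=-G_{AB}$, $G_{BC}=-G_{BB}$, and I realise $G$ by splitting the diagonal blocks evenly between the two patterns that contain them:
\[
X_{S_1}=\begin{pmatrix}\tfrac12 G_{AA} & G_{AB}\\ G_{AB}^{T} & \tfrac12 G_{BB}\end{pmatrix},\quad
X_{S_2}=\begin{pmatrix}\tfrac12 G_{AA} & -G_{AB}\\ -G_{AB}^{T} & \tfrac12 G_{BB}\end{pmatrix},\quad
X_{S_3}=\begin{pmatrix}\tfrac12 G_{BB} & -G_{BB}\\ -G_{BB} & \tfrac12 G_{BB}\end{pmatrix}.
\]

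The verification then has three steps. (a) $A^{*}X_\mathbb{S}=G$ is PSD, being a sum of rank-one PSD matrices. (b) Each $X_S+\tfrac12 I_{2d}\succeq 0$: in the orthonormal bases $\{u_j\}$ and $\{v_j\}$ these matrices become block diagonal with $2\times 2$ blocks indexed by $j$, whose positivity reduces to $t\le 1$ together with a single quadratic inequality in $t$ that, using $\sigma_j\le\|P\|_2\le 1$, holds at $t=\tfrac9{16}$. (c) A direct trace computation, using $u_j^{T}Pv_j=\sigma_j$, gives $\langle\Sigma_\mathbb{S},X_\mathbb{S}\rangle_\mathbb{S}=-4t\sum_J\sigma_j^2+2(1-\beta)\,t\,|J|$, whence $R(\Sigma_\mathbb{S})\ge\tfrac{4t}{3d}\sum_{j\in J}\bigl(\sigma_j^2-\tfrac{1-\beta}{2}\bigr)$; taking $t=\tfrac9{16}$ yields $R(\Sigma_\mathbb{S})\ge\tfrac{3}{4d}\sum_{j=1}^d\bigl(\sigma_j^2(P)-\tfrac{1-\beta}{2}\bigr)_{+}$, which is (ii) (when $J=\emptyset$ the bound is the trivial $R\ge 0$).

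The delicate step is (b): the constraint $X_\mathbb{S}+X_\mathbb{S}^0\succeq_\mathbb{S} 0$ caps the size of the certificate, so the even splitting of the diagonal blocks and the constant $t$ must be chosen so that (a), (b) and a large objective in (c) all hold simultaneously; the symmetric split keeps the $2\times 2$ reductions transparent and $t=\tfrac9{16}$ is essentially what makes the three constraints simultaneously satisfiable while producing the stated constant. One might instead hope for the sharper identity $R(\Sigma_\mathbb{S})=\max_j R_{3\text{-cyc}}(\sigma_j(P),\beta)$ by diagonalising along the singular directions of $P$, but the constant-diagonal constraint in the dual program~\eqref{Eq:Dual} obstructs a clean reduction, so the direct primal construction is the preferable route.
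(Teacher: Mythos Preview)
Your proposal is correct and is essentially the same proof as the paper's. For part~(i) you use the Schur complement followed by the $\bigl(\begin{smallmatrix}X&Y\\Y&X\end{smallmatrix}\bigr)$ diagonalisation, whereas the paper computes the quadratic form directly; both yield the same criterion. For part~(ii) your primal certificate is literally the paper's construction in different notation: writing $P=\sum_j\sigma_j u_jv_j^T$, your $X_\mathbb{S}$ coincides with the paper's $\sum_l X_\mathbb{S}^{(l)}$ under the substitution $c=4t$ (and your $t=9/16$ is exactly the paper's $c=9/4<5/6+\sqrt{73}/6$). The only difference is in verifying $X_\mathbb{S}+X_\mathbb{S}^0\succeq_\mathbb{S}0$: the paper uses Schur complements and explicit inversion of $I_d+\tfrac{c}{4}\sum_lv_lv_l^T$, while your $2\times2$ block-diagonalisation in the SVD bases is cleaner and more transparent.
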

This shows that, for $\Sigma_\mathbb{S}$ of the form above, we can relate our testing problem to the problem of testing whether the vector of squared singular values of $P$ belongs to the orthant $(-\infty,(1-\beta)/2]^d$, or is separated from it in the $\ell_1$ distance. In a Gaussian location model a similar problem, measuring separation with the $\ell_2$ distance, was considered by~\citet{carpentier18convex}, and part of our lower bound construction is inspired by this work. However, the consideration of singular values of matrices rather than Gaussian means means  that new technical tools are required. In this regard, the techniques of \cite{verzelen2021schatten}, who consider the estimation of quantities of the form $\sum_{j=1}^d \sigma_j(P)^q$ for $q >0$, are useful. We also mention that such problem are related to the estimation of $\ell_1$ distances, for which good references include \citet{cai2011testing} and \citet{weissman16}.


\section{Simulations}\label{sec:simul}
In this section, we empirically validate the performance of the bootstrap test described in Algorithm \ref{alg:Corrbootstrap_test}. To also detect departures from the null hypothesis caused by inconsistencies in either the means or variances, we introduce two separate bootstrap procedures addressing these aspects individually. Specifically, Algorithm \ref{alg:Meanbootstrap_test}, which provides the $p$-value $p_M$, is designed to detect inconsistencies in the collection of means $\mu_\mathbb{S}$, while Algorithm \ref{alg:Varbootstrap_test} returns the $p$-value $p_V$, focusing on inconsistencies in the variances $\sigma^2_\mathbb{S}$. To create a more comprehensive test, we propose an \textit{omnibus} procedure that combines these three $p$-values using a Bonferroni correction, which consists in rejecting the null whenever $\min\{p_R, p_V, p_M \} < \alpha/3$. 
Although alternative methods for combining $p$-values, such as Fisher’s method \citep{fisher48method}, could be considered, we chose this approach for its simplicity and its better control of the Type-I error. Furthermore, there are various alternative approaches for measuring inconsistency in means and variances, but we chose these because they align with the spirit of Algorithm \ref{alg:Corrbootstrap_test}. Additionally, this is a relatively classical problem since it reduces to testing the equality of means and variances. Consequently, alternative methods might achieve better practical performance. However, this is beyond the scope of our work, as the real novelty lies in addressing the more challenging problem of testing the compatibility of correlation matrices.  

Will will compare Algorithm \ref{alg:Corrbootstrap_test} (represented by the purple line in the plots) and the \textit{omnibus} approach (represented by the blue line in the plots) with Little's test \citep{little1988test}. Little's test can be applied when all pairs of variables are observed together, so that the EM algorithm~\citep{rubin77EM} can be applied to find estimators $\hat{\mu}$ and $\hat{\Lambda}$ of the mean and covariance matrix of the data under the null hypothesis of MCAR. Little's test is a generalised likelihood ratio test whose validity is based on the assumption that the data $(X_{S,i} : S \in \mathbb{S}, i \in [n_S])$ are Gaussian. Define \[
d_\mu^2= \sum_{S \in \mathbb{S}} n_S (\bar{X_S} - \hat{\mu}_{|S})\Lambda_{|S}^{-1}(\bar{X_S} - \hat{\mu}_{|S}),
\]
\[
d^2_{\mathrm{cov}} =  \sum_{S \in \mathbb{S}} n_S[\operatorname{tr}(\hat{\Omega}_S \hat{\Lambda}_{|S}^{-1})-|S|-\log|\hat{\Omega}_S| + \log|\hat{\Lambda}_{|S}|],
\]
and
\[
d^2_{\mathrm{aug}} = d_\mu^2+ d^2_{\mathrm{cov}}.
\]
Then, under MCAR, $d^2_{\mathrm{aug}}$ converges in law to a $\chi^2$-distribution with \[
f = \sum_{S \in \mathbb{S}} \frac{1}{2}|S|(|S|+3) - \frac{1}{2}d(d+3),
\]
degrees of freedom by Wilks' theorem. Based on these asymptotic results, Little's test rejects MCAR if and only if $d^2_{\mathrm{aug}} > \chi^2_f(1-\alpha)$, 
where $\chi^2_f(1-\alpha)$ is such that $\mathbb{P}\{W \geq \chi^2_f(1-\alpha)\} = \alpha$, and where $W$ is $\chi^2$-distributed with $f$ degrees of freedom. Using similar asymptotics, one can define a test based on $d^2_{\mathrm{cov}}$, which ignores the means and only considers the partial covariance matrices, and another one based only on $d^2_\mu$, which discards the collection of covariance matrices and makes use of the means only. For the test based on $d_\mu^2$ we will use the R-function \texttt{mcar\_test} from the R-package \texttt{naniar} \citep{naniar}, while the other two tests based on $d^2_{\mathrm{aug}}$ and $d^2_{\mathrm{cov}}$ can be found in the R-package \texttt{MCARtest} \citep{berrett2022MCARtest} under the name \texttt{little\_test}. In the following subsections, we will compare our procedures with Little's test based on $d^2_\mu$ (represented by the green line in the plots) and $d^2_{\mathrm{aug}}$ (represented by the black line in the plots). We will also include a \textit{combined} approach (represented by the orange line in the plots), which checks the compatibility of the covariance matrices using $\min\{p_R,p_V\}$ but checks the consistency of the means using $d_\mu^2$ instead of Algorithm \ref{alg:Meanbootstrap_test}. The $p$-values are again combined using a Bonferroni correction. Section \ref{sec:simul_teory} is focused on the case where $\mathbb{S}$ is a $d$-cycle, which is the best-studied theoretical setting we considered in the paper, while Section \ref{sec:simul_practice} is closer to real-world data applications.

\begin{algorithm}[!htbp]
\caption{Means only: MCAR bootstrap test checking consistency of means}\label{alg:Meanbootstrap_test}
\begin{algorithmic}[1]
\State Given data $X_\mathbb{S}$, discard all patterns $S \in \mathbb{S}$ such that $n_S \leq 10$.
\State Compute $\hat{\mu}_\mathbb{S} = \operatorname{SampleMean}X_\mathbb{S}$, i.e. $\hat{\mu}_{S,j} = n_S^{-1} \sum_{i \in n_S} X_{S, ij}$ for all $S \in \mathbb{S}$ and $j \in S$.
\State Compute $M(\hat{\mu}_{\mathbb{S}}) = \max_{S \in \mathbb{S}} \|\hat{\mu}_S - \hat{\mu}_{|S}\|_1 / \max_{S \in \mathbb{S}} |S|$, where $(\hat{\mu})_j = \hat{\mu}_j = |\mathbb{S}_j|^{-1}\sum_{S\in \mathbb{S}_j} \hat{\mu}_{S,j}$.
\State Rotate the original data $X_\mathbb{S}$, i.e. for all $S \in \mathbb{S}$, for all $i \in [n_S]$ \textbf{do} $\tilde{X}_{S,i} = X_{S,i}-\hat{\mu}_S + \hat{\mu}_{|S}$.
\For{$b \in [B]$} 
    \State For all $S \in \mathbb{S}$, let $\tilde{X}_{S}^{(b)} = (\tilde{X}_{S,i}^{(b)} : i \in [n_S])$ be a nonparametric bootstrap sample from $\tilde{X}_{S}$.
    \State Compute $\hat{\mu}_{\mathbb{S},b} = \operatorname{SampleMean}X_\mathbb{S}^{(b)}$ and  $M(\hat{\mu}_{\mathbb{S},b})$.
\EndFor
\State \Return  $p_M := (1+B)^{-1}(1 + \sum_{b=1}^B \mathbbm{1}\{M(\hat{\mu}_{\mathbb{S},b}) \geq M(\hat{\mu}_{\mathbb{S}})\})$.
\end{algorithmic}
\end{algorithm}

\begin{algorithm}[!htbp]
\caption{Variance only: MCAR bootstrap test checking consistency of variances}\label{alg:Varbootstrap_test}
\begin{algorithmic}[1]
\State Given data $X_\mathbb{S}$, discard all patterns $S \in \mathbb{S}$ such that $n_S \leq 10$.
\State Compute $\hat{\sigma}^2_\mathbb{S} = \operatorname{SampleVar}X_\mathbb{S}$, i.e.~$\hat{\sigma}^2_{S,j} = n_S^{-1}\sum_{i \in n_S}X^2_{S,ij} - \hat{\mu}_{S,j}^2$ for all $S \in \mathbb{S}$ and $j \in  S$.
\State Rescale the data such that $|\mathbb{S}_j|^{-1}\sum_{S\in \mathbb{S}_j} \hat{\sigma}^2_{S,j} = 1$ for all $j \in [d]$.
\State Compute $V(\hat{\sigma}^2_{\mathbb{S}}) = 1 - \min_{S \in \mathbb{S}} \min_{j \in S}\hat{\sigma}^2_{S,j}$.
\State Rotate the original data $X_\mathbb{S}$, i.e. for all $S \in \mathbb{S}$, for all $i \in [n_S]$ \textbf{do} $\tilde{X}_{S,i} = \operatorname{diag}(\hat{\sigma}^{-1}_{S}) X_{S,i}$.
\For{$b \in [B]$} 
    \State For all $S \in \mathbb{S}$, let $\tilde{X}_{S}^{(b)} = (\tilde{X}_{S,i}^{(b)} : i \in [n_S])$ be a nonparametric bootstrap sample from $\tilde{X}_{S}$. 
    \State Compute $\hat{\sigma}^2_{\mathbb{S},b} = \operatorname{SampleVar}\tilde{X}_{\mathbb{S},b}$ and rescale the data such that $|\mathbb{S}_j|^{-1}\sum_{S\in \mathbb{S}_j} \hat{\sigma}^2_{S,b,j} = 1$ for all $j \in [d]$.
    \State Compute $V(\hat{\sigma}^2_{\mathbb{S}, b})$.
\EndFor
\State \Return  $p_V := (1+B)^{-1}(1 + \sum_{b=1}^B \mathbbm{1}\{V(\hat{\sigma}^2_{\mathbb{S},b}) \geq V(\hat{\sigma}^2_{\mathbb{S}})\})$.
\end{algorithmic}
\end{algorithm}

\subsection{Correlation matrices and simulations for $d$-cycles}\label{sec:simul_teory}

We compare Algorithm \ref{alg:Corrbootstrap_test} with Little's procedures in the settings given in Theorem \ref{thm:cycle_minimax}, namely in the case of a $d$-cycle. For our first settings, we set $n_\mathbb{S} = (n_S)_{S \in \mathbb{S}} = (200, \ldots, 200)$, and simulate $X_{\{j,j+1\},i} \overset{\text{i.i.d.}}{\sim} N(\boldsymbol{0}_2, \Sigma_{\{j,j+1 \}})$ for $i \in [200]$ and $j \in [d]$, where \[
\Sigma_{\mathbb{S}_d} = \left\{\begin{pmatrix}
    1 & \cos\theta_1 \\
    \cos\theta_1 & 1
\end{pmatrix}, \ldots,  \begin{pmatrix}
    1 & \cos\theta_d \\
    \cos\theta_d & 1
\end{pmatrix}\right\}, 
\] 
for certain values of $\theta_1, \ldots, \theta_d \in [0, \pi]$. This makes sense only for $d=3$, while for $d \geq 4$ there exists at least one pair of variables that are never observed together, making the EM algorithm to estimate $\hat{\Lambda}$ inapplicable. As for the case $d = 3$, in Figure~\ref{fig:3cycle_gaussian} we set $B = 99$ and $\alpha = 0.05$, and we vary $\theta_1 \in [\theta_2 + \theta_3, (\theta_2 + \theta_3 + \pi)/2]$, with $(\theta_2, \theta_3)$ equal to $(\pi/3, \pi/6)$. We repeat the experiment $H =  500$ times, and report the average decision. 
\begin{figure}[!ht]
\minipage{0.49\textwidth}
  \includegraphics[width=\linewidth]{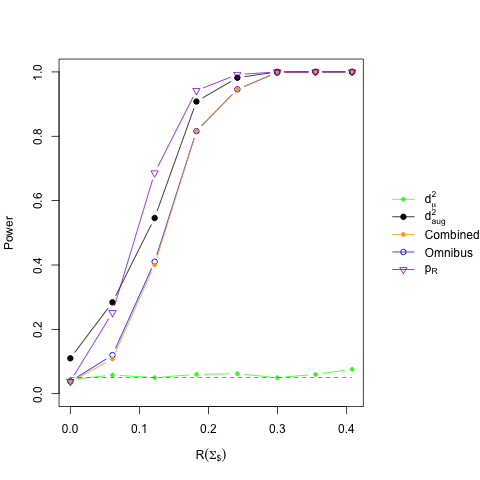}
  \caption{$3$-cycle with Gaussian data. We generate $X_{\{j,j+1\},i} \overset{\text{i.i.d.}}{\sim} N(\boldsymbol{0}_2, \Sigma_{\{j,j+1 \}})$ for all $i \in [200], j \in [3]$, where we vary $\theta_1 \in [\theta_2 + \theta_3, (\theta_2 + \theta_3 + \pi)/2]$, with $(\theta_2, \theta_3)=(\pi/3, \pi/6)$. We repeat the experiment $H =  500$ times, and report the average decision. The nominal level $\alpha = 0.05$ in red, $B = 99$.}\label{fig:3cycle_gaussian}
\endminipage\hfill
\minipage{0.49\textwidth}%
  \includegraphics[width=\linewidth]{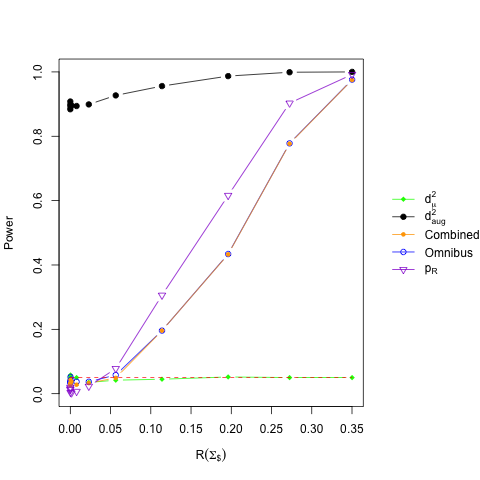}
  \caption{$3$-cycle with lognormal data. We generate $X_{\{j,j+1\},i} \overset{\text{i.i.d.}}{\sim} \log N(\boldsymbol{0}_2, \Sigma_{\{j,j+1 \}})$ for all $i \in [200], j \in [3]$, where we vary $\theta_1 \in [\pi/2 + \pi/12, \pi/12]$, while fixing $(\theta_2, \theta_3)=(3\pi/4, \pi/4)$. We use again $H =  500$, $B = 99$ and $\alpha = 0.05$. $R(\Sigma_\mathbb{S})$ is estimated using an independent sample.}\label{fig:3cycle_lognormal}
\endminipage
\end{figure}
    
The simulation results depicted in Figure~\ref{fig:3cycle_gaussian} show that for $d = 3$, both the \textit{omnibus} and \textit{combined} procedures perform similarly to Little's test based on $d^2_{\mathrm{aug}}$, though the Type I error of Little's test is slightly inflated. This outcome supports a conjecture in \cite{little1988test}, where it is suggested that even under normality, the asymptotic null distribution of $d^2_\mathrm{aug}$ is unlikely to be reliable unless the sample size is large.  Notably, Algorithm \ref{alg:Corrbootstrap_test} demonstrates the highest power, which is not surprising since it is specifically designed to detect incompatible correlation matrices. As expected, the test based on $d^2_\mu$ shows no power, as it is only sensitive to inconsistencies in the means, which are consistent in this particular scenario. For higher dimensions ($d \geq 4$), Little's test cannot be applied, while our test remains valid since it has no constraints on $\mathbb{S}$. In Figure \ref{fig:d_cycle_simul}, we show the power function of our bootstrap tests in the case of a $d$-cycle, with $d \in \{100, 200\}$, with $\theta_2 = \ldots = \theta_d = \frac{\pi}{2(d-1)}$, and varying $\theta_1$ in $[\pi/2, 5\pi/8]$. We repeat the procedure $H = 100$ times, and report the average decision as an estimate of the power function.
\begin{figure}[!htbp]
     \begin{center}
        \subfigure{%
            \label{fig:100_cycle}
            \includegraphics[width=0.5\textwidth, height=0.35\textheight]{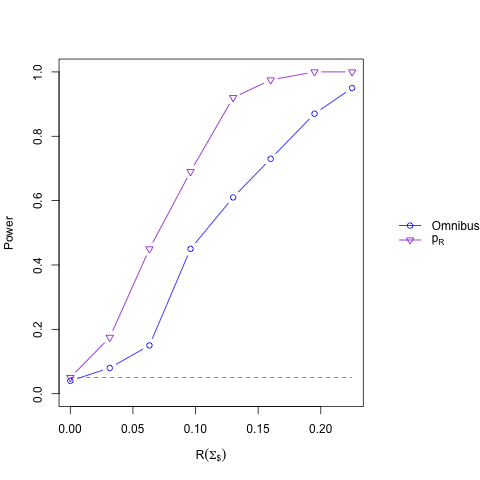}

            \label{fig:200_cycle}
           \includegraphics[width=0.5\textwidth, height=0.35\textheight]{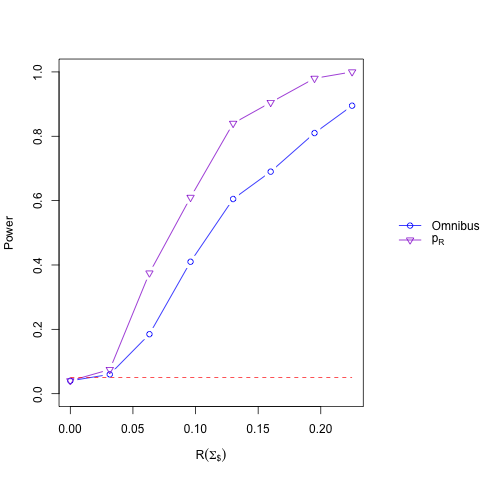}
        }
    \end{center}
    \caption{Simulation of the power functions of our method with $B=99$(blue) for $d = 100$ (left), and $d = 200$ (right), with Gaussian data. In each example, we fix $\theta_2 = \ldots = \theta_d = \frac{\pi}{2(d-1)}$, and vary $\theta_1$ in $[\pi/2, 5\pi/8]$. For each of this setting, we repeat the experiment $H =  100$ times, and report the average decision. The nominal level $\alpha = 0.05$ in red.}%
   \label{fig:d_cycle_simul}
\end{figure}

Our simulations so far have used Gaussian data, so that Little's test is valid. We now repeat our simulations with a heavy-tailed data distribution in order to assess the robustness of the methods. To this end, we consider again a $3$-cycle, and generate $X_{\{j,j+1\},i} \overset{\text{i.i.d.}}{\sim} \log N(\boldsymbol{0}_2, \Sigma_{\{j,j+1 \}})$ for all $i \in [200], j \in [3]$, where $\log N(\boldsymbol{0}_2, \Sigma_{\{j,j+1 \}})$ stands for the log-normal distribution, meaning that if $Y \sim \log N(\boldsymbol{0}_2, \Sigma_{\{j,j+1 \}})$ then $Y_i = e^{Z_i}$, with $Z \sim N(\boldsymbol{0}_2, \Sigma_{\{j,j+1 \}})$. Here we vary $\theta_1 \in [\pi/2 + \pi/12, \pi/12]$, with  $(\theta_2, \theta_3)=(3\pi/4, \pi/4)$. Figure~\ref{fig:3cycle_lognormal} shows the analogue of Figure \ref{fig:3cycle_gaussian}, in the case of artificial data from a multivariate log-normal distribution rather than a Gaussian distribution. Note however that the covariance matrices do not coincide with the original $\Sigma_{\{j,j+1 \}}$, hence on the $x$-axis we decided to estimate $R(\Sigma_\mathbb{S})$ using $R(\hat{\Sigma}_\mathbb{S})$ using an independent sample from the same log-normal distribution. As in our previous results, Little's test based on $d^2_\mu$ exhibits no power but maintains Type-I error control, even when we deviate from the Gaussian setting, which aligns with a conjecture made in \cite{little1988test}. In contrast, Little's test based on $d^2_\mathrm{aug}$ fails to control the Type-I error.
On the other hand, our three tests exhibit similar behavior to that in the Gaussian setting, with only a slight decrease in power.

\subsection{Omnibus approach}\label{sec:simul_practice}
In this subsection we compare the \textit{omnibus} and the \textit{combined} approaches with Little's tests in settings which are closer to real-data applications. In this regard, we generate complete artificial data according to various distributions, and then delete entries using the R package \texttt{missMethods} \citep{R_missMethods}. MCAR data are generated with the function \texttt{delete\_MCAR}, where each entry of the data matrix is deleted independently of the others with probability $p \in (0,1)$. Deviations from the null are generated by partitioning the columns in two groups, group A where the missing values are generated, and group B which determines the missingness mechanism, with two different mechanisms being considered. First, \texttt{delete\_MAR\_1\_to\_x} sets threshold values, splits the rows into two further groups depending on whether columns in group B have values greater or smaller than the threshold, and deletes some entries in columns in group A in a such a way that the probability for a value to be missing in group A divided by the
probability for a value to be missing in group B equals 1 divided by \texttt{x}, with \texttt{x} to be specified as an input parameter. Second, \texttt{delete\_MAR\_rank} deletes each entry in a column of group A with probability proportional to the rank of the same row in the corresponding column of group B. For further details on these functions, and other methods to generate MCAR, MAR, MNAR data, refer to \cite{generateMAR}. These three functions were also chosen in the numerical analysis of a test of MCAR based on U-statistics in \cite{aleksic2023novel}. Before discussing the simulation results, we note that in these settings, under the alternative, the collection of correlation matrices is only mildly incompatible (as verified by numerical inspection), while the inconsistency in means plays a critical role in detecting departures from MCAR. Consequently, the test based on $p_R$ demonstrates low power, unlike our \textit{omnibus} approach and Little's test based on $d^2_\mu$.

\begin{figure}[!htbp]
\minipage{0.32\textwidth}
  \includegraphics[width=\linewidth]{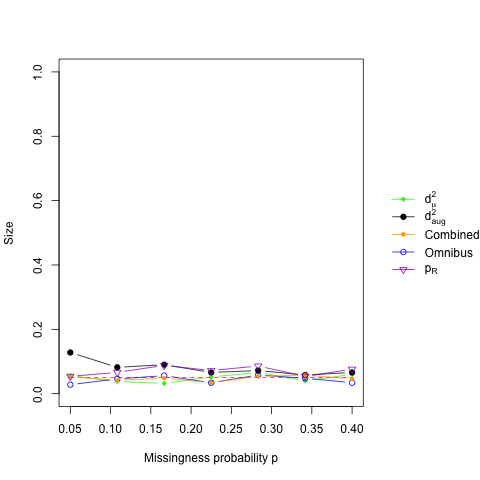}
  \caption{Size for MCAR data generate with \texttt{delete\_MCAR(p)}, for varying probability $p$ of having a missing value. Data from Clayton copula with parameter $1$ and $N(0,1)$ margins. $B = 99, H =  500$.}\label{fig:MCARnorm}
\endminipage\hfill
\minipage{0.32\textwidth}
  \includegraphics[width=\linewidth]{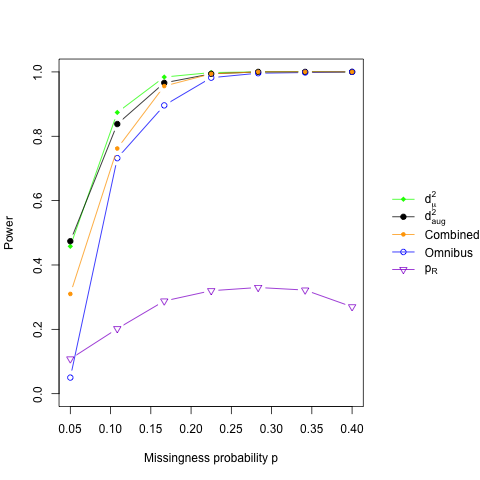}
  \caption{Power function for MAR data generate with \texttt{delete\_MAR\_1\_to\_x(p, x = 9)}, for varying probability $p$ of having a missing value. Data from the same Clayton copula. $B = 99, H =  500$.}\label{fig:MARnorm1}
\endminipage\hfill
\minipage{0.32\textwidth}%
  \includegraphics[width=\linewidth]{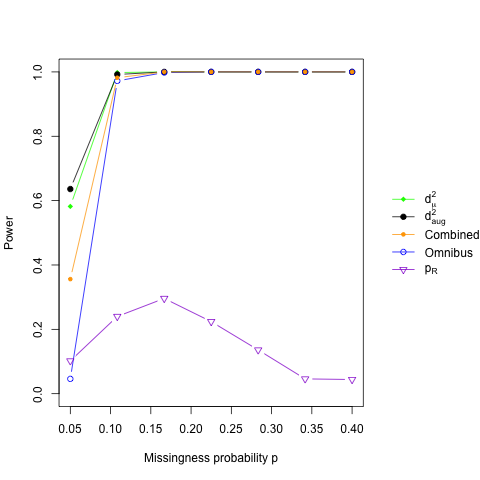}
  \caption{Power function for MAR data generate with \texttt{delete\_MAR\_rank(p)}, for varying probability $p$ of having a missing value. Data from the same Clayton copula. $B = 99, H =  500$.}\label{fig:MARnorm2}
\endminipage
\end{figure}

\begin{figure}[!htbp]
\minipage{0.32\textwidth}
  \includegraphics[width=\linewidth]{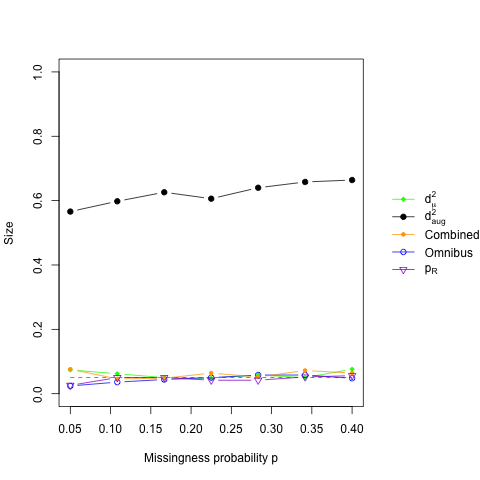}
  \caption{Size for MCAR data generate with \texttt{delete\_MCAR(p)}, for varying probability $p$ of having a missing value. Data from Clayton copula with parameter $1$ and $\operatorname{Exp}(1)$ margins. $B = 99, H =  500$.}\label{fig:MCARexp}
\endminipage\hfill
\minipage{0.32\textwidth}
  \includegraphics[width=\linewidth]{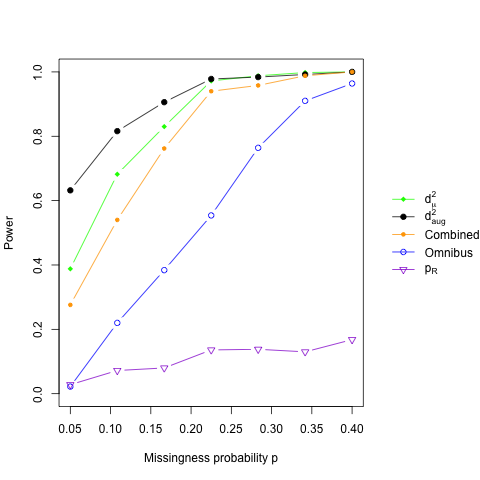}
  \caption{Power function for MAR data generate with \texttt{delete\_MAR\_1\_to\_x(p, x = 9)}, for varying probability $p$ of having a missing value. Data from the same Clayton copula. $B = 99, H =  500$.}\label{fig:MARexp1}
\endminipage\hfill
\minipage{0.32\textwidth}%
  \includegraphics[width=\linewidth]{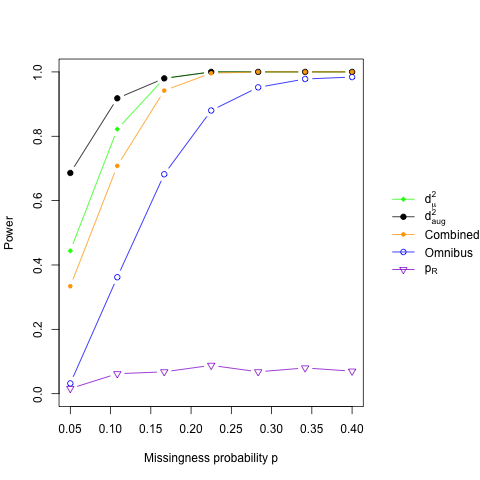}
  \caption{Power function for MAR data generate with \texttt{delete\_MAR\_rank(p)}, for varying probability $p$ of having a missing value. Data from the same Clayton copula. $B = 99, H =  500$.}\label{fig:MARexp2}
\endminipage
\end{figure}

\begin{figure}[!htbp]
\minipage{0.32\textwidth}
  \includegraphics[width=\linewidth]{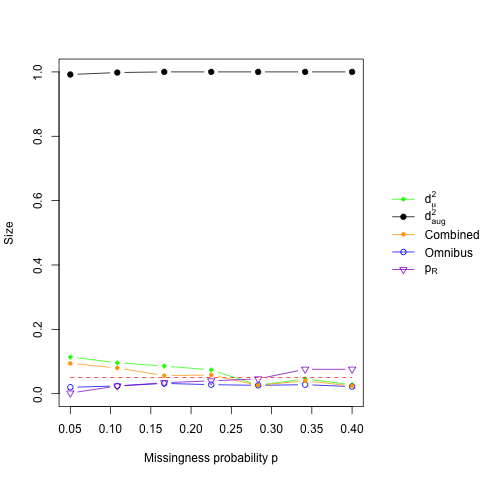}
  \caption{Size for MCAR data generate with \texttt{delete\_MCAR(p)}, for varying probability $p$ of having a missing value. Data from Clayton copula with parameter $1$ and $\log N(0,1)$ margins. $B = 99, H =  500$.}\label{fig:MCARlnorm}
\endminipage\hfill
\minipage{0.32\textwidth}
  \includegraphics[width=\linewidth]{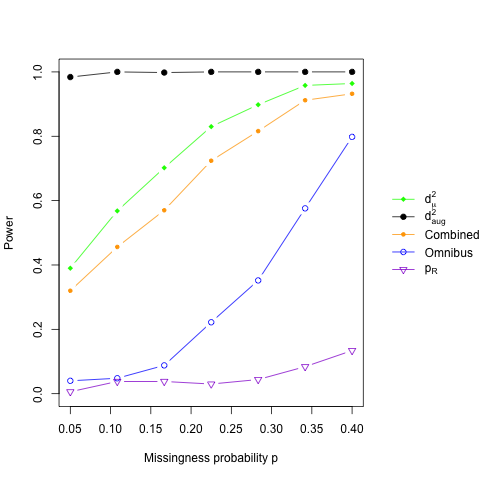}
  \caption{Power function for MAR data generate with \texttt{delete\_MAR\_1\_to\_x(p, x = 9)}, for varying probability $p$ of having a missing value. Data from the same Clayton copula. $B = 99, H =  500$.}\label{fig:MARlnorm1}
\endminipage\hfill
\minipage{0.32\textwidth}%
  \includegraphics[width=\linewidth]{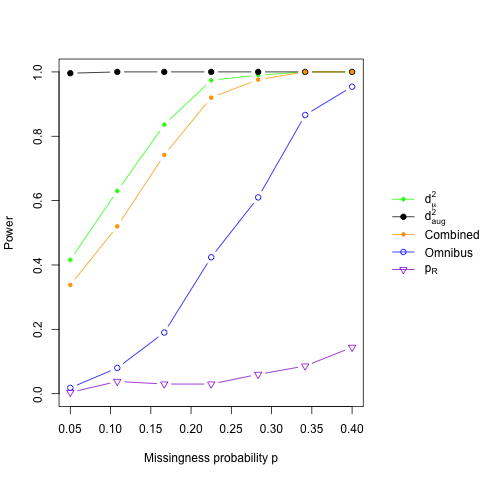}
  \caption{Power function for MAR data generate with \texttt{delete\_MAR\_rank(p)}, for varying probability $p$ of having a missing value. Data from the same Clayton copula. $B = 99, H =  500$.}\label{fig:MARlnorm2}
\endminipage
\end{figure}

For Figures \ref{fig:MCARnorm}, \ref{fig:MARnorm1}, \ref{fig:MARnorm2}, we generated $3$-dimensional datasets of sample size $n = 200$ distributed according to a Clayton copula, with parameter $1$ and Gaussian ($N(0,1)$) margins, using the function \texttt{mvdc} from the R-package \texttt{copula} \cite{R_copula}. For Figure \ref{fig:MCARnorm} we deleted the first two variables with \texttt{delete\_MCAR(p)} for different values of $p \in \{0.05, \ldots, 0.40\}$, in order to get an artificial setting coming from the null. For each $p$, we repeat the simulation $H =  500$ times, and report the average Type-I error. Alternatives to the null were generated using \texttt{delete\_MAR\_1\_to\_x}, with $x = 9$, for Figure \ref{fig:MARnorm1}, and \texttt{delete\_MAR\_rank} for Figure \ref{fig:MARnorm2}. Again, for each $p$, we repeat the simulations $500$ times, and report the average power. In this setting, Figures \ref{fig:MARnorm1} and \ref{fig:MARnorm2} demonstrate that all tests, apart from the one based on $p_R$, perform similarly in terms of power. 
The behaviour of the test based on $p_R$ can be attributed to the fact that, under the alternative, the collection of correlation matrices is only mildly incompatible, with numerical inspection suggesting that incompatibility decreases as $p$ increases. Meanwhile, Figure \ref{fig:MCARnorm} seems to indicate that all tests control the Type-I error at the nominal level $\alpha$, except possibly for Little's test based on $d^2_\mathrm{aug}$. This outcome supports once more the conjecture in \cite{little1988test} where it is suggested that even under normality, the asymptotic null distribution of $d^2_\mathrm{aug}$ is unlikely to be reliable unless the sample size is large. We then move beyond the Gaussian setting and explore different marginal distributions. Specifically, we consider exponential distributions ($\operatorname{Exp}(1)$) in Figures \ref{fig:MCARexp}, \ref{fig:MARexp1}, \ref{fig:MARexp2}, with $d= 3$, and log-normal distributions ($\log N(0,1)$) in Figures \ref{fig:MCARlnorm}, \ref{fig:MARlnorm1}, \ref{fig:MARlnorm2}, with $d = 5$. 
In the setting of log-normal data, Figures \ref{fig:MARexp1}, \ref{fig:MARexp2} show a slight loss in power for our \textit{omnibus} and \textit{combined} approaches. However, this is offset by their better control of the Type-I error (Figure \ref{fig:MCARexp}) compared to Little's test based on $d^2_\mu$, and especially $d^2_\mathrm{aug}$, which consistently fails to recognize the null and always rejects. The results for the exponential setting fall between those of the Gaussian and log-normal settings.

Overall, these simulations demonstrate the effectiveness of our procedures. The \textit{combined} approach could serve as a valid extension of Little's test based on $d^2_\mu$ when both means and covariance matrices are taken into account, rather than just means. Interestingly, incorporating $d^2_\mu$ with Algorithms \ref{alg:Corrbootstrap_test} and \ref{alg:Varbootstrap_test} does not inflate the Type-I error, which is a significant limitation of Little's test based on $d^2_\mathrm{aug}$. On the other hand, while the \textit{omnibus} approach exhibits slightly weaker performance in terms of power, it provides more rigorous control of the Type-I error. Additionally, it offers two further advantages compared to the \textit{combined} approach: (i) it can be applied to any missingness pattern $\mathbb{S}$, even when not all pairs of variables are observed simultaneously; and (ii) it remains effective even when the dimensionality $d$ is large, as it does not depend on the EM algorithm, which is known to encounter issues in practice when $d > 50$ (see the documentation of the R function \texttt{na.test} in \cite{misty24Rpackage}).

\section{Proofs}\label{sec:proof}
\subsection{Proof for Section \ref{sec:bootstrap_test}}
\begin{proof}[Proof of Theorem \ref{thm:bootstrap_guarantees}]
We start by proving (i). Let $P_\mathbb{S} \in \bar{\mathcal{P}}_\mathbb{S}^{-C_{\alpha, 2} }(0)$. Then 
\begin{align*}
    \mathbb{P}_{P_\mathbb{S}}&\left\{p_R \leq \alpha \right\} \\
    & = \mathbb{P}_{P_\mathbb{S}}\left\{1 + \sum_{i=1}^B \mathbbm{1}\{R_{\hat{c}/2}(\hat{\Sigma}_{\mathbb{S},b}) \geq R(\hat{\Sigma}_\mathbb{S})\} \leq \alpha(1+B) , R(\hat{\Sigma}_\mathbb{S}) \leq 3/4 \right\} + \mathbb{P}_{P_\mathbb{S}}\left\{R(\hat{\Sigma}_\mathbb{S}) > 3/4 \right\} \\
    & \leq   \mathbb{P}_{P_\mathbb{S}}\left\{R(\hat{\Sigma}_\mathbb{S}) > 0, R(\hat{\Sigma}_\mathbb{S}) \leq 3/4 \right\} +  \mathbb{P}_{P_\mathbb{S}}\left\{R(\hat{\Sigma}_\mathbb{S}) > 3/4 \right\} \leq 2  \mathbb{P}_{P_\mathbb{S}}\left\{R(\hat{\Sigma}_\mathbb{S}) > 0 \right\}\\
   & \leq 2\mathbb{P}_{P_\mathbb{S}}\left\{ \max_{S \in \mathbb{S}}\|\hat{\Sigma}_\mathbb{S} - \Sigma_\mathbb{S} \|_{2}  > C_{\alpha, 2}  \right\} \leq 2\sum_{S \in \mathbb{S}} \mathbb{P}_{P_\mathbb{S}}\left\{ \|\hat{\Sigma}_S - \Sigma_S \|_{2}  > C_{\alpha, 2}  \right\} \leq \alpha, 
\end{align*}
using \eqref{eq:FindCAlpha} and the computations thereafter. As for part (ii), suppose $R(\Sigma_\mathbb{S}) > 2 \rho$, with $\rho \in (0,1/2)$ to be chosen later, and observe that we can make a Type II error only under the event $\mathcal{B}_\mathbb{S} := \{R(\hat{\Sigma}_\mathbb{S}) \leq 3/4\}$. Then, for all $P_\mathbb{S} \in \mathcal{P}_\mathbb{S}$ and $B \geq 2(1-\alpha)/\alpha$, we can use Markov's inequality to show that 
\begin{align*}
\mathbb{P}_{P_\mathbb{S}}&\left\{p_R > \alpha \right\} \\
    & = \mathbb{P}_{P_\mathbb{S}}\left\{1 + \sum_{i=1}^B \mathbbm{1}\{R_{\hat{c}/2}(\hat{\Sigma}_{\mathbb{S},b}) \geq R(\hat{\Sigma}_\mathbb{S}) \} > \alpha(1+B) ,\mathcal{B}_\mathbb{S}\right\} \leq \frac{B\mathbb{P}_{P_\mathbb{S}}\left\{R_{\hat{c}/2}(\hat{\Sigma}_{\mathbb{S},1}) \geq R(\hat{\Sigma}_\mathbb{S}) , \mathcal{B}_\mathbb{S}\right\}}{\alpha(B+1)-1} \\
    & \leq \frac{2}{\alpha} \mathbb{P}_{P_\mathbb{S}}\left\{R_{\hat{c}/2}(\hat{\Sigma}_{\mathbb{S},1}) \geq R(\hat{\Sigma}_\mathbb{S}) , \mathcal{B}_\mathbb{S} \right\} \leq \frac{2}{\alpha}\left(\mathbb{P}_{P_\mathbb{S}}\left\{ R(\hat{\Sigma}_\mathbb{S}) < \rho , \mathcal{B}_\mathbb{S}\right\} + \mathbb{P}_{P_\mathbb{S}}\left\{R_{\hat{c}/2}(\hat{\Sigma}_{\mathbb{S},1}) \geq \rho , \mathcal{B}_\mathbb{S}\right\} \right) \\
    & \leq \frac{2}{\alpha} \left( \mathbb{P}_{P_\mathbb{S}}\left\{ |R(\hat{\Sigma}_\mathbb{S}) - R(\Sigma_\mathbb{S})| > \rho \right\} + \mathbb{P}_{P_\mathbb{S}}\left\{ |R_{\hat{c}/2}(\hat{\Sigma}_{\mathbb{S},1}) - R_{\hat{c}/2}(\hat{Q}_{\mathbb{S}})|  \geq \rho , \mathcal{B}_\mathbb{S} \right\} \right),
\end{align*}
where in the last inequality we used the fact that $R(\Sigma_\mathbb{S}) > 2 \rho$ and $R_{\hat{c}/2}(\hat{Q}_{\mathbb{S}}) = 0$. It is then enough to find $\rho \in (0,1/2)$ such that 
\begin{align}\label{eq:term_0}
    \mathbb{P}_{P_\mathbb{S}}\left\{ |R(\hat{\Sigma}_\mathbb{S}) - R(\Sigma_\mathbb{S})| > \rho \right\} + \mathbb{P}_{P_\mathbb{S}}\left\{ |R_{\hat{c}/2}(\hat{\Sigma}_{\mathbb{S},1}) - R_{\hat{c}/2}(\hat{Q}_{\mathbb{S}})|  \geq \rho, \mathcal{B}_\mathbb{S} \right\} \leq \frac{\alpha \beta}{2}.
\end{align}
To achieve this, we will draw on results from the proofs of Theorem \ref{thm:oracle_test} and Proposition \ref{prop:corr_concentration}, which are deferred to later sections. In particular, the analysis of the first term in \eqref{eq:term_0} follows directly from Theorem \ref{thm:oracle_test}. For the second term, we will draw on ideas from the proof of Proposition \ref{prop:corr_concentration} to demonstrate that it suffices to control the spectral norm of the covariance matrix of the bootstrap sample, which can be analysed using Proposition \ref{prop:rudelson} in Appendix \ref{sec:technical_ineq}. Throughout the following, to assist the reader in following the argument, we will explicitly reference the equations from the proofs of Theorem \ref{thm:oracle_test} and Proposition \ref{prop:corr_concentration} whenever they are used. Now, if we define the \textit{good set}
\begin{align*}
    \mathcal{A}_\mathbb{S} := & \left\{ \| \hat{\Omega}_\mathbb{S} \|_{2, \mathbb{S}} \leq 2\nu^2, \| \hat{D}_\mathbb{S}^{-1/2} \|_{2, \mathbb{S}} \leq 2/\sigma_\mathrm{min},  \hat{\Sigma}_\mathbb{S} \succeq_\mathbb{S} \frac{c}{2} I_\mathbb{S}, \right. \\
    & \left. \quad \quad 2 \max_{S \in \mathbb{S}}\|\hat{\Sigma}_S^{-1/2} \|_2 \|\hat{D}_S^{-1/2} \|_2  \max_{i \in [n_S]}    \| X_{S,i} - \mu_S \|_2 \leq \underbrace{16 \sqrt{\frac{2\nu^2 |S|}{c \sigma^2_\mathrm{min}}} + 8 \sqrt{\frac{2\nu^2\log (12|\mathbb{S}|n_S/\alpha \beta)}{c \sigma^2_\mathrm{min}}}}_{=:M} \right. \Bigg\},
\end{align*}
we have
\begin{align}\label{eq:term_1}
    \mathbb{P}_{P_\mathbb{S}}&\left\{ |R(\hat{\Sigma}_\mathbb{S}) - R(\Sigma_\mathbb{S})| > \rho \right\} + \mathbb{P}_{P_\mathbb{S}}\left\{ |R_{\hat{c}/2}(\hat{\Sigma}_{\mathbb{S},1}) - R_{\hat{c}/2}(\hat{Q}_{\mathbb{S}})| \geq \rho , \mathcal{B}_\mathbb{S} \right\} \nonumber \\
    & \leq  \mathbb{P}_{P_\mathbb{S}}\left\{ |R(\hat{\Sigma}_\mathbb{S}) - R(\Sigma_\mathbb{S})| > \rho \right\} + \mathbb{P}_{P_\mathbb{S}}\left\{ |R_{\hat{c}/2}(\hat{\Sigma}_{\mathbb{S},1}) - R_{\hat{c}/2}(\hat{Q}_{\mathbb{S}})| > \rho/2 , \mathcal{B}_\mathbb{S}\right\} \nonumber \\
    & \overset{\eqref{eq:R_to_spectral}}{\leq} \mathbb{P}_{P_\mathbb{S}}\left\{ \|\hat{\Sigma}_\mathbb{S} - \Sigma_\mathbb{S}\|_{2, \mathbb{S}} > c\rho/2, \mathcal{A}_\mathbb{S} \right\} + \mathbb{P}_{P_\mathbb{S}}\left\{ \| \hat{\Sigma}_{\mathbb{S},1} - \hat{Q}_{\mathbb{S}} \|_{2, \mathbb{S}} > c \rho/8, \mathcal{A}_\mathbb{S} , \mathcal{B}_\mathbb{S} \right\} + 2\mathbb{P}_{P_\mathbb{S}}\left\{\mathcal{A}_\mathbb{S}^\complement \right\} \nonumber \\
    & \overset{\eqref{eq:ProbSigmaHatSucceq}}{\leq} \mathbb{P}_{P_\mathbb{S}}\left\{ \|\hat{\Sigma}_\mathbb{S} - \Sigma_\mathbb{S}\|_{2, \mathbb{S}} > c\rho/2 \right\} + \mathbb{P}_{P_\mathbb{S}}\left\{ \| \hat{\Sigma}_{\mathbb{S},1} - \hat{Q}_{\mathbb{S}} \|_{2, \mathbb{S}} > c \rho/8, \mathcal{A}_\mathbb{S} , \mathcal{B}_\mathbb{S}\right\} \nonumber + 2\mathbb{P}_{P_\mathbb{S}}\left\{ \|\hat{\Omega}_\mathbb{S} - \Omega_\mathbb{S} \|_{2, \mathbb{S}} > \nu^2 \right\} \nonumber \\
    & \quad \quad + 2\mathbb{P}_{P_\mathbb{S}}\left\{ \|\hat{D}_\mathbb{S}^{-1/2}D_\mathbb{S}^{1/2} - I_\mathbb{S} \|_{2, \mathbb{S}} > 1 \right\} + 2\mathbb{P}_{P_\mathbb{S}}\left\{ \|\hat{\Sigma}_\mathbb{S} - \Sigma_\mathbb{S}\|_{2, \mathbb{S}} > c/2 \right\} \nonumber \\
    & \quad \quad + 2\mathbb{P}_{P_\mathbb{S}}\left\{2 \max_{S \in \mathbb{S}}\|\hat{\Sigma}_S^{-1/2} \|_2 \|\hat{D}_S^{-1/2} \|_2  \max_{i \in [n_S]}    \| X_{S,i} -\mu_S \|_2 > M,  \right\} \nonumber \\
    & \leq \mathbb{P}_{P_\mathbb{S}}\left\{ \|\hat{\Sigma}_\mathbb{S} - \Sigma_\mathbb{S}\|_{2, \mathbb{S}} > c\rho/2 \right\} + \mathbb{P}_{P_\mathbb{S}}\left\{ \| \hat{\Sigma}_{\mathbb{S},1} - \hat{Q}_{\mathbb{S}} \|_{2, \mathbb{S}} > c \rho/8, \mathcal{A}_\mathbb{S} , \mathcal{B}_\mathbb{S} \right\} + 2\mathbb{P}_{P_\mathbb{S}}\left\{ \|\hat{\Omega}_\mathbb{S} - \Omega_\mathbb{S} \|_{2, \mathbb{S}} > \nu^2 \right\} \nonumber \\
    & \quad \quad  + 4 \mathbb{P}_{P_\mathbb{S}}\left\{ \|\hat{D}_\mathbb{S}^{-1/2}D_\mathbb{S}^{1/2} - I_\mathbb{S} \|_{2, \mathbb{S}} > 1 \right\} + 4\mathbb{P}_{P_\mathbb{S}}\left\{ \|\hat{\Sigma}_\mathbb{S} - \Sigma_\mathbb{S}\|_{2, \mathbb{S}} > c/2 \right\} \nonumber \\
    & \quad \quad + 2\mathbb{P}_{P_\mathbb{S}}\left\{ \max_{S \in \mathbb{S}} \max_{i \in [n_S]}    \| X_{S,i} -\mu_S \|_2 >  4 \nu \sqrt{ |S|} + 2 \nu \sqrt{\log (12|\mathbb{S}|n_S/\alpha \beta)} \right\} \nonumber \\
    & \overset{\eqref{eq:conc_corr_to_cov_3},\eqref{eq:conc_corr_to_cov_4}}{\leq} 40\mathbb{P}_{P_\mathbb{S}}\left\{ \|\hat{\Omega}_\mathbb{S} - \Omega_\mathbb{S} \|_{2, \mathbb{S}} > \sigma^4_\mathrm{min}c\rho/48\nu^2 \right\} + \mathbb{P}_{P_\mathbb{S}}\left\{ \| \hat{\Sigma}_{\mathbb{S},1} - \hat{Q}_{\mathbb{S}} \|_{2, \mathbb{S}} > c \rho/8, \mathcal{A}_\mathbb{S} , \mathcal{B}_\mathbb{S} \right\} \nonumber \\
    & \quad \quad + 2\mathbb{P}_{P_\mathbb{S}}\left\{ \max_{S \in \mathbb{S}} \max_{i \in [n_S]}    \| X_{S,i} -\mu_S \|_2 > 4 \nu \sqrt{ |S|} + 2 \nu \sqrt{\log (12|\mathbb{S}|n_S/\alpha \beta)} \right\} \nonumber \\
    & \overset{\text{Prop.} \ref{prop:norm_SG}}{\leq}  40\mathbb{P}_{P_\mathbb{S}}\left\{ \|\hat{\Omega}_\mathbb{S} - \Omega_\mathbb{S} \|_{2, \mathbb{S}} > \sigma^4_\mathrm{min}c\rho/48\nu^2 \right\} + |\mathbb{S}| \max_{S \in \mathbb{S}} \mathbb{P}_{P_\mathbb{S}}\left\{ \| \hat{\Sigma}_{S,1} - \hat{Q}_{S} \|_{2} > c \rho/8, \mathcal{A}_\mathbb{S} , \mathcal{B}_\mathbb{S} \right\} + \alpha \beta/6.  
\end{align}
This shows that it is enough to choose $\rho$ such that 
\begin{align}\label{eq:last_two_bootstrap}
    \max\left(40\mathbb{P}_{P_\mathbb{S}}\left\{ \|\hat{\Omega}_\mathbb{S} - \Omega_\mathbb{S} \|_{2, \mathbb{S}} > \sigma^4_\mathrm{min}c\rho/48\nu^2 \right\}, |\mathbb{S}| \max_{S \in \mathbb{S}} \mathbb{P}_{P_\mathbb{S}}\left\{ \| \hat{\Sigma}_{S,1} - \hat{Q}_{S} \|_{2} > c \rho/8, \mathcal{A}_\mathbb{S} , \mathcal{B}_\mathbb{S} \right\} \right) \leq \alpha \beta/6.
\end{align}
As for the first term in the maximum, an analogous argument to that employed in the proof of Theorem \ref{thm:oracle_test} ensures that $40\mathbb{P}_{P_\mathbb{S}}\left\{ \|\hat{\Omega}_\mathbb{S} - \Omega_\mathbb{S} \|_{2, \mathbb{S}} > \sigma^4_\mathrm{min}c\rho/48\nu^2 \right\} \leq \alpha \beta / 6$ if $\rho \geq C_{\alpha \beta, c}$ 
 for a sufficiently large absolute constant $K_1 > 0$. As for the latter term in \eqref{eq:last_two_bootstrap}, we focus on finding a $\rho$ 
such that $\mathbb{P}_{P_\mathbb{S}}\left\{ \| \hat{\Sigma}_{S,1} - \hat{Q}_{S} \|_{2} > c \rho/8, \mathcal{A}_\mathbb{S} , \mathcal{B}_\mathbb{S} \right\} \leq \alpha \beta/6|\mathbb{S}|$. This would conclude the proof. Now, first observe that under $\mathcal{B}_\mathbb{S}$, \eqref{eq:dual_decomposition} implies that 
\[
\| \hat{Q}_S \|_2 \leq \frac{1}{1 - R(\hat{\Sigma}_\mathbb{S})} \| \hat{\Sigma}_S \|_2 \leq 4 \| \hat{\Sigma}_S \|_2  = 4 \| \hat{D}_S^{-1/2} \hat{\Omega}_S \hat{D}_S^{-1/2} \|_2 \leq 4 \| \hat{D}_S^{-1/2} \|_2^2 \| \hat{\Omega}_S \|_2,
\]
which is upper bounded by $32\nu^2/\sigma^2_\mathrm{min}$ under $\mathcal{A}_\mathbb{S}$. Hence, using similar steps as in \eqref{eq:conc_corr_to_cov} and \eqref{eq:conc_corr_to_cov_3}, for all $S \in \mathbb{S}$ and $x \in [0,1]$ we get
\begin{align}\label{eq:cov_bootstrap}
  & \mathbb{P}_{P_\mathbb{S}}\left\{ \| \hat{\Sigma}_{S,1} - \hat{Q}_{S} \|_{2} > x, \mathcal{A}_\mathbb{S} , \mathcal{B}_\mathbb{S} \right\} \nonumber \\
  & \overset{\sim \eqref{eq:conc_corr_to_cov}}{\leq} \mathbb{P}_{P_\mathbb{S}}\left\{ \| \hat{\Omega}_{S,1} - \hat{Q}_{S} \|_{2} > x/2, \mathcal{A}_\mathbb{S} , \mathcal{B}_\mathbb{S} \right\} + \mathbb{P}_{P_\mathbb{S}}\left\{ \| \hat{D}_{S,1}^{-1/2} - I_{S} \|_{2}(1+ \| \hat{D}_{S,1}^{-1/2} \|_{2} ) > \sigma^2_\mathrm{min} x/64\nu^2, \mathcal{A}_\mathbb{S} , \mathcal{B}_\mathbb{S} \right\} \nonumber \\
  & \leq \mathbb{P}_{P_\mathbb{S}}\left\{ \| \hat{\Omega}_{S,1} - \hat{Q}_{S} \|_{2} > x/2, \mathcal{A}_\mathbb{S} , \mathcal{B}_\mathbb{S} \right\} + \mathbb{P}_{P_\mathbb{S}}\left\{ \| \hat{D}_{S,1}^{-1/2} - I_{S} \|_{2} > \sigma^2_\mathrm{min} x/192\nu^2, \mathcal{A}_\mathbb{S} , \mathcal{B}_\mathbb{S} \right\} \nonumber \\
  & \hspace{3cm} + \mathbb{P}_{P_\mathbb{S}}\left\{ \| \hat{D}_{S,1}^{-1/2} \|_{2} > 2, \mathcal{A}_\mathbb{S} , \mathcal{B}_\mathbb{S} \right\} \nonumber \\
  & \leq \mathbb{P}_{P_\mathbb{S}}\left\{ \| \hat{\Omega}_{S,1} - \hat{Q}_{S} \|_{2} > x/2, \mathcal{A}_\mathbb{S} , \mathcal{B}_\mathbb{S} \right\} + 2\mathbb{P}_{P_\mathbb{S}}\left\{ \| \hat{D}_{S,1}^{-1/2} - I_{S} \|_{2} > \sigma^2_\mathrm{min} x/192\nu^2, \mathcal{A}_\mathbb{S} , \mathcal{B}_\mathbb{S} \right\} \nonumber \\
  & \overset{\sim \eqref{eq:conc_corr_to_cov_3}}{\leq} \mathbb{P}_{P_\mathbb{S}}\left\{ \| \hat{\Omega}_{S,1} - \hat{Q}_{S} \|_{2} > x/2, \mathcal{A}_\mathbb{S} , \mathcal{B}_\mathbb{S} \right\} + 4 \mathbb{P}_{P_\mathbb{S}}\left\{ \| \hat{\Omega}_{S,1} - \hat{Q}_{S} \|_{2} > \sigma^2_\mathrm{min} x/384\nu^2, \mathcal{A}_\mathbb{S} , \mathcal{B}_\mathbb{S} \right\} \nonumber \\
  & \leq  5 \mathbb{P}_{P_\mathbb{S}}\left\{ \| \hat{\Omega}_{S,1} - \hat{Q}_{S} \|_{2} > \sigma^2_\mathrm{min} x/384\nu^2, \mathcal{A}_\mathbb{S} , \mathcal{B}_\mathbb{S} \right\},
\end{align}
which shows that it is enough to give a concentration bound for the sample covariance matrix of the bootstrap sample. In this regard, 
recall that Algorithm \ref{alg:Corrbootstrap_test} generates a bootstrap sample from the rotated data $\tilde{X}_\mathbb{S}:=(\tilde{X}_S : S \in \mathbb{S})$, which for all $S \in \mathbb{S}$ and $i \in [n_S]$ is of the form $\tilde{X}_{S,i} = \hat{Q}_S^{1/2} \hat{\Sigma}_S^{-1/2} \hat{D}_S^{-1/2} (X_{S,i} - \hat{\mu}_S)$, where $\hat{Q}_S$ comes from the dual decomposition $\hat{\Sigma}_\mathbb{S} = (1-R(\hat{\Sigma}_\mathbb{S}))\hat{Q}_\mathbb{S} + R(\hat{\Sigma}_\mathbb{S})\hat{\Sigma}'_\mathbb{S}$. As a result, for all $S \in \mathbb{S}$, the sample correlation of $\tilde{X}_S$ coincides with the sample covariance, and is equal to $\hat{Q}_S$. Also, observe that, since $\tilde{X}_{S,i} = \hat{Q}_S^{1/2} \hat{\Sigma}_S^{-1/2} \hat{D}_S^{-1/2} (X_{S,i} - \hat{\mu}_S)$, we have  $\tilde{X}_{S, i}^{(1)} = \hat{Q}_S^{1/2} \hat{\Sigma}_S^{-1/2} \hat{D}_S^{-1/2} (X_{S,i}^{(1)} - \hat{\mu}_S)$, where $X_{S,i}^{(1)}$ is a bootstrap sample from the original set of data $X_S = (X_{S,1}, \ldots, X_{S, n_S})$. This implies that 
\begin{align*}
     \hat{\Omega}_{S,1} & - \hat{Q}_S = n_S^{-1}\sum_{i = 1}^{n_S} \tilde{X}_{S,i}^{(1)} \tilde{X}_{S,i}^{(1)T} - \left(n_S^{-1}\sum_{i = 1}^{n_S} \tilde{X}_{S,i}^{(1)}\right)\left(n_S^{-1}\sum_{i = 1}^{n_S} \tilde{X}_{S,i}^{(1)}\right)^T - \hat{Q}_S  \\
    & = n_S^{-1}\sum_{i = 1}^{n_S} \hat{Q}_S^{1/2} \hat{\Sigma}_S^{-1/2} \hat{D}_S^{-1/2} (X_{S,i}^{(1)} - \hat{\mu}_S) (X_{S,i}^{(1)} - \hat{\mu}_S)^T \hat{D}_S^{-1/2} \hat{\Sigma}_S^{-1/2} \hat{Q}_S^{1/2}  \\
    & \quad \quad \quad \quad - \left(n_S^{-1}\sum_{i = 1}^{n_S} \hat{Q}_S^{1/2} \hat{\Sigma}_S^{-1/2} \hat{D}_S^{-1/2} (X_{S,i}^{(1)} - \hat{\mu}_S) \right)\left(n_S^{-1}\sum_{i = 1}^{n_S} \hat{Q}_S^{1/2} \hat{\Sigma}_S^{-1/2} \hat{D}_S^{-1/2} (X_{S,i}^{(1)} - \hat{\mu}_S) \right)^T - \hat{Q}_S  \\
    & =  \hat{Q}_S^{1/2}\left\{n_S^{-1}\sum_{i = 1}^{n_S}  [\hat{\Sigma}_S^{-1/2} \hat{D}_S^{-1/2} (X_{S,i}^{(1)} - \hat{\mu}_S)] [\hat{\Sigma}_S^{-1/2} \hat{D}_S^{-1/2} (X_{S,i}^{(1)} - \hat{\mu}_S)]^T  - I_{|S|} \right. \\
    & \left. \quad \quad \quad \quad - \left(n_S^{-1}\sum_{i = 1}^{n_S} \hat{\Sigma}_S^{-1/2} \hat{D}_S^{-1/2} (X_{S,i}^{(1)} - \hat{\mu}_S) \right)\left(n_S^{-1}\sum_{i = 1}^{n_S} \hat{\Sigma}_S^{-1/2} \hat{D}_S^{-1/2} (X_{S,i}^{(1)} - \hat{\mu}_S)  \right)^T \right\} \hat{Q}_S^{1/2}.
\end{align*}
We thus get for all $x \in [0,1]$
\begin{align}\label{eq:cov_bootstrap_2}
   \mathbb{P}&\left\{\|\hat{\Omega}_{S,1} - \hat{Q}_S \|_2 > x, \mathcal{A}_\mathbb{S} , \mathcal{B}_\mathbb{S} \right\} \nonumber \\
   & \leq \mathbb{P}\left\{ \|\hat{Q}_S \|_2 \| n_S^{-1}\sum_{i = 1}^{n_S}  [\hat{\Sigma}_S^{-1/2} \hat{D}_S^{-1/2} (X_{S,i}^{(1)} - \hat{\mu}_S)] [\hat{\Sigma}_S^{-1/2} \hat{D}_S^{-1/2} (X_{S,i}^{(1)} - \hat{\mu}_S)]^T  - I_{|S|} \|_2 > x/2, \mathcal{A}_\mathbb{S} , \mathcal{B}_\mathbb{S} \right\} \nonumber \\
   & + \mathbb{P}\left\{ \|\hat{Q}_S \|_2 \|\hat{\Sigma}_S^{-1/2} \|_2^2 \|\hat{D}_S^{-1/2} \|_2^2  \|n_S^{-1}\sum_{i = 1}^{n_S} (X_{S,i}^{(1)} - \hat{\mu}_S) \|_2^2 > x/2, \mathcal{A}_\mathbb{S} , \mathcal{B}_\mathbb{S} \right\} \nonumber \\
   & \leq \mathbb{P}\left\{ \| n_S^{-1}\sum_{i = 1}^{n_S}  [\hat{\Sigma}_S^{-1/2} \hat{D}_S^{-1/2} (X_{S,i}^{(1)} - \hat{\mu}_S)] [\hat{\Sigma}_S^{-1/2} \hat{D}_S^{-1/2} (X_{S,i}^{(1)} - \hat{\mu}_S)]^T  - I_{|S|} \|_2 > \sigma^2_\mathrm{min} x/64\nu^2, \mathcal{A}_\mathbb{S} \right\} \nonumber \\
   & + \mathbb{P}\left\{ \|n_S^{-1}\sum_{i = 1}^{n_S} (X_{S,i}^{(1)} - \hat{\mu}_S) \|_2^2 > c \sigma^4_\mathrm{min} x/512\nu^2, \mathcal{A}_\mathbb{S} \right\}.
\end{align}
As for the first term in \eqref{eq:cov_bootstrap_2}, observe that, under $\mathcal{A}_\mathbb{S}$ and conditionally on the data $X_\mathbb{S}$, we have 
\begin{align*}
  \|&\hat{\Sigma}_S^{-1/2} \hat{D}_S^{-1/2} (X_{S,i}^{(1)} -  \hat{\mu}_{S}) \|_2 \leq \max_{i \in [n_S]}    \|\hat{\Sigma}_S^{-1/2} \hat{D}_S^{-1/2} (X_{S,i} -  \hat{\mu}_{S}) \|_2 \leq \|\hat{\Sigma}_S^{-1/2} \|_2 \| \hat{D}_S^{-1/2} \|_2  \max_{i \in [n_S]}    \| X_{S,i} -  \hat{\mu}_{S} \|_2 \\
  & \leq \|\hat{\Sigma}_S^{-1/2} \|_2 \| \hat{D}_S^{-1/2} \|_2 \left\{\|\hat{\mu}_{S} - \mu_S \|_2 + \max_{i \in [n_S]}    \| X_{S,i} -  \mu_S \|_2  \right\} \\
  & = \|\hat{\Sigma}_S^{-1/2} \|_2 \| \hat{D}_S^{-1/2} \|_2 \left\{\|n_S^{-1}\sum_{i = 1}^{n_S} (X_{S,i} - \mu_S) \|_2 + \max_{i \in [n_S]}    \| X_{S,i} -  \mu_S \|_2  \right\} \\
  & \leq 2 \|\hat{\Sigma}_S^{-1/2} \|_2 \| \hat{D}_S^{-1/2} \|_2  \max_{i \in [n_S]}    \| X_{S,i} -  \mu_S \|_2 \leq M \quad \text{ a.s.}.
\end{align*}
This, together with Proposition \ref{prop:rudelson} in Appendix \ref{sec:technical_ineq} implies that there exists a universal constant $K_2 > 0$ such that 
\begin{align}\label{cov_bootstrap_3}
    \mathbb{P}&\left\{ \| n_S^{-1}\sum_{i = 1}^{n_S}  [\hat{\Sigma}_S^{-1/2} \hat{D}_S^{-1/2} (X_{S,i}^{(1)} - \hat{\mu}_S)] [\hat{\Sigma}_S^{-1/2} \hat{D}_S^{-1/2} (X_{S,i}^{(1)} - \hat{\mu}_S)]^T  - I_{|S|} \|_2 > \sigma^2_\mathrm{min} x/64\nu^2, \mathcal{A}_\mathbb{S} \right\} \nonumber \\
    & \leq \mathbb{E}\left[\mathbb{P}\left\{ \| n_S^{-1}\sum_{i = 1}^{n_S}  [\hat{\Sigma}_S^{-1/2} \hat{D}_S^{-1/2} (X_{S,i}^{(1)} - \hat{\mu}_S)] [\hat{\Sigma}_S^{-1/2} \hat{D}_S^{-1/2} (X_{S,i}^{(1)} - \hat{\mu}_S)]^T  - I_{|S|} \|_2 > \sigma^2_\mathrm{min} x/64\nu^2, \mathcal{A}_\mathbb{S}  \mid X_\mathbb{S}\right\} \right] \nonumber \\
    & \leq 2 \exp \left\{-\frac{K_2 \sigma^4_\mathrm{min} n_S x^2}{\nu^4 M^2 \log n_S}\right\}.
\end{align}
As for the second term in \eqref{eq:cov_bootstrap_2}, calling $W = (W_1, \ldots, W_{n_S}) \sim \operatorname{Multinomial}(n_S, [n_S], (n_S^{-1}, \ldots, n_S^{-1}))$, we have 
\begin{align}\label{cov_bootstrap_4}
    \mathbb{P}&\left\{ \|n_S^{-1}\sum_{i = 1}^{n_S} (X_{S,i}^{(1)} - \hat{\mu}_S) \|_2^2 > c \sigma^4_\mathrm{min} x/512\nu^2, \mathcal{A}_\mathbb{S} \right\} \nonumber \\
    & = \mathbb{P}\left\{ \| n_S^{-1} \sum_{i = 1}^{n_S} (W_i - 1) (X_{S,i} - \mu_S) \|_2 > \sqrt{c \sigma^4_\mathrm{min} x/512\nu^2}, \mathcal{A}_\mathbb{S} , \max_{i \in [n_S]} |W_i - 1| \leq 3 \log(60|\mathbb{S}|n_S/\alpha\beta) \right\} \nonumber \\
    & \quad \quad + \mathbb{P}\left\{ \max_{i \in [n_S]} |W_i - 1| > 3 \log(60|\mathbb{S}|n_S/\alpha\beta) \right\} \nonumber \\
    & = \mathbb{E}\left[ \mathbb{P}\left\{ \| n_S^{-1} \sum_{i = 1}^{n_S} (W_i - 1) (X_{S,i} - \mu_S) \|_2 > \sqrt{c \sigma^4_\mathrm{min} x/512\nu^2}, \mathcal{A}_\mathbb{S} , \max_{i \in [n_S]} |W_i - 1| \leq 3 \log(60|\mathbb{S}|n_S/\alpha\beta) \mid W \right\} \right] \nonumber \\
    & \quad \quad + \mathbb{P}\left\{ \max_{i \in [n_S]} |W_i - 1| > 3 \log(60|\mathbb{S}|n_S/\alpha\beta) \right\} \overset{\text{ Prop. } \ref{prop:norm_SG}}{\leq }  5^{|S|}\exp \left\{-\frac{ n_S c \sigma^4_\mathrm{min} x}{192^2 \nu^4 \log^2(60|\mathbb{S}|n_S/\alpha\beta)}\right\} +\alpha \beta/60|\mathbb{S}|,
\end{align}
where in the last inequality we used Proposition \ref{prop:ineq_binomial} in Appendix \ref{sec:technical_ineq}, which ensures that $\mathbb{P}\left\{\max_{i \in [n_S]} |W_i - 1| > t \right\} \leq n_S e^t/(t+1)^{t+1} \leq n_S e^{-t/3}$ for $t \geq 1$. Now, combining \eqref{eq:cov_bootstrap}, \eqref{eq:cov_bootstrap_2}, \eqref{cov_bootstrap_3}, \eqref{cov_bootstrap_4} gives for all $S \in \mathbb{S}$
\begin{align*}
 \mathbb{P}_{P_\mathbb{S}}&\left\{ \| \hat{\Sigma}_{S,1} - \hat{Q}_{S} \|_{2} > c \rho/8, \mathcal{A}_\mathbb{S} \right\} \leq 10 \exp \left\{-\frac{K_2 \sigma^{10}_\mathrm{min} c^3 n_S \rho^2}{\nu^{10} \log n_S \{|S| + \log(|\mathbb{S}|n_S/\alpha\beta) \}}\right\} \\
 & \hspace{5cm} + 5^{|S|+1}\exp \left\{-\frac{ K_3 c^2 \sigma^6_\mathrm{min} n_S \rho}{\nu^6 \log^2(|\mathbb{S}|n_S/\alpha\beta)}\right\} + \alpha \beta/12|\mathbb{S}|,
\end{align*}
for sufficiently large universal constants $K_2, K_3 > 0$, which is upper bounded by $\alpha \beta/6|\mathbb{S}|$ if 
\begin{align*}
    \rho & \geq \max\left( \max_{S \in \mathbb{S}}\frac{K_2 \nu^5}{\sigma_{\mathrm{min}}^5 c^{3/2}}\sqrt{\frac{\log(n_S) \log(|\mathbb{S}|/\alpha\beta)\{|S| + \log(|\mathbb{S}|n_S/\alpha\beta)  \}}{n_S}}, \right. \\
    & \hspace{5cm} \left. \max_{S \in \mathbb{S}}\frac{K_3 \nu^6}{\sigma_{\mathrm{min}}^6 c^2}\frac{\{|S| + \log(|\mathbb{S}|/\alpha\beta)\} \log^2(|\mathbb{S}|n_S/\alpha\beta)}{n_S} \right).
\end{align*}

Taking the maximum between this and $C_{\alpha \beta,c}$ completes the proof in light of \eqref{eq:last_two_bootstrap}.
\end{proof}

\begin{proof}[Proof of Proposition \ref{prop:cycle_boundary}]
We know from \eqref{eq:barrett_charcterisation} in Appendix \ref{sec:extra_prop_R} that, in general, $\Sigma_\mathbb{S}$ is compatible if and only if 
\[
    \sum\limits_{j \in K} \theta_j \leq (|K| - 1) \pi + \sum\limits_{j \not\in K} \theta_j
\]
for all $K \subseteq [d]$ with $|K|$ odd. Additionally, we can argue as in the proof of Proposition \ref{prop:KKT_cycle} (i) to show that at most one of these inequalities can be an equality when $(\theta_1, \ldots, \theta_d)$ are bounded away from $\{0, \pi\}$, which is true by assumption. We can thus write the null as
\begin{align}\label{eq:nullPolycycle}
    \bar{\mathcal{P}}_\mathbb{S}(0) = \bigcap_{\substack{K \subseteq [d] \\ |K| \text{ odd }}} \left\{\sum\limits_{j \in K} \theta_j \leq (|K| - 1) \pi + \sum\limits_{j \not\in K} \theta_j \right\},
\end{align}
and its boundary as $\partial \bar{\mathcal{P}}_\mathbb{S}(0) = \bigcup_{\substack{K \subseteq [d] \\ |K| \text{ odd }} } F_K$, with
\begin{align}\label{eq:boundaryNull}
     F_K = \bigcap_{\substack{\tilde{K} \neq K \\ |\tilde{K}| \text{ odd }} } \left\{ \sum\limits_{j \in \tilde{K}} \theta_j < (|\tilde{K}| - 1) \pi + \sum\limits_{j \not\in \tilde{K}} \theta_j \right\} \cap \left\{ \sum\limits_{j \in K} \theta_j = (|K| - 1) \pi + \sum\limits_{j \not\in K} \theta_j \right\},
\end{align}
which shows that, in the case of a non-singular $d$-cycle, being on the boundary of the null hypothesis is equivalent to being in the relative interior of a face of the convex polyhedron defined in \eqref{eq:nullPolycycle}. Now, the first part of the result about the asymptotic validity in the interior of \eqref{eq:nullPolycycle} follows from Theorem \ref{thm:bootstrap_guarantees}, so that it is enough to analyse what happens on the boundary. In this regard, we will establish the asymptotic validity of our test for the case when $(\theta_1, \ldots, \theta_d) \in F_{\{1\}}$ (i.e.~$K = \{1\}$ in \eqref{eq:boundaryNull}), noting that the validity for the other cases can be demonstrated analogously. In this context, since on the boundary we have $R(\hat{\Sigma}_\mathbb{S}) \leq 3/4$ with high probability (w.h.p.) — a fact that follows by inverting the bound provided in Theorem \ref{thm:oracle_test} — it suffices to analyse the bootstrap procedure. For asymptotic validity, we need to show that when $(\theta_1, \ldots, \theta_d) \in F_{\{1\}}$, the random variables $R(\hat{\Sigma}_\mathbb{S})$ and $R_{\hat{c}/2}(\hat{\Sigma}_{\mathbb{S},1}) \mid X_\mathbb{S}$ converge in distribution to the same limiting law. We will now analyse each quantity separately. 

As for the former, since $\min\{1+\cos\theta_j, 1-\cos\theta_j\} \geq c$ for all $j \in [d]$ (assumption A2), we have w.h.p.
\begin{align}\label{eq:theta_hatInequalities}
    \sum\limits_{j \in \tilde{K}} \hat{\theta}_j < (|\tilde{K}| - 1) \pi + \sum\limits_{j \not\in \tilde{K}} \hat{\theta}_j \text{ for all } \tilde{K} \neq \{1\}.
\end{align}
Under this event, an analogous argument to that in the proof of Proposition \ref{prop:KKT_cycle} (iii) gives 
\begin{align}\label{eq:taylor_R_cycle}
    R(\hat{\Sigma}_\mathbb{S}) & = \left[\frac{2 \sin\left(\frac{\hat{\theta}_1 + \hat{\varphi}_1}{2}\right)}{1+\cos\hat{\varphi}_1} \sin\left(\frac{\hat{\theta}_1 - \hat{\varphi}_1}{2}\right)\right]_+ = \left[\frac{2 \sin\left(\frac{\hat{\theta}_1 + \hat{\varphi}_1}{2}\right)}{1+\cos\hat{\varphi}_1} \sin\left\{\frac{1}{2}\left(\hat{\theta}_1 - \sum_{j=2}^d \hat{\theta}_j - \hat{\beta}\right)\right\}\right]_+ \nonumber \\
    & = \left[\frac{2 \sin\left(\frac{\hat{\theta}_1 + \hat{\varphi}_1}{2}\right)}{1+\cos\hat{\varphi}_1} \left\{\sin\left(\frac{\hat{\theta}_1 - \sum_{j=2}^d \hat{\theta}_j}{2}\right) \cos \frac{\hat{\beta}}{2} - \cos\left(\frac{\hat{\theta}_1 - \sum_{j=2}^d \hat{\theta}_j}{2}\right) \sin \frac{\hat{\beta}}{2} \right\} \right]_+,
\end{align}
where $(\hat{\varphi}_1, \ldots, \hat{\varphi}_d)$ are as in Proposition \ref{prop:KKT_cycle} (iii), and $\hat{\beta} \equiv \beta(\hat{\theta}_1, \ldots, \hat{\theta}_d)$ is such that $0 \leq \hat{\beta} \leq \hat{\theta}_1 -  \sum_{j=2}^d \hat{\theta}_j$ and $ \sum_{j=2}^d \hat{\varphi}_j = \hat{\beta} +  \sum_{j=2}^d \hat{\theta}_j$. Furthermore, since $R(\hat{\Sigma}_\mathbb{S}) \overset{\mathbb{P}}{\rightarrow} R(\Sigma_\mathbb{S}) = 0$ as $n_\mathbb{S} \rightarrow \infty$, we have $\hat{\varphi}_j \overset{\mathbb{P}}{\rightarrow} \theta_j$ for all $j \in [d]$, which also implies that $\beta(\hat{\theta}_1, \ldots, \hat{\theta}_d) \overset{\mathbb{P}}{\rightarrow} 0$. This, together with a Taylor approximation of Equation \eqref{eq:taylor_R_cycle} implies that 
\[
R(\hat{\Sigma}_\mathbb{S}) = \frac{\sin \theta_1}{1+\cos\theta_1} \left\{(\hat{\theta}_1 - \theta_1) - \sum_{j = 2}^d (\hat{\theta}_j - \theta_j)\right\}_+ + o_{\mathbb{P}}(1/\sqrt{n}),
\]
where $\hat{\theta}_j := \cos^{-1}(\hat{\rho}_j)$ and $\hat{\rho}_j$ is Pearson's sample correlation coefficient. Now, it is known \citep[Example 5.4.3]{lehmann99largesample} that $\sqrt{n}(\hat{\rho}_1 - \rho_1) \overset{d}{\rightarrow} N(0, \gamma_1^2)$, where $\gamma_1^2 \equiv \gamma_1^2(P_{\{1,2\}})$ was defined in \eqref{eq:def_gammaPearson}. For example, if $P_{\{1,2\}}$ is Gaussian, it simplifies to $\gamma_1^2 = (1- \rho^2)^2$. This, together with the Delta method, implies that $\sqrt{n}\{\hat{\theta}_j - \theta_j\} \overset{d}{\rightarrow} N(0, \gamma_j^2/\sin^2 \theta_j)$ for all $j \in [d]$, which further shows that 
\begin{align}\label{eq:limiting_normal}
    \sqrt{n}R(\hat{\Sigma}_\mathbb{S}) \overset{d}{\rightarrow}  \frac{\sin \theta_1}{1+\cos\theta_1} N\left(0, \sum_{j = 1}^d \frac{\gamma_j^2}{\sin^2 \theta_j} \right)_+
\end{align}
due to the independence between $\hat{\theta}_{j_1}$ and $\hat{\theta}_{j_2}$ for $j_1 \neq j_2$. The limiting distribution has a point mass of $1/2$ at zero, and is non-degenerate for positive values, as $\sin \theta_1 > 0$ by $(\mathrm{A2})$, and there exists $j \in [d]$ such that $\gamma^2_j > 0$.

As for the convergence of $R_{\hat{c}/2}(\hat{\Sigma}_{\mathbb{S},1}) \mid X_\mathbb{S}$, we will split the proof in two steps. We will first show that $R(\hat{\Sigma}_{\mathbb{S},1}) \mid X_\mathbb{S}$ converges in distribution to \eqref{eq:limiting_normal}, using the fact that $\hat{Q}_\mathbb{S}$ is bounded away from singularity and lies on $F_{\{1\}}$ w.h.p., and then argue that $R_{\hat{c}/2}(\hat{\Sigma}_{\mathbb{S},1}) \mid X_\mathbb{S}  = R(\hat{\Sigma}_{\mathbb{S},1}) \mid X_\mathbb{S} + o_\mathbb{P}(1/\sqrt{n})$. Regarding the fact that $\hat{Q}_\mathbb{S}$ is bounded away from singularity, observe that Proposition \ref{prop:KKT_cycle} (iii) implies that 
\begin{align}\label{eq:QlessSingular}
    \min\{1+\cos\hat{\varphi}_j , 1 - \cos\hat{\varphi}_j \} \geq \min\{1+\cos\hat{\theta}_j , 1 - \cos\hat{\theta}_j \} \text{ for all } j \in [d].
\end{align}
In order to show this, observe that, under \eqref{eq:theta_hatInequalities}, we have that $(1 - \cos\hat{\theta}_1)/(1 + \cos \hat{\varphi}_1) = 1 - R(\hat{\Sigma}_\mathbb{S}) \in (0,1)$, which implies that $\hat{\varphi}_1 \leq \hat{\theta}_1$. Similarly, $\hat{\varphi}_j \geq \hat{\theta}_j$ for all $j \neq 1$. This, together with $\hat{\varphi}_1 = \sum_{j = 2}^d \hat{\varphi}_j$, shows that $\hat{\theta}_j \leq \hat{\varphi}_j \leq \hat{\varphi}_1 \leq \hat{\theta}_1$ for all $j \neq 1$, which completes the proof showing that $\hat{Q}_\mathbb{S}$ is at most as singular as $\hat{\Sigma}_\mathbb{S}$ w.h.p.. Note that this happens with high probability because the \textit{good} event \eqref{eq:theta_hatInequalities} happens with high probability. Furthermore, together with $\hat{\varphi}_1 = \sum_{j = 2}^d \hat{\varphi}_j$, the non-singularity of $\hat{Q}_\mathbb{S}$ shows that $(\hat{\varphi}_1, \ldots, \hat{\varphi}_d) \in F_{\{1\}}$ w.h.p..
Hence, the previous argument applies and gives 
\[
R(\hat{\Sigma}_{\mathbb{S},1}) \mid X_\mathbb{S} = \frac{\sin \hat{\varphi}_1}{1+\cos\hat{\varphi}_1} \left\{(\hat{\theta}_1^{(1)} - \hat{\varphi}_1) - \sum_{j = 2}^d (\hat{\theta}_j^{(1)} - \hat{\varphi}_j)\right\}_+ + o_{\mathbb{P}}(1/\sqrt{n}),
\]
where $\hat{\theta}_j^{(1)} := \cos^{-1}(\hat{\rho}_j^{(1)})$ and $\hat{\rho}_j^{(1)}$ is the Pearson's sample correlation coefficient for the bootstrap sample, and $\hat{\varphi}_1$ is such that $\hat{\varphi}_1 \overset{\mathbb{P}}{\rightarrow} \theta_1$. Now, observe that $\sqrt{n}\{ \hat{\rho}^{(1)}_1 - \cos \hat{\varphi}_1 \} \mid X_\mathbb{S} \overset{d}{\rightarrow} N(0, \gamma_1^2)$. To see why this is the case, calling $\tilde{P}_{n_{\{1,2\}}}^{\{1,2\}} := n^{-1}_{\{1,2\}} \sum_{i = 1}^{n_{\{1,2\}}} \tilde{X}_{\{1,2\},i}$ with $\tilde{X}_{\{1,2\},i} = \hat{Q}^{1/2}_{\{1,2\}}\hat{\Sigma}^{-1/2}_{\{1,2\}}\operatorname{diag}^{-1/2}(\hat{\sigma}^2_{\{1,2\}})(X_{\{1,2\},i} - \hat{\mu}_{\{1,2 \}})$, we have that $\gamma^2(\tilde{P}_{n_{\{1,2\}}}^{\{1,2\}})$ converges in probability to the $\gamma^2(\cdot)$ coefficient of the standardised distribution $\operatorname{diag}^{-1/2}(\sigma^2_{\{1,2\}})(X_{\{1,2\},1} - \mu_{\{1,2 \}})$, which is equal to $\gamma_1^2$ as $\gamma^2(\cdot)$ is invariant under standardisation (to see why, recall the definitions of $\gamma^2(P)$, $f(u,v,w)$ and $S$ in \eqref{eq:def_gammaPearson}).  We can then reproduce the same argument as before, replacing $\Sigma_\mathbb{S}$ with $\hat{Q}_\mathbb{S}$, and conclude using Slutsky's theorem that $R(\hat{\Sigma}_{\mathbb{S},1}) \mid X_\mathbb{S}$ converges in distribution to the limiting normal distribution described in \eqref{eq:limiting_normal}. 

As for the second step, observe that, since $P_{\{j, j+1\}}$ has finite fourth moments for all $j \in [d]$, we have that $\hat{\Sigma}_{\mathbb{S}, 1} \mid X_\mathbb{S} \overset{\mathbb{P}}{\rightarrow} \hat{Q}_\mathbb{S}$, hence $\lambda_\mathrm{min}(\hat{\Sigma}_{\mathbb{S}, 1}) \mid X_\mathbb{S} \geq \lambda_\mathrm{min}(\hat{Q}_\mathbb{S})/2$ w.h.p.. On the other hand, we also proved in \eqref{eq:QlessSingular} that $\lambda_\mathrm{min}(\hat{Q}_\mathbb{S}) \geq \lambda_\mathrm{min}(\hat{\Sigma}_{\mathbb{S}}) = \hat{c}$ w.h.p., hence $R_{\hat{c}/2}(\hat{\Sigma}_{\mathbb{S},1}) \mid X_\mathbb{S}  = R(\hat{\Sigma}_{\mathbb{S},1}) \mid X_\mathbb{S} + o_\mathbb{P}(1/\sqrt{n})$ using the definition of $R_z(\cdot)$ in \eqref{eq:reg_R}.

Putting all the pieces together, we have just shown that also $\sqrt{n}R_{\hat{c}/2}(\hat{\Sigma}_{\mathbb{S},1}) \mid X_\mathbb{S}$ converges in distribution to \eqref{eq:limiting_normal}. Now, calling $r_{n_\mathbb{S}}$ the critical value of the bootstrap test when $B \rightarrow \infty$ for fixed $n_\mathbb{S}$, this shows that $r_{n_\mathbb{S}}$ converges to the $(1-\alpha)$-quantile of the distribution in \eqref{eq:limiting_normal}, which is positive for $\alpha < 1/2$. Since the limiting distribution of $\sqrt{n}R(\hat{\Sigma}_\mathbb{S})$ is the same, we conclude that the probability of rejecting the null hypothesis on the boundary converges to $\alpha$. 
\end{proof}

\subsection{Proofs for Section \ref{Sec:cov_compatibility}}
\begin{proof}[Proof of Proposition \ref{Prop:BasicLinearProperties}]
For any $X \in \mathcal{M}$ and $X_\mathbb{S} \in \mathcal{M}_\mathbb{S}$ we have
\begin{align*}
	\langle AX, X_\mathbb{S} \rangle_\mathbb{S} &= \sum_{S \in \mathbb{S}} \sum_{j,j' \in S} ((AX)_S)_{jj'} (X_S)_{jj'} = \sum_{S \in \mathbb{S}} \sum_{j,j' \in S} X_{jj'} (X_S)_{jj'} \\
	&= \sum_{j,j'=1}^d X_{jj'} \sum_{S \in \mathbb{S}} \mathbbm{1}_{j,j' \in S} (X_S)_{jj'} = \langle X , A^* X_\mathbb{S} \rangle,
\end{align*}
as claimed.
\end{proof}

\begin{proof}[Proof of Proposition \ref{Prop:ConsCompatCheck}]
The strategy is to use a semi-definite programming version of Farkas' lemma. This is well known in the relevant literature, but we provide a statement and short proof for completeness; see Proposition \ref{Prop:farkas_lemma} in Appendix \ref{sec:SDP}. First, rewrite the matrix completion problem 
\[
\text{find } \Sigma \in \mathcal{M} \text{ such that } \begin{cases}
    \Sigma_{jj'} = (\Sigma_S)_{jj'}, \forall S \in \mathbb{S}_{jj'} \\
    \Sigma \succcurlyeq 0
\end{cases}
\]
as 
\begin{align}\label{eq:SDP_completion}
    \text{find } \Sigma \in \mathcal{M} \text{ such that } \begin{cases}
    \langle \Sigma, E_{jj'} \rangle = (\Sigma_S)_{jj'}, \forall S \in \mathbb{S}_{jj'} \\
    \Sigma \succcurlyeq 0
\end{cases}
\end{align}
where $E_{jj'} = (\boldsymbol{e}_{j} \boldsymbol{e}_{j'}^T + \boldsymbol{e}_{j'} \boldsymbol{e}_{j}^T)/2$ and $\boldsymbol{e}_j$ is the $j$-th column vector of the standard orthonormal basis of $\mathbb{R}^d$. In order to apply the semi-definite version of Farkas' lemma we transform our problem so that the equality constraints have zero on the right-hand side. To this end, define 
\[
H_j := \begin{pmatrix}
0 & \boldsymbol{e}_j^T/2 \\
\boldsymbol{e}_j/2 & \boldsymbol{O}
\end{pmatrix}
\text{ and }
G_{S, jj'} := \begin{pmatrix}
-(\Sigma_S)_{jj'} & \boldsymbol{0}^T \\
\boldsymbol{0} & E_{jj'}
\end{pmatrix},
\]
and consider the completion problem \begin{align}\label{eq:SDP_completion2}
    \text{find } \tilde{\Sigma} \in \mathcal{M} \text{ such that } \begin{cases}
    \langle \tilde{\Sigma}, H_j \rangle = 0, \forall j \in [d] \\
    \langle \tilde{\Sigma}, G_{S, jj'} \rangle = 0, \forall S \in \mathbb{S}_{jj'} \\
    \tilde{\Sigma} \succcurlyeq 0.
\end{cases}
\end{align}
The condition $\langle \tilde{\Sigma}, H_j \rangle = 0, \forall j \in [d]$ forces $\tilde{\Sigma}$ to be in block diagonal form 
\[
\tilde{\Sigma} := \begin{pmatrix}
\gamma_{0,0} & \boldsymbol{0}^T \\
\boldsymbol{0} & \Sigma
\end{pmatrix}.
\]
Now, observe that (\ref{eq:SDP_completion}) has a solution if and only if (\ref{eq:SDP_completion2}) has a non-zero solution. Indeed, for every solution $\Sigma_0$ of (\ref{eq:SDP_completion}), then $\operatorname{diag}(1, \Sigma_0)$ is a solution of (\ref{eq:SDP_completion2}). On the other hand, suppose that $\tilde{\Sigma}_0 = \operatorname{diag}(\gamma_{0,0}, \Sigma_0) \neq \boldsymbol{O}$ is a solution of (\ref{eq:SDP_completion2}). This implies that $\gamma_{0,0} \neq 0$, otherwise $0 = \langle \tilde{\Sigma}_0, G_{S, jj'} \rangle = -\gamma_{0,0}(\Sigma_S)_{jj'} + \Sigma_{jj'} = \Sigma_{jj'}$, which would imply $\tilde{\Sigma}_0 = \boldsymbol{O}$. Being $\gamma_{0,0} \neq 0$, we can rescale the bigger block in $\tilde{\Sigma}_0$ by $\gamma_{0,0}$, i.e. $\Sigma_0 =: \gamma_{0,0} G$, and get $0 = \langle \tilde{\Sigma}_0, G_{S, jj'} \rangle = -\gamma_{0,0}(\Sigma_S)_{jj'} + \gamma_{0,0} G_{jj'} = -(\Sigma_S)_{jj'} + G_{jj'}$, which shows that $G$ is a solution of (\ref{eq:SDP_completion}). This further implies that we can assume without loss of generality that $\gamma_{0,0} = 1$ when (\ref{eq:SDP_completion2}) admits a non-zero solution. Now, by Proposition~\ref{Prop:farkas_lemma}, we know that (\ref{eq:SDP_completion2}) has a non-zero solution $\tilde{\Sigma} = \operatorname{diag}(1, \Sigma)$ if and only if
\[
\sum\limits_{S \in \mathbb{S}}\sum\limits_{j,j' \in S} (X_S)_{jj'}G_{S, jj'} = \sum\limits_{S \in \mathbb{S}}\begin{pmatrix}
- \langle \Sigma_S, X_S \rangle & \boldsymbol{0}^T \\
\boldsymbol{0} & \frac{1}{2}X_S
\end{pmatrix} = \begin{pmatrix}
- \langle \Sigma_{\mathbb{S}}, X_{\mathbb{S}} \rangle & \boldsymbol{0}^T \\
\boldsymbol{0} & \frac{1}{2} A^* X_{\mathbb{S}}
\end{pmatrix} \nsucc 0,
\]
for all collections of matrices $X_{\mathbb{S}}$, not necessarily PSD. Now, this block matrix is positive definite if and only if both $A^* X_{\mathbb{S}} \succ 0$ and $\langle \Sigma_{\mathbb{S}}, X_{\mathbb{S}} \rangle < 0$. Hence,~\eqref{eq:SDP_completion2} has a non-zero solution if and only if $\langle \Sigma_{\mathbb{S}}, X_{\mathbb{S}} \rangle \geq 0$ for all $X_\mathbb{S}$ such that $A^* X_{\mathbb{S}} \succ 0$, and the claim follows.
\end{proof}

\begin{proof}[Proof of Proposition \ref{Prop:CorrDuality}]
Weak duality, i.e. LHS $\leq$ RHS, always holds for SDPs (see \cite{blekherman2012sdp}), but we include a short proof for the sake of completeness. In fact, for any $\Sigma_\mathbb{S} \in \mathcal{P}_\mathbb{S}$, we can rewrite \begin{align}\label{eq:RHS_dual}
    \inf \{ \epsilon \in [0,1] : \Sigma_\mathbb{S} \in (1-\epsilon) \mathcal{P}_\mathbb{S}^0 + \epsilon \mathcal{P}_\mathbb{S} \}
\end{align} 
as
\begin{align*}
\inf\{  \epsilon \in [0,1] : \Sigma_\mathbb{S} \in (1-\epsilon)  \mathcal{P}_\mathbb{S}^0 + \epsilon \mathcal{P}_\mathbb{S}\} &= 1 - \sup\{ \epsilon \in [0,1] : \Sigma_\mathbb{S} \in \epsilon \mathcal{P}_\mathbb{S}^0 + (1-\epsilon) \mathcal{P}_\mathbb{S} \} \\
	& = 1 - \frac{1}{d} \sup\{ \mathrm{tr}(\Sigma) : \Sigma \in \mathcal{P}^*, \Sigma_\mathbb{S} - A \Sigma \succeq_\mathbb{S} 0, \Sigma_{11} = \ldots = \Sigma_{dd} \}.
\end{align*}
Now, for any $Y_\mathbb{S} \in \mathcal{P}_\mathbb{S}^*$ such that $A^* Y_\mathbb{S} + Y \succeq I_d \text{ for some } Y \in \mathcal{Y}$, and any $\Sigma \in \mathcal{P}^*$ such that $\Sigma_\mathbb{S} - A \Sigma \succeq_\mathbb{S} 0$, we have
\begin{align*}
	\mathrm{tr}(\Sigma) &= \langle I_d, \Sigma \rangle = -\langle A^*Y_\mathbb{S} + Y - I_d, \Sigma \rangle + \langle A^*Y_\mathbb{S} + Y, \Sigma \rangle \leq \langle A^*Y_\mathbb{S}, \Sigma \rangle + \langle Y, \Sigma \rangle\\
	&= \langle A^*Y_\mathbb{S}, \Sigma \rangle = \langle Y_\mathbb{S}, A\Sigma \rangle_\mathbb{S} = \langle Y_\mathbb{S}, \Sigma_\mathbb{S} \rangle_\mathbb{S} - \langle Y_\mathbb{S}, \Sigma_\mathbb{S} - A\Sigma \rangle_\mathbb{S} \leq \langle Y_\mathbb{S}, \Sigma_\mathbb{S} \rangle_\mathbb{S}.
\end{align*}
This shows that (\ref{eq:RHS_dual})
is lower bounded by 
\begin{align}\label{eq:LHS_dual}
1 - \frac{1}{d} \inf\{ \langle Y_\mathbb{S}, \Sigma_\mathbb{S} \rangle_\mathbb{S} : Y_\mathbb{S} \in \mathcal{P}_\mathbb{S}^*, A^*Y_\mathbb{S} + Y \succeq I_d \}.
\end{align}
Weak duality follows upon noting that $A^* X_\mathbb{S}^0 = I_d$ and $\langle X_\mathbb{S}^0, \Sigma_\mathbb{S} \rangle_\mathbb{S} =d$ and setting $X_\mathbb{S}=Y_\mathbb{S}-X_\mathbb{S}^0$. This is not surprising, as we already mentioned that weak duality always holds for SDP problems.

We will now prove strong duality for this problem. Our strategy is to write our primal and dual problems in standard form and check Slater's condition for the primal problem (\ref{eq:LHS_dual}). We already mentioned that (\ref{eq:RHS_dual}) can be written as
\[
    1 - \frac{1}{d} \sup\{ \mathrm{tr}(\Sigma) : \Sigma \in \mathcal{P}^*, \Sigma_{11} = \ldots = \Sigma_{dd}, \Sigma_\mathbb{S} - A\Sigma \succeq_\mathbb{S} 0\}. 
\]
We now write this maximisation problem in standard form by introducing variables $(Z_S : S \in \mathbb{S}) = \Sigma_\mathbb{S} - A \Sigma \in \mathcal{P}_\mathbb{S}^*$.  Enumerating $\mathbb{S}$ as $\{S_1,\ldots,S_m\}$, we instead optimise over block-diagonal matrices of the form    
\[
    X  = \begin{pmatrix} \Sigma & 0 & \cdots & 0 \\ 0 & Z_{S_1} & \cdots & 0 \\ \vdots & \vdots & \ddots & \vdots \\ 0 & 0 & \cdots & Z_{S_m} \end{pmatrix}
\]
For such $X$ our constraints are equivalent to $X \succeq 0$,
\[
    \langle E_{jj} - E_{11}, X \rangle = 0 \quad \text{ for } j =2,\ldots,d
\]
and
\[
    \langle E_{jj'} + E_{S,jj'} , X \rangle = (\Sigma_S)_{jj'} \quad \text{ for } S \in \mathbb{S} \text{ and } j,j' \in S,
\]
where $E_{jj'} = (\boldsymbol{e}_j\boldsymbol{e}_{j'}^T +\boldsymbol{e}_{j'}\boldsymbol{e}_j^T)/2$ is the binary symmetric matrix of the same dimension as $X$ with its only non-zero entries being in the $(j,j')$-th and $(j',j)$-th positions of the top left block, and where $E_{S,jj'} = (\boldsymbol{e}_{S,j}\boldsymbol{e}_{S,j'}^T +\boldsymbol{e}_{S,j'}\boldsymbol{e}_{S,j}^T)/2$ is the binary symmetric matrix of the same dimension as $X$ with its only non-zero entries being in the $(j,j')$-th and $(j',j)$-th positions of the block occupied by $Z_S$ in $X$. Write $C$ for the diagonal matrix of the same dimension as $X$ with $I_d$ in the top left block, and all other entries equal to zero. It is now possible to write
\begin{align}
\label{Eq:StandardDual}
    \sup&\{ \mathrm{tr}(\Sigma) : \Sigma \in \mathcal{P}^*,\quad \Sigma_{11} = \ldots = \Sigma_{dd}, \quad\Sigma_\mathbb{S} - A\Sigma \succeq_\mathbb{S} 0\} \nonumber \\
    &= \sup\{ \langle C, X \rangle : X \succeq 0 \text{ is block diagonal}, \langle E_{jj} - E_{11}, X \rangle = 0 \text{ for } j =2,\ldots,d, \nonumber \\
    & \hspace{150pt} \langle E_{jj'} + E_{S,jj'} , X \rangle = (\Sigma_S)_{jj'} \quad \text{ for } S \in \mathbb{S} \text{ and } j,j' \in S \},
\end{align}
so that our dual problem (\ref{eq:RHS_dual}) is now in standard form. Our primal problem (\ref{eq:LHS_dual}) is put into standard form by writing
\begin{align}
\label{Eq:StandardPrimal}
    \inf \{ &\langle \Sigma_\mathbb{S}, Y_\mathbb{S} \rangle : A^* Y_\mathbb{S} + Y \succeq I_d, \quad Y_\mathbb{S} \succeq_\mathbb{S} 0, \quad Y_\mathbb{S} \in \mathcal{M}_\mathbb{S}, \quad Y \in \mathcal{Y} \} \nonumber \\
    & =\inf\biggl\{ \sum_{S \in \mathbb{S}} \sum_{j,j' \in S} (\Sigma_S)_{jj'} y_{S,jj'} : \sum_{S \in \mathbb{S}} \sum_{j,j' \in S} y_{S,jj'} E_{jj'} + \sum_{j=2}^d y_{jj}(E_{jj}-E_{11}) \succeq I_d, \nonumber \\
    & \hspace{100pt} \sum_{S \in \mathbb{S}} \sum_{j,j' \in S} y_{S,jj'} E_{S,jj'} \succeq 0 \text{ for all } S \in \mathbb{S}, \quad y_{jj},y_{S,jj'} \in \mathbb{R} \text{ for all } S,j,j' \biggr\}.
\end{align}
With the problems written in standard form, it is now clear that~\eqref{Eq:StandardDual} is the dual problem associated to~\eqref{Eq:StandardPrimal}; see  Theorem 3.1~in \cite{boyd_vandeberghe}. Observe further that the primal problem is strictly feasible since $Y_{\mathbb{S}} = X_{\mathbb{S}}^0$ satisfies the linear constraints with  $Y$ equal to the zero matrix. Hence, by standard duality results (Theorem 2.15~in \cite{blekherman2012sdp}, Theorem 3.1~in \cite{boyd_vandeberghe}), we have that
\begin{align*}
    & \sup\{ \langle C, X \rangle : X \succeq 0 \text{ is block diagonal}, \langle E_{jj} - E_{11}, X \rangle = 0 \text{ for } j =2,\ldots,d, \\
    & \hspace{200pt} \langle E_{jj'} + E_{S,jj'} , X \rangle = (\Sigma_S)_{jj'} \quad \text{ for } S \in \mathbb{S} \text{ and } j,j' \in S \} \\
    & = \inf\biggl\{ \sum_{S \in \mathbb{S}} \sum_{j,j' \in S} (\Sigma_S)_{jj'} y_{S,jj'} : \sum_{S \in \mathbb{S}} \sum_{j,j' \in S} y_{S,jj'} E_{jj'} + \sum_{j=2}^d y_{jj}(E_{jj}-E_{11}) \succeq I_d, \\
    & \hspace{100pt} \sum_{S \in \mathbb{S}} \sum_{j,j' \in S} y_{S,jj'} E_{S,jj'} \succeq 0 \text{ for all } S \in \mathbb{S}, \quad y_{jj},y_{S,jj'} \in \mathbb{R} \text{ for all } S,j,j' \biggr\}, 
\end{align*}
and the result follows.
\end{proof}

\begin{proof}[Proof of Proposition \ref{prop:properties}]
     (i) Convexity follows easily from basic properties of the supremum. Indeed, consider $\tilde{\Sigma}_\mathbb{S}: = \lambda \Sigma_\mathbb{S}^{(1)} + (1 - \lambda) \Sigma_\mathbb{S}^{(2)}$ with $\lambda \in [0,1]$.   Observe that $R$ is well defined at $\tilde{\Sigma}_\mathbb{S}$, as the convex combination of correlation matrices is still a correlation matrix. Then, for all $\lambda \in [0,1]$,
    \begin{align*}
        R(\tilde{\Sigma}_\mathbb{S}) &= \sup\biggr\{ -\frac{1}{d} \langle X_{\mathbb{S}}, \tilde{\Sigma}_{\mathbb{S}} \rangle : X_\mathbb{S} + X_\mathbb{S}^0 \succeq_\mathbb{S} 0, A^* X_\mathbb{S} + Y \succeq 0 \text{ for some } Y \in \mathcal{Y} \biggr\} \\
        &= \sup\biggr\{ -\frac{1}{d} \langle X_{\mathbb{S}}, \lambda \Sigma_\mathbb{S}^{(1)} + (1 - \lambda) \Sigma_\mathbb{S}^{(2)} \rangle : X_\mathbb{S} + X_\mathbb{S}^0 \succeq_\mathbb{S} 0, A^* X_\mathbb{S} + Y \succeq 0 \text{ for some } Y \in \mathcal{Y} \biggr\} \\
        &\leq \lambda \sup\biggr\{ -\frac{1}{d} \langle X_{\mathbb{S}},  \Sigma_\mathbb{S}^{(1)} \rangle: X_\mathbb{S} + X_\mathbb{S}^0 \succeq_\mathbb{S} 0, A^* X_\mathbb{S} + Y \succeq 0 \text{ for some } Y \in \mathcal{Y} \biggr\} \\
        &  \hspace{50pt} + (1-\lambda) \sup\biggr\{ -\frac{1}{d} \langle X_{\mathbb{S}}, \Sigma_\mathbb{S}^{(2)} \rangle : X_\mathbb{S} + X_\mathbb{S}^0 \succeq_\mathbb{S} 0, A^* X_\mathbb{S} + Y \succeq 0 \text{ for some } Y \in \mathcal{Y} \biggr\} \\ &= \lambda R(\Sigma_\mathbb{S}^{(1)}) + (1-\lambda) R(\Sigma_\mathbb{S}^{(2)}),
    \end{align*}
    and the convexity of $R(\cdot)$ follows.
    
    (ii) $R$ acts on $\mathcal{P}_{\mathbb{S}}$, which is the space of correlation matrices over the patterns $\mathbb{S}$. Now, the spectrahedron of all correlation matrices of dimension $p$,  \[\mathcal{E}_p = \left\{\left(x_1, \ldots, x_{\binom{p}{2}}\right) \in \mathbb{R}^{\binom{p}{2}} : \Sigma_{X} = \begin{pmatrix}
    1 & x_1 & \cdots & x_{p-1}\\
    x_1 & 1 & \cdots & x_{2p-3}\\
    \vdots & \vdots & \ddots & \vdots\\
    x_{p-1} & x_{2p-3} & \cdots & 1\\
  \end{pmatrix} \succeq 0 \right\},\]
  is called the \emph{elliptope}, and identifies a closed subset of $\mathbb{R}^{\binom{p}{2}}$. This follows from the fact that the symmetry condition $\Sigma_{X} = \Sigma_{X}^T$ defines a linear subspace of $\mathbb{R}^p$ of dimension $\binom{p}{2}$, while the PSD condition $v^T \Sigma_{X} v \geq 0$ for all $v \in \mathbb{R}^p$ defines a closed subset of $\mathbb{R}^{\binom{p}{2}}$, which is a convex cone. For further insights, refer to \cite{laurent_elliptope96}. This implies that, for every pattern $\mathbb{S}$,  $\mathcal{P}_{\mathbb{S}}$ can be identified with a closed subspace of $\mathbb{R}^s$, where $s = \sum_{S \in \mathbb{S}} \binom{|S|}{2}$. The continuity of $R$ follows from the fact that every convex function that is finite on  $\mathbb{R}^s$ is necessarily continuous (see Corollary 10.1.1. in \cite{rockafellar-1970a}).
  
To prove (iii), we will make use of the fact that the dual characterisation allows us to express $R(\Sigma_{\mathbb{S}'})$ as \[
   1 - \frac{1}{d'} \sup\{ \mathrm{tr}(\Sigma) : \Sigma \in \mathcal{P}^*, \Sigma_{11} = \ldots = \Sigma_{d'd'}, \Sigma_{\mathbb{S}'} - A_{\mathbb{S}'} \Sigma \succeq_{\mathbb{S}'} 0\},
  \]
  where $d'=|\cup_{S \in \mathbb{S}'}S|$. Now, let $\tilde{\Sigma}$ be an optimal feasible matrix for $\Sigma_{\mathbb{S}'}$, where all the diagonal elements of $\tilde{\Sigma}$ are the equal to each other by definition of $R$. Then, if we consider the restriction of $\tilde{\Sigma}$ on $\cup_{S \in \mathbb{S}}S$, call it  $\tilde{\Sigma}_{|\mathbb{S}}$, it is clear that $ \Sigma_{\mathbb{S}} - A_{\mathbb{S}} \tilde{\Sigma}_{|\mathbb{S}} \succeq_{\mathbb{S}} 0$, since $ \Sigma_{\mathbb{S}'} - A_{\mathbb{S}'} \Sigma \succeq_{\mathbb{S}'} 0$ and $\Sigma_{\mathbb{S}} \subseteq \Sigma_{\mathbb{S}'}$ by hypothesis, while $\tilde{\Sigma}_{|\mathbb{S}} \succeq 0$ follows again by Cauchy's interlacing theorem. Hence, calling $d=|\cup_{S \in \mathbb{S}}S|$, for every $\tilde{\Sigma}$ that is optimal for $\Sigma_{\mathbb{S}'}$, we can construct a feasible $\tilde{\Sigma}_{|\mathbb{S}}$ for $\Sigma_{\mathbb{S}}$ such that $1-\operatorname{tr}(\tilde{\Sigma}_{|\mathbb{S}})/d = R(\Sigma_{\mathbb{S}'})$. This completes the proof.
\end{proof}

\subsection{Proofs for Section \ref{Sec:oracle_test}}

\begin{proof}[Proof of Theorem \ref{thm:oracle_test}]
We are interested in finding $C_{\alpha} \in (0,1)$ such that $\forall \alpha \in (0,1)$ 
 \[ \mathbb{P}_{H_0} \left\{R(\hat{\Sigma}_{\mathbb{S}}) \geq C_{\alpha}  \right\} \leq \alpha. \]
 We have  
\begin{align}
\label{Eq:NonSingularityDecomposition}
   \mathbb{P}_{H_0}\left\{R(\hat{\Sigma}_{\mathbb{S}}) \geq C_\alpha  \right\} & \leq  \mathbb{P}_{H_0}\left\{R(\hat{\Sigma}_{\mathbb{S}}) \geq C_\alpha,\, \hat{\Sigma}_{\mathbb{S}} \succeq_\mathbb{S} \frac{c}{2}I_\mathbb{S} \right\} + 1 - \mathbb{P}_{H_0}\left\{\hat{\Sigma}_{\mathbb{S}} \succeq_\mathbb{S} \frac{c}{2}I_\mathbb{S} \right\}.
\end{align}
Since $\Sigma_{\mathbb{S}} \succeq_\mathbb{S} cI_\mathbb{S}$ by assumption, we may bound the second part of~\eqref{Eq:NonSingularityDecomposition} by writing
\begin{align*}
    1 & = \mathbb{P}_{H_0}\left\{\Sigma_{\mathbb{S}} \succeq_\mathbb{S} cI_\mathbb{S} \right\} = \mathbb{P}_{H_0}\left\{\Sigma_{\mathbb{S}} - \hat{\Sigma}_{\mathbb{S}} + \hat{\Sigma}_{\mathbb{S}}  \succeq_\mathbb{S} cI_\mathbb{S} \right\} \\
    & \leq \mathbb{P}_{H_0}\left\{\Sigma_{\mathbb{S}} - \hat{\Sigma}_{\mathbb{S}}  \succeq_\mathbb{S} \frac{c}{2}I_\mathbb{S} \right\} + \mathbb{P}_{H_0}\left\{ \hat{\Sigma}_{\mathbb{S}}  \succeq_\mathbb{S} \frac{c}{2}I_\mathbb{S} \right\} \\
    & \leq \mathbb{P}_{H_0}\left\{\|\hat{\Sigma}_{\mathbb{S}} - \Sigma_{\mathbb{S}}\|_{2,\mathbb{S}} \geq \frac{c}{2} \right\} + \mathbb{P}_{H_0}\left\{ \hat{\Sigma}_{\mathbb{S}}  \succeq_\mathbb{S} \frac{c}{2}I_\mathbb{S} \right\}.
\end{align*}
This implies that
\begin{align}\label{eq:ProbSigmaHatSucceq}
  1 - \mathbb{P}_{H_0}\left\{\hat{\Sigma}_{\mathbb{S}} \succeq_\mathbb{S} \frac{c}{2}I_\mathbb{S} \right\} \leq \mathbb{P}_{H_0}\left\{\|\hat{\Sigma}_{\mathbb{S}} - \Sigma_{\mathbb{S}}\|_{2,\mathbb{S}} \geq \frac{c}{2} \right\} .  
\end{align}
Now, define \[
\bar{\mathrm{tr}}(X_\mathbb{S}) = \sum_{j=1}^d |\mathbb{S}_j|^{-1} \sum_{S \in \mathbb{S}_j} (X_S)_{jj}
\]
and observe that, for all $X_\mathbb{S} \in \mathcal{M}_\mathbb{S}$, we have $\langle X_\mathbb{S}^0, X_\mathbb{S} \rangle_\mathbb{S} = \bar{\mathrm{tr}}(X_\mathbb{S})$. See Proposition \ref{Prop:BasicLinearProperties2} in Appendix \ref{sec:test_cov} for a proof of this fact. Using the arguments leading up to~\eqref{Eq:CompactFeasibleSet} above, the first term on the right-hand side of~\eqref{Eq:NonSingularityDecomposition} can be written as 
\[
\mathbb{P}_{H_0}\left\{R(\hat{\Sigma}_{\mathbb{S}}) \geq C_\alpha, \,\hat{\Sigma}_{\mathbb{S}} \succeq_\mathbb{S} \frac{c}{2}I_\mathbb{S} \right\} = \mathbb{P}_{H_0}\left\{\sup\limits_{X_{\mathbb{S}} \in \mathcal{F}_{c/2}} -\frac{1}{d} \langle X_{\mathbb{S}}, \hat{\Sigma}_{\mathbb{S}} \rangle_{\mathbb{S}}  \geq C_\alpha, \,\hat{\Sigma}_{\mathbb{S}} \succeq_\mathbb{S} \frac{c}{2}I_\mathbb{S} \right\},
\] 
where $\mathcal{F}_{c/2} =  \{X_{\mathbb{S}} + X_{\mathbb{S}}^0 \succeq_{\mathbb{S}} 0 , A^* X_\mathbb{S} + Y \succeq 0 \text{ for some } Y \in \mathcal{Y}, \langle X_{\mathbb{S}} + X_{\mathbb{S}}^0, \frac{c}{2}I_{\mathbb{S}} \rangle_{\mathbb{S}} \leq d\}$. Discarding the condition $A^* X_\mathbb{S} + Y \succeq 0 \text{ for some } Y \in \mathcal{Y}$ and enlarging our feasible to   $\mathcal{\tilde{F}}_{c/2} := \{X_{\mathbb{S}} + X_{\mathbb{S}}^0 \succeq_{\mathbb{S}} 0 ,  \langle X_{\mathbb{S}} + X_{\mathbb{S}}^0, \frac{c}{2}I_{\mathbb{S}} \rangle_{\mathbb{S}} \leq d\}$, we have
 \begin{align}\label{eq:R_to_spectral}
     \mathbb{P}_{H_0} \left\{R(\hat{\Sigma}_{\mathbb{S}}) \geq C_\alpha, \,\hat{\Sigma}_{\mathbb{S}} \succeq_\mathbb{S} \frac{c}{2}I_\mathbb{S} \right\} &= \mathbb{P}_{H_0} \left\{\hat{R} - R \geq C_\alpha, \,\hat{\Sigma}_{\mathbb{S}} \succeq_\mathbb{S} \frac{c}{2}I_\mathbb{S} \right\} \leq  \mathbb{P}_{H_0} \left\{|\hat{R} - R| \geq C_\alpha, \,\hat{\Sigma}_{\mathbb{S}} \succeq_\mathbb{S} \frac{c}{2}I_\mathbb{S} \right\} \nonumber \\
     & \leq \mathbb{P}_{H_0} \left\{\sup\limits_{X_{\mathbb{S}} \in \mathcal{\tilde{F}}_{c/2}} \left| - \frac{1}{d} \langle X_{\mathbb{S}}, \hat{\Sigma}_{\mathbb{S}} - \Sigma_{\mathbb{S}}\rangle_{\mathbb{S}} \right| \geq C_\alpha\right\} \nonumber \\
     &= \mathbb{P}_{H_0} \left\{\sup\limits_{X_{\mathbb{S}} \in \mathcal{\tilde{F}}_{c/2}} \left| \langle X_{\mathbb{S}} + X_{\mathbb{S}}^0, \hat{\Sigma}_{\mathbb{S}} - \Sigma_{\mathbb{S}}\rangle_{\mathbb{S}} - (\operatorname{\bar{tr}}(\hat{\Sigma}_{\mathbb{S}}) - d)\right| \geq d \cdot C_\alpha \right\} \nonumber \\
     &\leq \mathbb{P}_{H_0} \left\{\|\hat{\Sigma}_{\mathbb{S}} - \Sigma_{\mathbb{S}}\|_{2,\mathbb{S}} \cdot \sup\limits_{X_{\mathbb{S}} \in \mathcal{\tilde{F}}_{c/2}} \|X_{\mathbb{S}} + X_{\mathbb{S}}^0\|_{*, \mathbb{S}} \geq d \cdot C_\alpha \right\} \nonumber \\
  & \leq \mathbb{P}_{H_0} \left\{\|\hat{\Sigma}_{\mathbb{S}} - \Sigma_{\mathbb{S}}\|_{2,\mathbb{S}} \cdot 2d/c \geq d \cdot C_\alpha \right\} \nonumber \\
  & = \mathbb{P}_{H_0} \left\{\|\hat{\Sigma}_{\mathbb{S}} - \Sigma_{\mathbb{S}}\|_{2,\mathbb{S}} \geq c \cdot C_\alpha / 2 \right\},
 \end{align}
where we used Holder's inequality for collections of matrices, and the fact that $\operatorname{\bar{tr}}(\hat{\Sigma}_{\mathbb{S}}) = d$, since $\hat{\Sigma}_{\mathbb{S}}$ is a collection of sample correlation matrices. Putting all the pieces together, we have \begin{align}\label{eq:control_R_spectral}
     \mathbb{P}_{H_0}\left\{R(\hat{\Sigma}_{\mathbb{S}}) \geq C_{\alpha}  \right\} & \leq \mathbb{P}_{H_0} \left\{\|\hat{\Sigma}_{\mathbb{S}} - \Sigma_{\mathbb{S}}\|_{2,\mathbb{S}} \geq c \cdot C_\alpha / 2 \right\} + \mathbb{P}_{H_0} \left\{\|\hat{\Sigma}_{\mathbb{S}} - \Sigma_{\mathbb{S}}\|_{2,\mathbb{S}} \geq c / 2\right\} \nonumber \\
     & \leq 2 \mathbb{P}_{H_0} \left\{\|\hat{\Sigma}_{\mathbb{S}} - \Sigma_{\mathbb{S}}\|_{2,\mathbb{S}} \geq c \cdot C_\alpha / 2 \right\},
 \end{align}
 since $\mathbb{P}\{X \geq x_1\} + \mathbb{P}\{X \geq x_2\} \leq 2\mathbb{P}\{X \geq \min\{x_1, x_2\}\}$. Hence, in order to bound this probability above by $\alpha$, it is sufficient to find $C_{\alpha}$ such that 
\[
 \mathbb{P}_{H_0} \left\{\|\hat{\Sigma}_{\mathbb{S}} - \Sigma_{\mathbb{S}}\|_{2,\mathbb{S}} \geq c \cdot C_\alpha / 2 \right\} \leq \alpha/2.
\]
We have 
\begin{align}\label{eq:FindCAlpha}
    \mathbb{P}_{H_0} \Bigl\{ \|\hat{\Sigma}_{\mathbb{S}} - \Sigma_{\mathbb{S}}\|_{2,\mathbb{S}} & \geq c \cdot C_\alpha/ 2 \Bigr\}  \leq \mathbb{P}_{H_0} \left\{\max_{S \in \mathbb{S}} \|\hat{\Sigma}_{S} - \Sigma_{S}\|_2 \geq c \cdot C_\alpha / 2 \right\} \nonumber \\
  & \leq \sum_{S \in \mathbb{S}} \mathbb{P}_{H_0} \left\{ \|\hat{\Sigma}_{S} - \Sigma_{S}\|_2 \geq c \cdot C_\alpha /2   \right\} \leq |\mathbb{S}| \cdot \max_{S \in \mathbb{S}} \mathbb{P}_{H_0} \left\{ \|\hat{\Sigma}_{S} - \Sigma_{S}\|_2 \geq c \cdot C_\alpha / 2  \right\}.
\end{align}
Hence, calling 
\begin{align*}
        C_{t}(S) &:= \frac{K_1 \nu^4}{\sigma^4_{\mathrm{min}}} \sqrt{\frac{|S| + \log(1/t)}{n}} 
    \end{align*}
for all $S \in \mathbb{S}$, with $K_1 > 0$ sufficiently large universal constants, it is immediate to see using Proposition~\ref{prop:corr_concentration} that it is sufficient to take 
\[
C_{\alpha} \geq \frac{2}{c}\max_{S \in \mathbb{S}}C_{\alpha/2|\mathbb{S}|}(S),
\]
while ensuring $C_\alpha \leq 1$, in order to have $\mathbb{P}_{H_0} \left\{\|\hat{\Sigma}_{\mathbb{S}} - \Sigma_{\mathbb{S}}\|_{2,\mathbb{S}} \geq c \cdot C_\alpha / 2\right\} \leq \alpha/2$,   and the first statement follows. As for the last statement, observe that if $R(\Sigma_\mathbb{S}) > C_\alpha + C_\beta$, we have 
\begin{align*}
    \mathbb{P}\{R(\hat{\Sigma}_\mathbb{S}\} \leq C_\alpha\} &= \mathbb{P}\{R(\hat{\Sigma}_\mathbb{S}) - R(\Sigma_\mathbb{S} \leq C_\alpha - R(\Sigma_\mathbb{S})\} \leq \mathbb{P}\{R(\hat{\Sigma}_\mathbb{S}) - R(\Sigma_\mathbb{S}) \leq -C_\beta\} \\
    & \leq \mathbb{P}\{|R(\hat{\Sigma}_\mathbb{S}) - R(\Sigma_\mathbb{S})| \geq C_\beta\} \leq \beta,
\end{align*}
using the exact same concentration bound we employed to control the Type-I error.
\end{proof}

The analysis of $R(\hat{\Sigma}_\mathbb{S})$ in Theorem \ref{thm:oracle_test} is crucially based on the fact that we can control the oscillation $|R(\hat{\Sigma}_\mathbb{S}) - R(\Sigma_\mathbb{S})|$ using $\max_{S \in \mathbb{S}} \|\hat{\Sigma}_{S} - \Sigma_{S}\|_2$, where the $\Sigma_{S}$ are the Pearson population correlation matrices and $\hat{\Sigma}_{S}$ are the corresponding Pearson sample correlation matrices. We now state and prove a tail bound for the spectral norm $\|\hat{\Sigma} - \Sigma\|_2$, where $\Sigma$ is the population correlation matrix and $\hat{\Sigma}$ is the sample correlation matrix of complete data.  
\begin{prop}\label{prop:corr_concentration}
    Suppose we observe an i.i.d sample $X_1, \cdots, X_n \sim X$, where $X$ is $\nu$-subgaussian random vector in $\mathbb{R}^d$ with mean $\mu$. Let $\hat{\mu}$ be the sample mean, and let $\Omega$ and $\hat{\Omega} := n^{-1}\sum_{i =1}^n X_i X_i^T - \hat{\mu} \hat{\mu}^T$ be the population and sample covariance matrices, respectively. Let $\Sigma = D^{-1/2}\Omega D^{-1/2}$ be the population correlation matrix, where $D = \operatorname{diag}(\Omega)$, and $\hat{\Sigma} = \hat{D}^{-1/2} \hat{\Omega} \hat{D}^{-1/2}$ be the sample correlation matrix, where $\hat{D} = \operatorname{diag}(\hat{\Omega})$. Then, there exist a universal constant $K_1 > 0$ such that, for every $t \in [0,1]$ such that $C_t \leq 1$,  we have $\mathbb{P}\{\|\hat{\Sigma} - \Sigma \|_2 > C_t \} \leq t$ where 
    \begin{align*}
        C_t :=  \frac{K_1 \nu^4}{\sigma^4_{\mathrm{min}}} \sqrt{\frac{d + \log(1/t)}{n}}
    \end{align*}
    and $\sigma_\mathrm{min}^2 := \min\limits_{j \in [d]} \Omega_{jj}$.
\end{prop}
\begin{proof}[Proof of Proposition \ref{prop:corr_concentration}]
We will prove the claim by showing that it is sufficient to bound the spectral norm of the difference between the sample covariance matrix and its population counterpart, for which classical tail bounds apply. Now, observe that, for all $x \in [0,1]$,  the triangle inequality and the sub-multiplicativity of the spectral norm imply that
\begin{align}\label{eq:conc_corr_to_cov}
    \mathbb{P}&\left\{ \| \hat{\Sigma} - \Sigma \|_2 > x \right\} = \mathbb{P}\left\{ \| \hat{D}^{-1/2} \hat{\Omega} \hat{D}^{-1/2} - D^{-1/2}\Omega D^{-1/2} \|_2 > x \right\} \nonumber \\
    & \leq  \mathbb{P}\left\{ \| \hat{D}^{-1/2} \hat{\Omega} \hat{D}^{-1/2} - D^{-1/2}\hat{\Omega} D^{-1/2} \|_2 > x/2 \right\} + \mathbb{P}\left\{ \| D^{-1/2}(\hat{\Omega} - \Omega) D^{-1/2} \|_2 > x/2 \right\} \nonumber \\
    & \leq  \mathbb{P}\left\{ \| \hat{D}^{-1/2} \hat{\Omega} ( \hat{D}^{-1/2} - D^{-1/2}) \|_2 +  \| (\hat{D}^{-1/2}  - D^{-1/2})\hat{\Omega} D^{-1/2} \|_2 > x/2 \right\} +  \mathbb{P}\left\{ \| \hat{\Omega} - \Omega \|_2 > \sigma_\mathrm{min}^2 x/2 \right\} \nonumber \\
    & \leq \mathbb{P}\left\{ \| \hat{D}^{-1/2}  - D^{-1/2} \|_2 \| \hat{\Omega} \|_2 ( \| \hat{D}^{-1/2} \|_2  + \| D^{-1/2} \|_2)> x/2 \right\}  +  \mathbb{P}\left\{ \| \hat{\Omega} - \Omega \|_2 > \sigma_\mathrm{min}^2 x/2 \right\}. 
\end{align}
As for the first term in \eqref{eq:conc_corr_to_cov}, using the fact that $\|D^{-1/2}\|_2 \leq 1/\sigma_\mathrm{min}$ and $\|\Omega\|_2 \leq \nu^2$ due to subgaussianity, we further have 
\begin{align}\label{eq:conc_corr_to_cov_2}
    \mathbb{P}&\left\{ \| \hat{D}^{-1/2}  - D^{-1/2} \|_2 \| \hat{\Omega} \|_2 ( \| \hat{D}^{-1/2} \|_2  + \| D^{-1/2} \|_2)> x/2 \right\} \nonumber \\
    & \leq \mathbb{P}\left\{ \| \hat{D}^{-1/2}  - D^{-1/2} \|_2 \| \hat{\Omega} \|_2 ( \| \hat{D}^{-1/2} \|_2  + \| D^{-1/2} \|_2)> x/2, \|\hat{\Omega}\|_2 \leq 2 \nu^2, \|\hat{D}^{-1/2} \|_2 \leq 2/\sigma_\mathrm{min} \right\} \nonumber \\
    & \quad \quad + \mathbb{P}\{\|\hat{\Omega}\|_2 > 2 \nu^2 \} + \mathbb{P}\{\|\hat{D}^{-1/2}\|_2 > 2/\sigma_\mathrm{min} \} \nonumber \\
    & \leq \mathbb{P}\left\{ \| \hat{D}^{-1/2}  - D^{-1/2} \|_2   > \sigma_\mathrm{min} x/12\nu^2 \right\} + \mathbb{P}\{\|\hat{\Omega} - \Omega \|_2 > \nu^2 \} + \mathbb{P}\{\|\hat{D}^{-1/2} -  D^{-1/2} \|_2 > 1/\sigma_\mathrm{min} \} \nonumber \\
    & \leq \mathbb{P}\left\{ \| \hat{D}^{-1/2}D^{1/2} - I \|_2   > \sigma_\mathrm{min}^2 x/12\nu^2 \right\} + \mathbb{P}\{\|\hat{\Omega} - \Omega \|_2 > \nu^2 \} + \mathbb{P}\{\|\hat{D}^{-1/2}D^{1/2} - I \|_2 > 1 \} \nonumber \\
    & \leq 2\mathbb{P}\left\{ \| \hat{D}^{-1/2}D^{1/2} - I \|_2   > \sigma_\mathrm{min}^2 x/12\nu^2 \right\} + \mathbb{P}\{\|\hat{\Omega} - \Omega \|_2 > \nu^2 \},
\end{align}
where in the last step we used the fact that $x \in [0,1]$ and $\sigma_\mathrm{min}^2 \leq \|\Omega \|_2 \leq \nu^2$. As for the first term in \eqref{eq:conc_corr_to_cov_2}, for all $x > 0$ we have 
\begin{align}\label{eq:conc_corr_to_cov_3}
    \mathbb{P}&\left\{ \| \hat{D}^{-1/2}D^{1/2} - I \|_2   > x \right\} = \mathbb{P}\left\{\max_{j \in [d]} |\sigma_j/\hat{\sigma}_j - 1| > x \right\} \nonumber \\
    & \leq \mathbb{P}\left\{\max_{j \in [d]} |\sigma_j/\hat{\sigma}_j - 1| > x, \max_{j \in [d]} |\hat{\sigma}_j^2/\sigma_j^2 - 1| \leq 3/4 \right\} + \mathbb{P}\left\{ \max_{j \in [d]} |\hat{\sigma}_j^2/\sigma_j^2 - 1| > 3/4 \right\} \nonumber \\
    & \leq \mathbb{P}\left\{\max_{j \in [d]} |\hat{\sigma}_j/\sigma_j - 1| > x/2 \right\} + \mathbb{P}\left\{ \max_{j \in [d]} |\hat{\sigma}_j^2/\sigma_j^2 - 1| > 3/4 \right\} \nonumber  \\
    & \leq \mathbb{P}\left\{\max_{j \in [d]} |\hat{\sigma}_j^2/\sigma_j^2 - 1| > x/2 \right\} + \mathbb{P}\left\{ \max_{j \in [d]} |\hat{\sigma}_j^2/\sigma_j^2 - 1| > 3/4 \right\} \leq 2 \mathbb{P}\left\{\max_{j \in [d]} |\hat{\sigma}_j^2/\sigma_j^2 - 1| > x/2 \wedge 3/4 \right\} \nonumber \\
    & \leq 2\mathbb{P}\left\{\max_{j \in [d]} |\hat{\sigma}_j^2 - \sigma_j^2| > \sigma_\mathrm{min}^2 (x/2 \wedge 3/4) \right\} = 2\mathbb{P}\left\{\| \hat{D} - D \|_2 > \sigma_\mathrm{min}^2 (x/2 \wedge 3/4) \right\} \nonumber \\
    & = 2\mathbb{P}\left\{\| \operatorname{diag}(\operatorname{diag}(\hat{\Omega} - \Omega)) \|_2 > \sigma_\mathrm{min}^2 (x/2 \wedge 3/4) \right\} \leq 2\mathbb{P}\left\{\| \hat{\Omega} - \Omega \|_2 > \sigma_\mathrm{min}^2 (x/2 \wedge 3/4) \right\},
\end{align}
where in the second and third inequalities we used the fact that $|1/\sqrt{x} - 1| \leq 2 |\sqrt{x} - 1|$ for $|x - 1| \leq 3/4$ and the fact that $|\sqrt{x} - 1| \leq |x - 1|$, respectively. Combining \eqref{eq:conc_corr_to_cov}, \eqref{eq:conc_corr_to_cov_2}, \eqref{eq:conc_corr_to_cov_3} gives
\begin{align}\label{eq:conc_corr_to_cov_4}
    \mathbb{P}\left\{ \| \hat{\Sigma} - \Sigma \|_2 > x \right\} & \leq \mathbb{P}\left\{ \| \hat{\Omega} - \Omega \|_2 > \sigma_\mathrm{min}^2 x/2 \right\} + \mathbb{P}\left\{ \| \hat{\Omega} - \Omega \|_2 > \nu^2 \right\} \nonumber \\
    & + 4 \mathbb{P}\left\{ \| \hat{\Omega} - \Omega \|_2 > \sigma_\mathrm{min}^2 (3/4 \wedge \sigma_\mathrm{min}^2 x/24\nu^2) \right\} \leq 6 \mathbb{P}\left\{ \| \hat{\Omega} - \Omega \|_2 > \sigma_\mathrm{min}^4 x/24\nu^2 \right\},
\end{align}
using again the fact that $x \in [0,1]$ and $\sigma^2_\mathrm{min} \leq \nu^2$, which shows that it is enough to control $\|\hat{\Omega} - \Omega \|_2$. In this regard, for all $x > 0$ we have 
\begin{align*}
     \mathbb{P} & \left\{\|\hat{\Omega} - \Omega \|_2  > x \right\} = \mathbb{P}\left\{\|n^{-1}\sum_{i=1}^{n} X_{i} X_{i}^T - \hat{\mu} \hat{\mu}^T -\Omega \|_2 > x \right\} \\
    & = \mathbb{P}\left\{\|n^{-1}\sum_{i=1}^{n} (X_{i} - \mu)(X_{i}-\mu)^T -\Omega -(\hat{\mu} - \mu)(\hat{\mu}-\mu)^T \|_2 > x \right\} \\
    & \leq \mathbb{P}\left\{\|n^{-1}\sum_{i=1}^{n} (X_{i} - \mu)(X_{i}-\mu)^T -\Omega \|_2 > x/2 \right\} + \mathbb{P}\left\{\|\hat{\mu} - \mu\|_2^2 > x/2 \right\} \\
    & = \mathbb{P}\left\{\|n^{-1}\sum_{i=1}^{n} (X_{i} - \mu)(X_{i}-\mu)^T -\Omega \|_2 > x/2 \right\} + \mathbb{P}\left\{\|n^{-1}\sum_{i = 1}^{n}(X_{i} - \mu)\|_2 > \sqrt{x/2} \right\} \\
    & \leq 2 \cdot 9^d \exp \left\{-n \frac{x}{32 \nu^2} \wedge\left(\frac{x}{32 \nu^2}\right)^2  \right\} + 5^d \exp \left\{-\frac{n x}{16 \nu^2} \right\},
\end{align*}
where we used Propositions \ref{prop:norm_SG} and \ref{prop:concentration_cov} in Appendix \ref{sec:technical_ineq} in the last inequality. Inverting this bound leads to 
\begin{align}\label{eq:cov_concentration}
    \|\hat{\Omega} - \Omega\|_2 \leq K_1 \nu^2 \sqrt{\frac{d + \log(1/t)}{n}} \vee \frac{d + \log(1/t)}{n},
\end{align}
with probability $\geq 1 - t$, for a universal constant $K_1 > 0$ sufficiently large. Now, combining this with \eqref{eq:conc_corr_to_cov_4} and \eqref{eq:cov_concentration} shows that
\begin{align}\label{eq:cov_concentration_2}
    \|\hat{\Sigma} - \Sigma\|_2 \leq \frac{K_1 \nu^4}{\sigma^4_{\mathrm{min}}} \sqrt{\frac{d + \log(1/t)}{n}} \vee \frac{d + \log(1/t)}{n},
\end{align}
with probability $\geq 1 - t$, and the assumption that $C_t \leq 1$ allows focusing on the subgaussian regime. This  completes the proof.
\end{proof}

First, observe that the dependence on $1/\sigma_\mathrm{min}^2$ is reasonable, as the smaller the minimum variance the more problematic the normalisation matrix $D^{-1/2}$. Second, observe that since we are restricting to the case $C_t \leq 1$, i.e.~$n \gtrsim d$, the subgaussian regime prevails, and we obtain that  
\[
\|\hat{\Sigma} - \Sigma\|_2 \lesssim \sqrt{\frac{d}{n}}
\]
in probability. Similar rates, with logarithmic factors, were found in high-dimensional covariance matrix estimation with missing observations \citep{lounici14}, sample covariance matrix estimator of reduced effective rank population matrices \citep{Bunea_2015}, concentration of the adjacency matrix and of the Laplacian in random graphs \citep{oliveira2010concentration}, and in the statistical analysis of latent generalized correlation matrix estimation in transelliptical distribution \citep{Han_2017}. In particular, using the additional assumption that the data is generated according to a transelliptical distribution, \cite{Han_2017} gave an estimator $\hat{K}$ based on Kendall's tau and proved that
\[
\|\hat{K} - \Sigma\|_2 \lesssim \sqrt{\frac{r(\Sigma) \log(d)}{n}},
\]
where $r(\Sigma) := \operatorname{tr}(\Sigma)/\|\Sigma\|_2$ is the effective dimension of $\Sigma$. This is analogous to the bound given in Proposition \ref{prop:corr_concentration}, where we have an extra factor of $\nu^2/\sigma_\mathrm{min}^2$, which can be interpreted as the condition number and might lead to a suboptimal bound when it is large, and the ambient dimension $d$ in place of the intrinsic dimension $r(\Sigma)$. This would improve the bound sensibly in the case of an approximately low-rank correlation matrix.

\subsection{Proofs for Section \ref{subsec:cycles}}

\begin{proof}[Proof of Proposition \ref{prop:measure_consistency}]
Let $\sigma_\mathbb{S}^2$ be a nonnegative collection such that $\bar{\operatorname{av}}_j(\sigma^2_{\mathbb{S}}) = 1 \text{ for all } j \in [d]$. Using, for the third equality, the facts that $A_{V} \boldsymbol{1}_d$ also satisfies these properties and that $\bar{\text{av}}$ is linear, we have that
    \begin{align*}
    1 - V(\sigma_{\mathbb{S}}^2) &= 1 - \inf\left\{ \epsilon \in [0,1] : \sigma_{\mathbb{S}}^2 = (1-\epsilon)A_{V} \boldsymbol{1}_d + \epsilon {\sigma'}_{\mathbb{S}}^2 \text{ with } \bar{\operatorname{av}}_j(\sigma'^2_{\mathbb{S}}) = 1 \text{ for all } j \in [d] \right\} \\
    & = \sup\left\{ \epsilon \in [0,1] : \sigma_{\mathbb{S}}^2 = \epsilon A_{V} \boldsymbol{1}_d + (1-\epsilon) {\sigma'}_{\mathbb{S}}^2 \text{ with } \bar{\operatorname{av}}_j(\sigma'^2_{\mathbb{S}}) = 1 \text{ for all } j \in [d] \right\} \\
    & =  \sup\left\{ \epsilon \in [0,1] : \epsilon \leq \min_{j \in [d]} \min_{S \in \mathbb{S}_j} \sigma_{S,j}^2  \right\} = \min_{j \in [d]} \min_{S \in \mathbb{S}_j} \sigma_{S,j}^2,
\end{align*}
as claimed.
\end{proof}

\begin{proof}[Proof of Theorem \ref{thm:combined_oracle_test}]
We are interested in finding $C_{\alpha}^{(R+V)} \in (0,2)$ such that $\forall \alpha \in (0, 1)$ we have 
\[
\mathbb{P}_{H_0} \{R(\hat{\Sigma}_{\mathbb{S}}) + V(\hat{\sigma}_{\mathbb{S}}^2) \geq C_{\alpha}^{(R+V)} \} \leq \alpha.
\]
First, observe that \[ \mathbb{P}_{H_0}\{R(\hat{\Sigma}_{\mathbb{S}}) \geq C_{\alpha}^{(R+V)}/2  \} + \mathbb{P}_{H_0}\{V(\hat{\sigma}_{\mathbb{S}}^2) \geq C_{\alpha}^{(R+V)}/2 \},
\]
so that it is enough to find $C_{\alpha}^{(R+V)}$ such that
\[
\max \left\{\mathbb{P}_{H_0}\{R(\hat{\Sigma}_{\mathbb{S}}) \geq C_{\alpha}^{(R+V)}/2  \} , \mathbb{P}_{H_0}\{V(\hat{\sigma}_{\mathbb{S}}^2) \geq C_{\alpha}^{(R+V)}/2 \} \right\} \leq \alpha/2.
\]
As for the former, Theorem \ref{thm:oracle_test} applies, showing that 
\begin{align}\label{eq:C_alpha_R}
    \mathbb{P}_{H_0}\{R(\hat{\Sigma}_{\mathbb{S}}) \geq C_{\alpha}^{(R+V)}/2  \} \leq \alpha/2
\end{align}
if $ C_{\alpha}^{(R+V)} \geq 2C_{\alpha/2}$, as long as $C_{\alpha/2} \leq 1$. As for the second term, since $V(\sigma_{\mathbb{S}}^2) = 0$ under the null, 
\begin{align*}
\mathbb{P}_{H_0}\left\{V(\hat{\sigma}_{\mathbb{S}}^2) \geq C_{\alpha}^{(R+V)}/2  \right\} & = \mathbb{P}_{H_0}\left\{V(\hat{\sigma}_{\mathbb{S}}^2) - V(\sigma_{\mathbb{S}}^2) \geq C_{\alpha}^{(R+V)}/2  \right\} \leq  \mathbb{P}_{H_0}\left\{|V(\hat{\sigma}_{\mathbb{S}}^2) - V(\sigma_{\mathbb{S}}^2)| \geq C_{\alpha}^{(R+V)}/2  \right\} \\
& = \mathbb{P}_{H_0}\left\{|\min_{j \in [d]} \min_{S \in \mathbb{S}_j} \hat{\sigma}^2_{S,j} - \min_{j \in [d]} \min_{S \in \mathbb{S}_j} \sigma^2_{S,j}| \geq C_{\alpha}^{(R+V)}/2  \right\} \\
& \leq \mathbb{P}_{H_0}\left\{\max_{j \in [d]} \max_{S \in \mathbb{S}_j} |\hat{\sigma}^2_{S,j} - \sigma^2_{S,j}| \geq C_{\alpha}^{(R+V)}/2  \right\} \\
     &  \leq \sum_{S \in \mathbb{S}} \sum_{j \in S} \mathbb{P}_{H_0}\left\{ |\hat{\sigma}^2_{S,j} - \sigma^2_{S,j}| \geq C_{\alpha}^{(R+V)}/2  \right\} \\
     & \leq \left(\sum_{S \in \mathbb{S}} |S| \right) \max_{j \in [d]} \max_{S \in \mathbb{S}_j} \mathbb{P}_{H_0}\left\{ |\hat{\sigma}^2_{S,j} - \sigma^2_{S,j}| \geq C_{\alpha}^{(R+V)}/2  \right\}.
\end{align*}
Now, the standard Chernhoff method for subgaussian and subexponential random variables (see Propositions \ref{prop:hoeffding} and \ref{prop:SE_tail_bound} in Appendix \ref{sec:technical_ineq}) gives, for all $j \in [d]$, for all $S \in \mathbb{S}_j$, for all $x \leq 8 \nu^2$ (so that we can focus on the subgaussian regime),
\begin{align*}
    \mathbb{P}_{H_0}&\left\{ |\hat{\sigma}^2_{S,j} - \sigma^2_{S,j}| > x  \right\}
    \leq \mathbb{P}\left\{ |n_S^{-1} \sum_{i \in [n_S]} (X_{S,ij} - \hat{\mu}_{S,j})^2 - \mathbb{E}[(X_{S, 1j} - \mu_{S, j})^2] | > x \right\} \\
    & \leq \mathbb{P}\left\{ |n_S^{-1} \sum_{i \in [n_S]} (X_{S,ij} - \mu_{S,j})^2 - \mathbb{E}[(X_{S, 1j} - \mu_{S,j})^2] | > x/2 \right\} +  \mathbb{P}\left\{ |\hat{\mu}_{S, j} - \mu_{S, j} |^2 > x/2  \right\} \\
    & \leq 2\exp\left\{-\frac{n_{S} x^2}{1024 \nu^4} \right\} + 2\exp\left\{-\frac{n_{S} x}{8 \nu^2} \right\} \leq 4\exp\left\{-\frac{n_{S} x}{1024 \nu^4} \right\} \leq 4\exp\left\{-\frac{(\min_{S \in \mathbb{S}} n_{S}) x^2}{1024 \nu^4} \right\}.
\end{align*}
This implies that, if $C_{\alpha}^{(R+V)} \leq 16\nu^2$,   $\mathbb{P}_{H_0}\left\{V(\hat{\sigma}_{\mathbb{S}}^2) \geq C_{\alpha}^{(R+V)}/2  \right\} \leq \alpha/2$ is satisfied if \[
\left(\sum_{S \in \mathbb{S}} |S| \right) \exp\left\{-\frac{\min_{S \in \mathbb{S}} n_S C^2_{\alpha}}{1024 \nu^4} \right\} \leq \frac{\alpha}{8},
\]
which further shows that it is sufficient to take 
\begin{equation}\label{eq:C_alpha_V}
    C_{\alpha}^{(R+V)} \geq K_2 \nu^2 \sqrt{\frac{\log\left(\sum_{S \in \mathbb{S}} |S|/\alpha\right)}{\min_{S \in \mathbb{S}} n_S}},
\end{equation}
for a universal constant $K_2 > 0$. In order to satisfy both (\ref{eq:C_alpha_R}) and (\ref{eq:C_alpha_V}) at the same time, it is sufficient to take the maximum between the two right-hand sides, while ensuring that $C_{\alpha}^{(R+V)} \leq \min\{ 2, 16\nu^2\}$, and the first statement follows. As for the second part concerning the Type-II error, the proof follows similarly to that of the second part of Theorem \ref{thm:oracle_test}.
\end{proof}

\begin{proof}[Proof of Theorem \ref{thm:cycle_minimax}]
For the $d$-cycle, our measure of consistency of the variances is \[
V(\Sigma_{\mathbb{S}}) =  1 - \min_{j \in [d]} \min_{S \in \{\{j-1,j \}, \{j,j+1 \} \}} \sigma^2_{S,j}.
\] 
We will show that testing 
 \[
 H_0: V(\Sigma_\mathbb{S}) = 0 \quad \text{ vs. } \quad H_1(\rho): V(\Sigma_\mathbb{S}) > \rho, 
\]
requires at least a separation of the order $\sqrt{\log d/n}$, and since $H_1^\prime: R(\Sigma_\mathbb{S}) + V(\sigma^2_\mathbb{S}) > \rho$, the statement would follow. Formally, this corresponds to assuming that $\Sigma_\mathbb{S}$ is always compatible, and constructing prior distributions just on $\{V(\sigma^2_\mathbb{S}) = 0\}$ and$ \{V(\sigma^2_\mathbb{S}) > \rho\}$.  We construct a lower bound to show that the minimax separation in this case is at least $c_1 \sqrt{\log d/\min_{i \in [d]} n_i}$, where $c_1 > 0$ is a universal constant. Let
\[
P_0 = \left(N^{\otimes n_1}\left(\boldsymbol{0}_{2}, \begin{pmatrix}
            1 & 0 \\
            0 & 1
        \end{pmatrix}\right), \ldots, N^{\otimes n_d}\left(\boldsymbol{0}_{2}, \begin{pmatrix}
           1 & 0 \\
            0 & 1
        \end{pmatrix}\right) \right),
\]
and
\begin{align*}
    P_j & = \left(N^{\otimes n_1}\left(\boldsymbol{0}_{2}, \begin{pmatrix}
            1 & 0 \\
            0 & 1
        \end{pmatrix}\right), \ldots, N^{\otimes n_j}\left(\boldsymbol{0}_{2}, \begin{pmatrix}
            1 & 0 \\
            0 & 1-\delta
        \end{pmatrix}\right), \right.\\
        & \hspace{4cm} \left. N^{\otimes n_{j+1}}\left(\boldsymbol{0}_{2}, \begin{pmatrix}
            1+\delta & 0 \\
            0 & 1
        \end{pmatrix}\right),
        \ldots, N^{\otimes n_d}\left(\boldsymbol{0}_{2}, \begin{pmatrix}
           1 & 0 \\
            0 & 1
        \end{pmatrix}\right) \right),
\end{align*}
for $j \in [d]$, and $\delta > 0$. It is clear that each $P_j$ lies in $H_1(\delta)$, and that \begin{align*}
    \frac{dP_j}{dP_0}((x_{1,1},y_{1,1}), & \ldots,(x_{1,n_1},y_{1,n_1}),(x_{2,1},y_{2,1}),\ldots,(x_{d,n_d},y_{d,n_d}) \\
    & =  \prod_{h \in [n_j]}\frac{1}{(1-\delta)^{1/2}}\exp\left\{-\frac{\delta}{2(1-\delta)} y_{j,h}^2\right\} \prod_{h \in [n_{j+1}]} \frac{1}{(1+\delta)^{1/2}}\exp\left\{\frac{\delta}{2(1+\delta)} x_{j+1,h}^2\right\}.
\end{align*}
Now, using the same strategy outlined in Section \ref{sec:minimax_LB}, it is enough to control the Total Variation distance 
\begin{align*}
    4 \operatorname{TV}\Bigg\{P_0, \frac{1}{d}\sum_{j=1}^d P_j \Bigg\}^2 & \leq \operatorname{\chi^2}\left(P_0, \frac{1}{d}\sum_{j=1}^d P_j \right) = \int \frac{\{\frac{1}{d}\sum_{j=1}^d dP_j \}^2}{dP_0} - 1 \\
    & = \frac{1}{d^2}\sum_{j_1, j_2 =1}^d\int \frac{d P_{j_1} d P_{j_2}}{d P_0} - 1 = \frac{1}{d^2}\sum_{j_1, j_2 =1}^d \int \frac{d P_{j_1}}{d P_0}\frac{d P_{j_2}}{d P_0}d P_0  - 1. 
\end{align*}
Now, it is easy to see that if $j_2 \notin \{j_1 - 1, j_1, j_1 + 1\}$, then $\int \frac{d P_{j_1}}{d P_0}\frac{d P_{j_2}}{d P_0}d P_0 = 1$. This happens to be the case also when $j_2 = j_1 \pm 1$, since, for $j_2 = j_1 - 1 =: j-1$, we have 
\begin{align*}
    \int \frac{d P_{j_1}}{d P_0}\frac{d P_{j_2}}{d P_0}d P_0 & = \int (1 - \delta)^{-n_{j-1}/2} (1-\delta^2)^{-n_j/2} (1 + \delta)^{-n_{j+1}/2} \prod_{h \in [n_{j-1}]} \exp\left\{-\frac{\delta}{2(1-\delta)} y_{j-1,h}^2\right\} \\
    & \quad \quad \quad \times \prod_{h \in [n_j]} \exp\left\{\frac{\delta}{2(1+\delta)} x_{j,h}^2 -\frac{\delta}{2(1-\delta)} y_{j,h}^2\right\} \prod_{h \in [n_{j+1}]} \exp\left\{\frac{\delta}{2(1+\delta)} x_{j+1,h}^2 \right\} dP_0 \\
    & = \int \prod_{h \in [n_{j-1}]} \frac{1}{2\pi(1-\delta)^{1/2}}\exp\left\{ - \frac{1}{2} \begin{pmatrix}
        x_{j-1,h} \\ y_{j-1,h}
    \end{pmatrix}^T \begin{pmatrix}
       1 & 0 \\
       0 & \frac{1}{1-\delta}
    \end{pmatrix} \begin{pmatrix}
        x_{j-1,h} \\ y_{j-1,h}
    \end{pmatrix} \right\} d \boldsymbol{x}_{j-1} d \boldsymbol{y}_{j-1} \\
    & \times \int \prod_{h \in [n_{j}]} \frac{1}{2\pi(1-\delta^2)^{1/2}}\exp\left\{ - \frac{1}{2} \begin{pmatrix}
        x_{j,h} \\ y_{j,h}
    \end{pmatrix}^T \begin{pmatrix}
       \frac{1}{1+\delta} & 0 \\
       0 & \frac{1}{1-\delta}
    \end{pmatrix} \begin{pmatrix}
        x_{j,h} \\ y_{j,h}
    \end{pmatrix} \right\} d \boldsymbol{x}_{j} d \boldsymbol{y}_{j} \\
    & \times \int \prod_{h \in [n_{j+1}]} \frac{1}{2\pi(1+\delta)^{1/2}}\exp\left\{ - \frac{1}{2} \begin{pmatrix}
        x_{j+1,h} \\ y_{j+1,h}
    \end{pmatrix}^T \begin{pmatrix}
       \frac{1}{1+\delta} & 0 \\
       0 & 1
    \end{pmatrix} \begin{pmatrix}
        x_{j+1,h} \\ y_{j+1,h}
    \end{pmatrix} \right\} d \boldsymbol{x}_{j+1} d \boldsymbol{y}_{j+1},
\end{align*}
which is equal to $1$. Similarly, if $j_1 = j_2 = j$, 
\begin{align*}
    \int & \frac{d P_{j_1}}{d P_0}\frac{d P_{j_2}}{d P_0}d P_0  = \int (1-\delta)^{-n_j}(1+\delta)^{-n_{j+1}} \prod_{h \in [n_j]} \exp\left\{-\frac{\delta}{1-\delta} y_{j,h}^2\right\} \prod_{h \in [n_{j+1}]} \exp\left\{\frac{\delta}{1+\delta} x_{j+1,h}^2\right\} dP_0 \\
    & = (1-\delta)^{-n_j}(1+\delta)^{-n_{j+1}} \int \prod_{h \in [n_{j}]} \frac{1}{2\pi}\exp\left\{ - \frac{1}{2} \begin{pmatrix}
        x_{j,h} \\ y_{j,h}
    \end{pmatrix}^T \begin{pmatrix}
       1 & 0 \\
       0 & \frac{1+\delta}{1-\delta}
    \end{pmatrix} \begin{pmatrix}
        x_{j,h} \\ y_{j,h}
    \end{pmatrix} \right\} d \boldsymbol{x}_{j} d \boldsymbol{y}_{j} \\
    & \quad\quad\quad \quad\quad\quad \times \int \prod_{h \in [n_{j+1}]} \frac{1}{2\pi}\exp\left\{ - \frac{1}{2} \begin{pmatrix}
        x_{j+1,h} \\ y_{j+1,h}
    \end{pmatrix}^T \begin{pmatrix}
       \frac{1-\delta}{1+\delta} & 0 \\
       0 & 1
    \end{pmatrix} \begin{pmatrix}
        x_{j,h} \\ y_{j,h}
    \end{pmatrix} \right\} d \boldsymbol{x}_{j+1} d \boldsymbol{y}_{j+1} \\
    & = (1-\delta^2)^{-n_j/2} (1-\delta^2)^{-n_{j+1}/2} \int \prod_{h \in [n_{j}]} \frac{1}{2\pi}\sqrt{\frac{1+\delta}{1-\delta}}\exp\left\{ - \frac{1}{2} \begin{pmatrix}
        x_{j,h} \\ y_{j,h}
    \end{pmatrix}^T \begin{pmatrix}
       1 & 0 \\
       0 & \frac{1+\delta}{1-\delta}
    \end{pmatrix} \begin{pmatrix}
        x_{j,h} \\ y_{j,h}
    \end{pmatrix} \right\} d \boldsymbol{x}_{j} d \boldsymbol{y}_{j} \\
    & \quad\quad\quad \quad\quad\quad \times \int \prod_{h \in [n_{j+1}]} \frac{1}{2\pi}\sqrt{\frac{1-\delta}{1+\delta}}\exp\left\{ - \frac{1}{2} \begin{pmatrix}
        x_{j+1,h} \\ y_{j+1,h}
    \end{pmatrix}^T \begin{pmatrix}
       \frac{1-\delta}{1+\delta} & 0 \\
       0 & 1
    \end{pmatrix} \begin{pmatrix}
        x_{j,h} \\ y_{j,h}
    \end{pmatrix} \right\} d \boldsymbol{x}_{j+1} d \boldsymbol{y}_{j+1} \\
    & = (1-\delta^2)^{-n_j/2} (1-\delta^2)^{-n_{j+1}/2} = (1-\delta^2)^{-(n_j+n_{j+1})/2}.
\end{align*}
It follows that
\begin{align*}
    4 \operatorname{TV}\Bigg\{P_0, \frac{1}{d}\sum_{j=1}^d P_j \Bigg\}^2 & \leq \frac{1}{d^2}\sum_{j_1, j_2 =1}^d \int \frac{d P_{j_1}}{d P_0}\frac{d P_{j_2}}{d P_0}d P_0  - 1 \\
    & = \frac{1}{d^2}\sum_{j_1, j_2 =1}^d \mathbbm{1}_{j_1 = j_2} (1-\delta^2)^{-(n_{j_1} + n_{j_1+1})/2} + \mathbbm{1}_{j_1 \neq j_2} \\
    & = \frac{1}{d^2} \sum_{j =1}^d (1-\delta^2)^{-(n_j + n_{j+1})/2} - \frac{1}{d} \\
    & \leq  \frac{1}{d^2} \sum_{j =1}^d \exp\{+(n_j + n_{j+1})\delta^2/2\} - \frac{1}{d}, 
\end{align*}
from which we see that $\operatorname{TV}\left\{P_0, \frac{1}{d}\sum_{j=1}^d P_j \right\} \leq 1/2$ if $\delta \leq \sqrt{2\log(1+d)/(n_j + n_{j+1})}$ for all $j \in [d]$. The above bound on the total variation distance demonstrates that we may choose $\delta=\sqrt{\log(1+d)/\min_j n_j}$, and hence that we have
\[
    \rho^* \geq \delta = \biggl\{ \frac{\log(1+d)}{\min_j n_j} \biggr\}^{1/2},
\]
as claimed.
\end{proof}

\begin{proof}[Proof of Proposition \ref{prop:KKT_cycle}]
    (i)  We may suppose without loss of generality that $|\rho_i| \neq 1$ as, otherwise, we may perform the reduction given in Proposition \ref{prop:reduction_2}. Possibly, this reduces the $d$-cycle to a $3$-cycle: if there are no more correlations equal to $\pm 1$, then we proceed, otherwise we know $R$ exactly thanks to Example \ref{ex:explicit_theta_zero} and we can check that the claim holds. Now, calling $M_{i,i+1} = \cos\varphi_i$, we have 
    \begin{align*}
         \lambda^* = 1 - R(\Sigma_{\mathbb{S}}) &= \frac{1}{d} \sup\{ \mathrm{tr}(\Sigma) : \Sigma \succeq 0, \Sigma_{11} = \ldots = \Sigma_{dd}, \Sigma_{\mathbb{S}'} - A \Sigma \succeq_{\mathbb{S}'} 0\} \\
         &= \sup\{\lambda : M \succeq 0, M_{11} = \ldots = M_{dd} = 1, 1-\lambda \geq |\rho_i -\lambda\cos\varphi_i| \text{ for all } i \in [d] \}\\
         & = \sup\bigg\{\lambda : \lambda \leq  \min_{i \in [d]}\min\bigg\{\frac{1-\rho_i}{1 - \cos\varphi_i}, \frac{1+\rho_i}{1+ \cos\varphi_i} \bigg\} , \\
         & \quad \hspace{3cm} \sum\limits_{i \in K} \varphi_i \leq (|K| -1 ) \pi + \sum\limits_{i \not\in K} \varphi_i \text{ for all } K \subseteq [d] \text{ with }|K| \text{ odd}\bigg\}.
    \end{align*}
   Now, this implies that 
    \begin{align*}
         \frac{1}{\lambda^*} &= \inf\bigg\{z : z \geq  \max_{i \in [d]}\max\bigg\{\frac{1 - \cos\varphi_i}{1-\rho_i}, \frac{1+ \cos\varphi_i}{1+\rho_i} \bigg\} , \\
         & \quad \hspace{3cm} \sum\limits_{i \in K} \varphi_i \leq (|K| -1 ) \pi + \sum\limits_{i \not\in K} \varphi_i \text{ for all } K \subseteq [d] \text{ with }|K| \text{ odd}\bigg\}.
    \end{align*}
    Calling $g(\varphi_i) = (1 - \cos\varphi_i)/(1-\rho_i)$ and $h(\varphi_i) = (1 + \cos\varphi_i)/(1+\rho_i)$ for all $i \in [d]$, this is a linearly constrained finite minimax problem (see Chapter 2 in \cite{polak_2012_optimization}), namely
    \[
    \frac{1}{\lambda^*} = \min \max_{i \in [d]}\max\{g(\varphi_i), h(\varphi_i) \}
    \]
    under the $2^{d-1}$ linear constraints \[
    \sum\limits_{i \in K} \varphi_i \leq (|K| -1 ) \pi + \sum\limits_{i \not\in K} \varphi_i \text{ for all } K \subseteq [d] \text{ with }|K| \text{ odd},
    \]
    which is equivalent to
    \begin{align*}
        \text{ minimise } & z \\
        \text{ subject to } & g(\varphi_i) \leq z \text { for all } i \in [d], \\
        & h(\varphi_i) \leq z \text { for all } i \in [d], \\
        & \sum\limits_{i \in K} \varphi_i \leq (|K| -1 ) \pi + \sum\limits_{i \not\in K} \varphi_i \text{ for all } K \subseteq [d] \text{ with }|K| \text{ odd}.
    \end{align*}
    As a result, every optimal solution $(\varphi_1^*, \ldots, \varphi_d^*)$ must satisfy the Karush–Kuhn–Tucker (KKT) conditions (see Chapter 5 of \cite{boyd_conex_opt_2004}, Chapter 28-30 of \cite{rockafellar-1970a})
    \begin{align*}
    (i) \quad & \left(\frac{\lambda_i}{1-\rho_i} - \frac{\lambda_{i+d}}{1+\rho_i}\right)\sin\varphi_i = \mathop{\sum_{|K| odd}}\limits_{i \in K} \mu_K - \mathop{\sum_{|K| odd}}\limits_{i \in K^c} \mu_K, \text{ for all } i \in [d], \\
    (ii) \quad & \lambda_i \geq 0, \lambda_{d+i} \geq 0, \text{ for all } i \in [d], \\
    (iii) \quad & \sum_{i = 1}^d (\lambda_i + \lambda_{d+i}) = 1, \\
    (iv) \quad & \lambda_i (g(\varphi_i) - \max_{i \in [d]}\max\{g(\varphi_i), h(\varphi_i) \}) = 0, \text{ for all } i \in [d], \\
    (v) \quad & \lambda_{d+i} (h(\varphi_i) - \max_{i \in [d]}\max\{g(\varphi_i), h(\varphi_i) \}) = 0, \text{ for all } i \in [d], \\
    (vi) \quad & g(\varphi_i) \leq \max_{i \in [d]}\max\{g(\varphi_i), h(\varphi_i) \}), \text{ for all } i \in [d], \\
    (vii) \quad & h(\varphi_i) \leq \max_{i \in [d]}\max\{g(\varphi_i), h(\varphi_i) \}), \text{ for all } i \in [d], \\
     (viii) \quad & \mu_K \geq 0, \text{ for all } K \subseteq [d] \text{ with }|K| \text{ odd}, \\
     (ix) \quad & \sum\limits_{i \in K} \varphi_i \leq (|K| -1 ) \pi + \sum\limits_{i \not\in K} \varphi_i \text{ for all } K \subseteq [d] \text{ with }|K| \text{ odd}, \\
     (x) \quad & \mu_K\left(\sum\limits_{i \in K} \varphi_i - (|K| -1 ) \pi - \sum\limits_{i \not\in K} \varphi_i\right) =0,  \text{ for all } K \subseteq [d] \text{ with }|K| \text{ odd}. \\
    \end{align*}

    Now, observe that conditions (iv) and (v) imply that, for all $i \in [d]$, either $g(\varphi_i)$ or $h(\varphi_i)$ reaches the maximum, meaning that the minimal $1/\lambda^*$ is equal to this common value. Indeed, if the original $d$-cycle is completable, this statement is trivial, since we must have $|\rho_i - \cos\varphi^*_i| = 0$. This is the only case in which we can have \[
    \frac{1-\rho_i}{1 - \cos\varphi_i} = \frac{1+\rho_i}{1+ \cos\varphi_i} = 1,
    \]
    meaning that when the $d$-cycle is incompatible, then either $\max_{i \in [d]}\max\{g(\varphi_i), h(\varphi_i) \} - g(\varphi_i) > 0$ or $\max_{i \in [d]}\max\{g(\varphi_i), h(\varphi_i) \} - h(\varphi_i) > 0$.  Indeed, if $R > 0$, either $\lambda_i$ or $\lambda_{d+i}$ must be equal to zero since either $g(\varphi_i)$ or $h(\varphi_i)$ has a strictly positive gap from $\max_{i \in [d]}\max\{g(\varphi_i), h(\varphi_i) \}$. If both $\lambda_i = 0$ and $\lambda_{i+d} = 0$, we would have \[
    \mathop{\sum_{|K| odd}}\limits_{i \in K} \mu_K = \mathop{\sum_{|K| odd}}\limits_{i \in K^c} \mu_K,\]
    which is a contradiction due to the fact that there exists a unique $\mu_K \neq 0$. To prove the existence part, observe that if $\mu_K = 0$ for all $K \subseteq [d]$ with $|K|$ odd, then we would have \[
    \left(\frac{\lambda_i}{1-\rho_i} - \frac{\lambda_{i+d}}{1+\rho_i}\right)\sin\varphi_i = 0
    \]
    for all $i \in [d]$, and since there exists at least a $j$ such that $\lambda_j + \lambda_{d+j} > 0$ due to (iii), this would imply that $\varphi_j \in \{0, \pi\}$, which leads to $\theta_j \in \{0, \pi\}$, which is excluded from our analysis. To prove the uniqueness part, suppose there exists another $[d] \supseteq M \neq K$, with $|M|$ odd, such that
    \[
    \begin{cases}
        \sum\limits_{i \in K} \varphi_i = (|K| -1 ) \pi + \sum\limits_{i \not\in K} \varphi_i \\
        \sum\limits_{i \in M} \varphi_i = (|M| -1 ) \pi + \sum\limits_{i \not\in M} \varphi_i, 
    \end{cases}
    \]
    hence summing these equalities gives
    \[
    2\left(\sum\limits_{i \in K\cap M} \varphi_i - \sum\limits_{i \in K^c \cap M^c} \varphi_i \right) = (|K|+|M|-2)\pi.
    \]
    Now, if we suppose that $K^c \cap M^c = \varnothing$, meaning that $K\cup M = [d]$, it is easy to show that $2|K \cap M| \leq |K|+|M|-2$. Indeed, $|K \cap M| \leq |K| \wedge |M|$, with equality if and only if $M \subseteq K$ (or viceversa): in this case we must have $|K| \geq |M| + 2$, otherwise they would be equal, hence $2|K \cap M| \leq 2(|K|\wedge|M|) = 2|M|$ while $|K|+|M|-2 \geq |M|+2+|M|-2 = 2|M|$. If the equality is not reached, $2|K \cap M| \leq 2(|K|\wedge|M|-1) = 2|M| - 2$, while $|K|+|M|-2 \geq |M| + |M| - 2 = 2|M| - 2$. This shows that $2|K \cap M| \leq |K|+|M|-2$, which implies that the equality above can be verified only if $\varphi_i = \pi$ for all $i \in K \cap M$, which is excluded from our analysis. Furthermore, if $K^c \cap M^c \neq \varnothing$, this is even worse unless $\varphi_i = 0$ for all $i \in K^c \cap M^c$, which is again excluded from our analysis. This completes the proof of the fact that for all $i \in [d]$, if $R > 0$, exactly one between $\lambda_i$ and $\lambda_{d_i}$ is greater than zero. As a corollary, we have that the optimal $(\varphi_1^*, \ldots, \varphi_d^*)$ satisfies \[
    1 - \lambda^* = |\rho_i - \lambda^*\cos\varphi_i^*|, \text{ for all } i \in [d],
    \]
    as required.
    
    (ii) The primal set is strictly feasible, hence we know that $R$ is attained in the dual set, which is enough to prove existence. As for uniqueness, suppose there exists two optimal $\Sigma_1, \Sigma_2$ such that 
    \[
    \begin{cases}
        \Sigma_{\mathbb{S}} = \lambda^*A\Sigma_1 + (1-\lambda^*)\Sigma_{\mathbb{S}}' \\
    \Sigma_{\mathbb{S}} = \lambda^*A\Sigma_2 + (1-\lambda^*)\Sigma_{\mathbb{S}}''. 
    \end{cases}
    \]
    This implies that for all $\mu \in (0,1)$ 
    \[
    \Sigma_{\mathbb{S}} = \lambda^*A(\mu\Sigma_1+(1-\mu)\Sigma_2) + (1-\lambda^*)(\mu\Sigma_{\mathbb{S}}'+(1-\mu)\Sigma_{\mathbb{S}}''),
    \]
    meaning that $\mu\Sigma_1+(1-\mu)\Sigma_2$ is optimal. By the optimality of $\Sigma_1$ and $\Sigma_2$ we must have that $\Sigma_{\mathbb{S}}'$ and $\Sigma_{\mathbb{S}}''$ are maximally incompatible, which means they must all be singular, as stated in Example \ref{ex:explicit_theta_zero}. Now, observe that if there exists $i \in [d]$ such that $\Sigma_{\{i,i+1\}}' \neq \Sigma_{\{i,i+1\}}''$, \[
    \mu\Sigma_{\{i,i+1\}}' + (1-\mu)\Sigma_{\{i,i+1\}}'' = \begin{pmatrix}
        1 & \pm(2\mu - 1)\\
        \pm(2\mu - 1) & 1
    \end{pmatrix},
    \]
    which means that $\mu\Sigma_{\mathbb{S}}'+(1-\mu)\Sigma_{\mathbb{S}}''$ can never be maximally incompatible since $\mu \in (0,1)$. This implies that $\Sigma_{\mathbb{S}}' = \Sigma_{\mathbb{S}}''$, which in turn implies that $\boldsymbol{\varphi}_1^* = \boldsymbol{\varphi}_2^*$. As for the continuity of $\boldsymbol{\varphi}^*(\theta_1, \ldots, \theta_d)$, observe that $1 - \lambda^* = |\rho_j - \lambda^*\cos\varphi_j^*|, \text{ for all } j \in [d]$ in point (i) means that there exist $\{\epsilon_j = \pm 1\}_{j \in [d]}$ such that \[
    \lambda^* = \frac{1-\epsilon_j \cos\theta_j}{1-\epsilon_j \cos\varphi_j^*}, \quad \text{ for all } j \in [d].
    \]
    Now, let $\left\{\boldsymbol{\theta}^{(n)} = \left(\theta_{1,n}, \ldots, \theta_{d,n}\right)\right\}_{n \in \mathbb{N}} \longrightarrow \boldsymbol{\theta} = (\theta_1, \ldots, \theta_d)$, and consider the associated sequence of optimal $\left\{\boldsymbol{\varphi}_n^{*} = (\varphi_{1,n}^*, \ldots, \varphi_{d,n}^*)\right\}_{n \in \mathbb{N}}$, meaning that \[
    \lambda_n^* =  \frac{1-\epsilon_{j,n} \cos\theta_{j,n}}{1-\epsilon_{j,n} \cos\varphi_{j,n}^*}, \quad \text{ for all } j \in [d].
    \]
    Taking the limit on both sides, since $\lambda^*$ is continuous due to Proposition \ref{prop:properties} (ii), we get that 
    \[
    \lambda^* = \frac{1 \pm \cos\theta_j}{1 \pm \cos\left(\lim_n \varphi_{j,n}^* \right)}, \quad \text{ for all } j \in [d].
    \]
    This shows that the limit $\lim_n \varphi_{j,n}^*$ exists, and by uniqueness (i), we can conclude that $\lim_n \varphi_{j,n}^* = \varphi_j^*$, showing that $\boldsymbol{\varphi}$ is continuous.
    
    As for (iii), supposing without loss of generality that $\theta_1 = \max_{i \in [d]} \theta_i$, with at most $\theta_1 > \pi/2$, observe that incompatibility is equivalent to having $\theta_1 - \sum_{i=2}^d \theta_i > 0$, hence in order to make $\lambda^*$ as big as possible we should choose $\rho_j = \lambda^*\cos\varphi_j^* + (1-\lambda^*)$ for all $ j \in \{2, \ldots, d\}$, and $\rho_1 = \lambda^*\cos\varphi_1^* - (1-\lambda^*)$. This would imply that the optimal choice of signs for a general $d$-cycle is $\epsilon_d = (-1, +\boldsymbol{1}_{d-1}^T)$, and this turns out to be true indeed. To see why, start by considering the case $d=3$, and observe that from (ii) we know that there exists a unique $K \subseteq [3]$ with $|K|$ odd such that \[
    \mathop{\sum_{|K| odd}}\limits_{i \in K} \mu_K = \mathop{\sum_{|K| odd}}\limits_{i \in K^c} \mu_K.\]
    The possible values of $K$ are $\{1\}, \{2\}, \{3\}$ and $\{1,2,3\}$, and these are associated to the vectors of signs $(-1,1,1)$,$ (1,-1,1)$, $(1,1,-1)$ and $(-1,-1,-1)$, respectively. Hence, in order to prove the statement it is necessary and sufficient to show that $K = \{1\}$ leads to the optimal $\lambda^*$, meaning that $\lambda^*_1 \geq \lambda^*_2$ and $\lambda^*_1 \geq \lambda^*_4$, where \[
    \lambda^*_1 = \frac{1+\cos\theta_1}{1+\cos\varphi_1^*} = \frac{1-\cos\theta_2}{1-\cos\varphi_2^*} = \frac{1-\cos\theta_3}{1-\cos(\varphi_1^*-\varphi_2^*)},
    \]
    \[
    \lambda^*_2 = \frac{1-\cos\theta_1}{1-\cos\tilde{\varphi}_1^*} = \frac{1+\cos\theta_2}{1+\cos\tilde{\varphi}_2^*} = \frac{1-\cos\theta_3}{1-\cos(\tilde{\varphi}_2^*-\tilde{\varphi}_1^*)}, 
    \]
\[
    \lambda^*_4 = \frac{1+\cos\theta_1}{1+\cos\tilde{\varphi}_1^*} = \frac{1+\cos\theta_2}{1+\cos\tilde{\varphi}_2^*} = \frac{1+\cos\theta_3}{1+\cos(2\pi - \tilde{\varphi}_1^*-\tilde{\varphi}_2^*)}. 
    \]
        Now, for $\lambda^*_1 < \lambda^*_2$ to be true it is necessary to have \[ 
    \begin{cases}
        \cos(\varphi_1^*-\varphi_2^*) < \cos(\tilde{\varphi}_1^*-\tilde{\varphi}_2^*) \\
        \\
        \cos\tilde{\varphi}_1^* > 1 - \frac{1-\cos\theta_1}{1+\cos\theta_1}(1+\cos\varphi_1^*)\\
        \\
        \cos\tilde{\varphi}_2^* < -1 + \frac{1+\cos\theta_2}{1-\cos\theta_2}(1-\cos\varphi_2^*),\\
    \end{cases}
    \]
    with $(\varphi_1^*, \varphi_2^*), (\tilde{\varphi}_1^*, \tilde{\varphi}_2^*) \in [0, \pi]^2$ that need to simultaneously satisfy
\begin{align*}
    \begin{cases}
        \cos\varphi_2^* = 1 - \frac{1-\cos\theta_2}{1+\cos\theta_1}(1+\cos\varphi_1^*)\\
        \\
        \cos\tilde{\varphi}_1^* = 1 - \frac{1-\cos\theta_1}{1+\cos\theta_2}(1+\cos\tilde{\varphi}_2^*),\\
    \end{cases} & \text{ and } \quad \quad  \begin{cases}
        \varphi_1^* \leq \theta_1, \quad \quad \varphi_2^* \geq \theta_2 \\
        \\
        \tilde{\varphi}_1^* \geq \theta_1, \quad \quad \tilde{\varphi}_2^* \leq \theta_2, \\
    \end{cases}
\end{align*}
    to ensure $R(\Sigma_{\mathbb{S}_3}) \in [0,1]$. This system of inequalities has no solution in $(\tilde{\varphi}_1^*, \tilde{\varphi}_2^*)$ for fixed $(\varphi_1^*, \varphi_2^*)$ and $\theta_1 > \theta_2$. The same reasoning shows that $\lambda^*_1 < \lambda^*_4$ can never be satisfied as well, showing that the optimal choice of signs for $d=3$ is indeed $\epsilon_3 = (-1, +1, +1)$. For general $d$, it is sufficient to proceed by induction: indeed, suppose that $\epsilon_j = (-1, +\boldsymbol{1}_{j-1}^T)$ for all $j \in \{3, \ldots, d-1\}$, and consider $\lambda^* = \lambda^*(\theta_d)$ as a function of $\theta_d$, for fixed $\theta_1, \ldots, \theta_{d-1}$. This function is continuous over $[0, \theta_1 - \sum_{i = 2}^{d-1}\theta_i)$, because is the restriction of $\lambda^* = 1 - R(\Sigma_{\mathbb{S}_d})$, which is continuous by Proposition \ref{prop:properties} (ii), onto the last coordinate. Now, $\lambda^*(\theta_d)$ uniquely identifies a vector of signs for varying $\theta_d \in [0, \theta_1 - \sum_{i = 2}^{d-1}\theta_i)$, call it $\epsilon(\theta_d)$, taking values in $\{+1, -1\}^d$. This vector is unique because we supposed the cycle to be incompatible, hence either $g(\varphi_i)$ or $h(\varphi_i)$ in the KKT conditions has a strictly positive optimal gap, so that there exists a unique $\mu_K \neq 0$. We will show that this vector is constant for all  $\theta_d \in [0, \theta_1 - \sum_{i = 2}^{d-1}\theta_i)$, that is to say that each component of $\epsilon(\theta_d)$ is continuous in  $\theta_d \in [0, \theta_1 - \sum_{i = 2}^{d-1}\theta_i)$. Indeed, consider without loss of generality the first component of $\epsilon(\theta_d)$, and suppose by contradiction that $\epsilon(\theta_d)_1$ is not continuous in $\tilde{\theta}_d$. This implies that there exists a sequence of angles $\{\theta_{d,n}\}$ converging to $\tilde{\theta}_d$ such that \[
    \lim\limits_{n \rightarrow +\infty}\epsilon(\theta_{d,n})_1  = \epsilon_{\mathrm{lim}} = -\epsilon(\tilde{\theta}_d)_1.
    \]  
    Without loss of  generality, assume $\epsilon_{\mathrm{lim}} = +1$ and $\epsilon(\tilde{\theta}_d)_1 = -1$. But we must have by continuity \begin{align*}
         \frac{1+ \cos\theta_1}{1+\cos\varphi_1^*} & =\frac{1-\epsilon(\tilde{\theta}_d)_1\cos\theta_1}{1-\epsilon(\tilde{\theta}_d)_1\cos\varphi_1^*} = \lambda^*(\theta_d) = \lim\limits_{n \rightarrow +\infty} \lambda^*(\theta_{d,n}) = \lim\limits_{n \rightarrow +\infty} \frac{1-\epsilon(\theta_{d,n})_1\cos\theta_1}{1-\epsilon(\theta_{d,n})_1\cos\varphi_{1,n}^{*}} \\
        & = \frac{1- \cos\theta_1\lim\limits_{n \rightarrow +\infty}\epsilon(\theta_{d,n})_1 }{1-\cos\left(\lim\limits_{n \rightarrow +\infty}\varphi_{1,n}^{*}\right) \lim\limits_{n \rightarrow +\infty}\epsilon(\theta_{d,n})_1 } = \frac{1-\epsilon_{\mathrm{lim}}\cos\theta_1}{1-\epsilon_{\mathrm{lim}}\cos\varphi_1^*} =  \frac{1-\cos\theta_1}{1-\cos\varphi_1^*},
    \end{align*}
    where $\cos\varphi_{1,n}^*$ and $\cos\varphi_1^*$ are the $(1,2)$-th entries of the optimal matrix of the dual in $\theta_{d,n}$ and $\tilde{\theta}_d$, respectively. This implies that $\lambda^*(\tilde{\theta}_d)$ admits both representations, one with the plus sign, and one with the minus sign, and this can happen only if the cycle is compatible, which cannot be the case for $\tilde{\theta}_d \in [0, \theta_1 - \sum_{i = 2}^{d-1}\theta_i)$ since $\theta_1 > \sum_{i=2}^d \theta_i$. This means that $\epsilon(\theta_d)_j$ is continuous for all $j \in [d]$ for varying $\theta_d \in [0, \theta_1 - \sum_{i = 2}^{d-1}\theta_i)$, which implies that the vector $\epsilon(\theta_d)$ is constant on $[0, \theta_1 - \sum_{i = 2}^{d-1}\theta_i)$, so that the behaviour of $\epsilon(\theta_d)$ is uniquely determined by $\epsilon(0)$. But we do know that \[\epsilon(0) = (\epsilon_{d-1}, \epsilon(0)_d) = (-1, +\boldsymbol{1}_{d-2}^T, \epsilon(0)_d) \]
    due to Proposition \ref{prop:reduction_2} and the induction step: this, together with the fact that $\epsilon(0)_d = +1$ in order to make $\Sigma'_{\mathbb{S}}$ maximally incompatible, completes the proof. \\
\end{proof}

\subsection{Proofs for Section \ref{subsec:block}}

\begin{proof}[Proof of Theorem \ref{block_minimax}]
Formally, we are looking at
         \[
         H_0: R(\Sigma_\mathbb{S}) = 0 \quad \text{ vs. } \quad H_1(\rho): R(\Sigma_\mathbb{S}) > \rho, 
        \]
         for fixed $\rho > 0$, and we aim at finding the smallest of such $\rho$'s for which we can have non-trivial power.
We prove the result by considering the two cases $d \geq 42$ and $d<42$ separately. For the first of these, we specialise $\Sigma_{\mathbb{S}}$ to be 
         \[
         \Sigma_{\mathbb{S}} = \Bigg\{\begin{pmatrix}
            I_d & P \\
            P^T & I_d
        \end{pmatrix},\begin{pmatrix}
            I_d & -P \\
            -P^T & I_d
        \end{pmatrix}, \begin{pmatrix}
            I_d & \beta I_d \\
            \beta I_d & I_d
        \end{pmatrix}\Bigg\},
        \]
        and since $R(\Sigma_\mathbb{S}) \geq  \frac{3}{4d}\sum_{j = 1}^d (\sigma_j^2(P) -  \frac{1-\beta}{2})_+$ by Proposition \ref{prop:block_3_cycle}, it is sufficient to study the testing problems
        \[
         H_0': \sum_{j = 1}^d (\theta_i)_+ \leq 0 \quad \text{ vs. } \quad H_1'(\rho'): \sum_{j = 1}^d (\theta_i)_+ > \rho', 
         \]
         where $\theta_i = \sigma_j^2(P) -  \frac{1-\beta}{2}$, find the smallest $\rho'$ for which we have non-trivial power, and use the relationship $\rho = \frac{3}{4d} \rho'$. More precisely, focusing on the latter testing problem, we want to lower bound the minimax testing risk
\[
\rho^*(n_{\mathbb{S}}, \eta) := \inf\left\{\rho > 0: \exists \varphi_{\mathbb{S}} \in \Psi_{\mathbb{S}} : \sup_{P_{\mathbb{S},0} \in \bar{\mathcal{P}}_{\mathbb{S}}(0)} P_{\mathbb{S},0}^{\otimes n_{\mathbb{S}}}(\varphi_{\mathbb{S}} = 1) + \sup_{P_{\mathbb{S},1} \in \mathcal{P}_{\mathbb{S}}(\rho)} P_{\mathbb{S},1}^{\otimes n_{\mathbb{S}}} (\varphi_{\mathbb{S}} = 0) \leq \eta \right\}
\]
        where $\bar{\mathcal{P}}_{\mathbb{S}}(0) = \{P_{\mathbb{S}} : \operatorname{Corr}(P_{\mathbb{S}}) = \Sigma_{\mathbb{S}} \text{ and } R(\Sigma_\mathbb{S}) = 0 \}$, $\mathcal{P}_{\mathbb{S}}(\rho) =  \{P_{\mathbb{S}} : \operatorname{Corr}(P_{\mathbb{S}}) = \Sigma_{\mathbb{S}} \text{ and } R(\Sigma_\mathbb{S}) > \rho \}$, and $\Psi_{\mathbb{S}}$ is the set of collection of tests coherent with $\mathbb{S}$. To this aim, we start by defining two prior distributions $\mu_0, \mu_1$ for $P$. First, there exist two measures $\nu_0, \nu_1$ with matching moments up to the $M$-th order such that \begin{itemize}
        \item[I.] $\operatorname{supp}\left(\nu_0\right) \subseteq[-b, 0], \quad \operatorname{supp}\left(\nu_1\right) \subseteq[-b, 0] \cup\left\{\frac{b}{4 M^2}\right\}$
        \item[II.] $\nu_1\left(\left\{\frac{b}{4 M^2}\right\}\right) \geq \frac{1}{2}$
        \item[III.] $\forall k \in\{0,1, \ldots, M\}: \int z^k \nu_0(\mathrm{~d} z)=\int z^k \nu_1(\mathrm{~d} z)$.
    \end{itemize}
    This is proved in \cite{jud_nemiroski_matching_mom02} using ideas from the theory of best polynomial approximation. A different, but closely related version, was proved in \cite{cai2011testing} using similar techniques. Such prior distributions have been extensively used in the minimax literature in the last decade, and led to optimal, or nearly-optimal, lower bounds in many problems of interest such as optimal estimation of nonsmooth functionals \citep{cai2011testing, weissman16, verzelen2021schatten}, testing MCAR in a fully nonparametric setting \citep{berrett2022optimal}, and testing convex hypothesis \citep{carpentier18convex}. Let $\mathcal{U}(d)$ denote the (normalised) Haar measure over the Lie group of orthogonal matrices $SO(d) = \{ U \in \mathbb{R}^{d,d} : U^T U = UU^T = I_d\}$, and let $\nu_0, \nu_1$ the distributions with matching moments up to the order $M$ defined above. Calling $\delta_0$ the Dirac measure in zero, we define $\mu_i$ to be the distribution of $P=U^T \Lambda U$, where $U \sim \mathcal{U}(d)$, and $\Lambda = \operatorname{diag}(\sigma_{1:d})$, with $\sigma_{1:d} \sim \nu_i^{\otimes \lceil d/2 \rceil } \otimes \delta_0 ^{\otimes (d - \lceil d/2 \rceil)}$, for $i \in \{0, 1\}$. Observe now that the support of $\mu_1$ also contains elements in $(\mathcal{P}_\mathbb{S}(\rho))^C$. In order to overcome this, we will consider the conditional measure $\mu_1|\xi$, where $\xi$ is the event \[
\xi=\left\{\sum_{i=1}^d \mathbbm{1}_{\left\{\mu_i=b/4M^2\right\}} \geq \frac{d}{3}\right\},
\]
which ensures that $\mu_1|\xi$ is supported on the alternative. Now, given $P$, we use the shorthand
\[
N_\mathbb{S}^{\otimes n_\mathbb{S}} \equiv N_\mathbb{S}^{\otimes n_\mathbb{S}}(P) = \left(N^{\otimes n_1}\left(0, \begin{pmatrix}
            I_d & P \\
            P^T & I_d
        \end{pmatrix}\right), N^{\otimes n_2}\left(0, \begin{pmatrix}
            I_d & -P \\
            -P^T & I_d
        \end{pmatrix}\right), N^{\otimes n_3}\left(0, \begin{pmatrix}
            I_d & \beta I_d \\
            \beta I_d & I_d
        \end{pmatrix}\right)\right).
\]
The marginal distribution of the data when $P$ is generated according to $\mu_1|\xi$ is then given by the mixture distribution
\[
\mathbb{E}_{\mu_1|\xi}N_\mathbb{S}^{\otimes n_\mathbb{S}} = \left(\mathbb{E}_{\mu_1|\xi}N^{\otimes n_1}\left(0, \begin{pmatrix}
            I_d & P \\
            P^T & I_d
        \end{pmatrix}\right), \mathbb{E}_{\mu_1|\xi}N^{\otimes n_2}\left(0, \begin{pmatrix}
            I_d & -P \\
            -P^T & I_d
        \end{pmatrix}\right), N^{\otimes n_3}\left(0, \begin{pmatrix}
            I_d & \beta I_d \\
            \beta I_d & I_d
        \end{pmatrix}\right)\right).
\]
Similarly, the marginal distribution of the data when $P$ is generated according to $\mu_i$ is then given by the mixture distribution
\[
\mathbb{E}_{\mu_i}N_\mathbb{S}^{\otimes n_\mathbb{S}} = \left(\mathbb{E}_{\mu_i}N^{\otimes n_1}\left(0, \begin{pmatrix}
            I_d & P \\
            P^T & I_d
        \end{pmatrix}\right), \mathbb{E}_{\mu_i}N^{\otimes n_2}\left(0, \begin{pmatrix}
            I_d & -P \\
            -P^T & I_d
        \end{pmatrix}\right), N^{\otimes n_3}\left(0, \begin{pmatrix}
            I_d & \beta I_d \\
            \beta I_d & I_d
        \end{pmatrix}\right)\right),
\]
for $i \in \{0,1 \}$.
For every test collection $\varphi_{\mathbb{S}} \in \Psi_{\mathbb{S}}$, and for prior distributions $\mu_0, \mu_1|\xi$, we can bound the total error probability as
\begin{align*}
             \mathcal{R}(n_\mathbb{S}, \rho') &  = \sup_{P_{\mathbb{S},0} \in \bar{\mathcal{P}}_{\mathbb{S}}(0)} P_{\mathbb{S},0}^{\otimes n_{\mathbb{S}}}(\varphi_{\mathbb{S}} = 1) + \sup_{P_{\mathbb{S},1} \in \mathcal{P}_{\mathbb{S}}(\rho)} P_{\mathbb{S},1}^{\otimes n_{\mathbb{S}}} (\varphi_{\mathbb{S}} = 0) \\
             & \geq \mathbb{E}_{\mu_0}N_\mathbb{S}^{\otimes n_\mathbb{S}}(\varphi_{\mathbb{S}} = 1) + \mathbb{E}_{\mu_1|\xi}N_\mathbb{S}^{\otimes n_\mathbb{S}}(\varphi_{\mathbb{S}} = 0) \\
             & = \mathbb{E}_{\mu_0}N_\mathbb{S}^{\otimes n_\mathbb{S}}(\varphi_{\mathbb{S}} = 1)+ 1-\frac{\mathbb{E}_{\mu_1}N_\mathbb{S}^{\otimes n_\mathbb{S}}(\{\varphi_\mathbb{S}=1\} \cap \xi)}{\mu_1(\xi)} \\
& \geq \mathbb{E}_{\mu_0}N_\mathbb{S}^{\otimes n_\mathbb{S}}(\varphi_{\mathbb{S}} = 1) + 1-\frac{10}{9}\mathbb{E}_{\mu_1}N_\mathbb{S}^{\otimes n_\mathbb{S}}(\varphi_{\mathbb{S}} = 1) \\
& \geq \mathbb{E}_{\mu_0}N_\mathbb{S}^{\otimes n_\mathbb{S}}(\varphi_{\mathbb{S}} = 1) + \frac{10}{9}\mathbb{E}_{\mu_1}N_\mathbb{S}^{\otimes n_\mathbb{S}}(\varphi_{\mathbb{S}} = 0)-\frac{1}{9} \\
& \geq \mathbb{E}_{\mu_0}N_\mathbb{S}^{\otimes n_\mathbb{S}}(\varphi_{\mathbb{S}} = 1) + \mathbb{E}_{\mu_1}N_\mathbb{S}^{\otimes n_\mathbb{S}}(\varphi_{\mathbb{S}} = 0)-\frac{1}{9} \\
& \geq 1 - \operatorname{TV}\left(\mathbb{E}_{\mu_0}N_\mathbb{S}^{\otimes n_\mathbb{S}}, \mathbb{E}_{\mu_1}N_\mathbb{S}^{\otimes n_\mathbb{S}} \right) -\frac{1}{9}.
\end{align*}
The second inequality follows from Hoeffding's inequality, which ensures that for all $d \geq 42$,
\[
\mu_1\left(\xi\right) \geq \frac{9}{10},
\]
since $\mu_1(\{b/4M^2\}) \geq 1/2$ by II. This shows that it is now sufficient to control the total variation distance between the marginals of $N_\mathbb{S}^{\otimes n_\mathbb{S}}$ with respect to the unconditional priors $\mu_0, \mu_1$ by finding $b/4M^2$ such that $\operatorname{TV}\left(\mathbb{E}_{\mu_0}N_\mathbb{S}^{\otimes n_\mathbb{S}}, \mathbb{E}_{\mu_1}N_\mathbb{S}^{\otimes n_\mathbb{S}} \right) \leq 1/2 - 1/9$. This would imply that $\mathcal{R}(n_\mathbb{S}, \rho') \geq 1/2$, and would lead to  \[
\rho' = \frac{d}{3} \frac{b}{4M^2},\]
where the extra $d/3$ factor comes from conditioning on the event $\xi$. Hence, let us now focus on controlling $\operatorname{TV}\left(\mathbb{E}_{\mu_0}N_\mathbb{S}^{\otimes n_\mathbb{S}}, \mathbb{E}_{\mu_1}N_\mathbb{S}^{\otimes n_\mathbb{S}} \right)$. We have 
        \begin{align*}
               \operatorname{TV}&\left(\mathbb{E}_{\mu_0}N_\mathbb{S}^{\otimes n_\mathbb{S}}, \mathbb{E}_{\mu_1}N_\mathbb{S}^{\otimes n_\mathbb{S}} \right) \\
             & = \operatorname{TV}\Bigg\{\left(\mathbb{E}_{\mu_0} N^{\otimes n_1}\left(0, \begin{pmatrix}
            I_d & P \\
            P^T & I_d
        \end{pmatrix}\right), \mathbb{E}_{\mu_0}N^{\otimes n_2}\left(0, \begin{pmatrix}
            I_d & -P \\
            -P^T & I_d
        \end{pmatrix}\right), N^{\otimes n_3}\left(0, \begin{pmatrix}
            I_d & \beta I_d \\
            \beta I_d & I_d
        \end{pmatrix}\right) \right), \\
        & \quad \quad \quad \quad \quad  \left(\mathbb{E}_{\mu_1} N^{\otimes n_1}\left(0, \begin{pmatrix}
            I_d & P \\
            P^T & I_d
        \end{pmatrix}\right), \mathbb{E}_{\mu_1}N^{\otimes n_2}\left(0, \begin{pmatrix}
            I_d & -P \\
            -P^T & I_d
        \end{pmatrix}\right), N^{\otimes n_3}\left(0, \begin{pmatrix}
            I_d & \beta I_d \\
            \beta I_d & I_d
        \end{pmatrix}\right) \right) \Bigg\} \\
        & = \operatorname{TV}\Bigg\{\left(\mathbb{E}_{\mu_0} N^{\otimes n_1}\left(0, \begin{pmatrix}
            I_d & P \\
            P^T & I_d
        \end{pmatrix}\right), \mathbb{E}_{\mu_0}N^{\otimes n_2}\left(0, \begin{pmatrix}
            I_d & -P \\
            -P^T & I_d
        \end{pmatrix}\right) \right), \\
        & \quad \quad \quad \quad \quad  \left(\mathbb{E}_{\mu_1} N^{\otimes n_1}\left(0, \begin{pmatrix}
            I_d & P \\
            P^T & I_d
        \end{pmatrix}\right), \mathbb{E}_{\mu_1}N^{\otimes n_2}\left(0, \begin{pmatrix}
            I_d & -P \\
            -P^T & I_d
        \end{pmatrix}\right) \right) \Bigg\} \\
        & \leq \operatorname{TV}\Bigg\{\mathbb{E}_{\mu_0}N^{\otimes n_1} \left(0, \begin{pmatrix}
            I_d & P \\
            P^T & I_d
        \end{pmatrix}\right), \mathbb{E}_{\mu_1}N^{\otimes n_1} \left(0, \begin{pmatrix}
            I_d & P \\
            P^T & I_d
        \end{pmatrix}\right)\Bigg\} \\
        & + \operatorname{TV}\Bigg\{\mathbb{E}_{\mu_0}N^{\otimes n_2} \left(0, \begin{pmatrix}
            I_d & -P \\
            -P^T & I_d
        \end{pmatrix}\right), \mathbb{E}_{\mu_1}N^{\otimes n_2} \left(0, \begin{pmatrix}
            I_d & -P \\
            -P^T & I_d
        \end{pmatrix}\right) \Bigg\}.
        \end{align*}
         Dealing with such $\mu_0, \mu_1$ is not straightforward, due to the presence of the integrals with respect to the Haar measure. Nonetheless, following similar ideas as in \cite{verzelen2021schatten}, we upper bound the total variance distance above using the following two lemmata, where we suppose $P$ to be symmetric. 
        \begin{lemma}\label{lemma:haar}
    Let $P$ be symmetric, with spectral decomposition $P = U^T \Lambda U$. Let $\mathcal{U}(d)$ denote the (normalised) Haar measure over the Lie group of orthogonal matrices $SO(d) = \{ U \in \mathbb{R}^{d,d} : U^T U = UU^T = I_d\}$, and let $\nu_0, \nu_1$ the distributions with matching moments up to the order $M$ defined above. Denote by $\mu_i$ the distribution of $U^T \Lambda U$, where $U \sim \mathcal{U}(d)$, and $\Lambda = \operatorname{diag}(\sigma_{1:d})$, with $\sigma_{1:d} \sim \nu_i^{\otimes \lceil d/2 \rceil } \otimes \delta_0 ^{\otimes (d - \lceil d/2 \rceil)}$. Then \begin{align*}
       \operatorname{TV}\Bigg\{\mathbb{E}_{\mu_0}& N^{\otimes n} \left(0, \begin{pmatrix}
            I_d & P \\
            P^T & I_d
        \end{pmatrix}\right), \mathbb{E}_{\mu_1}N^{\otimes n} \left(0, \begin{pmatrix}
            I_d & P \\
            P^T & I_d
        \end{pmatrix}\right)\Bigg\} \\
    & \leq \lceil d/2 \rceil \operatorname{TV}\Bigg\{\mathbb{E}_{\tilde{\pi}_0}\left\{N^{\otimes n}\left(\boldsymbol{0}_{2d}, \begin{pmatrix}
        I_{d} & \eta u u^T \\
        \eta u u^T & I_{d}
    \end{pmatrix}\right) \right\}, \mathbb{E}_{\tilde{\pi}_1}\left\{N^{\otimes n}\left(\boldsymbol{0}_{2d}, \begin{pmatrix}
        I_{d} & \eta' u' u'^T \\
        \eta' u' u'^T & I_{d}
    \end{pmatrix}\right) \right\} \Bigg\},
    \end{align*}
    where $\tilde{\pi}_0$ (resp. $\tilde{\pi}_1$) is the distribution of $\eta u u^T$ (resp. $\eta' u' u'^T$), where $\eta \sim \nu_0$ (resp. $\eta' \sim \nu_1$) and $u = (u_{d'}, \boldsymbol{0}^T_{d-d'})$ (resp. $u' = (u'_{d'}, \boldsymbol{0}^T_{d-d'})$) is such that $u_{d'}$ (resp $u'_{d'}$) is a uniform sample from the $d'$-dimensional sphere $\mathcal{S}^{d'-1} = \{x \in \mathbb{R}^{d'}:\|x\|_2 = 1 \}$, with $d' = d+1-\lceil d/2 \rceil$.
\end{lemma}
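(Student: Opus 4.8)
The plan is to interpolate between $\mu_0$ and $\mu_1$ by switching the laws of the nonzero eigenvalues one at a time, and to verify that each single switch costs no more than the rank--one total variation distance appearing on the right--hand side. Write $m=\lceil d/2\rceil$ and $d'=d+1-m$, and for a symmetric $d\times d$ matrix $M$ abbreviate
\begin{align*}
\Gamma_M\ :=\ N^{\otimes n}\!\left(0,\begin{pmatrix}I_d & M\\ M & I_d\end{pmatrix}\right).
\end{align*}
For $j\in\{0,1,\dots,m\}$ let $\mu^{(j)}$ be the law of $P=U^{T}\Lambda U$ with $U\sim\mathcal U(d)$ and $\Lambda=\operatorname{diag}(\sigma_{1:d})$, where $\sigma_1,\dots,\sigma_j\overset{\text{i.i.d.}}{\sim}\nu_1$, $\sigma_{j+1},\dots,\sigma_m\overset{\text{i.i.d.}}{\sim}\nu_0$, and $\sigma_{m+1}=\dots=\sigma_d=0$, so that $\mu^{(0)}=\mu_0$ and $\mu^{(m)}=\mu_1$. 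The triangle inequality for total variation gives
\begin{align*}
\operatorname{TV}\!\left(\mathbb E_{\mu_0}\Gamma_P,\ \mathbb E_{\mu_1}\Gamma_P\right)\ \le\ \sum_{j=1}^{m}\operatorname{TV}\!\left(\mathbb E_{\mu^{(j-1)}}\Gamma_P,\ \mathbb E_{\mu^{(j)}}\Gamma_P\right),
\end{align*}
so it is enough to bound each summand by the right--hand side of the lemma.

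For the $j$-th summand I would condition on $\mathcal C:=\bigl((u_k)_{k\le m,\,k\neq j},(\sigma_k)_{k\le m,\,k\neq j}\bigr)$, which has the same law under $\mu^{(j-1)}$ and $\mu^{(j)}$ since only the law of $\sigma_j$ changes. Given $\mathcal C$, set $V=\operatorname{span}\{u_k:k\le m,\,k\neq j\}$ and $P'=\sum_{k\le m,\,k\neq j}\sigma_k u_k u_k^{T}$, so that $P=P'+\sigma_j u_j u_j^{T}$, where $P'$ is a fixed matrix supported on $V$ and $u_j$ is a unit vector of $V^{\perp}$; the two summands act on orthogonal subspaces. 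Because the first $m$ columns of a Haar orthogonal matrix form a uniformly distributed orthonormal $m$--frame, conditionally on $\mathcal C$ the vector $u_j$ is uniform on the unit sphere of $V^{\perp}$, which has dimension $d-(m-1)=d'$, and $\sigma_j$ is distributed according to $\nu_0$ (resp.\ $\nu_1$) independently of $u_j$. Fixing a measurable orthonormal basis of $V^{\perp}$ depending only on $\mathcal C$, there is an orthogonal matrix $O_{\mathcal C}$ of size $2d$, not depending on $(\sigma_j,u_j)$, conjugating $\bigl(\begin{smallmatrix}I_d & P\\ P & I_d\end{smallmatrix}\bigr)$ into the direct sum of the $m-1$ fixed blocks $\bigl(\begin{smallmatrix}1 & \sigma_k\\ \sigma_k & 1\end{smallmatrix}\bigr)$ and the $2d'\times 2d'$ block $\bigl(\begin{smallmatrix}I_{d'} & \sigma_j a a^{T}\\ \sigma_j a a^{T} & I_{d'}\end{smallmatrix}\bigr)$, where $a\in\mathcal S^{d'-1}$ is the coordinate vector of $u_j$ in the chosen basis; this uses only that $u_1,\dots,u_m$ are orthonormal eigenvectors of $P$.

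Applying $O_{\mathcal C}$ to each of the $n$ samples is a bijection of $(\mathbb R^{2d})^{n}$, hence leaves total variation invariant, and turns $\Gamma_P$, conditionally on $\mathcal C$, into the product of the $m-1$ fixed bivariate Gaussian factors and the law $N^{\otimes n}\!\bigl(0,\bigl(\begin{smallmatrix}I_{d'} & \sigma_j a a^{T}\\ \sigma_j a a^{T} & I_{d'}\end{smallmatrix}\bigr)\bigr)$. Using $\operatorname{TV}(Q\otimes R,Q'\otimes R)=\operatorname{TV}(Q,Q')$ to cancel the common factors, the $j$-th summand conditioned on $\mathcal C$ equals
\begin{align*}
\operatorname{TV}\!\left(\mathbb E_{\eta\sim\nu_0,\,a}\,N^{\otimes n}\!\Bigl(0,\begin{pmatrix}I_{d'} & \eta a a^{T}\\ \eta a a^{T} & I_{d'}\end{pmatrix}\Bigr),\ \ \mathbb E_{\eta\sim\nu_1,\,a}\,N^{\otimes n}\!\Bigl(0,\begin{pmatrix}I_{d'} & \eta a a^{T}\\ \eta a a^{T} & I_{d'}\end{pmatrix}\Bigr)\right),
\end{align*}
with $a$ uniform on $\mathcal S^{d'-1}$ and independent of $\eta$. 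The right--hand side of the lemma reduces to the same quantity: the covariance $\bigl(\begin{smallmatrix}I_d & \eta u u^{T}\\ \eta u u^{T} & I_d\end{smallmatrix}\bigr)$ with $u=(u_{d'},\boldsymbol 0^{T}_{d-d'})$ is, after a fixed permutation of coordinates, the direct sum of $\bigl(\begin{smallmatrix}I_{d'} & \eta u_{d'}u_{d'}^{T}\\ \eta u_{d'}u_{d'}^{T} & I_{d'}\end{smallmatrix}\bigr)$ and $I_{2(d-d')}=I_{2(m-1)}$, and cancelling the resulting common standard--Gaussian factor gives exactly the displayed quantity. In particular the conditional summand does not depend on $\mathcal C$, so joint convexity of total variation removes the conditioning, yielding that $\operatorname{TV}(\mathbb E_{\mu^{(j-1)}}\Gamma_P,\mathbb E_{\mu^{(j)}}\Gamma_P)$ is at most the right--hand side; summing over $j=1,\dots,m=\lceil d/2\rceil$ finishes the proof.

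I expect the delicate step to be the block--diagonalisation, together with the claim that the conjugating orthogonal matrix can be chosen measurably in $\mathcal C$ and independently of $(\sigma_j,u_j)$: this rests on the orthonormality of the eigenvectors of $P$, which places the rank--one perturbation $\sigma_j u_j u_j^{T}$ inside $V^{\perp}$ and orthogonal to the remaining part of $P$, so that the $2d$--dimensional Gaussian splits into $m-1$ frozen bivariate factors and one genuinely $2d'$--dimensional rank--one factor. The other ingredients — the triangle inequality and joint convexity of total variation, its invariance under bijections, the tensorisation identity, and the conditional--uniformity property of sub--frames of a Haar orthogonal matrix (an immediate consequence of the left invariance of the Haar measure) — are standard, the last being exploited in a similar manner in \cite{verzelen2021schatten}.
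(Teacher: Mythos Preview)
Your proof is correct, and at the strategic level it matches the paper: both interpolate between $\mu_0$ and $\mu_1$ one eigenvalue at a time, condition on the common randomness (the other $m-1$ eigenpairs), and reduce each increment to the rank--one mixture.

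The implementation of the reduction step differs, and yours is the cleaner of the two. The paper works directly with conditional Gaussian densities: it writes the law of $X\mid Y$ via $(I-U^T\Lambda^2 U)^{-1/2}$, then performs two successive affine changes of variable in the $x$--integral --- first a translation $x_i'=x_i-\sum_{k\neq j}\sigma_k u_k u_k^T y_i$ to strip the mean shift coming from $P'$, then a rescaling $x_i=(I-\sum_{k\neq j}\sigma_k^2 u_k u_k^T)^{1/2}z_i$ to strip the variance contribution of $P'$ --- arriving at the rank--one density by hand. You instead exploit orthogonal invariance of the Gaussian in one stroke: the conjugation $O_{\mathcal C}=\mathrm{diag}(Q,Q)$ with $Q$ sending the $(u_k)_{k\neq j}$ to coordinate axes block--diagonalises $\bigl(\begin{smallmatrix}I&P\\P&I\end{smallmatrix}\bigr)$ into $m-1$ frozen $2\times 2$ blocks and one $2d'\times 2d'$ rank--one block, after which the tensor identity $\operatorname{TV}(R\otimes Q,R\otimes Q')=\operatorname{TV}(Q,Q')$ finishes the job. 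This avoids all Jacobian bookkeeping and makes the independence structure manifest. The paper's route has the minor advantage of setting up the notation and integrands exactly as needed for the companion Lemma~\ref{lemma:determinant}, whereas yours would require reintroducing the explicit densities there; but for the present lemma your argument is shorter and more transparent.
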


\begin{lemma}\label{lemma:determinant}
    With the same notation as above, then 
    \begin{align*}
\operatorname{TV}^2&\Bigg\{\mathbb{E}_{\tilde{\pi}_0}\left\{N^{\otimes n}\left(\boldsymbol{0}_{2d}, \begin{pmatrix}
        I_d & \eta u u^T \\
        \eta u u^T & I_d
    \end{pmatrix}\right) \right\},  \mathbb{E}_{\tilde{\pi}_1}\left\{N^{\otimes n}\left(\boldsymbol{0}_{2d}, \begin{pmatrix}
        I_d & \eta' u' u'^T \\
        \eta' u' u'^T & I_d
    \end{pmatrix}\right) \right\}\Bigg\} \\
    & \quad \quad \quad \leq \sum\limits_{k = M +1}^\infty \binom{k+n-1}{n-1}\mathbb{E}[u_1^{2k}] \left(\int \eta^{k} [\nu_0(d\eta)-\nu_1(d\eta)] \right)^2.
    \end{align*}
\end{lemma}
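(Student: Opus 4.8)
The plan is to reduce the total variation distance to a chi-square divergence between two mixture distributions over a single shared nuisance parameter, namely the direction $u$, and then expand in the natural orthonormal basis. First I would recall the classical bound $2\operatorname{TV}(P,Q)^2 \leq \chi^2(P,Q)$ whenever $P \ll Q$, so it suffices to bound $\chi^2\bigl(\mathbb{E}_{\tilde\pi_0} N^{\otimes n}(\cdot), \mathbb{E}_{\tilde\pi_1} N^{\otimes n}(\cdot)\bigr)$; since $\nu_0$ and $\nu_1$ share their first $M$ moments and $\nu_0$ is supported in $[-b,0]$, the reference measure should be taken to be the mixture under $\tilde\pi_0$ (or, more conveniently, the single Gaussian $N^{\otimes n}(\boldsymbol 0_{2d}, I_{2d})$), against which both mixtures are absolutely continuous because $\|\eta uu^T\|_2 = |\eta| \le b < 1$.

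Next, conditionally on the direction $u$, the covariance $\begin{pmatrix} I_d & \eta uu^T \\ \eta uu^T & I_d\end{pmatrix}$ is a rank-one perturbation of the identity in the $(u,u)$-plane, so after rotating coordinates the density of $N^{\otimes n}$ relative to $N^{\otimes n}(\boldsymbol 0,I_{2d})$ depends on the data only through the two scalar projections onto $u$ in each of the two blocks, and the likelihood ratio is an explicit function of $\eta$ and these projections. The key computation is then to recognise that the $\chi^2$ against the standard Gaussian factorises as an integral over two independent copies of $u, u'$ (one from each mixture), and that integrating out the Gaussian data produces, for each $k$, a coefficient $\mathbb{E}[(u^Tu')^{2k}]$ times $\bigl(\int \eta^k \nu_0(d\eta) \cdot \int \eta'^k\nu_1(d\eta')\bigr)$-type terms; expanding $\log(1-\eta^2(u^Tu')^2)$ or the Mehler-type kernel in powers and using the moment-matching property to kill all terms with $k \le M$ leaves only $k \geq M+1$. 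Here I would use that $u,u'$ are supported on the $d'$-sphere and that by exchangeability $\mathbb{E}[(u^Tu')^{2k}]$ can be replaced by $\mathbb{E}[u_1^{2k}]$ after conditioning on one of them and using rotational invariance. The combinatorial factor $\binom{k+n-1}{n-1}$ arises from collecting the $n$ i.i.d.\ samples: raising a degree-one-in-each-sample likelihood ratio to account for $n$ observations and expanding $\bigl(\sum_{i=1}^n (\text{rank-one quadratic in sample } i)\bigr)^k$ gives a multinomial count of the ways to distribute $k$ powers among $n$ samples, which is exactly $\binom{k+n-1}{n-1}$; Gaussian moment identities ($\mathbb{E}[Z^{2k}] = (2k-1)!!$ and Wick's formula) control the remaining expectations and, after the dust settles, are absorbed into the bound.

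The main obstacle I anticipate is handling the direction $u$ cleanly: unlike in a pure Gaussian-means problem, the nuisance here is a rank-one projector $uu^T$ with a Haar-type law, so the likelihood ratio is not a simple exponential family and one must carefully diagonalise the conditional covariance and verify that cross terms between the two blocks (the $P$ and the $-P$, respectively $+P$, structure inherited from Lemma~\ref{lemma:haar}) do not spoil the factorisation. A secondary technical point is justifying term-by-term integration when expanding the logarithm or the Mehler kernel in a power series — one needs $|\eta|(u^Tu') < 1$ uniformly, which holds since $|\eta| \le b < 1$, together with dominated convergence to exchange the infinite sum with the expectation over $u, u'$ and over $\eta, \eta'$. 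Once these are in place, the moment-matching condition III collapses the sum to its tail $k \geq M+1$ and the stated inequality follows; the remaining examples and the choice of $b$, $M$ to make this tail small are then deferred to the body of the proof of Theorem~\ref{block_minimax}.
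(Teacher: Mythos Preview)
Your strategy is correct and is essentially the one the paper uses: bound $\operatorname{TV}^2$ by a Cauchy--Schwarz/chi-square quantity against the central Gaussian $N(\boldsymbol 0,I_{2d})^{\otimes n}$, expand the resulting cross term over two independent draws $(\eta,u)$ and $(\eta',u')$, integrate out the Gaussian data, and use moment matching to kill $k\le M$. Two small corrections worth noting: the quantity that emerges after the Gaussian integration is $(1-(u^Tu')^2\,\eta\eta')^{-n}$, with the cross product $\eta\eta'$ rather than $\eta^2$, and it is this factor (not a logarithm) whose negative-binomial expansion $\sum_k \binom{k+n-1}{n-1}(u^Tu')^{2k}(\eta\eta')^k$ produces the binomial coefficient; in the paper the Gaussian integral is carried out by an explicit determinant computation (their Lemma~\ref{lemma:computing_det}) rather than via Wick's formula or a Mehler expansion, which makes the closed form $(1-(u^Tu')^2\eta\eta')^{-n}$ immediate.
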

Applying these lemmata, it follows that
    \begin{align*}    
         \operatorname{TV}&\Bigg\{\mathbb{E}_{\mu_0}N^{\otimes n_1} \left(0, \begin{pmatrix}
            I_d & P \\
            P^T & I_d
        \end{pmatrix}\right), \mathbb{E}_{\mu_1}N^{\otimes n_1} \left(0, \begin{pmatrix}
            I_d & P \\
            P^T & I_d
        \end{pmatrix}\right)\Bigg\} \ \\
         & \leq \lceil d/2 \rceil \operatorname{TV}\Bigg\{\mathbb{E}_{\tilde{\pi}_0}\left\{N^{\otimes n_1}\left(\boldsymbol{0}_{2d}, \begin{pmatrix}
        I_d & \eta u u^T \\
        \eta u u^T & I_d
    \end{pmatrix}\right) \right\}, \mathbb{E}_{\tilde{\pi}_1}\left\{N^{\otimes n_1}\left(\boldsymbol{0}_{2d}, \begin{pmatrix}
        I_d & \eta' u' u'^T \\
        \eta' u' u'^T & I_d
    \end{pmatrix}\right) \right\}  \Bigg\} \\
    & \leq \lceil d/2 \rceil \sqrt{\sum_{k = M+1}^\infty \binom{k+n_1-1}{n_1-1} \mathbb{E}[u_1^{2k}] \left(\int \eta^{k} [\nu_0(d\eta)-\nu_1(d\eta)] \right)^2} ,
    \end{align*}
where the first inequality comes from Lemma \ref{lemma:haar}, and the second from Lemma \ref{lemma:determinant}. Here $u_1$ is the first coordinate of a uniform random vector in the $d'$-dimensional unit sphere, and $\nu_0, \nu_1$ are the distributions with matching moments up the order $M$ defined above. Now, observe that $u_1^2 \overset{d}{=} Z_1^2/\sum_{i=1}^{d'} Z_i^2$ where $Z_i \overset{\text{i.i.d.}}{\sim} N(0,1)$,  due to the fact that the standard normal distribution is isotropic. Hence $u_1^2 \sim \operatorname{Beta}(\frac{d'-1}{2}, \frac{1}{2})$ since if $X \sim \chi^2(\alpha)$ and $Y \sim \chi^2(\beta)$ are independent, then $ \frac{X}{X+Y} \sim \operatorname{Beta}\left(\frac{\alpha}{2}, \frac{\beta}{2}\right)$. It follows that
\begin{align*}
    \mathbb{E}[u_1^{2k}] & = \int_{-1}^{1} \frac{u^{2k}(1-u^2)^{\frac{d'-3}{2}}}{B(\frac{d'-1}{2}, \frac{1}{2})} du = \int_{0}^{1} \frac{v^{k-\frac{1}{2}}(1-v)^{\frac{d'-1}{2}-1}}{B(\frac{d'-1}{2}, \frac{1}{2})} dv \\
    & = \frac{B(\frac{d'-1}{2}, k +\frac{1}{2})}{B(\frac{d'-1}{2}, \frac{1}{2})} = \frac{\Gamma(k+\frac{1}{2})}{\Gamma(\frac{1}{2})} \frac{\Gamma(\frac{d'}{2})}{\Gamma(\frac{d'}{2} +k)}.
\end{align*}
Moreover \[
\left(\int \eta^{k} [\nu_0(d\eta)-\nu_1(d\eta)] \right)^2 \leq \left(b^k\left(1+\frac{1}{4^kM^{2k}} \right)\right)^2 \leq 4b^{2k}.
\]
If we choose $b^2 = \frac{d'}{4n}$, we have 
    \begin{align*}
       \sum_{k = M+1}^\infty \binom{k+n-1}{n-1} & \mathbb{E}[u_1^{2k}] \left(\int \eta^{k} [\nu_0(d\eta)-\nu_1(d\eta)] \right)^2 \\
       & \leq 4 \sum_{k = M+1}^\infty \binom{k+n-1}{n-1} \frac{\Gamma(k+\frac{1}{2})}{\Gamma(\frac{1}{2})} \frac{\Gamma(\frac{d'}{2})}{\Gamma(\frac{d'}{2} +k)} b^{2k} \\
       & = 4 \sum_{k = M+1}^\infty \frac{\Gamma(n+k)}{\Gamma(n)\Gamma(k+1)} \frac{\Gamma(k+\frac{1}{2})}{\Gamma(\frac{1}{2})} \frac{\Gamma(\frac{d'}{2})}{\Gamma(\frac{d'}{2} +k)} b^{2k} \\
       & = 4 \sum_{k = M+1}^\infty \frac{\Gamma(k+\frac{1}{2})}{\Gamma(k+1)\Gamma(\frac{1}{2})} \frac{\Gamma(\frac{d'}{2})}{\Gamma(\frac{d'}{2} +k)} \frac{\Gamma(n+k)}{\Gamma(n)} \frac{(d')^k}{4^kn^k} \\
       & = 4 \sum_{k = M+1}^\infty \frac{\Gamma(k+\frac{1}{2})}{\Gamma(k+1)\Gamma(\frac{1}{2})} \frac{(\frac{d'}{2})^k \Gamma(\frac{d'}{2})}{\Gamma(\frac{d'}{2} +k)} \frac{\Gamma(n+k)}{n^k \Gamma(n)} \frac{1}{2^k} \\
       & \leq  4 \sum_{k = M+1}^\infty 
        \frac{(\frac{d'}{2})^k \Gamma(\frac{d'}{2})}{\Gamma(\frac{d'}{2} +k)} \frac{\Gamma(n+k)}{n^k \Gamma(n)} 2^{-k} \leq 4 \cdot 2^{-(M+1)} = 2^{-(M-1)}.
    \end{align*}
     The second inequality follows from the fact that $\frac{\Gamma(k+\frac{1}{2})}{\Gamma(k+1)\Gamma(\frac{1}{2})} \leq 1$, while the third one follows from the fact that
    \[
        \frac{(\frac{d}{2})^k \Gamma(\frac{d}{2})}{\Gamma(\frac{d}{2} +k)} \frac{\Gamma(n+k)}{n^k \Gamma(n)} \leq 1.
    \] 
    Indeed, writing $\psi$ for the digamma function, the function $x \mapsto \log \frac{a^k \Gamma(a)}{\Gamma(a+k)}$ has derivative $\psi(a)-\psi(a+k)+k/a > 0$, and is therefore increasing. Thus, whenever $n \geq d/2 \geq d'/2$ the inequality follows. 
    Summing up, if we set $b/4M^2 = d/\{4(n_1 \wedge n_2)\}$,  we have that 
    \begin{align*}
             \operatorname{TV}\left(\mathbb{E}_{\mu_0}N_\mathbb{S}^{\otimes n_\mathbb{S}}, \mathbb{E}_{\mu_1}N_\mathbb{S}^{\otimes n_\mathbb{S}} \right) & \leq \operatorname{TV}\Bigg\{\mathbb{E}_{\mu_0}N^{\otimes n_1} \left(0, \begin{pmatrix}
            I_d & P \\
            P^T & I_d
        \end{pmatrix}\right), \mathbb{E}_{\mu_1}N^{\otimes n_1} \left(0, \begin{pmatrix}
            I_d & P \\
            P^T & I_d
        \end{pmatrix}\right)\Bigg\}  \\
        & + \operatorname{TV}\Bigg\{\mathbb{E}_{\mu_0}N^{\otimes n_2} \left(0, \begin{pmatrix}
            I_d & -P \\
            -P^T & I_d
        \end{pmatrix}\right), \mathbb{E}_{\mu_1}N^{\otimes n_2} \left(0, \begin{pmatrix}
            I_d & -P \\
            -P^T & I_d
        \end{pmatrix}\right) \Bigg\} \\
        & \leq 2 \lceil d/2 \rceil 2^{-\frac{M-1}{2}} \leq (d+1)2^{-\frac{M-1}{2}},
        \end{align*}
which is upper bounded by $1/2 - 1/9$ if and only if \[
M > 2\frac{\log (d+1) - \log(1/2 - 1/9)}{\log2} + 1.
\]
Hence, this shows that \[
\frac{b}{4M^2} \gtrsim \sqrt{\frac{d}{(n_1 \wedge n_2) \log^4 d}}
\] is sufficient to have $\operatorname{TV}\left(\mathbb{E}_{\mu_0}N_\mathbb{S}^{\otimes n_\mathbb{S}}, \mathbb{E}_{\mu_1}N_\mathbb{S}^{\otimes n_\mathbb{S}} \right) \leq 1/2-1/9$, which implies that $\mathcal{R}(n_\mathbb{S}, \rho') =  \mathcal{R}(n_\mathbb{S}, \frac{d}{3}\frac{b}{4M^2}) \geq 1/2$. This allows us to conclude that \[
\rho^*  \geq \frac{3}{4d}\rho' = \frac{b}{16M^2} \gtrsim \sqrt{\frac{d}{(n_1 \wedge n_2) \log^4 d}}.\]

We finally turn to the simpler case $d < 42$ and show that testing 
 \[
 H_0: R(\Sigma_\mathbb{S}) = 0 \quad \text{ vs. } \quad H_1(\rho): R(\Sigma_\mathbb{S}) > \rho, 
\]
requires at least a separation of the order $\sqrt{1/(n_1 \wedge n_2)}$. To this aim, we consider $\Sigma_\mathbb{S} = \{I_{2d} + \Delta, I_{2d}, I_{2d} \}$, with 
\[
\Delta_{ij} = \begin{cases}
    \delta, \quad \text{ if } (i,j) \in \{(1,2), (2,1)\} \\
    0, \quad \text{otherwise},
\end{cases}
\]
and show that $R(\Sigma_\mathbb{S}) \geq \delta/2$. This follows from the fact that $\tilde{X}_\mathbb{S} = \{\Theta_1 -\frac{1}{2}I_{2d}, \Theta_2 -\frac{1}{2}I_{2d}, -\frac{1}{2}I_{2d} \}$, where 
\[
\Theta_1 = \begin{pNiceArray}{cccccc}[margin,columns-width=auto]
  +\frac{3d}{4} & -\frac{3d}{4} & \Block{2-2}{\boldsymbol{O}_{2,2d-2}} & \\
  -\frac{3d}{4} & +\frac{3d}{4} & & \\
  \Block{2-2}{\boldsymbol{O}_{2d-2,2}}
    &  & \Block{2-2}{\boldsymbol{O}_{2d-2}} & \\
\\
\end{pNiceArray} \text{ and } \Theta_2 = \begin{pNiceArray}{cccccc}[margin,columns-width=auto]
  +\frac{3d}{4} & +\frac{3d}{4} & \Block{2-2}{\boldsymbol{O}_{2,2d-2}} & \\
  +\frac{3d}{4} & +\frac{3d}{4} & & \\
  \Block{2-2}{\boldsymbol{O}_{2d-2,2}}
    &  & \Block{2-2}{\boldsymbol{O}_{2d-2}} & \\
\\
\end{pNiceArray},
\]
is feasible as $\tilde{X}_\mathbb{S} + \tilde{X}_\mathbb{S}^0 = \{\Theta_1, \Theta_2, \boldsymbol{O}_{2d} \}$ and $A^*\tilde{X}_\mathbb{S}$ is diagonal with $\operatorname{tr}(A^*\tilde{X}_\mathbb{S}) = 0$.
We then bound the total error probability by choosing 
\[
P_{\mathbb{S},0}^{\otimes n_\mathbb{S}} = \left(N^{\otimes n_1}(\boldsymbol{0}_{2d}, I_{2d}), N^{\otimes n_2}(\boldsymbol{0}_{2d}, I_{2d}), N^{\otimes n_3}(\boldsymbol{0}_{2d}, I_{2d}) \right) \in \bar{\mathcal{P}}_{\mathbb{S}}(0),
\]
and
\[
P_{\mathbb{S},1}^{\otimes n_\mathbb{S}} = \left(N^{\otimes n_1}(\boldsymbol{0}_{2d}, I_{2d}+\Delta), N^{\otimes n_2}(\boldsymbol{0}_{2d}, I_{2d}), N^{\otimes n_3}(\boldsymbol{0}_{2d}, I_{2d}) \right) \in \mathcal{P}_{\mathbb{S}}(\delta),
\]
and using the fact that
\begin{align*}
    \mathcal{R}(n_\mathbb{S}, \delta) & =\inf_{\varphi_\mathbb{S}} \biggl\{ \sup_{P_{\mathbb{S},0} \in \bar{\mathcal{P}}_{\mathbb{S}}(0)} P_{\mathbb{S},0}^{\otimes n_{\mathbb{S}}}(\varphi-\mathbb{S} = 1)  + \sup_{P_{\mathbb{S},1} \in \mathcal{P}_{\mathbb{S}}(\delta)} P_{\mathbb{S},1}^{\otimes n_{\mathbb{S}}} (\varphi-\mathbb{S} = 0) \biggr\} \geq 1 - \operatorname{TV}\bigl( P_{\mathbb{S},0}^{\otimes n_{\mathbb{S}}} , P_{\mathbb{S},1}^{\otimes n_{\mathbb{S}}} \bigr) \\
    & \geq  1 - \operatorname{TV}\bigl(N^{\otimes n_1}(\boldsymbol{0}_{2d}, I_{2d}), N^{\otimes n_1}(\boldsymbol{0}_{2d}, I_{2d}+\Delta) \bigr).
\end{align*}
Now, if $P_0 := N^{\otimes n_1}\left(\boldsymbol{0}_{2}, I_{2}\right), P_1 := N^{\otimes n_1}\left(\boldsymbol{0}_{2},\begin{pmatrix}
    1 & \delta \\
    \delta & 1
\end{pmatrix}\right)$, we have
\begin{align*}
    4 \operatorname{TV}&\bigl(N^{\otimes n_1}(\boldsymbol{0}_{2d}, I_{2d}), N^{\otimes n_1}(\boldsymbol{0}_{2d}, I_{2d}+\Delta) \bigr)^2 \\
    & = 4 \operatorname{TV}(P_0, P_1)^2 \leq \operatorname{\chi^2}\left(P_0, P_1 \right) = \int \left(\frac{dP_1}{dP_0}\right)^2 dP_0 - 1 \\
    & = \frac{1}{\cos^{2n_1}\delta} \int \left(\prod_{i = 1}^{n_1} \exp\left\{ -\frac{1}{2} \langle \begin{pmatrix}
        x_i \\
        y_i
    \end{pmatrix}\begin{pmatrix}
        x_i \\
        y_i
    \end{pmatrix}^T, \frac{2}{\cos^2 \delta} \begin{pmatrix}
        1 & \sin\delta \\
        \sin \delta & 1
    \end{pmatrix} - 2I_2\rangle \right\}\right)^2 dP_0 - 1 \\
    & = \{(2-\cos^2\delta)^2 - 4\sin^2\delta \}^{-n_1/2}  - 1\\
    & \leq (1-\delta^2)^{-n_1/2} - 1\leq e^{n_1\delta^2/2} - 1,
\end{align*}
from which we see that $\operatorname{TV}\left(P_0, P_1 \right) \leq 1/2$ if $\delta \leq \sqrt{2\log 2/n_1}$. The same holds true if we switch the roles of $n_1$ and $n_2$. The above bound on the total variation distance demonstrates that we may choose $\delta=\sqrt{2\log 2/n_1 \wedge n_2}$, and hence that we have
\[
    \rho^* \geq \delta = \sqrt{\frac{2\log 2}{n_1 \wedge n_2}},
\]
as claimed.
\end{proof}
\begin{proof}[Proof of Lemma \ref{lemma:haar}]
    Let $\varphi(z)$ denote the density of the $d$-dimensional Gaussian law with respect to the Lebesgue measure. By the triangle inequality we have that
    \begin{align*}
       \operatorname{TV} & \Bigg\{\mathbb{E}_{\mu_0}\left\{ N^{\otimes n}\left(\boldsymbol{0}_{2d}, \begin{pmatrix}
        I_d & U^T \Lambda U \\
        U^T \Lambda U & I_d
    \end{pmatrix}\right) \right\}, \mathbb{E}_{\mu_1}\left\{N^{\otimes n}\left(\boldsymbol{0}_{2d}, \begin{pmatrix}
        I_d & U^T \Lambda U \\
        U^T \Lambda U & I_d
    \end{pmatrix}\right) \right\} \Bigg\} \\
    & \leq \sum_{j = 0}^{\lceil d/2 \rceil-1} \operatorname{TV}\Bigg\{\mathbb{E}_{\pi_j}\left\{N^{\otimes n}\left(\boldsymbol{0}_{2d}, \begin{pmatrix}
        I_d & U^T \Lambda U \\
        U^T \Lambda U & I_d
    \end{pmatrix}\right) \right\}, \mathbb{E}_{\pi_{j+1}}\left\{N^{\otimes n}\left(\boldsymbol{0}_{2d}, \begin{pmatrix}
        I_d & U^T \Lambda U \\
        U^T \Lambda U & I_d
    \end{pmatrix} \right) \right\}\Bigg\}, 
    \end{align*}
where $\pi_j$ is distribution of $U^T \Lambda U$, where $U \sim \mathcal{U}(d)$ is common for all $\pi_j$, while $\Lambda = \operatorname{diag}(\sigma_{1:d})$, with $\sigma_{1:d} \sim \nu_0^{\otimes (\lceil d/2 \rceil -j)} \otimes \nu_1^{\otimes j} \otimes \delta_0 ^{\otimes \lfloor d/2 \rfloor}$, for $j \in \{0, \ldots, \lceil d/2 \rceil - 1\}$. Observe that $\pi_0 = \mu_0$ and $\pi_{\lceil d/2 \rceil} = \mu_1$, so that this inequality essentially interpolates $\mu_0$ and $\mu_1$ with $\lceil d/2 \rceil$ intermediate measures such that, for every $j \in \{0, \ldots, \lceil d/2 \rceil - 1\}$, $\pi_j$ differs from $\pi_{j+1}$ only for the distribution of $\sigma_j$ in $\Lambda$. Now, consider a generic $j \in \{0, \ldots, \lceil d/2 \rceil - 1\}$ and define $S:= \{1, \ldots, \lceil d/2 \rceil -j-1, \lceil d/2 \rceil -j+1, \ldots, \lceil d/2 \rceil  \}$. We will show that we can bound each term of the summation above by \[\operatorname{TV}\Bigg\{\mathbb{E}_{\tilde{\pi}_0}\left\{N^{\otimes n}\left(\boldsymbol{0}_{2d}, \begin{pmatrix}
        I_d & \eta u u^T \\
        \eta u u^T & I_d
    \end{pmatrix}\right) \right\}, \mathbb{E}_{\tilde{\pi}_1}\left\{N^{\otimes n}\left(\boldsymbol{0}_{2d}, \begin{pmatrix}
        I_d & \eta' u' u'^T \\
        \eta' u' u'^T & I_d
    \end{pmatrix}\right) \right\} \Bigg\}, \]
    with $\tilde{\pi}_0, \tilde{\pi}_1$ defined in the statement, and this would conclude the proof. To this aim, observe that if \[
    \begin{pmatrix}
        X \\
        Y
    \end{pmatrix} \sim N\left(\boldsymbol{0}_{2d}, \begin{pmatrix}
        I_d & U^T \Lambda U \\
        U^T \Lambda U & I_d
    \end{pmatrix} \right),
    \]
    then \[
    \begin{cases}
        X|Y \sim N(U^T \Lambda U Y, I_d - U^T \Lambda^2 U)\\
        \\
        Y \sim N(\boldsymbol{0}_{d}, I_d).
    \end{cases}
    \]
    This allows us to write 
    \begin{align*}
TV_0 & := \operatorname{TV} \Bigg\{\mathbb{E}_{\pi_j}\left\{N^{\otimes n}\left(\boldsymbol{0}_{2d}, \begin{pmatrix}
        I_d & U^T \Lambda U \\
        U^T \Lambda U & I_d
    \end{pmatrix}\right) \right\}, \mathbb{E}_{\pi_{j+1}}\left\{N^{\otimes n}\left(\boldsymbol{0}_{2d}, \begin{pmatrix}
        I_d & U^T \Lambda U \\
        U^T \Lambda U & I_d
    \end{pmatrix} \right) \right\} \Bigg\} \\
    & = \int  \prod_{i =1}^n \varphi(y_i) \bigg|\mathbb{E}_{\pi_j}\left\{\prod_{i =1}^n |I - U^T \Lambda^2 U|^{-1/2}\varphi((I - U^T \Lambda^2 U)^{-1/2}(x_i - U^T \Lambda U y_i))\right\} \\
    & \quad \quad \quad \quad - \mathbb{E}_{\pi_{j+1}}\left\{\prod_{i =1}^n |I - U^T \Lambda^2 U|^{-1/2}\varphi((I - U^T \Lambda^2 U)^{-1/2}(x_i - U^T \Lambda U y_i))\right\} \bigg|  dx dy,
    \end{align*}
    where $dx = dx_1 \ldots dx_n$, and similarly for $dy$. Now, let $U_S$ be the restriction  of $U$ to its columns in $S$. By definition of $\pi_{j}$ (resp. $ \pi_{j+1}$), we can write $U^T \Lambda U$ as $U^T \Lambda U = \eta u u^T + U_S^T \operatorname{diag}(\sigma_{-j}) U_S$
where $\sigma_{-j} \sim \nu_0^{\otimes (\lceil d/2 \rceil -j-1)} \otimes \delta_0 \otimes \nu_1^{\otimes j} \otimes \delta_0^{\otimes d - \lceil d/2 \rceil}$, where $\delta_0$ is the Dirac measure in $0$. Write $\pi$ for the distribution of $\left(U_S, \operatorname{diag}\left(\sigma_{-j}\right)\right)$ and $f_0$ (resp. $f_1$) for the conditional distribution of $(u, \eta)$ given $\left(U_S, \operatorname{diag}\left(\sigma_{-j}\right)\right)$: this is given by $\eta \sim \nu_0$ (resp. $\nu_1$), while $u|U_S$ is sampled uniformly from $\mathcal{S}^{d-1} \cap U_S^\perp$, i.e. the intersection between the $d$-dimensional unit sphere $\mathcal{S}^{d-1}:= \{x \in \mathbb{R}^d:\|x\|_2 = 1 \}$ and the orthogonal complement of the columns spanned by $U_S$. First, observe that $\operatorname{dim}(\mathcal{S}^{d-1} \cap U_S^\perp) = d+1-\lceil d/2 \rceil$. Secondly, observe that for every measurable function $h$,  \[
\mathbb{E}h(P) = \int h(P) \pi_j(dP) = \int h(P) f_0(du, d\eta)\pi\left(dU_S, d\sigma_{-j}\right),
\]
and similarly for $\pi_{j+1}$. This allows to bound the TV distance above one step further as
\begin{multline*}
TV_0 \leq \int \prod_{i =1}^n \varphi(y_i) \bigg|\int \prod_{i =1}^n |I - U^T \Lambda^2 U|^{-1/2}\varphi((I - U^T \Lambda^2 U)^{-1/2}(x_i - U^T \Lambda U y_i) f_0(du, d\eta) \\
     - \int \prod_{i =1}^n |I - U'^T \Lambda'^2 U'|^{-1/2}\varphi((I - U'^T \Lambda'^2 U')^{-1/2}(x_i - U'^T \Lambda' U' y_i) f_1(du', d\eta')\bigg| \pi\left(dU_S, d\sigma_{-j}\right) dx dy \\
= \int \prod_{i =1}^n \varphi(y_i) \bigg(\int \bigg|\int \prod_{i =1}^n |I - U^T \Lambda^2 U|^{-1/2}\varphi((I - U^T \Lambda^2 U)^{-1/2}(x_i - U^T \Lambda U y_i) f_0(du, d\eta) \\
- \int \prod_{i =1}^n |I - U'^T \Lambda'^2 U'|^{-1/2}\varphi((I - U'^T \Lambda'^2 U')^{-1/2}(x_i - U'^T \Lambda' U' y_i) f_1(du', d\eta')\bigg| dx \bigg) \pi\left(dU_S, d\sigma_{-j}\right) dy,  
\end{multline*}
where in the first step we used Jensen's inequality, bringing the common $\pi$ outside the absolute value, while in the last step we used Fubini-Tonelli theorem with positive integrand. Consider now the innermost integral \begin{align*}
    \int & \bigg|\int \prod_{i =1}^n |I - U^T \Lambda^2 U|^{-1/2}\varphi((I - U^T \Lambda^2 U)^{-1/2}(x_i - U^T \Lambda U y_i) f_0(du, d\eta) \\
    & \quad - \int \prod_{i =1}^n |I - U'^T \Lambda'^2 U'|^{-1/2}\varphi((I - U'^T \Lambda'^2 U')^{-1/2}(x_i - U'^T \Lambda' U' y_i) f_1(du', d\eta')\bigg| dx \\
    & = \int \bigg|\int \prod_{i =1}^n |I - U^T \Lambda^2 U|^{-1/2}\varphi((I - U^T \Lambda^2 U)^{-1/2}(x_i - \eta u u^T y_i - \sum_{k \neq j}\sigma_k u_k u_k^T y_i) f_0(du, d\eta) \\
    & \quad - \int \prod_{i =1}^n |I - U'^T \Lambda'^2 U'|^{-1/2}\varphi((I - U'^T \Lambda'^2 U')^{-1/2}(x_i - \eta' u' u'^T y_i - \sum_{k \neq j}\sigma_k u_k u_k^T y_i) f_1(du', d\eta')\bigg| dx
\end{align*}
for fixed $U_S, \sigma_{-j}, y$. This can be simplified to \begin{align*}
    \int & \bigg|\int \prod_{i =1}^n |I - U^T \Lambda^2 U|^{-1/2}\varphi((I - U^T \Lambda^2 U)^{-1/2}(x_i - \eta u u^T y_i)) f_0(du, d\eta) \\
    & \quad - \int \prod_{i =1}^n |I - U'^T \Lambda'^2 U'|^{-1/2}\varphi((I - U'^T \Lambda'^2 U')^{-1/2}(x_i - \eta' u' u'^T y_i)) f_1(du', d\eta')\bigg| dx
\end{align*} after the change of variable $x_i' = x_i - \sum_{k \neq j}\sigma_k u_k u_k^T y_i$, for all $i \in [n]$. Now, observe that, under $f_0$, we have \[
(I - U^T \Lambda^2 U)^{-1/2} = \frac{1}{\sqrt{1-\eta^2}}uu^T + \sum_{i \neq j} \frac{1}{\sqrt{1-\sigma_i^2}} u_i u_i^T,
\]
which yields \[|I - U^T \Lambda^2 U|^{-1/2} = \frac{1}{{\sqrt{1-\eta^2}} \prod\limits_{i \neq j}\sqrt{1-\sigma_i^2}};\] 
similarly under for $(I - U'^T \Lambda'^2 U')^{-1/2}$ under $\pi_1$, with $\eta', u'$ in place of $\eta, u$. Perform the change of variables \[
x_i = \left(I - \sum_{k \neq j}\sigma_k^2 u_k u_k^T\right)^{1/2}z_i, 
\] 
for all $i \in [n]$, whose Jacobian is \[
\prod_{i =1}^n \left|I - \sum_{k \neq j}\sigma_k^2 u_k u_k^T\right|^{1/2} =  \prod_{i =1}^n \prod\limits_{k \neq j}\sqrt{1-\sigma_k^2}.
\]
We get 
\begin{align*}
    \int & \bigg|\int \prod_{i =1}^n \frac{1}{\sqrt{1-\eta^2}} \varphi\left(\left(\sum_{i \neq j} u_i u_i^T + \frac{1}{\sqrt{1-\eta^2}}uu^T \right) z_i - \frac{\eta}{\sqrt{1-\eta^2}} u u^T y_i\right) f_0(du, d\eta) \\
    & \quad - \int \prod_{i =1}^n \frac{1}{\sqrt{1-\eta'^2}} \varphi\left(\left(\sum_{i \neq j} u_i u_i^T + \frac{1}{\sqrt{1-\eta'^2}}u'u'^T \right) z_i - \frac{\eta'}{\sqrt{1-\eta'^2}} u' u'^T y_i\right) f_1(du', d\eta')\bigg| dx \\
    & = \int \bigg|\int \prod_{i =1}^n \frac{1}{\sqrt{1-\eta^2}} \varphi\left(\left(\sum_{i \neq j} u_i u_i^T + \frac{1}{\sqrt{1-\eta^2}}uu^T \right)(z_i - \eta u u^T y_i)\right) f_0(du, d\eta) \\
    & \quad - \int \prod_{i =1}^n \frac{1}{\sqrt{1-\eta'^2}} \varphi\left(\left(\sum_{i \neq j} u_i u_i^T + \frac{1}{\sqrt{1-\eta'^2}}u'u'^T \right)(z_i - \eta' u' u'^T y_i)\right) f_1(du', d\eta')\bigg| dx \\
    & = \int \bigg|\int \prod_{i =1}^n \frac{1}{\sqrt{1-\eta^2}} \varphi\left(\left(I - \eta^2 uu^T \right)^{-1/2}(z_i - \eta u u^T y_i)\right) f_0(du, d\eta) \\
    & \quad - \int \prod_{i =1}^n \frac{1}{\sqrt{1-\eta'^2}} \varphi\left(\left(I - \eta'^2 u'u'^T \right)^{-1/2} (z_i - \eta' u' u'^T y_i)\right) f_1(du', d\eta')\bigg| dx \\
    & = \int \bigg|\int \prod_{i =1}^n \left|I - \eta^2 uu^T \right|^{-1/2}\varphi\left(\left(I - \eta^2 uu^T \right)^{-1/2}(z_i - \eta u u^T y_i)\right) f_0(du, d\eta) \\
    & \quad - \int \prod_{i =1}^n \left|I - \eta'^2 u'u'^T \right|^{-1/2} \varphi\left(\left(I - \eta'^2 u'u'^T \right)^{-1/2} (z_i - \eta' u' u'^T y_i)\right) f_1(du', d\eta')\bigg| dx \\
    & = \operatorname{TV}\Bigg\{ \mathbb{E}_{f_0}\{N^{\otimes n}(\eta u u^T y, I - (\eta uu^T)(\eta uu^T)^T)\}, \mathbb{E}_{f_1}\{N^{\otimes n}(\eta' u' u'^T y, I - (\eta' u'u'^T)(\eta' u'u'^T)^T)\}\Bigg\}. 
\end{align*} 
Coming back to the initial TV distance we wish to bound, we get that 
\begin{align*}
& \operatorname{TV} \Bigg\{\mathbb{E}_{\pi_j}\left\{N^{\otimes n}\left(\boldsymbol{0}_{2d}, \begin{pmatrix}
        I_d & U^T \Lambda U \\
        U^T \Lambda U & I_d
    \end{pmatrix}\right) \right\}, \mathbb{E}_{\pi_{j+1}}\left\{N^{\otimes n}\left(\boldsymbol{0}_{2d}, \begin{pmatrix}
        I_d & U^T \Lambda U \\
        U^T \Lambda U & I_d
    \end{pmatrix} \right) \right\} \Bigg\} \\
        & \leq \int \operatorname{TV}\Bigg\{ \mathbb{E}_{f_0}\{N^{\otimes n}(\eta u u^T y, I - (\eta uu^T)(\eta uu^T)^T)\}, \mathbb{E}_{f_1}\{N^{\otimes n}(\eta' u' u'^T y, I - (\eta' u'u'^T)(\eta' u'u'^T)^T)\}\Bigg\} \\
        & \quad \quad \quad \quad \quad \quad \quad \quad \quad \quad \quad \quad \quad \quad \quad \quad \quad \quad \quad \quad \quad \quad \quad \quad \quad \quad \quad \quad \quad \quad \varphi(y) \pi\left(dU_S, d\sigma_{-j}\right) dy \\
         & = \int \operatorname{TV}\Bigg\{\mathbb{E}_{f_0}\left\{N^{\otimes n}\left(\boldsymbol{0}_{2d}, \begin{pmatrix}
        I_{d} & \eta u u^T \\
        \eta u u^T & I_{d}
    \end{pmatrix}\right) \right\}, \mathbb{E}_{f_1}\left\{N^{\otimes n}\left(\boldsymbol{0}_{2d}, \begin{pmatrix}
        I_{d} & \eta' u' u'^T \\
        \eta' u' u'^T & I_{d}
    \end{pmatrix}\right) \right\} \Bigg\} \pi\left(dU_S, d\sigma_{-j}\right) \\
        & = \operatorname{TV}\Bigg\{\mathbb{E}_{\tilde{\pi}_0}\left\{N^{\otimes n}\left(\boldsymbol{0}_{2d}, \begin{pmatrix}
        I_{d} & \eta u u^T \\
        \eta u u^T & I_{d}
    \end{pmatrix}\right) \right\}, \mathbb{E}_{\tilde{\pi}_1}\left\{N^{\otimes n}\left(\boldsymbol{0}_{2d}, \begin{pmatrix}
        I_{d} & \eta' u' u'^T \\
        \eta' u' u'^T & I_{d}
    \end{pmatrix}\right) \right\} \Bigg\}, 
    \end{align*}
    where $\tilde{\pi}_0$ (resp. $\tilde{\pi}_1$) is the distribution of $\eta u u^T$ (resp. $\eta' u' u'^T$), where $\eta \sim \nu_0$ (resp. $\eta' \sim \nu_1$) and $u$ (resp. $u'$) is sampled uniformly from a $d'$-dimensional unit sphere embedded in $\mathbb{R}^d$, with $d' = d + 1 - \lceil d/2 \rceil$. Now, since the Gaussian distribution is invariant under orthogonal transformation, we might assume that $u = (u_{d'}, \boldsymbol{0}^T_{d-d'})$, with $u_{d'}$ uniformly sampled from the $d'$-dimensional sphere $\mathcal{S}^{d'-1}$, and the result follows.
\end{proof}

\begin{proof}[Proof of Lemma \ref{lemma:determinant}]
Consider \[
\int \prod_{i =1}^n \varphi(y_i) \bigg|\int \prod_{i =1}^n \frac{1}{\sqrt{1-\eta^2}} \varphi\left(\left(I - \eta^2 uu^T \right)^{-1/2}(z_i - \eta u u^T y_i)\right) [\tilde{\pi}_0(du, d\eta) - \tilde{\pi}_1(du, d\eta)]\bigg| dz dy,
\]
and observe that \[
\left(I - \eta^2 uu^T \right)^{-1} = I + \frac{\eta^2}{1-\eta^2}uu^T.
\]
Hence, 
\begin{align*}
    & \varphi\left(\left(I - \eta^2 uu^T \right)^{-1/2}(z_i - \eta u u^T y_i)\right) = (2\pi)^{-\frac{d}{2}} \exp\left\{-\frac{1}{2}(z_i - \eta u u^T y_i)^T \left(I + \frac{\eta^2}{1-\eta^2}uu^T \right) (z_i - \eta u u^T y_i) \right\} \\
    & = (2\pi)^{-\frac{d}{2}} \exp\left\{-\frac{1}{2}z_i^T z_i - \frac{1}{2}\frac{\eta^2}{1-\eta^2} z_i^T uu^T z_i - \frac{1}{2} \eta^2 y_i^T uu^T y_i -\frac{1}{2} \frac{\eta^4}{1-\eta^2} y_i^T uu^T y_i  \right. \\
    & \left. \hspace{11.1cm} + \eta z_i^T uu^T y_i + \frac{\eta^3}{1-\eta^2}z_i^T u u^T y_i \right\} \\
    & = (2\pi)^{-\frac{d}{2}} \exp\left\{-\frac{1}{2}z_i^T z_i - \frac{1}{2}\frac{\eta^2}{1-\eta^2} z_i^T uu^T z_i - \frac{1}{2} \frac{\eta^2}{1-\eta^2} y_i^T uu^T y_i + \frac{\eta}{1-\eta^2}z_i^T u u^T y_i\right\} \\
    & = \varphi(z_i)\exp\left\{ - \frac{1}{2}\frac{\eta^2}{1-\eta^2} z_i^T uu^T z_i - \frac{1}{2} \frac{\eta^2}{1-\eta^2} y_i^T uu^T y_i + \frac{\eta}{1-\eta^2}z_i^T u u^T y_i\right\} \\
    & = \varphi(z_i)\exp \left\{ - \frac{1}{2} \langle \begin{pmatrix}
       \frac{\eta^2}{1-\eta^2} uu^T & -\frac{\eta}{1-\eta^2} uu^T \\
       -\frac{\eta}{1-\eta^2} uu^T & \frac{\eta^2}{1-\eta^2}uu^T
    \end{pmatrix}, \begin{pmatrix}
        z_i \\ y_i
    \end{pmatrix}\begin{pmatrix}
        z_i \\ y_i
    \end{pmatrix}^T \rangle \right\} =: \varphi(z_i) g(\eta, u, z_i, y_i).
\end{align*}
Hence, 
\begin{align*}
    TV_1 &:= \int \prod_{i =1}^n \varphi(y_i) \bigg|\int \prod_{i =1}^n \frac{1}{\sqrt{1-\eta^2}} \varphi\left(\left(I - \eta^2 uu^T \right)^{-1/2}(z_i - \eta u u^T y_i)\right) [\tilde{\pi}_0(du, d\eta) - \tilde{\pi}_1(du, d\eta)]\bigg| dx dy \\
    & = \int \prod_{i =1}^n \varphi(z_i) \varphi(y_i) \bigg|\int \prod_{i =1}^n \frac{1}{\sqrt{1-\eta^2}} g(\eta, u, z_i, y_i) [\tilde{\pi}_0(du, d\eta) - \tilde{\pi}_1(du, d\eta)]\bigg| dz dy \\
    & \leq \sqrt{\int \prod_{i =1}^n \varphi(z_i) \varphi(y_i) \bigg[\int \prod_{i =1}^n \frac{1}{\sqrt{1-\eta^2}} g(\eta, u, z_i, y_i) [\tilde{\pi}_0(du, d\eta) - \tilde{\pi}_1(du, d\eta)]\bigg]^2 dz dy},
\end{align*}
where we used Cauchy-Schwartz inequality in the last step. Thus, it follows that 
\begin{align*}
    TV_1^2 & \leq \int \prod_{i =1}^n \varphi(z_i) \varphi(y_i) \sum_{k = 0,1} \sum_{j = 0,1}(-1)^{k+j} \\
    & \quad \quad \quad \left(\int \int \prod_{i =1}^n \frac{1}{\sqrt{1-\eta^2}\sqrt{1-\eta'^2}} g(\eta, u, z_i, y_i) g(\eta', u', z_i, y_i)\tilde{\pi}_k(du, d\eta)\tilde{\pi}_j(du', d\eta')\right) dz dy \\
    & = \sum_{k = 0,1} \sum_{j = 0,1}(-1)^{k+j}\int \int (1-\eta^2)^{-n/2} (1-\eta'^2)^{-n/2} \\
    & \quad \quad \quad \left(\int \prod_{i =1}^n g(\eta, u, z_i, y_i) g(\eta', u', z_i, y_i) \varphi(z_i) \varphi(y_i) dz dy \right)\tilde{\pi}_k(du, d\eta)\tilde{\pi}_j(du', d\eta') \\
    & = \sum_{k = 0,1} \sum_{j = 0,1}(-1)^{k+j}\int \int (1-\eta^2)^{-n/2} (1-\eta'^2)^{-n/2} \\
    & \quad \quad \quad \left(\prod_{i =1}^n \int  g(\eta, u, z_i, y_i) g(\eta', u', z_i, y_i) \varphi(z_i) \varphi(y_i) dz_i dy_i \right)\tilde{\pi}_k(du, d\eta)\tilde{\pi}_j(du', d\eta'), 
\end{align*}
where in the second equality we used Fubini-Tonelli's theorem to change the order of integration, and Fubini's theorem to factorise independent integrands in the last one. Let us consider a generic \[
\int g(\eta, u, z_i, y_i) g(\eta', u', z_i, y_i) \varphi(z_i) \varphi(y_i)dz_i dy_i,
\]
bearing in mind that $u = (u_{d'}, \boldsymbol{0}^T_{d-d'}), u' = (u'_{d'},\boldsymbol{0}^T_{d-d'})$, with $u_{d'}, u'_{d'}$ being independent and uniform samples from the $d'$-dimensional unit sphere, where $d' = d + 1 - \lceil d/2 \rceil$.  We have \begin{align*}
     \int & g(\eta, u, z_i, y_i) g(\eta', u', z_i, y_i) \varphi(z_i) \varphi(y_i)dz_i dy_i \\
    & = \int (2\pi)^{-d} \exp \left\{ - \frac{1}{2} \langle \begin{pmatrix}
       \frac{\eta^2}{1-\eta^2} uu^T & -\frac{\eta}{1-\eta^2} uu^T \\
       -\frac{\eta}{1-\eta^2} uu^T & \frac{\eta^2}{1-\eta^2}uu^T
    \end{pmatrix} + \right. \\
    & \left. \hspace{4cm} + \begin{pmatrix}
       \frac{\eta'^2}{1-\eta'^2} u'u'^T & -\frac{\eta'}{1-\eta'^2} u'u'^T \\
       -\frac{\eta'}{1-\eta'^2} u'u'^T & \frac{\eta'^2}{1-\eta'^2}u'u'^T
    \end{pmatrix} + \begin{pmatrix}
       I_d & \boldsymbol{O}_d \\
       \boldsymbol{O}_d & I_d
    \end{pmatrix}, \begin{pmatrix}
        z_i \\ y_i
    \end{pmatrix}\begin{pmatrix}
        z_i \\ y_i
    \end{pmatrix}^T \rangle \right\} \\
    & = \int (2\pi)^{-d} \exp \left\{ - \frac{1}{2} \langle \begin{pmatrix}
       I_d + \frac{\eta^2}{1-\eta^2} uu^T + \frac{\eta'^2}{1-\eta'^2} u'u'^T & -\frac{\eta}{1-\eta^2} uu^T -\frac{\eta'}{1-\eta'^2} u'u'^T \\
       -\frac{\eta}{1-\eta^2} uu^T  -\frac{\eta'}{1-\eta'^2} u'u'^T & I_d + \frac{\eta^2}{1-\eta^2}uu^T \frac{\eta'^2}{1-\eta'^2}u'u'^T
    \end{pmatrix}, \begin{pmatrix}
        z_i \\ y_i
    \end{pmatrix}\begin{pmatrix}
        z_i \\ y_i
    \end{pmatrix}^T \rangle \right\} \\
    & =: \int (2\pi)^{-d} \exp \left\{ - \frac{1}{2} \langle K, \begin{pmatrix}
        z_i \\ y_i
    \end{pmatrix}\begin{pmatrix}
        z_i \\ y_i
    \end{pmatrix}^T \rangle \right\} = |K|^{-1/2}.
\end{align*}
Now $K$ takes the form \[
K = \begin{pmatrix}
       I_d + \frac{\eta^2}{1-\eta^2} uu^T + \frac{\eta'^2}{1-\eta'^2} u'u'^T & -\frac{\eta}{1-\eta^2} uu^T -\frac{\eta'}{1-\eta'^2} u'u'^T \\
       -\frac{\eta}{1-\eta^2} uu^T  -\frac{\eta'}{1-\eta'^2} u'u'^T & I_d + \frac{\eta^2}{1-\eta^2}uu^T \frac{\eta'^2}{1-\eta'^2}u'u'^T
    \end{pmatrix}.
\]
It is straightforward to show that \[
|K| = \frac{(1 - (u^Tu')^2 \eta \eta')^2}{(1-\eta^2)(1-\eta'^2)} \overset{d}{=} \frac{(1 - u_1^2 \eta \eta')^2}{(1-\eta^2)(1-\eta'^2)},
\]
but, since it requires some lengthy algebraic computations, we defer its proof to Lemma \ref{lemma:computing_det} below. Now, it follows that 
\begin{align*}
    TV_1^2 & \leq \sum_{k = 0,1} \sum_{j = 0,1}(-1)^{k+j}\int \int (1-\eta^2)^{-n/2} (1-\eta'^2)^{-n/2} \\
    & \quad \quad \quad \quad \left(\prod_{i =1}^n \int  g(\eta, u, z_i, y_i) g(\eta', u', z_i, y_i) \varphi(z_i) \varphi(y_i) dz_i dy_i \right)\tilde{\pi}_k(du, d\eta)\tilde{\pi}_j(du', d\eta') \\
    & = \sum_{k = 0,1} \sum_{j = 0,1}(-1)^{k+j}\int \int (1-\eta^2)^{-n/2} (1-\eta'^2)^{-n/2} |K|^{-n/2}\tilde{\pi}_k(du, d\eta)\tilde{\pi}_j(du', d\eta') \\
    & = \sum_{k = 0,1} \sum_{j = 0,1}(-1)^{k+j}\int \int \frac{1}{(1-u_1^2 \eta\eta')^n}\tilde{\pi}_k(du, d\eta)\tilde{\pi}_j(du', d\eta') \\
    & = \sum_{h = 0}^\infty \sum_{k = 0,1} \sum_{j = 0,1}(-1)^{k+j}\int \int \binom{h +n-1}{n-1} u_1^{2h}\eta^h\eta'^h\tilde{\pi}_0(du, d\eta)\tilde{\pi}_1(du', d\eta') \\
    & = \sum_{k = 0}^\infty \binom{k+n-1}{n-1} 
 \mathbb{E}[u_1^{2k}] \left(\int \eta^{k} [\nu_0(d\eta)-\nu_1(d\eta)] \right)^2 \\
 & = \sum_{k = M+1}^\infty \binom{k+n-1}{n-1} \mathbb{E}[u_1^{2k}] \left(\int \eta^{k} [\nu_0(d\eta)-\nu_1(d\eta)] \right)^2,
\end{align*}
since $\nu_0, \nu_1$ share the first $M$ moments.
\end{proof}
\begin{lemma}\label{lemma:computing_det}
    Let \[K = \begin{pmatrix}
       I_d + \frac{\eta^2}{1-\eta^2} uu^T + \frac{\eta'^2}{1-\eta'^2} u'u'^T & -\frac{\eta}{1-\eta^2} uu^T -\frac{\eta'}{1-\eta'^2} u'u'^T \\
       -\frac{\eta}{1-\eta^2} uu^T  -\frac{\eta'}{1-\eta'^2} u'u'^T & I_d + \frac{\eta^2}{1-\eta^2}uu^T \frac{\eta'^2}{1-\eta'^2}u'u'^T
    \end{pmatrix},
\]
where $u,u'$ are $d$-dimensional unit vectors. Then, \[
|K| = \frac{(1 - (u^Tu')^2 \eta \eta')^2}{(1-\eta^2)(1-\eta'^2)} 
\]
\end{lemma}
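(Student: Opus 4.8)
The plan is to exploit the block structure of $K$. Both diagonal blocks coincide, equal to $A := I_d + \frac{\eta^2}{1-\eta^2}uu^T + \frac{\eta'^2}{1-\eta'^2}u'u'^T$, and both off-diagonal blocks coincide, equal to $B := -\frac{\eta}{1-\eta^2}uu^T - \frac{\eta'}{1-\eta'^2}u'u'^T$, so that $K = \begin{pmatrix} A & B \\ B & A\end{pmatrix}$. Conjugating by the orthogonal matrix $P = \frac{1}{\sqrt{2}}\begin{pmatrix} I_d & I_d \\ I_d & -I_d\end{pmatrix}$ yields $P^T K P = \begin{pmatrix} A+B & \boldsymbol{O}_d \\ \boldsymbol{O}_d & A-B\end{pmatrix}$, and hence $|K| = |A+B|\cdot|A-B|$; this step requires no commutation between $A$ and $B$.

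Next I would simplify $A \pm B$ by collecting coefficients of $uu^T$ and $u'u'^T$. Since $\frac{\eta^2+\eta}{1-\eta^2} = \frac{\eta}{1-\eta}$ and $\frac{\eta^2-\eta}{1-\eta^2} = -\frac{\eta}{1+\eta}$, and similarly with $\eta'$, one gets
\[
    A - B = I_d + \frac{\eta}{1-\eta}uu^T + \frac{\eta'}{1-\eta'}u'u'^T, \qquad A + B = I_d - \frac{\eta}{1+\eta}uu^T - \frac{\eta'}{1+\eta'}u'u'^T.
\]
Both are rank-two perturbations of the identity, so I would apply the matrix determinant lemma (Weinstein--Aronszajn identity): writing $c := u^T u'$ and using $\|u\|_2 = \|u'\|_2 = 1$,
\[
    \bigl|I_d + \alpha uu^T + \beta u'u'^T\bigr| = \left| I_2 + \begin{pmatrix} \alpha & \alpha c \\ \beta c & \beta\end{pmatrix}\right| = (1+\alpha)(1+\beta) - \alpha\beta c^2 .
\]

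Finally, substituting $\alpha = \frac{\eta}{1-\eta}$, $\beta = \frac{\eta'}{1-\eta'}$ gives $|A-B| = \frac{1-c^2\eta\eta'}{(1-\eta)(1-\eta')}$, while $\alpha = -\frac{\eta}{1+\eta}$, $\beta = -\frac{\eta'}{1+\eta'}$ gives $|A+B| = \frac{1-c^2\eta\eta'}{(1+\eta)(1+\eta')}$; multiplying these two expressions produces $\frac{(1-c^2\eta\eta')^2}{(1-\eta^2)(1-\eta'^2)}$, which is exactly the claimed formula. There is no genuine obstacle in this argument: the only two points deserving a little care are the validity of the block-determinant identity $|K| = |A+B|\,|A-B|$ when $A$ and $B$ need not commute (which the orthogonal conjugation by $P$ settles) and the bookkeeping in the rank-two determinant computation.
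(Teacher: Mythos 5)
Your argument is correct, and it takes a genuinely different and cleaner route than the paper's. The paper reaches the same determinant by a Schur-complement expansion $|K|=|A|\,|A-BA^{-1}B|$, then reduces WLOG to $u'=\boldsymbol{e}_1$ and works through two applications of rank-one/rank-two inverse formulas and a fairly heavy algebraic simplification before plugging back in. You instead observe that $K$ has the centrosymmetric block structure $\begin{pmatrix} A & B \\ B & A\end{pmatrix}$ with equal diagonal and equal off-diagonal blocks; conjugating by the orthogonal $\tfrac{1}{\sqrt 2}\begin{pmatrix} I & I \\ I & -I\end{pmatrix}$ block-diagonalises $K$ into $\operatorname{diag}(A+B,\,A-B)$ without any commutativity assumption, giving $|K|=|A+B|\,|A-B|$ for free. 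The partial-fraction collapse $\tfrac{\eta^2\pm\eta}{1-\eta^2}=\tfrac{\eta}{1\mp\eta}$ is the key simplification it exposes, after which each factor is a rank-two update of the identity handled directly by the Weinstein--Aronszajn identity $|I_d + UV^T|=|I_2 + V^TU|$. What this buys you is that the Schur complement, the Sherman--Morrison inversions, and the coordinate choice $u'=\boldsymbol{e}_1$ all disappear; the whole computation reduces to two $2\times 2$ determinants, each of which is $\tfrac{1-c^2\eta\eta'}{(1\mp\eta)(1\mp\eta')}$ with $c=u^Tu'$, and their product is the stated answer. No gaps.
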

\begin{proof}
Let $\alpha = \eta^2/(1-\eta^2), \alpha' = \eta'^2/(1-\eta'^2), \beta = -\eta/(1-\eta^2), \beta' = -\eta'/(1-\eta'^2)$. We aim at finding $|K|$, where \[
K = \begin{pmatrix}
       I_d + \alpha uu^T + \alpha' u'u'^T & \beta uu^T + \beta' u'u'^T \\
       \beta uu^T  +\beta' u'u'^T & I_d + \alpha uu^T \alpha u'u'^T
    \end{pmatrix}.
\]
    First, observe that by Schur's complement
    \begin{align*}
    |K| &= \bigg| I_d + \alpha uu^T + \alpha' u'u'^T \bigg| \\
    & \quad \quad \quad \quad \quad \times \bigg| I_d + \alpha uu^T + \alpha' u'u'^T - \left(\beta uu^T + \beta' u'u'^T\right)\left( I_d + \alpha uu^T + \alpha' u'u'^T \right)^{-1}\left(\beta uu^T + \beta' u'u'^T\right)\bigg|,
\end{align*}
and that we may assume without loss of generality that $u' = \boldsymbol{e}_1$. Indeed, let $R$ be any orthogonal matrix in $\mathbb{R}^{d,d}$, and consider $Ru, Ru'$ in place of $u,u'$ respectively. Then \begin{align*}
    \bigg| I_d + \alpha R uu^T R^T + \alpha' R u'u'^T R^T \bigg| &= \bigg| R(I_d + \alpha uu^T + \alpha' u'u'^T)R^T \bigg| = \bigg|R\bigg| \bigg|I_d + \alpha uu^T + \alpha' u'u'^T\bigg| \bigg|R^T \bigg| \\
    & \bigg| I_d + \alpha uu^T + \alpha' u'u'^T \bigg|,
\end{align*}
and it is easy to check that the same happens for 
\begin{align*}
    \bigg| I_d & + \alpha Ruu^T R^T + \alpha' Ru'u'^TR^T \\
    & - \left(\beta Ruu^TR^T + \beta' Ru'u'^TR^T\right)\left( I_d + \alpha Ruu^TR^T + \alpha'Ru'u'^T R^T\right)^{-1}\left(\beta Ruu^TR^T + \beta' Ru'u'^TR^T\right)\bigg|.
\end{align*}
This is not necessary for the proof, but it helps with the notation, and also explains why $u^Tu' \overset{d}{=} u_1$ when $u$ and $u'$ are sampled as described when we apply the result. Now, for all $\alpha, \alpha' \in \mathbb{R}$, \begin{align*}
    \left(I + \alpha u u^T + \alpha' \boldsymbol{e}_1 \boldsymbol{e}_1^T \right)^{-1} &= I - \frac{\alpha}{1+\alpha-\frac{\alpha\alpha'}{1+\alpha'}\gamma^2} uu^T - \frac{\alpha'}{1+\alpha'-\frac{\alpha\alpha'}{1+\alpha'}\gamma^2}\boldsymbol{e}_1 \boldsymbol{e}_1^T + \\
    & + \gamma \frac{\alpha'}{1+\alpha'}\frac{\alpha}{1+\alpha-\frac{\alpha\alpha'}{1+\alpha'}\gamma^2}\boldsymbol{e}_1 u^T + \gamma \frac{\alpha}{1+\alpha}\frac{\alpha'}{1+\alpha'-\frac{\alpha\alpha'}{1+\alpha'}\gamma^2} u \boldsymbol{e}_1^T,
\end{align*}
where $\gamma = \boldsymbol{e}_1^T u$, and
\begin{align*}
    I + \alpha u u^T + \alpha' \boldsymbol{e}_1 \boldsymbol{e}_1^T & - \left(\beta uu^T + \beta' \boldsymbol{e}_1 \boldsymbol{e}_1^T\right)\left(I + \alpha u u^T + \alpha' \boldsymbol{e}_1 \boldsymbol{e}_1^T \right)^{-1}\left(\beta uu^T + \beta' \boldsymbol{e}_1 \boldsymbol{e}_1^T\right) \\
    & = I +\frac{\gamma^2 \eta^2 \eta'^2}{1-\gamma^2 \eta^2 \eta'^2}(uu^T + \boldsymbol{e}_1 \boldsymbol{e}_1^T) -\frac{\gamma \eta \eta}{1-\gamma^2 \eta^2 \eta'^2}(\boldsymbol{e}_1 u^T + u \boldsymbol{e}_1^T).
\end{align*}
Calling $x = (\gamma^2 \eta^2 \eta'^2)/(1-\gamma^2 \eta^2 \eta'^2)$ and $c = \gamma \eta \eta'$, we thus have 
\begin{align*}
    |K| &= \bigg| I_d + \alpha uu^T + \alpha' \boldsymbol{e}_1 \boldsymbol{e}_1^T \bigg| \bigg| I_d + x uu^T + x \boldsymbol{e}_1 \boldsymbol{e}_1^T - \frac{x}{c} \begin{pmatrix}
        \vertbar & \vertbar \\
        u & \boldsymbol{e}_1 \\
        \vertbar & \vertbar
    \end{pmatrix} \begin{pmatrix}
        \horzbar & \boldsymbol{e}_1 & \horzbar \\
        \horzbar & u & \horzbar
    \end{pmatrix}\bigg|.
\end{align*}
In order to compute these determinants, we will repeatedly make use of the fact that, if $A$ is an invertible $n\times n$ matrix, $U,V$ are $n \times m$ matrices, then \[
|A + uu^T| = |I_m + V^T A^{-1} U| |A|.
\]
If $A = I_n$, this is commonly referred as the Weinstein–Aronszajn identity. Now, 
\[
\bigg| I_d + \alpha uu^T + \alpha' \boldsymbol{e}_1 \boldsymbol{e}_1^T \bigg| = \bigg| I_d + \alpha uu^T \bigg| \bigg|1 +  \alpha' \boldsymbol{e}^T_1(I - \alpha uu^T/(1+\alpha))\boldsymbol{e}_1 \bigg| = (1+\alpha)\left(1+\alpha'-\frac{\alpha \alpha'}{1+\alpha}\gamma^2\right),
\]
and \begin{align*}
    \bigg| & I_d + x uu^T + x \boldsymbol{e}_1 \boldsymbol{e}_1^T - \frac{x}{c} \begin{pmatrix}
        \vertbar & \vertbar \\
        u & \boldsymbol{e}_1 \\
        \vertbar & \vertbar
    \end{pmatrix} \begin{pmatrix}
        \horzbar & \boldsymbol{e}_1 & \horzbar \\
        \horzbar & u & \horzbar
    \end{pmatrix}\bigg| \\
    & = \bigg| I_d + x uu^T + x \boldsymbol{e}_1 \boldsymbol{e}_1^T \bigg|\bigg| I -\frac{x}{c}\begin{pmatrix}
        \horzbar & \boldsymbol{e}_1 & \horzbar \\
        \horzbar & u & \horzbar
    \end{pmatrix}(I_d + x uu^T + x \boldsymbol{e}_1 \boldsymbol{e}_1^T)^{-1} \begin{pmatrix}
        \vertbar & \vertbar \\
        u & \boldsymbol{e}_1 \\
        \vertbar & \vertbar
    \end{pmatrix}\bigg| \\
    & = (1+x)(1+x-\frac{x^2}{1+x}\gamma^2)\bigg|I -\frac{x}{c}\begin{pmatrix}
        \horzbar & \boldsymbol{e}_1 & \horzbar \\
        \horzbar & u & \horzbar
    \end{pmatrix} \\
    & \quad \times \left(I - \frac{x}{1+x-\frac{x^2}{1+x}\gamma^2} (uu^T + \boldsymbol{e}_1 \boldsymbol{e}_1^T) + \gamma \frac{x}{1+x} \frac{x}{1+x-\frac{x^2}{1+x}\gamma^2}\begin{pmatrix}
        \vertbar & \vertbar \\
        u & \boldsymbol{e}_1 \\
        \vertbar & \vertbar
    \end{pmatrix} \begin{pmatrix}
        \horzbar & \boldsymbol{e}_1 & \horzbar \\
        \horzbar & u & \horzbar
    \end{pmatrix}\right)\begin{pmatrix}
        \vertbar & \vertbar \\
        u & \boldsymbol{e}_1 \\
        \vertbar & \vertbar
    \end{pmatrix} \bigg| \\
    & = (1+x)(1+x-\frac{x^2}{1+x}\gamma^2) \left|\begin{pmatrix}
        1 - \frac{x}{c}(\gamma + 2\tau_1 \gamma + \tau_2(1+\gamma^2)) & -\frac{x}{c}(1+\tau_1(1+\gamma^2)+2\tau_2\gamma) \\
        -\frac{x}{c}(1+\tau_1(1+\gamma^2)+2\tau_2\gamma) &  1 - \frac{x}{c}(\gamma + 2\tau_1 \gamma + \tau_2(1+\gamma^2)) 
    \end{pmatrix} \right|,
\end{align*}
where $\tau_1 = - x/(1+x-\frac{x^2}{1+x}\gamma^2), \tau_2  = \gamma x \tau_1/(1+x)$. Putting all the pieces together, 
\begin{align*}
    |K| &= (1+\alpha)\left(1+\alpha'-\frac{\alpha \alpha'}{1+\alpha}\gamma^2\right) (1+x)\left(1+x-\frac{x^2}{1+x}\gamma^2\right) \\
    & \quad \quad \times \left((1 - \frac{x}{c}(\gamma + 2\tau_1 \gamma + \tau_2(1+\gamma^2)))^2 -\frac{x^2}{c^2}(1+\tau_1(1+\gamma^2)+2\tau_2\gamma))^2 \right),
\end{align*}
and substituting the expressions of $\alpha, \alpha', x, c, \tau_1, \tau_2$ as functions of $\eta, \eta', \gamma$ gives \[
|K| = \frac{(1 - (u^Tu')^2 \eta \eta')^2}{(1-\eta^2)(1-\eta'^2)},
\]
as claimed.
\end{proof}

\begin{proof}[Proof of Proposition \ref{prop:block_3_cycle}]
 We start by proving the first statement. Since $\Sigma_\mathbb{S}$ is consistent, we have that
\begin{align*}
    \Sigma_\mathbb{S} \text{ is compatible } \quad  & \text{if and only if } \quad \begin{pmatrix}
    I_d & P & -P \\
    P^T & I_d & \beta I_d \\
    -P^T & \beta I_d & I_d
\end{pmatrix} \succeq 0 \\
    &  \text{if and only if } \quad  \begin{pmatrix}
    I_d & \beta I_d \\
    \beta I_d & I_d
\end{pmatrix} - 
\begin{pmatrix}
    P^T P & -P^T P \\
    -P^T P & P^T P 
\end{pmatrix} \succeq 0,
\end{align*}
where the second equivalence follows by standard properties of Schur complements. However, we can see that
\begin{align*}
    \inf \biggl\{ \begin{pmatrix} x \\ y \end{pmatrix}^T \biggl\{ \begin{pmatrix}
    I_d & \beta I_d \\
    \beta I_d & I_d
\end{pmatrix} &- 
\begin{pmatrix}
    P^T P & -P^T P \\
    -P^T P & P^T P 
\end{pmatrix} \biggr\} \begin{pmatrix} x \\ y \end{pmatrix} : x, y \in \mathbb{R}^d \biggr\} \\
    & = \inf \bigl\{ \|x-y\|_2^2 + 2(1+\beta)x^Ty - \|P(x-y)\|_2^2 : x, y \in \mathbb{R}^d \bigr\} \\
    & = \inf \bigl\{ \|v\|_2^2 + 2(1+\beta)(v+y)^Ty - \|Pv\|_2^2 : v, y \in \mathbb{R}^d \bigr\} \\
    & = \inf \biggl\{ \frac{1-\beta}{2}\|v\|_2^2 - v^TP^TPv : v  \in \mathbb{R}^d \biggr\} \\
    & = \inf \biggl\{ \biggl( \frac{1-\beta}{2} - \|P\|_2^2\biggr)v^2 : v \in [0,\infty) \biggr\},
\end{align*}
where the third equality follows on noting that the minimising choice of $y$ is given by $-v/2$. It is now clear that $\Sigma_\mathbb{S}$ is compatible if and only if $\|P\|_2^2 \leq (1-\beta)/2$, as claimed.

As for the second part of the statement, let $\boldsymbol{v}_1, \ldots, \boldsymbol{v}_d$ the orthonormal eigenvectors or $P^TP$ with eigenvalues $\lambda_1 \geq \ldots \geq \lambda_d$, and let $L$ be the maximal $l$ such that $\lambda_l \geq (1-\beta)/2$. For $l \in [L]$, define 
\[
        X_{\mathbb{S}}^{(l)} = \frac{c}{4}\Bigg\{\begin{pmatrix}
            2P\boldsymbol{v}_l\boldsymbol{v}_l^T P^T & -2P\boldsymbol{v}_l\boldsymbol{v}_l^T \\
            -2\boldsymbol{v}_l\boldsymbol{v}_l^T P^T & \boldsymbol{v}_l\boldsymbol{v}_l^T/2
        \end{pmatrix},\begin{pmatrix}
            2P\boldsymbol{v}_l\boldsymbol{v}_l^T P^T & 2P\boldsymbol{v}_l\boldsymbol{v}_l^T \\
            2\boldsymbol{v}_l\boldsymbol{v}_l^T P^T & \boldsymbol{v}_l\boldsymbol{v}_l^T/2
        \end{pmatrix}, \begin{pmatrix}
            \boldsymbol{v}_l\boldsymbol{v}_l^T/2 & -\boldsymbol{v}_l\boldsymbol{v}_l^T \\
            -\boldsymbol{v}_l\boldsymbol{v}_l^T & \boldsymbol{v}_l\boldsymbol{v}_l^T/2
        \end{pmatrix}\Bigg\},
        \]
with $0< c \leq 5/6 + \sqrt{73}/6$, and define $ X_{\mathbb{S}} = \sum_{l=1}^L X_{\mathbb{S}}^{(l)}$. We first show that $X_\mathbb{S}$ is a feasible solution for our primal optimisation problem. We have 
\[A^*X_{\mathbb{S}} = \frac{c}{4} \sum_{l=1}^L
\begin{pmatrix}
            4P\boldsymbol{v}_l\boldsymbol{v}_l^T P^T & -2P\boldsymbol{v}_l\boldsymbol{v}_l^T & 2P\boldsymbol{v}_l\boldsymbol{v}_l^T\\
            -2\boldsymbol{v}_l\boldsymbol{v}_l^T P^T & \boldsymbol{v}_l\boldsymbol{v}_l^T & - \boldsymbol{v}_l\boldsymbol{v}_l^T \\
            2\boldsymbol{v}_l\boldsymbol{v}_l^T P^T & - \boldsymbol{v}_l\boldsymbol{v}_l^T &  \boldsymbol{v}_l\boldsymbol{v}_l^T
        \end{pmatrix} = \frac{c}{4} \sum_{l=1}^L \begin{pmatrix}
            2P\boldsymbol{v}_l \\
            -\boldsymbol{v}_l \\
            \boldsymbol{v}_l
        \end{pmatrix}
        \begin{pmatrix}
            2P\boldsymbol{v}_l \\
            -\boldsymbol{v}_l \\
            \boldsymbol{v}_l
        \end{pmatrix}^T \succeq 0,
\]
and since $X_{\mathbb{S}}^{(0)} = \frac{1}{2} (I_{2d}, I_{2d}, I_{2d})$, 
\begin{align*}
    X_{\mathbb{S}} + X_{\mathbb{S}}^{(0)} & = \frac{1}{2}\left(\begin{pmatrix}
    I_d + cP(\sum_{l=1}^L\boldsymbol{v}_l \boldsymbol{v}_l^T) P^T & -cP(\sum_{l=1}^L\boldsymbol{v}_l \boldsymbol{v}_l^T) \\
    -c(\sum_{l=1}^L\boldsymbol{v}_l \boldsymbol{v}_l^T) P^T & I_d + \frac{c}{4} \sum_{l=1}^L\boldsymbol{v}_l \boldsymbol{v}_l^T 
\end{pmatrix}, \right. \\
& \left. \quad \quad  \begin{pmatrix}
    I_d + cP(\sum_{l=1}^L\boldsymbol{v}_l \boldsymbol{v}_l^T) P^T & +cP(\sum_{l=1}^L\boldsymbol{v}_l \boldsymbol{v}_l^T) \\
    +c(\sum_{l=1}^L\boldsymbol{v}_l \boldsymbol{v}_l^T) P^T & I_d + \frac{c}{4} \sum_{l=1}^L\boldsymbol{v}_l \boldsymbol{v}_l^T 
\end{pmatrix}, \begin{pmatrix}
    I_d + \frac{c}{4} \sum_{l=1}^L\boldsymbol{v}_l \boldsymbol{v}_l^T & -\frac{c}{2} \sum_{l=1}^L\boldsymbol{v}_l \boldsymbol{v}_l^T \\
    -\frac{c}{2} \sum_{l=1}^L\boldsymbol{v}_l \boldsymbol{v}_l^T & I_d + \frac{c}{4} \sum_{l=1}^L\boldsymbol{v}_l \boldsymbol{v}_l^T
\end{pmatrix} \right).
\end{align*}
It remains to show that $X_{\mathbb{S}} + X_{\mathbb{S}}^{(0)} \succeq_\mathbb{S} 0$. Now, as for the first component of $X_{\mathbb{S}} + X_{\mathbb{S}}^{(0)}$, observe that the bottom-right block \[
I_d + \frac{c}{4} \sum_{l=1}^L\boldsymbol{v}_l \boldsymbol{v}_l^T \succeq 0,
\]
and it is invertible due to the fact that $\|c \sum_{l=1}^L\boldsymbol{v}_l \boldsymbol{v}_l^T/4\|_2 \leq c/4 < 1$, since the $\boldsymbol{v}_l$'s are orthonormal. The inverse is 
\begin{align*}
    \left(I_d + \frac{c}{4} \sum_{l=1}^L\boldsymbol{v}_l \boldsymbol{v}_l^T \right)^{-1}  &= \left(I_d -  \left(-\frac{c}{4} \sum_{l=1}^L\boldsymbol{v}_l \boldsymbol{v}_l^T\right)\right)^{-1} = \sum_{k=0}^\infty (-1)^k \left(\frac{c}{4}\right)^k \left(\sum_{l=1}^L\boldsymbol{v}_l \boldsymbol{v}_l^T \right)^k \\
    &= I_d + \sum_{k=1}^\infty (-1)^k \left(\frac{c}{4}\right)^k \left(\sum_{l=1}^L\boldsymbol{v}_l \boldsymbol{v}_l^T \right)^k = I_d + \left(\sum_{l=1}^L\boldsymbol{v}_l \boldsymbol{v}_l^T \right)\sum_{k=1}^\infty (-1)^k \left(\frac{c}{4}\right)^k \\
    & = I_d + \left(\sum_{l=1}^L\boldsymbol{v}_l \boldsymbol{v}_l^T \right)\left( \frac{1}{1+c/4} - 1\right) = I_d - \frac{c}{c+4}\sum_{l=1}^L\boldsymbol{v}_l \boldsymbol{v}_l^T,
\end{align*}
where the fourth equality comes from the fact that $\sum_{l=1}^L\boldsymbol{v}_l \boldsymbol{v}_l^T$ is idempotent again by the orthonormality of the $\boldsymbol{v}_l$'s. Hence, the first component of $X_{\mathbb{S}} + X_{\mathbb{S}}^{(0)}$ is positive semi-definite if and only if \[
I_d + cP\left(\sum_{l=1}^L\boldsymbol{v}_l \boldsymbol{v}_l^T\right) P^T \succeq c^2 P \left(\sum_{l=1}^L\boldsymbol{v}_l \boldsymbol{v}_l^T\right) \left(I_d -\frac{c}{4+c}\sum_{l=1}^L\boldsymbol{v}_l \boldsymbol{v}_l^T\right) \left(\sum_{l=1}^L\boldsymbol{v}_l \boldsymbol{v}_l^T\right) P^T,
\]
which is equivalent to \[
I_d \succeq \left(\frac{4c^2}{4+c} - c \right)P\left(\sum_{l=1}^L\boldsymbol{v}_l \boldsymbol{v}_l^T\right)P^T = \left(\frac{4c^2}{4+c} - c \right)P\left(\sum_{l=1}^L\boldsymbol{v}_l \boldsymbol{v}_l^T\right)^2P^T,
\]
which is satisfied if and only if $4c^2/(4+c) - c \leq 1$, due to the fact that $\|P\left(\sum_{l=1}^L\boldsymbol{v}_l \boldsymbol{v}_l^T\right)^2P^T\|_2 = \|P\left(\sum_{l=1}^L\boldsymbol{v}_l \boldsymbol{v}_l^T\right)\|_2^2 \leq 1$ again by orthonormality. This implies that the first component of $X_{\mathbb{S}} + X_{\mathbb{S}}^{(0)} \succeq_\mathbb{S}$ is PSD if and only if $0< c \leq 5/6 + \sqrt{73}/6$, and of course the same is true for the second component of $X_{\mathbb{S}} + X_{\mathbb{S}}^{(0)}$. As for the third component,  using an analogous idea, it is easy to show that it is positive semi-definite if and only if \[
I_d \succeq \left(\frac{c^2}{4+c} - \frac{c}{4} \right)  \sum_{l=1}^L\boldsymbol{v}_l \boldsymbol{v}_l^T,
\]
which is satisfied if and only if $0 < c \leq 4$. Summing up, this shows that $X_{\mathbb{S}}$ is feasible for $0< c \leq 5/6 + \sqrt{73}/6$, and leads to 
\begin{align*}
    R(\Sigma_\mathbb{S}) & \geq -\frac{c}{3d} \sum_{l=1}^L \begin{pmatrix}
            2P\boldsymbol{v}_l \\
            -\boldsymbol{v}_l \\
            \boldsymbol{v}_l
        \end{pmatrix}
        \begin{pmatrix}
            I_d & P & -P \\
            P & I_d & \beta I_d \\
            -P^T \beta I_d & I_d 
        \end{pmatrix}
        \begin{pmatrix}
            2P\boldsymbol{v}_l \\
            -\boldsymbol{v}_l \\
            \boldsymbol{v}_l
        \end{pmatrix}^T \\
        & = \frac{c}{3d} \sum_{l=1}^L \left(\lambda_l - \frac{1-\beta}{2}\right) = \frac{c}{3d} \sum_{l=1}^d \left(\lambda_l - \frac{1-\beta}{2}\right)_+ \\
        & = \frac{c}{3d} \sum_{l=1}^d \left(\sigma^2_l(P)^2 - \frac{1-\beta}{2}\right)_+ > \frac{3}{4d} \sum_{l=1}^d \left(\sigma^2_l(P)^2 - \frac{1-\beta}{2}\right)_+,
\end{align*}
since $ 5/6 + \sqrt{73}/6 > 9/4$.
\end{proof}

\bibliography{bib}{}

\begin{thebibliography}{71}
\providecommand{\natexlab}[1]{#1}
\providecommand{\url}[1]{\texttt{#1}}
\expandafter\ifx\csname urlstyle\endcsname\relax
  \providecommand{\doi}[1]{doi: #1}\else
  \providecommand{\doi}{doi: \begingroup \urlstyle{rm}\Url}\fi

\bibitem[Albert(1972)]{albert72pseudo}
Arthur Albert.
\newblock Regression and the {M}oore-{P}enrose pseudoinverse.
\newblock \emph{Mathematics in science and engineering}, 94, 1972.

\bibitem[Aleksić(2024)]{aleksic2023novel}
Danijel Aleksić.
\newblock A novel test of {M}issing {C}ompletely at {R}andom:
  U-statistics-based approach.
\newblock \emph{Statistics}, 0\penalty0 (0):\penalty0 1--20, 2024.
\newblock \doi{10.1080/02331888.2024.2386361}.
\newblock URL \url{https://doi.org/10.1080/02331888.2024.2386361}.

\bibitem[Aleksić et~al.(2023)Aleksić, Cuparić, and Milošević]{milosevic23}
Danijel Aleksić, Marija Cuparić, and Bojana Milošević.
\newblock {N}on-degenerate {U}-statistics for data missing completely at random
  with application to testing independence.
\newblock \emph{Stat}, 12\penalty0 (1):\penalty0 e634, 2023.
\newblock \doi{https://doi.org/10.1002/sta4.634}.
\newblock URL \url{https://onlinelibrary.wiley.com/doi/abs/10.1002/sta4.634}.

\bibitem[Andrews(2000)]{andrews2000bootstrap}
Donald W.~K. Andrews.
\newblock Inconsistency of the bootstrap when a parameter is on the boundary of
  the parameter space.
\newblock \emph{Econometrica}, 68\penalty0 (2):\penalty0 399--405, 2000.
\newblock \doi{https://doi.org/10.1111/1468-0262.00114}.
\newblock URL
  \url{https://onlinelibrary.wiley.com/doi/abs/10.1111/1468-0262.00114}.

\bibitem[Barrett et~al.(1993)Barrett, Johnson, and Tarazaga]{barrett1993real}
Wayne Barrett, Charles~R Johnson, and Pablo Tarazaga.
\newblock The real positive definite completion problem for a simple cycle.
\newblock \emph{Linear Algebra Appl.}, 192:\penalty0 3--31, 1993.

\bibitem[Bentler and Berkane(1986)]{kurtosisElliptical86}
P.~M. Bentler and M.~Berkane.
\newblock {Greatest lower bound to the elliptical theory kurtosis parameter}.
\newblock \emph{Biometrika}, 73\penalty0 (1):\penalty0 240--241, 04 1986.
\newblock ISSN 0006-3444.
\newblock \doi{10.1093/biomet/73.1.240}.
\newblock URL \url{https://doi.org/10.1093/biomet/73.1.240}.

\bibitem[Berrett and Samworth(2023)]{berrett2022optimal}
Thomas~B Berrett and Richard~J Samworth.
\newblock Optimal nonparametric testing of {M}issing {C}ompletely {A}t
  {R}andom, and its connections to compatibility.
\newblock \emph{Ann. Statist.}, 51\penalty0 (5):\penalty0 2170--2193, 2023.

\bibitem[Berrett et~al.(2022)Berrett, Bordino, Duisenbekov, Jaffe, and
  Samworth]{berrett2022MCARtest}
Thomas~B. Berrett, Alberto Bordino, Danat Duisenbekov, Sean Jaffe, and
  Richard~J. Samworth.
\newblock \emph{MCARtest: Optimal nonparametric testing of {M}issing
  {C}ompletely {A}t {R}andom}, 2022.
\newblock URL
  \url{https://cran.r-project.org/web/packages/MCARtest/index.html}.
\newblock R package version 1.2.

\bibitem[Blanchard et~al.(2018)Blanchard, Carpentier, and
  Gutzeit]{carpentier18convex}
Gilles Blanchard, Alexandra Carpentier, and Maurilio Gutzeit.
\newblock {Minimax Euclidean separation rates for testing convex hypotheses in
  $\mathbb{R}^{d}$}.
\newblock \emph{Electronic Journal of Statistics}, 12\penalty0 (2):\penalty0
  3713 -- 3735, 2018.
\newblock \doi{10.1214/18-EJS1472}.
\newblock URL \url{https://doi.org/10.1214/18-EJS1472}.

\bibitem[Blekherman et~al.(2012)Blekherman, Parrilo, and
  Thomas]{blekherman2012sdp}
Grigoriy Blekherman, Pablo~A. Parrilo, and Rekha~R. Thomas.
\newblock \emph{Semidefinite Optimization and Convex Algebraic Geometry}.
\newblock Society for Industrial and Applied Mathematics, Philadelphia, PA,
  2012.
\newblock \doi{10.1137/1.9781611972290}.
\newblock URL \url{https://epubs.siam.org/doi/abs/10.1137/1.978161197229}.

\bibitem[Boyd and Vandenberghe(2004)]{boyd_conex_opt_2004}
Stephen Boyd and Lieven Vandenberghe.
\newblock \emph{Convex {O}ptimization}.
\newblock {Cambridge University Press}, March 2004.
\newblock ISBN 0521833787.
\newblock URL
  \url{https://www.cambridge.org/gb/universitypress/subjects/statistics-probability/optimization-or-and-risk/convex-optimization?format=HB}.

\bibitem[Brown and Forsythe(1974)]{brown12variances}
Morton~B. Brown and Alan~B. Forsythe.
\newblock Robust tests for the equality of variances.
\newblock \emph{Journal of the American Statistical Association}, 69\penalty0
  (346):\penalty0 364--367, 1974.
\newblock \doi{10.1080/01621459.1974.10482955}.
\newblock URL
  \url{https://www.tandfonline.com/doi/abs/10.1080/01621459.1974.10482955}.

\bibitem[Bunea and Xiao(2015)]{Bunea_2015}
Florentina Bunea and Luo Xiao.
\newblock On the sample covariance matrix estimator of reduced effective rank
  population matrices, with applications to {fPCA}.
\newblock \emph{Bernoulli}, 21\penalty0 (2), may 2015.
\newblock URL \url{https://doi.org/10.3150%2F14-bej602}.

\bibitem[Cai and Low(2011)]{cai2011testing}
T~Tony Cai and Mark~G Low.
\newblock Testing composite hypotheses, {H}ermite polynomials and optimal
  estimation of a nonsmooth functional.
\newblock \emph{Ann. Statist.}, 39:\penalty0 1012--1041, 2011.

\bibitem[Cai and Zhang(2019)]{cai2018high}
T~Tony Cai and Linjun Zhang.
\newblock High-dimensional linear discriminant analysis: Optimality, adaptive
  algorithm, and missing data.
\newblock \emph{J. Roy. Statist. Soc. Ser. B}, 81\penalty0 (4):\penalty0
  675--705, 2019.

\bibitem[Cai et~al.(2011)Cai, Liu, and Luo]{cai2011precision}
Tony Cai, Weidong Liu, and Xi~Luo.
\newblock A {C}onstrained $l_1$ {M}inimization {A}pproach to {S}parse
  {P}recision {M}atrix {E}stimation.
\newblock \emph{Journal of the American Statistical Association}, 106\penalty0
  (494):\penalty0 594--607, 2011.
\newblock ISSN 01621459.
\newblock URL \url{http://www.jstor.org/stable/41416395}.

\bibitem[Cand\`es and Tao(2010)]{candes2010completion}
Emmanuel~J. Cand\`es and Terence Tao.
\newblock The {P}ower of {C}onvex {R}elaxation: {N}ear-{O}ptimal {M}atrix
  {C}ompletion.
\newblock \emph{IEEE Transactions on Information Theory}, 56\penalty0
  (5):\penalty0 2053--2080, 2010.
\newblock \doi{10.1109/TIT.2010.2044061}.

\bibitem[Candès and Recht(2009)]{candes_recht_2009}
Emmanuel~J. Candès and Benjamin Recht.
\newblock Exact {M}atrix {C}ompletion via {C}onvex {O}ptimization.
\newblock \emph{Foundations of Computational Mathematics}, 9\penalty0
  (6):\penalty0 717–772, Dec 2009.
\newblock ISSN 1615-3375.
\newblock Funding by NSF.

\bibitem[Cavaliere et~al.(2017)Cavaliere, Nielsen, and
  Rahbek]{cavaliere17bootstrap}
Giuseppe Cavaliere, Heino~Bohn Nielsen, and Anders Rahbek.
\newblock On the consistency of bootstrap testing for a parameter on the
  boundary of the parameter space.
\newblock \emph{Journal of Time Series Analysis}, 38\penalty0 (4):\penalty0
  513--534, 2017.
\newblock \doi{https://doi.org/10.1111/jtsa.12214}.
\newblock URL \url{https://onlinelibrary.wiley.com/doi/abs/10.1111/jtsa.12214}.

\bibitem[Dempster et~al.(1977)Dempster, Laird, and Rubin]{rubin77EM}
A.~P. Dempster, N.~M. Laird, and D.~B. Rubin.
\newblock Maximum {L}ikelihood from {I}ncomplete {D}ata {V}ia the {EM}
  {A}lgorithm.
\newblock \emph{Journal of the Royal Statistical Society: Series B
  (Methodological)}, 39\penalty0 (1):\penalty0 1--22, 1977.
\newblock URL
  \url{https://rss.onlinelibrary.wiley.com/doi/abs/10.1111/j.2517-6161.1977.tb01600.x}.

\bibitem[Elsener and van~de Geer(2019)]{elsner2019}
Andreas Elsener and Sara van~de Geer.
\newblock Sparse spectral estimation with missing and corrupted measurements.
\newblock \emph{Stat}, 8\penalty0 (1):\penalty0 e229, 2019.
\newblock \doi{https://doi.org/10.1002/sta4.229}.
\newblock URL \url{https://onlinelibrary.wiley.com/doi/abs/10.1002/sta4.229}.

\bibitem[Follain et~al.(2022)Follain, Wang, and Samworth]{follain2021}
Bertille Follain, Tengyao Wang, and Richard~J. Samworth.
\newblock {High-dimensional Changepoint Estimation with Heterogeneous
  Missingness}.
\newblock \emph{Journal of the Royal Statistical Society Series B: Statistical
  Methodology}, 84\penalty0 (3):\penalty0 1023--1055, 07 2022.
\newblock ISSN 1369-7412.
\newblock \doi{10.1111/rssb.12540}.
\newblock URL \url{https://doi.org/10.1111/rssb.12540}.

\bibitem[Fuchs(1982)]{fuchs1982maximum}
Camil Fuchs.
\newblock Maximum likelihood estimation and model selection in contingency
  tables with missing data.
\newblock \emph{J. Amer. Statist. Assoc.}, 77:\penalty0 270--278, 1982.

\bibitem[Gastwirth et~al.(2009)Gastwirth, Gel, and Miao]{miao09levene}
Joseph~L. Gastwirth, Yulia~R. Gel, and Weiwen Miao.
\newblock {The Impact of Levene’s Test of Equality of Variances on
  Statistical Theory and Practice}.
\newblock \emph{Statistical Science}, 24\penalty0 (3):\penalty0 343 -- 360,
  2009.
\newblock \doi{10.1214/09-STS301}.
\newblock URL \url{https://doi.org/10.1214/09-STS301}.

\bibitem[Grone et~al.(1984)Grone, Johnson, Sá, and Wolkowicz]{GRONE1984109}
Robert Grone, Charles~R. Johnson, Eduardo~M. Sá, and Henry Wolkowicz.
\newblock Positive definite completions of partial {H}ermitian matrices.
\newblock \emph{Linear Algebra and its Applications}, 58:\penalty0 109--124,
  1984.
\newblock ISSN 0024-3795.
\newblock \doi{https://doi.org/10.1016/0024-3795(84)90207-6}.
\newblock URL
  \url{https://www.sciencedirect.com/science/article/pii/0024379584902076}.

\bibitem[Han and Liu(2017)]{Han_2017}
Fang Han and Han Liu.
\newblock Statistical analysis of latent generalized correlation matrix
  estimation in transelliptical distribution.
\newblock \emph{Bernoulli}, 23\penalty0 (1), feb 2017.
\newblock \doi{10.3150/15-bej702}.
\newblock URL \url{https://doi.org/10.3150%2F15-bej702}.

\bibitem[Hawkins(1981)]{Hawkins1981}
{Douglas M.} Hawkins.
\newblock A new test for multivariate normality and homoscedasticity.
\newblock \emph{Technometrics}, 23\penalty0 (1):\penalty0 105--110, 1981.
\newblock ISSN 0040-1706.
\newblock \doi{10.1080/00401706.1981.10486244}.

\bibitem[Hofert et~al.(2020)Hofert, Kojadinovic, Maechler, and Yan]{R_copula}
Marius Hofert, Ivan Kojadinovic, Martin Maechler, and Jun Yan.
\newblock \emph{copula: Multivariate Dependence with Copulas}, 2020.
\newblock URL \url{https://CRAN.R-project.org/package=copula}.
\newblock R package version 1.0-0.

\bibitem[Jamshidian and Jalal(2010)]{jamshidian2010tests}
Mortaza Jamshidian and Siavash Jalal.
\newblock Tests of homoscedasticity, normality, and missing completely at
  random for incomplete multivariate data.
\newblock \emph{Psychometrika}, 75:\penalty0 649--674, 2010.

\bibitem[Jiao et~al.(2016)Jiao, Han, and Weissman]{weissman16}
Jiantao Jiao, Yanjun Han, and Tsachy Weissman.
\newblock Minimax {E}stimation of the {$L_1$} {D}istance.
\newblock In \emph{2016 IEEE International Symposium on Information Theory
  (ISIT)}, pages 750--754, 2016.
\newblock \doi{10.1109/ISIT.2016.7541399}.

\bibitem[Juditsky and Nemirovski(2002)]{jud_nemiroski_matching_mom02}
Anatoli Juditsky and Arkadi Nemirovski.
\newblock {On nonparametric tests of positivity/monotonicity/convexity}.
\newblock \emph{The Annals of Statistics}, 30\penalty0 (2):\penalty0 498 --
  527, 2002.
\newblock \doi{10.1214/aos/1021379863}.
\newblock URL \url{https://doi.org/10.1214/aos/1021379863}.

\bibitem[Kellerer(1984)]{kellerer1984duality}
Hans~G Kellerer.
\newblock Duality theorems for marginal problems.
\newblock \emph{Z. Wahrscheinlichkeit.}, 67:\penalty0 399--432, 1984.

\bibitem[Khachiyan and Porkolab(1997)]{porkolab}
L.~Khachiyan and L.~Porkolab.
\newblock Computing integral points in convex semi-algebraic sets.
\newblock In \emph{Proceedings 38th Annual Symposium on Foundations of Computer
  Science}, pages 162--171, 1997.
\newblock \doi{10.1109/SFCS.1997.646105}.

\bibitem[Kim and Bentler(2002)]{kim2002tests}
Kevin~H Kim and Peter~M Bentler.
\newblock Tests of homogeneity of means and covariance matrices for
  multivariate incomplete data.
\newblock \emph{Psychometrika}, 67:\penalty0 609--623, 2002.

\bibitem[Laurent(2009)]{laurent2009matrix}
Monique Laurent.
\newblock Matrix {C}ompletion {P}roblems.
\newblock \emph{Encyclopedia of Optimization}, 3:\penalty0 221--229, 2009.

\bibitem[Laurent and Poljak(1996)]{laurent_elliptope96}
Monique Laurent and Svatopluk Poljak.
\newblock On the {F}acial {S}tructure of the {S}et of {C}orrelation {M}atrices.
\newblock \emph{SIAM Journal on Matrix Analysis and Applications}, 17\penalty0
  (3):\penalty0 530--547, 1996.
\newblock \doi{10.1137/0617031}.
\newblock URL \url{https://doi.org/10.1137/0617031}.

\bibitem[Lehmann(1999)]{lehmann99largesample}
Erich~L Lehmann.
\newblock \emph{Elements of {L}arge-{S}ample {T}heory}.
\newblock Springer Science \& Business Media, 1999.

\bibitem[Li and Yu(2015)]{li2015nonparametric}
Jun Li and Yao Yu.
\newblock A nonparametric test of missing completely at random for incomplete
  multivariate data.
\newblock \emph{Psychometrika}, 80:\penalty0 707--726, 2015.

\bibitem[Little and Rubin(2002)]{little2002statistical}
R.J.A. Little and D.B. Rubin.
\newblock \emph{Statistical analysis with missing data}.
\newblock Wiley series in probability and mathematical statistics. Probability
  and mathematical statistics. Wiley, 2002.
\newblock ISBN 9780471183860.
\newblock URL \url{http://books.google.com/books?id=aYPwAAAAMAAJ}.

\bibitem[Little(1988)]{little1988test}
Roderick~JA Little.
\newblock A test of {M}issing {C}ompletely at {R}andom for multivariate data
  with missing values.
\newblock \emph{J. Amer. Statist. Assoc.}, 83:\penalty0 1198--1202, 1988.

\bibitem[Loh and Wainwright(2012)]{loh2012high}
Po-Ling Loh and Martin~J Wainwright.
\newblock High-dimensional regression with noisy and missing data: Provable
  guarantees with nonconvexity.
\newblock \emph{Ann. Statist.}, 40\penalty0 (3):\penalty0 1637--1664, 2012.

\bibitem[Lounici(2014)]{lounici14}
Karim Lounici.
\newblock {High-dimensional covariance matrix estimation with missing
  observations}.
\newblock \emph{Bernoulli}, 20\penalty0 (3):\penalty0 1029 -- 1058, 2014.
\newblock \doi{10.3150/12-BEJ487}.
\newblock URL \url{https://doi.org/10.3150/12-BEJ487}.

\bibitem[Lov{\'a}sz(2003)]{Lovasz2003}
L.~Lov{\'a}sz.
\newblock \emph{Semidefinite Programs and Combinatorial Optimization}, pages
  137--194.
\newblock Springer New York, New York, NY, 2003.
\newblock ISBN 978-0-387-22444-2.
\newblock URL \url{https://doi.org/10.1007/0-387-22444-0_6}.

\bibitem[Meinshausen and Bühlmann(2006)]{buhlmann2006lasso}
Nicolai Meinshausen and Peter Bühlmann.
\newblock High-{D}imensional {G}raphs and {V}ariable {S}election with the
  {L}asso.
\newblock \emph{The Annals of Statistics}, 34\penalty0 (3):\penalty0
  1436--1462, 2006.
\newblock ISSN 00905364.
\newblock URL \url{http://www.jstor.org/stable/25463463}.

\bibitem[Mosteller and Fisher(1948)]{fisher48method}
Frederick Mosteller and R.~A. Fisher.
\newblock Questions and answers.
\newblock \emph{The American Statistician}, 2\penalty0 (5):\penalty0 30--31,
  1948.
\newblock ISSN 00031305.
\newblock URL \url{http://www.jstor.org/stable/2681650}.

\bibitem[Muirhead(1982)]{Muirhead1982AspectsOM}
Robb~J. Muirhead.
\newblock Aspects of multivariate statistical theory.
\newblock In \emph{Wiley Series in Probability and Statistics}, 1982.
\newblock URL \url{https://api.semanticscholar.org/CorpusID:123513635}.

\bibitem[Nesterov and Nemirovskii(1994)]{nesterov94}
Yurii Nesterov and Arkadii Nemirovskii.
\newblock \emph{Interior-Point Polynomial Algorithms in Convex Programming}.
\newblock Society for Industrial and Applied Mathematics, 1994.
\newblock \doi{10.1137/1.9781611970791}.
\newblock URL \url{https://epubs.siam.org/doi/abs/10.1137/1.9781611970791}.

\bibitem[Oliveira(2010)]{oliveira2010concentration}
Roberto~Imbuzeiro Oliveira.
\newblock Concentration of the adjacency matrix and of the {L}aplacian in
  random graphs with independent edges.
\newblock \emph{arXiv preprint arXiv:0911.0600}, 2010.

\bibitem[Polak(2012)]{polak_2012_optimization}
Elijah Polak.
\newblock \emph{Optimization}.
\newblock Springer Science and Business Media, 12 2012.

\bibitem[Ramana(1997)]{Ramana1997AnED}
Motakuri~V. Ramana.
\newblock An exact duality theory for semidefinite programming and its
  complexity implications.
\newblock \emph{Mathematical Programming}, 77:\penalty0 129--162, 1997.

\bibitem[Recht(2011)]{recht2011}
Benjamin Recht.
\newblock A {S}impler {A}pproach to {M}atrix {C}ompletion.
\newblock \emph{J. Mach. Learn. Res.}, 12:\penalty0 3413–3430, dec 2011.
\newblock ISSN 1532-4435.

\bibitem[Rockafellar(1970)]{rockafellar-1970a}
R.~Tyrrell Rockafellar.
\newblock \emph{Convex analysis}.
\newblock Princeton Mathematical Series. Princeton University Press, Princeton,
  N. J., 1970.

\bibitem[Rockel(2020)]{R_missMethods}
Tobias Rockel.
\newblock \emph{missMethods: Methods for Missing Data}, 2020.
\newblock URL \url{https://CRAN.R-project.org/package=missMethods}.
\newblock R package version 0.2.0.

\bibitem[Rudelson and Vershynin(2007)]{rudelson07rank1}
Mark Rudelson and Roman Vershynin.
\newblock Sampling from large matrices: An approach through geometric
  functional analysis.
\newblock \emph{J. ACM}, 54\penalty0 (4):\penalty0 21–es, jul 2007.
\newblock ISSN 0004-5411.
\newblock \doi{10.1145/1255443.1255449}.
\newblock URL \url{https://doi.org/10.1145/1255443.1255449}.

\bibitem[Samworth(2003)]{samworth03restoring}
Richard Samworth.
\newblock {A note on methods of restoring consistency to the bootstrap}.
\newblock \emph{Biometrika}, 90\penalty0 (4):\penalty0 985--990, 12 2003.
\newblock ISSN 0006-3444.
\newblock \doi{10.1093/biomet/90.4.985}.
\newblock URL \url{https://doi.org/10.1093/biomet/90.4.985}.

\bibitem[Santos et~al.(2019)Santos, Pereira, Costa, Soares, Santos, and
  Abreu]{generateMAR}
Miriam~Seoane Santos, Ricardo~Cardoso Pereira, Adriana~Fonseca Costa,
  Jastin~Pompeu Soares, João Santos, and Pedro~Henriques Abreu.
\newblock Generating {S}ynthetic {M}issing {D}ata: A {R}eview by {M}issing
  {M}echanism.
\newblock \emph{IEEE Access}, 7:\penalty0 11651--11667, 2019.
\newblock \doi{10.1109/ACCESS.2019.2891360}.

\bibitem[Seaman et~al.(2013)Seaman, Galati, Jackson, and Carlin]{seaman13}
Shaun Seaman, John Galati, Dan Jackson, and John Carlin.
\newblock {What Is Meant by “Missing at Random”?}
\newblock \emph{Statistical Science}, 28\penalty0 (2):\penalty0 257 -- 268,
  2013.
\newblock \doi{10.1214/13-STS415}.
\newblock URL \url{https://doi.org/10.1214/13-STS415}.

\bibitem[Sell et~al.(2024)Sell, Berrett, and Cannings]{sell2023nonparametric}
Torben Sell, Thomas~B. Berrett, and Timothy~I. Cannings.
\newblock {Nonparametric classification with missing data}.
\newblock \emph{The Annals of Statistics}, 52\penalty0 (3):\penalty0 1178 --
  1200, 2024.
\newblock \doi{10.1214/24-AOS2389}.
\newblock URL \url{https://doi.org/10.1214/24-AOS2389}.

\bibitem[Spohn et~al.(2021)Spohn, N{\"a}f, Michel, and
  Meinshausen]{michel2021pklm}
Meta-Lina Spohn, Jeffrey N{\"a}f, Loris Michel, and Nicolai Meinshausen.
\newblock {PKLM}: A flexible {MCAR} test using {C}lassification.
\newblock \emph{arXiv preprint arXiv:2109.10150}, 2021.

\bibitem[Stekhoven and Bühlmann(2011)]{buhlmann_impuation_2011}
Daniel~J. Stekhoven and Peter Bühlmann.
\newblock {MissForest—non-parametric missing value imputation for mixed-type
  data}.
\newblock \emph{Bioinformatics}, 28\penalty0 (1):\penalty0 112--118, 10 2011.
\newblock ISSN 1367-4803.
\newblock \doi{10.1093/bioinformatics/btr597}.
\newblock URL \url{https://doi.org/10.1093/bioinformatics/btr597}.

\bibitem[Th{\'e}paut and Verzelen(2024)]{verzelen2021schatten}
Sol{\`e}ne Th{\'e}paut and Nicolas Verzelen.
\newblock Optimal estimation of {S}chatten norms of a rectangular matrix.
\newblock \emph{The Annals of Statistics}, 52\penalty0 (4):\penalty0 1334 --
  1359, 2024.
\newblock \doi{10.1214/24-AOS2374}.
\newblock URL \url{https://doi.org/10.1214/24-AOS2374}.

\bibitem[Tierney and Cook(2023)]{naniar}
Nicholas Tierney and Dianne Cook.
\newblock Expanding tidy data principles to facilitate missing data
  exploration, visualization and assessment of imputations.
\newblock \emph{Journal of Statistical Software}, 105\penalty0 (7):\penalty0
  1--31, 2023.
\newblock \doi{10.18637/jss.v105.i07}.

\bibitem[van Buuren and Groothuis-Oudshoorn(2011)]{van2011mice}
Stef van Buuren and Karin Groothuis-Oudshoorn.
\newblock mice: {M}ultivariate imputation by chained equations in {R}.
\newblock \emph{Journal of Statistical Software}, 45:\penalty0 1--67, 2011.

\bibitem[Vandenberghe and Boyd(1996)]{boyd_vandeberghe}
Lieven Vandenberghe and Stephen Boyd.
\newblock Semidefinite programming.
\newblock \emph{SIAM Review}, 38\penalty0 (1):\penalty0 49--95, 1996.
\newblock \doi{10.1137/1038003}.
\newblock URL \url{https://doi.org/10.1137/1038003}.

\bibitem[Vershynin(2019)]{vershynin19hdp}
Roman Vershynin.
\newblock \emph{High-Dimensional Probability}.
\newblock Cambridge University Press, 2019.
\newblock URL
  \url{https://www.math.uci.edu/~rvershyn/papers/HDP-book/HDP-book.pdf}.

\bibitem[Waghmare and Panaretos(2022)]{waghmare2022completion}
Kartik~G Waghmare and Victor~M Panaretos.
\newblock The completion of covariance kernels.
\newblock \emph{The Annals of Statistics}, 50\penalty0 (6):\penalty0
  3281--3306, 2022.

\bibitem[Wainwright(2019)]{wainwright2019high}
Martin~J Wainwright.
\newblock \emph{High-dimensional Statistics: A Non-asymptotic Viewpoint}.
\newblock Cambridge University Press, 2019.

\bibitem[Wilks(1946)]{wilks1946means}
S.~S. Wilks.
\newblock {Sample Criteria for Testing Equality of Means, Equality of
  Variances, and Equality of Covariances in a Normal Multivariate
  Distribution}.
\newblock \emph{The Annals of Mathematical Statistics}, 17\penalty0
  (3):\penalty0 257 -- 281, 1946.
\newblock \doi{10.1214/aoms/1177730940}.
\newblock URL \url{https://doi.org/10.1214/aoms/1177730940}.

\bibitem[Yanagida(2024)]{misty24Rpackage}
Takuya Yanagida.
\newblock \emph{misty: {M}iscellaneous {F}unctions}, 2024.
\newblock URL \url{https://CRAN.R-project.org/package=misty}.
\newblock R package version 0.6.7.

\bibitem[Yates(1933)]{yates33}
F.~Yates.
\newblock The {A}nalysis of {R}eplicated {E}xperiments {W}hen the {F}ield
  {R}esults {A}re {I}ncomplete.
\newblock \emph{Empirical Journal of Experimental Agriculture 1 (2): 129–42},
  1933.

\bibitem[Zhu et~al.(2022)Zhu, Wang, and Samworth]{zhu2019}
Ziwei Zhu, Tengyao Wang, and Richard~J. Samworth.
\newblock {High‐dimensional principal component analysis with heterogeneous
  missingness}.
\newblock \emph{Journal of the Royal Statistical Society Series B}, 84\penalty0
  (5):\penalty0 2000--2031, November 2022.
\newblock \doi{10.1111/rssb.12550}.
\newblock URL
  \url{https://ideas.repec.org/a/bla/jorssb/v84y2022i5p2000-2031.html}.

\end{thebibliography}

\begin{appendices}
    Appendix \ref{sec:extra_prop_R} contains further properties of our measure of incompatibility $R$ that were not investigated in the main body of the text. Appendix \ref{sec:test_cov} contains another oracle test based on a different measure of incompatibility, which acts on covariance matrices normalised in such a way to have fixed scale. Appendix \ref{sec:SDP} contains auxiliary results in Semi-definite Programming, while classical tail bounds are contained in Appendix~\ref{sec:technical_ineq}.

\section{Further properties of $R(\cdot)$}\label{sec:extra_prop_R}
We explore the properties of $R(\cdot)$ in the cycle example. In particular, we provide explicit expression in simple cases, we discuss the meaning of maximal incompatibility and we prove a result showing that $R(\cdot)$ is bounded below by the maxima of suitable linear functions if $\Sigma_{\mathbb{S}_d}$ is not too singular. Here we write $\Sigma_{\mathbb{S}_d}:= (\Sigma_{\{1,2\}}, \cdots, \Sigma_{ \{d,1\}})$ for a collection of $2 \times 2$ correlation matrices with
\[
\Sigma_{j,j+1} = \begin{pmatrix} 1 & \rho_{j} \\ \rho_{j} & 1 \end{pmatrix}.
\]
Our next result shows that singular matrices can be removed from $\Sigma_{\mathbb{S}_d}$ when $d\geq4$, without affecting the value of $R(\cdot)$, reducing the length of the cycle.

\begin{prop}\label{prop:reduction_2}
    Fix $d \geq 3$ and $k \geq 1$. Let $\Sigma_{\mathbb{S}_{d+k}}$ be a $(d+k)$-cycle with correlations $(\rho_{\{1,2\}},\ldots,\rho_{\{d+k,1\}})$ such that  $|\rho_{j,j+1}| = 1$ for all $j \in \{d+1, \cdots, d+k \}$ and let $\Sigma_{\mathbb{S}_{d}}$ represent a $d$-cycle with correlations $(\bar{\rho}_{\{1,2\}},\ldots,\bar{\rho}_{\{d,1\}})$ such that $\bar{\rho}_{j,j+1} = \rho_{j, j+1}, \text{ for all } j \in [d-1]$ and
    \[
    \begin{cases}
        \bar{\rho}_{d,1} = \rho_{d, d+1} \text{ if } \prod\limits_{j = d+1}^{d+k} \rho_{j,j+1} = 1\\
        \bar{\rho}_{d,1} = -\rho_{d, d+1} \text{ if } \prod\limits_{j = d+1}^{d+k} \rho_{j,j+1} = -1.
    \end{cases}
    \]
    Then we have $R(\Sigma_{\mathbb{S}_{d+k}}) = R(\Sigma_{\mathbb{S}_{d}})$.
\end{prop}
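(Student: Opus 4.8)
The plan is to show that the singular links in the longer cycle can be "collapsed" by a deterministic change of variables, so that any feasible point of one SDP can be transformed into a feasible point of the other with the same objective value. I will work primarily from the dual representation in Proposition~\ref{Prop:CorrDuality}, since there $R(\Sigma_\mathbb{S}) = 1 - \tfrac1d \sup\{\operatorname{tr}(\Sigma) : A\Sigma \preceq_\mathbb{S} \Sigma_\mathbb{S}, \ \Sigma_{11}=\cdots=\Sigma_{dd}, \ \Sigma \succeq 0\}$, and the constraint $A\Sigma \preceq_\mathbb{S} \Sigma_\mathbb{S}$ decomposes over the edges of the cycle: for each edge $\{j,j+1\}$ it reads $\begin{pmatrix} \Sigma_{jj} & \Sigma_{j,j+1} \\ \Sigma_{j,j+1} & \Sigma_{j+1,j+1}\end{pmatrix} \preceq \begin{pmatrix} 1 & \rho_{j,j+1} \\ \rho_{j,j+1} & 1 \end{pmatrix}$. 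When $|\rho_{j,j+1}|=1$, say $\rho_{j,j+1}=s\in\{\pm1\}$, the right-hand matrix is rank one, equal to $ww^T$ with $w=(1,s)^T$; a PSD matrix $M\preceq ww^T$ must have $\operatorname{tr}(M)\le\operatorname{tr}(ww^T)=2$ with equality iff $M=ww^T$, and more usefully $M\preceq ww^T$ forces $M$ to be supported on $\mathrm{span}(w)$, i.e. $M_{jj}=M_{j+1,j+1}=sM_{j,j+1}$. Since we also impose the constant-diagonal constraint $\Sigma_{11}=\cdots=\Sigma_{dd}$, this says $\Sigma_{j,j+1}=s\,\Sigma_{jj}$ is forced, i.e. the $j$-th and $(j+1)$-th coordinates of any optimal $\Sigma$ are perfectly (anti-)correlated and carry the same "weight."

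It is precisely this rigidity that I would exploit. First I would reduce to $k=1$: collapsing one singular edge at a time and iterating gives the general statement, with the sign bookkeeping $\prod_{j=d+1}^{d+k}\rho_{j,j+1}$ being exactly the composition of the reflection signs. So fix a $(d+1)$-cycle with $|\rho_{d+1,1}|=1$ (relabelling so the singular edge is the last one, $\{d+1,1\}$), set $s=\rho_{d+1,1}$, and define the $d$-cycle with $\bar\rho_{j,j+1}=\rho_{j,j+1}$ for $j\in[d-1]$ and $\bar\rho_{d,1}=s\,\rho_{d,d+1}$. Given a feasible $\Sigma$ for the $(d+1)$-cycle dual (a $(d+1)\times(d+1)$ PSD matrix with constant diagonal $\tau:=\Sigma_{11}=\cdots=\Sigma_{d+1,d+1}$), the singular-edge constraint forces $\Sigma_{d+1,1}=s\tau$ and, combined with $\Sigma\succeq0$, forces column $d+1$ of $\Sigma$ to equal $s$ times column $1$ (equivalently, $e_{d+1}-se_1$ is in the kernel of $\Sigma$, which one checks from the $2\times2$ Schur-complement/PSD condition on the $\{1,d+1\}$ block relative to the rest). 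Hence I can define $\bar\Sigma\in\mathcal M_d$ by deleting row/column $d+1$; it is PSD with the same constant diagonal $\tau$ and $\operatorname{tr}(\bar\Sigma)=\operatorname{tr}(\Sigma)-\tau$. I then verify it is feasible for the $d$-cycle dual: the edges $\{j,j+1\}$, $j\in[d-1]$, are untouched, and for the new edge $\{d,1\}$ the required bound $\begin{pmatrix}\bar\Sigma_{dd} & \bar\Sigma_{d,1}\\ \bar\Sigma_{d,1} & \bar\Sigma_{11}\end{pmatrix}\preceq\begin{pmatrix}1 & s\rho_{d,d+1}\\ s\rho_{d,d+1} & 1\end{pmatrix}$ follows from the old constraint on edge $\{d,d+1\}$ together with $\Sigma_{d,d+1}=\bar\Sigma_{d,d+1}$... wait — here I need $\Sigma_{d,1}$ in terms of $\Sigma_{d,d+1}$, which is exactly $\Sigma_{d,1}=s\Sigma_{d,d+1}$ because column $d+1=s\cdot$column $1$. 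So $\bar\Sigma_{d,1}=\Sigma_{d,1}=s\Sigma_{d,d+1}$, and the edge-$\{d,d+1\}$ constraint $\begin{pmatrix}\tau & \Sigma_{d,d+1}\\ \Sigma_{d,d+1} & \tau\end{pmatrix}\preceq\begin{pmatrix}1 & \rho_{d,d+1}\\ \rho_{d,d+1} & 1\end{pmatrix}$ transforms, under conjugation by $\operatorname{diag}(1,s)$ (an isometry preserving the Loewner order), into exactly the desired edge-$\{d,1\}$ constraint. Conversely, given a feasible $\bar\Sigma$ for the $d$-cycle, I lift it by setting the new coordinate $d+1$ equal to $s$ times coordinate $1$ (append a row/column: $\Sigma_{d+1,k}=s\bar\Sigma_{1,k}$ for $k\le d$, $\Sigma_{d+1,d+1}=\tau$); this is PSD (it is $V\bar\Sigma V^T$ for the obvious $(d+1)\times d$ matrix $V$), has constant diagonal, satisfies the singular edge constraint with equality, and the edge $\{d,d+1\}$ constraint reverses the computation above. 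Thus the two sup problems have equal feasible objective values, giving $R(\Sigma_{\mathbb{S}_{d+1}})=R(\Sigma_{\mathbb{S}_d})$, and induction on $k$ finishes it, the sign $\bar\rho_{d,1}=(\prod_{j=d+1}^{d+k}\rho_{j,j+1})\rho_{d,d+1}$ emerging from composing the reflections $\operatorname{diag}(1,s)$ across the successively collapsed edges.

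The main obstacle I anticipate is not the algebra of the change of variables but justifying the rigidity cleanly: showing that every \emph{feasible} (not just optimal) $\Sigma$ with constant diagonal and a rank-one upper bound on an edge necessarily has the corresponding off-diagonal saturated, i.e. $\Sigma_{d+1,1}=s\tau$, and hence column $d+1=s\cdot$column $1$. This is where the constant-diagonal constraint is essential — without it one only gets $\Sigma_{d+1,1}^2\le\Sigma_{d+1,d+1}$ and no collapse. The cleanest route is: $\begin{pmatrix}\tau & \Sigma_{1,d+1}\\ \Sigma_{1,d+1} & \tau\end{pmatrix}\preceq ww^T$ with $w=(1,s)^T$ means $ww^T - \Sigma_{\{1,d+1\}}\succeq0$; taking the quadratic form against $w^\perp=(1,-s)^T/\sqrt2$ gives $0\ge w^\perp{}^T\Sigma_{\{1,d+1\}}w^\perp = \tau - s\Sigma_{1,d+1}$, while $\Sigma_{\{1,d+1\}}\succeq0$ gives $\tau\ge s\Sigma_{1,d+1}$ (as $|s|=1$), forcing $\Sigma_{1,d+1}=s\tau$ and then $w^\perp$ is in the kernel of $\Sigma_{\{1,d+1\}}$; extending this to the full matrix via the PSD ordering $\Sigma\succeq0$ shows $e_{d+1}-se_1\in\ker\Sigma$. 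I would present exactly this computation, then the deletion/lifting maps, then the edge-by-edge Loewner checks under conjugation by $\operatorname{diag}(1,s)$, and close with the induction; the case hypotheses $d\ge3$, $k\ge1$ ensure the reduced cycle still has length $\ge3$ so that $R(\cdot)$ is defined and nontrivial throughout.
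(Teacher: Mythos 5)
Your proof is correct and follows the same overall strategy as the paper's: work in the dual formulation of Proposition~\ref{Prop:CorrDuality}, observe that the rank-one bound at a singular edge together with the constant-diagonal constraint forces an explicit kernel vector of any feasible $\Sigma$, and set up a bijection between the feasible sets of the two dual problems by deleting and (conversely) appending the redundant coordinate. Your execution is cleaner in two places. Reducing to $k=1$ by induction makes the lifting step the simple $V\bar\Sigma V^T$; the paper handles general $k$ in one go and so needs to append a $k\times k$ sign-pattern block $U$ and a coupling block $B$ simultaneously, verifying positive semi-definiteness of the enlarged matrix through a limit characterisation of the Moore--Penrose pseudoinverse of $\Sigma$. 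Your quadratic-form argument against $w^\perp=(1,-s)^T/\sqrt2$ gives the saturation $\Sigma_{1,d+1}=s\tau$ and the kernel direction $e_{d+1}-se_1$ in a single step, whereas the paper reaches the same conclusion more indirectly through an inductive chain of zero-variance differences of perfectly (anti-)correlated coordinates, which is correct but less transparent. One small point you should make explicit in a write-up: the deletion/lifting bijection preserves the dual objective only because the constant-diagonal constraint forces both normalised traces to equal the common diagonal value, $\operatorname{tr}(\Sigma)/(d+1)=\tau=\operatorname{tr}(\bar\Sigma)/d$; you rely on this silently when concluding that the two suprema coincide.
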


\begin{proof}[Proof of Proposition \ref{prop:reduction_2}]
    We will prove the result using the dual characterisation, which allows expressing $R(\Sigma_{\mathbb{S}_{d}})$ as \[
   1 - \frac{1}{d} \sup\{ \mathrm{tr}(\Sigma) : \Sigma \in \mathcal{P}^*, \Sigma_{11} = \cdots = \Sigma_{dd}, \Sigma_{\mathbb{S}_{d}} - A \Sigma \succeq_{\mathbb{S}_d} 0\}. \] 
   Suppose that $\prod_{j = d+1}^{d+k} \rho_{j,j+1} = 1$. We will show that $R(\Sigma_{{\mathbb{S}}_{d+k}}) = R(\Sigma_{\mathbb{S}_{d}})$ by proving both $R(\Sigma_{{\mathbb{S}}_{d+k}}) \geq R(\Sigma_{\mathbb{S}_{d}})$ and $R(\Sigma_{{\mathbb{S}}_{d+k}}) \leq R(\Sigma_{\mathbb{S}_{d}})$. As for the first of these, for every $\tilde{\Sigma}$ optimal for $\Sigma_{\mathbb{S}_{d+k}}$, we will show that $\Sigma = \tilde{\Sigma}_{|[d]}$ is feasible for $\Sigma_{\mathbb{S}_d}$. Now, $\Sigma \succeq 0$ since $\tilde{\Sigma} \succeq 0$, and $\Sigma_{11} = \ldots = \Sigma_{dd}$ by definition of $\tilde{\Sigma}$. As for $\Sigma_{\mathbb{S}_d} - A_{\mathbb{S}_d}\Sigma \succeq_{\mathbb{S}} 
0$, observe that $\Sigma_{\mathbb{S}_d}$ contains exactly the first $d$ matrices in $\Sigma_{\mathbb{S}_{d}}$, but $\mathbb{S}_d$ contains just $d-1$ patterns of $\Sigma_{\mathbb{S}_{d+k}}$. This is due to the fact that $\mathbb{S}_d$ has $\{d,1\}$ in place of $\{d,d+1\}$, which prevents us from employing Proposition \ref{prop:properties} (ii). Nonetheless, observe that $\tilde{\Sigma}_{1,d} = \tilde{\Sigma}_{d,d+1}$, due to the fact that $\tilde{\Sigma}_{j,j+1} = \rho_{j,j+1}\tilde{\Sigma}_{11} = \pm \tilde{\Sigma}_{11}$ for all $j \in \{d+1, \ldots, d+k\}$. Indeed, $\Sigma_{\mathbb{S}_{d+k}} - A_{\mathbb{S}_{d+k}}\tilde{\Sigma} \succeq_{\mathbb{S}_{d+k}} 0 $ implies that \[
\begin{pmatrix}
    1 & \pm 1 \\
    \pm 1 & 1
\end{pmatrix} - 
\begin{pmatrix}
    \tilde{\Sigma}_{11} & \tilde{\Sigma}_{j,j+1} \\
    \tilde{\Sigma}_{j,j+1} & \tilde{\Sigma}_{11}
\end{pmatrix} \succeq 0
\]
for all $j \in \{d+1, \ldots, d+k\}$, which can be satisfied if and only if $\tilde{\Sigma}_{j,j+1} = \pm \tilde{\Sigma}_{11} = \rho_{j,j+1}\tilde{\Sigma}_{11}$, since we must also have $|\tilde{\Sigma}_{j,j+1}| \leq \tilde{\Sigma}_{11}$ in order to have $\tilde{\Sigma} \succeq 0$. The fact that $\tilde{\Sigma}_{j,j+1} = \rho_{j,j+1} \tilde{\Sigma}_{11}$ for all $j \in \{d+1, \ldots, d+k\}$ implies that  $\operatorname{Var}(X_{j+1}- \Tilde{\Sigma}_{j+1,j+2} X_{j+2}) = 0$,
for all $j \in \{d, \cdots, d+k-1\}$, since $A_{\mathbb{S}_{d+k}}\tilde{\Sigma}$ is compatible. By induction, this gives $\operatorname{Var}(X_{d+1}- \prod_{j = d+1}^{d+k} \tilde{\Sigma}_{j,j+1} X_{1}) = 0$ by which 
\[
\tilde{\Sigma}_{1,d} = \frac{1}{\prod\limits_{j = d+1}^{d+k} \tilde{\Sigma}_{j,j+1}}\tilde{\Sigma}_{d,d+1} = \frac{1}{\prod\limits_{j = d+1}^{d+k} \rho_{j,j+1}}\tilde{\Sigma}_{d,d+1} = \tilde{\Sigma}_{d,d+1}. 
\]
Since $\tilde{\Sigma}_{1,d} = \tilde{\Sigma}_{d,d+1}$, we know that $\Sigma_{\mathbb{S}_{d+k}} - A_{\mathbb{S}_{d+k}}\tilde{\Sigma} \succeq_{\mathbb{S}_{d+k}} 0 $ implies that $\Sigma_{\mathbb{S}_d} - A_{\mathbb{S}_d}\Sigma \succeq_{\mathbb{S}} 
0$.

To show the reverse inequality, consider an optimal $\Sigma$ for $\Sigma_{\mathbb{S}_{d}}$ coming from the dual formulation above, and define \[ \tilde{\Sigma} :=
   \begin{pmatrix}
       \Sigma & B \\
       B^T & U
   \end{pmatrix},
   \] where \[
   U := \Sigma_{11}\begin{pmatrix}
    U_{11} & \cdots & U_{1k} \\
    \vdots & & \vdots \\
    U_{k1} & \cdots & U_{kk}
   \end{pmatrix}
   \]
   is such that $U = U^T$, $U_{ii} = 1$ for $i \in [k]$, $U_{i,i+1} = \rho_{d+i,d+i+1}$ for $i \in [k-1]$, $U_{1k} = U_{k1}$ is either $+1$ or $-1$ to make this $(k-1)$-cycle completable, and the other entries are again $+1$ or $-1$ to make the cycle consistent; and  \[B^T := \begin{pmatrix}
    B_{11} & \cdots & B_{1d} \\
    \vdots & & \vdots \\
    B_{k1} & \cdots & B_{kd}
   \end{pmatrix}
   \]
   is such that $B_{ij} = \Sigma_{1j} \cdot U_{i1}$ for $i \in [k], j \in [d]$. If such a $\tilde{\Sigma}$ is feasible for $\Sigma_{{\mathbb{S}}_{d+k}}$, then the result would follow from the fact that $R(\Sigma_{{\mathbb{S}}_{d+k}}) \leq 1-\operatorname{tr}(\tilde{\Sigma})/(d+k) = 1-\Sigma_{11} = R(\Sigma_{{\mathbb{S}}_{d}})$. The condition $\Sigma_{\mathbb{S}_{d+k}} - A_{\mathbb{S}_{d+k}}\tilde{\Sigma} \succeq_{\mathbb{S}_{d+k}} 0 $ is implied by $\Sigma_{\mathbb{S}_{d}} - A\Sigma \succeq_{\mathbb{S}_{d}} 0$, which is satisfied by hypothesis, and $(1-\Sigma_{11})\Sigma_{\{i, i+1\}} \succeq 0$ for $i \in \{d+1, \cdots, d+k-1 \}$, which is again satisfied since $\Sigma_{11} \in [0,1]$ and $\Sigma_{\{i, i+1\}} \succeq 0$. Moreover, being a symmetric block matrix, $\tilde{\Sigma}$ is positive semi-definite if and only if $\Sigma \succeq 0$, which is true by hypothesis, $U - B^T\Sigma^{\dagger}B \succeq 0$, and $(I - \Sigma\Sigma^{\dagger})B = 0$, where $\Sigma^\dagger$ is the Moore-Penrose inverse of $\Sigma$. As for the first of these last two conditions, observe that the $(k,h)$-th entry of $B^T\Sigma^{\dagger}B$ is given by \[
   (B^T\Sigma^{\dagger}B)_{kh} = U_{k1}\Sigma_1^T \Sigma^\dagger \Sigma_1 U_{h1} = U_{k1}U_{h1}\Sigma_1^T \Sigma^\dagger \Sigma_1 = U_{kh}\Sigma_1^T \Sigma^\dagger \Sigma_1,
   \]
   where $\Sigma_1$ is the first column of $\Sigma$. What is left to prove is to check that $\Sigma_1^T \Sigma^\dagger \Sigma_1 = \operatorname{tr}(\Sigma_1^T \Sigma^\dagger \Sigma_1) = \Sigma_{11}$ and, to this aim, we will use the limit characterisation of the pseudoinverse (see pag. 19 in \cite{albert72pseudo}), which allows writing $\Sigma^{\dagger}$ as $\lim_{\delta \rightarrow 0}\Sigma^T(\Sigma\Sigma^T + \delta^2 I)^{-1}$. With this in mind, and calling $\lambda_i$ the eigenvalues of $\Sigma$, and $\boldsymbol{v}_i$ the associated orthonormal eigenvectors,
   \begin{align*}
       \operatorname{tr}(\Sigma_1^T \Sigma^\dagger \Sigma_1) &= \operatorname{tr}(\Sigma^\dagger \Sigma_1 \Sigma_1^T) = \operatorname{tr}\left(\lim_{\delta \rightarrow 0}\Sigma^T(\Sigma\Sigma^T + \delta^2 I)^{-1} \Sigma_1 \Sigma_1^T\right) \\
       &= \lim_{\delta \rightarrow 0}\operatorname{tr}\left((\Sigma \Sigma^T + \delta^2 I)^{-1} \Sigma_1 \Sigma_1^T \Sigma^T\right) = \lim_{\delta \rightarrow 0}\operatorname{tr}\left(\sum_{i=1}^d \frac{1}{\lambda_i^2 +\delta^2}\boldsymbol{v}_i \boldsymbol{v}_i^T \Sigma_1 \Sigma_1^T \Sigma^T\right)\\
       &= \lim_{\delta \rightarrow 0}\sum_{i=1}^d \frac{1}{\lambda_i^2 +\delta^2} \operatorname{tr}\left(\boldsymbol{v}_i \boldsymbol{v}_i^T \Sigma_1 \Sigma_1^T \Sigma^T\right) = \lim_{\delta \rightarrow 0}\sum_{i,j=1}^d \frac{\lambda_j}{\lambda_i^2 +\delta^2} \operatorname{tr}\left( \boldsymbol{v}_j \boldsymbol{v}_j^T \boldsymbol{v}_i \boldsymbol{v}_i^T \Sigma_1 \Sigma_1^T \right) \\
       &= \lim_{\delta \rightarrow 0}\sum_{i=1}^d \frac{\lambda_i}{\lambda_i^2 +\delta^2} \| \boldsymbol{v}_i \boldsymbol{v}_i^T \Sigma_1\|_2^2 = \lim_{\delta \rightarrow 0}\sum_{i=1}^d \frac{\lambda_i}{\lambda_i^2 +\delta^2} \| \boldsymbol{v}_i \boldsymbol{v}_i^T \sum_{j=1}^d \lambda_j v_{j1} \boldsymbol{v}_j\|_2^2 \\
       &= \lim_{\delta \rightarrow 0}\sum_{i=1}^d \frac{\lambda_i^3}{\lambda_i^2 +\delta^2} \| v_{i1} \boldsymbol{v}_i\|_2^2 = \sum_{i=1}^d \lambda_i v_{i1}^2 = \Sigma_{11}.
   \end{align*}
   The other condition can be checked easily using again the limit characterisation of $\Sigma^\dagger$ and the spectral decomposition of $\Sigma$. This concludes the proof for the case $\prod_{j = d+1}^{d+k} \rho_{j,j+1} = +1$. On the other hand, if $\prod_{j = d+1}^{d+k} \rho_{j,j+1} = -1$, $R(\Sigma_{{\mathbb{S}}_{d+k}}) \geq R(\Sigma_{\mathbb{S}_{d}})$ follows after noticing that, if $\tilde{\Sigma}$ is optimal for $\Sigma_{{\mathbb{S}}_{d+k}}$, we must have $\tilde{\Sigma}_{d,d+1} = -\tilde{\Sigma}_{d,1}$. As for $R(\Sigma_{{\mathbb{S}}_{d+k}}) \leq R(\Sigma_{\mathbb{S}_{d}})$, the proof follows the exact same line as the one above, with the only exception that $B_{ij}$ should now be defined as $ - \Sigma_{1j} \cdot U_{i1}$ for $i \in [k], j \in [d]$.
\end{proof}

This reduction applies when the correlations associated to an edge belonging to the path from node $d+1$ to node $1$ are either $+1$ or $-1$. In this setting, we are allowed to identify node $1$ with node $d+1$ in such a way that the incompatibility measure of the $d$-cycle $\Sigma_{\mathbb{S}_{d}}$ is the same as the one of the original $(d+k)$-cycle $\Sigma_{\mathbb{S}_{d+k}}$. This is to be expected, as $\rho_{j,j+1}=\pm 1$ means that variables $j$ and $j+1$ can be identified, up to change in scale, and the dimensionality of the problem can be reduced. Clearly, this result is invariant under cyclic permutations of the nodes' labels.

We now give some explicit expressions for $R(\cdot)$ in special cases and discuss a case for which $R(\Sigma_{\mathbb{S}}) = 1$, meaning that $\Sigma_{\mathbb{S}}$ is maximally incompatible. It will be convenient for the rest of the subsection to reparametrise the correlations as $\rho_j = \cos\theta_j$, with $\theta_j \in [0, \pi]$.

\begin{eg}\label{ex:explicit_theta_zero}
    If there are $\theta_1,\theta_2 \in [0,\pi]$ such that $\theta_1 \geq \theta_2$ and
        \[
        \Sigma_{\mathbb{S}_3} = \Bigg\{\begin{pmatrix}
            1 & \cos\theta_1 \\
            \cos\theta_1 & 1
        \end{pmatrix},\begin{pmatrix}
            1 & \cos\theta_2 \\
            \cos\theta_2 & 1
        \end{pmatrix}, \begin{pmatrix}
            1 & 1 \\
            1 & 1
        \end{pmatrix} \Bigg\},
        \]
        then $R(\Sigma_{\mathbb{S}_3}) = (\cos\theta_2 - \cos\theta_1)/2$. In particular, setting $\theta_2=0$ we see that if 
        \[
        \Sigma_{\mathbb{S}_3} = \Bigg\{\begin{pmatrix}
            1 & \cos\theta_1 \\
            \cos\theta_1 & 1
        \end{pmatrix}, \begin{pmatrix}
            1 & 1 \\
            1 & 1
        \end{pmatrix}, \begin{pmatrix}
            1 & 1 \\
            1 & 1
        \end{pmatrix}\Bigg\},
        \]
        then $R(\Sigma_{\mathbb{S}_3}) = \sin^2(\theta_1/2)$. Moreover, assuming without loss of generality that at most one correlation is negative, as justified in Proposition \ref{prop:reduciton3} below, for a general $3$-cycle $\Sigma_{\mathbb{S}_3}$ we have $R(\Sigma_{\mathbb{S}_3}) = 1$ if and only if $$\Sigma_{{\mathbb{S}}_3} := \left\{\begin{pmatrix}
        1 & -1 \\ -1 & 1
    \end{pmatrix}, \begin{pmatrix}
        1 & 1 \\ 1 & 1
    \end{pmatrix}, \begin{pmatrix}
        1 & 1 \\ 1 & 1
    \end{pmatrix} \right\}.$$ 
    These results can be extended to a general $d$ using Proposition \ref{prop:reduction_2}.
\end{eg}

\begin{proof}[Proof of Example \ref{ex:explicit_theta_zero}]
    Start by considering a $3$-cycle. In the first case, the optimal $\Sigma$ of the dual representation \[
        R(\Sigma_{\mathbb{S}_3}) = 1 - \frac{1}{d} \sup\{ \mathrm{tr}(\Sigma) : \Sigma \in \mathcal{P}^*, \Sigma_\mathbb{S} - A \Sigma \succeq_\mathbb{S} 0, \Sigma_{11}=\Sigma_{22}=\Sigma_{33}  \}
        \]
        must be of the form \[
        \begin{pmatrix}
            \lambda & x & \lambda \\
            x & \lambda & y \\
            \lambda & y & \lambda
        \end{pmatrix}
        \]
        for some $\lambda \in [0,1]$ and some $x,y \in [-\lambda, \lambda]$ in order to satisfy $\Sigma_\mathbb{S} - A \Sigma \succeq_\mathbb{S} 0$. Furthermore, since $\operatorname{det}(\Sigma) = -\lambda(x-y)^2$, we must have $x=y$ in order to satisfy $\Sigma \succeq 0$. It follows that \begin{align*}
            R(\Sigma_{\mathbb{S}_3}) & = 1 - \sup\{\lambda \in [0,1]: 1- \lambda \geq \max\{|x-\rho_1|,|x-\rho_2| \}, \text{ with }  x \in [-\lambda, \lambda]\} \\
            & = \inf\{\epsilon \in [0,1] : \epsilon \geq \max\{|x-\rho_1|,|x-\rho_2| \}, \epsilon \leq 1 - |x| \} \\
            & = \inf\{\max\{|x-\rho_1|,|x-\rho_2| \} \in [0,1] : \max\{|x-\rho_1|,|x-\rho_2| \} \leq 1 - |x| \}
        \end{align*}
        which is equal to $(\cos\theta_2 - \cos\theta_1)/2$. As for the second case with $d=3$, setting $\rho_2=1$ in the above we see that if \[
        \Sigma_{\mathbb{S}_3} = \Bigg\{\begin{pmatrix}
            1 & \rho_1 \\
            \rho_1 & 1
        \end{pmatrix}, \begin{pmatrix}
            1 & 1 \\
            1 & 1
        \end{pmatrix}, \begin{pmatrix}
            1 & 1 \\
            1 & 1
        \end{pmatrix}\Bigg\},
        \]
        then $R(\Sigma_{\mathbb{S}_3}) = (1-\rho_1)/2 = (1-\cos\theta_1)/2 = \sin^2(\theta_1/2)$. Plugging in $\theta_1 = \pi$ gives the sufficiency part of the second statement. As for the necessity part, Proposition \ref{prop:KKT_cycle} (i) implies that it is necessary that $|\rho_i| = 1$ for all $i \in [d]$ for $R$ to be $1$.
\end{proof}

Related to the last claim of Example \ref{ex:explicit_theta_zero}, another important property for a $d$-cycle is that we can always assume without loss of generality that at most one $\theta_i$ is larger than $\pi/2$, which is equivalent to having at most one negative $\rho_i$, without changing the value of $R(\cdot)$. This is shown in the following result.
\begin{prop}\label{prop:reduciton3}
    Consider the $d$-cycle with $\mathbb{S}_d=\{\{1,2\},\ldots, \{d,1\}\}$ and $\Sigma_{\mathbb{S}_d}:= (\Sigma_{\{1,2\}}, \cdots, \Sigma_{ \{d,1\}})$, where the correlations $\rho_j = \cos\theta_j$ are uniquely determined by $0 \leq \theta_1, \ldots, \theta_d \leq \pi$. Then, there exists another sequence of angles $0 \leq \tilde{\theta}_1, \ldots, \tilde{\theta_d} \leq \pi$ with at most one $\tilde{\theta}_i$ larger than $\pi/2$ such that the corresponding $d$-cycle $\tilde{\Sigma}_{\mathbb{S}_d}:= (\tilde{\Sigma}_{\{1,2\}}, \cdots, \tilde{\Sigma}_{ \{d,1\}})$ satisfies $R(\tilde{\Sigma}_{\mathbb{S}_d}) = R(\Sigma_{\mathbb{S}_d})$.
\end{prop}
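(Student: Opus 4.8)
The plan is to realise $\tilde\Sigma_{\mathbb{S}_d}$ as the image of $\Sigma_{\mathbb{S}_d}$ under a \emph{sign change of variables}: for a diagonal matrix $D = \operatorname{diag}(d_1,\ldots,d_d)$ with $d_j\in\{\pm1\}$, conjugating each matrix in the sequence by the relevant principal submatrix $D_S$ sends the cycle correlation $\rho_{j,j+1}$ to $t_j\rho_{j,j+1}$ with $t_j := d_jd_{j+1}$, while leaving the unit diagonals untouched. Two things then need to be established: (a) that such a conjugation leaves $R(\cdot)$ unchanged, and (b) that the $d_j$ can be chosen so that $t_j\rho_{j,j+1}\ge 0$ for all but at most one $j$.

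For (a) I would work from the representation $R(\Sigma_\mathbb{S}) = \inf\{\epsilon\in[0,1] : \Sigma_\mathbb{S}\in(1-\epsilon)\mathcal{P}_\mathbb{S}^0 + \epsilon\mathcal{P}_\mathbb{S}\}$ of Proposition~\ref{Prop:CorrDuality}. Writing $\Phi(\Sigma_\mathbb{S}) := (D_S\Sigma_S D_S : S\in\mathbb{S})$, the point is that $\Phi$ is a linear involution of $\mathcal{M}_\mathbb{S}$ which, because $D_S^2 = I_{|S|}$, $(D_S\Sigma_SD_S)_{jj} = (\Sigma_S)_{jj}$, and $A(D\Sigma D) = \Phi(A\Sigma)$ with $\Sigma\mapsto D\Sigma D$ a bijection of $\mathcal{P}$, restricts to bijections of $\mathcal{P}_\mathbb{S}$, of $\mathcal{P}_\mathbb{S}^0$, and hence (by linearity) of $(1-\epsilon)\mathcal{P}_\mathbb{S}^0 + \epsilon\mathcal{P}_\mathbb{S}$ for each $\epsilon$. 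Applying $\Phi$ throughout the infimum then yields $R(\Phi(\Sigma_\mathbb{S})) = R(\Sigma_\mathbb{S})$; alternatively one can conjugate the dual SDP directly, noting that $\Sigma\mapsto D\Sigma D$ preserves $\operatorname{tr}(\Sigma)$, the constraints $\Sigma\succeq 0$ and $\Sigma_{11}=\cdots=\Sigma_{dd}$, and maps $\{A\Sigma\preceq_\mathbb{S}\Sigma_\mathbb{S}\}$ onto $\{A\Sigma\preceq_\mathbb{S}\Phi(\Sigma_\mathbb{S})\}$.

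For (b) the relevant combinatorial fact is that a tuple $(t_1,\ldots,t_d)\in\{\pm1\}^d$ is of the form $(d_jd_{j+1})_j$ for some $d\in\{\pm1\}^d$ if and only if $\prod_j t_j = 1$ (necessity is clear since $\prod_j d_jd_{j+1} = (\prod_j d_j)^2$; for sufficiency set $d_1 = 1$ and $d_{j+1} = d_jt_j$). So I would try to take $t_j = \operatorname{sign}(\rho_{j,j+1})$ wherever $\rho_{j,j+1}\ne 0$: if some correlation vanishes, the corresponding $t_j$ is free and can be used to enforce $\prod_jt_j = 1$ while every $t_j\rho_{j,j+1}$ stays $\ge 0$; if all correlations are nonzero, this choice already gives $\prod_jt_j = \operatorname{sign}(\prod_j\rho_{j,j+1})$, which is $+1$ precisely when no correlation is negative, and when it is $-1$ one flips a single $t_{j_0}$, restoring the product at the cost of exactly one index $j_0$ with $t_{j_0}\rho_{j_0,j_0+1}<0$. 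Setting $\tilde\rho_{j,j+1} := t_j\rho_{j,j+1}$ and $\tilde\theta_j := \arccos\tilde\rho_{j,j+1}\in[0,\pi]$ finishes things, since $|\tilde\rho_{j,j+1}| = |\rho_{j,j+1}|\le 1$ keeps $\tilde\Sigma_{\mathbb{S}_d}$ a valid sequence of correlation matrices, $\tilde\theta_j>\pi/2$ occurs for at most one $j$, and $R(\tilde\Sigma_{\mathbb{S}_d}) = R(\Sigma_{\mathbb{S}_d})$ by (a).

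I expect the main obstacle to be conceptual rather than technical: recognising that the natural group acting here is the group of sign changes of the variables, and that its action on a \emph{cycle} is constrained by the product $\prod_jt_j = 1$ — this constraint is exactly what can force one unavoidable negative correlation (when $\prod_j\rho_{j,j+1}<0$), explaining why the proposition permits one $\tilde\theta_i>\pi/2$ rather than none. The one genuine verification is that $R$, and not merely compatibility, is invariant under the sign change; this is short given Proposition~\ref{Prop:CorrDuality} but should be spelled out, since $R$ is an optimal value and one must check that the feasible set is carried onto the feasible set of the transformed problem.
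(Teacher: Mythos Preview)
Your proposal is correct and follows essentially the same approach as the paper: both arguments realise $\tilde\Sigma_{\mathbb{S}_d}$ via sign changes $X_j\mapsto \pm X_j$ and verify that such changes preserve $R$. Your execution is slightly different in two places --- you establish invariance of $R$ by appealing directly to the dual representation of Proposition~\ref{Prop:CorrDuality} (the paper instead constructs explicit feasible points for both primal and dual), and your combinatorial step is cleaner, using the characterisation $\prod_j t_j = 1$ rather than the paper's more ad hoc ``swap adjacent signs, then pair up'' argument --- but the underlying idea is the same.
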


\begin{proof}[Proof of Proposition \ref{prop:reduciton3}]
    It is easy to see that we can always transform the original $d$-cycle into a new one where at most $\cos{\theta_1}$ is negative by changing some $X_j$ into $-X_j$. To see why, let $\theta = (\theta_1, \ldots, \theta_d)$ be such that $\theta_j = \mathbbm{1}\{\rho_{j, j+1} \geq 0\}$, and observe that, if $\theta_{j-1} = 0$ and $\theta_{j} = 1$, changing the sign of $X_j$ corresponds to switching $\theta_{j-1}$ with $\theta_j$. Hence, it is easy to see that we can switch signs to some variables in order to reach a configuration of $\theta$ in which all the zeros are at the beginning, and all the ones at the end. It is now sufficient to couple the zeros starting from the end, and switch sign to make it both one, to get $\theta = (\theta_1, \boldsymbol{1}_{d-1})$, where $\theta_1 = +1$ if the number of original $\rho_{j,j+1}$ is even, and zero otherwise. As a by-product, this also shows that we can always assume without loss of generality that at most $\cos\theta_1$ is negative. Now, let $\tilde{\Sigma}_{\mathbb{S}_d}$ be this new $d$-cycle: what we want to show is that $R(\tilde{\Sigma}_{\mathbb{S}_d}) = R(\Sigma_{\mathbb{S}_d})$, and, in order to do so, we will show that we can construct feasible $\tilde{X}_{\mathbb{S}}$ and $\tilde{\Sigma}$ for primal and dual problems of $\tilde{\Sigma}_{\mathbb{S}_d}$ which lead to the same target values, using the optimal $X_{\mathbb{S}}$ and $\Sigma$  for $\Sigma_{\mathbb{S}_d}$.
    Starting from the dual problem, let $M$ be a diagonal matrix such that $M_{jj} = -1$ if $X_j$ was replaced with $-X_j$, and $+1$ otherwise. Then, it is easy to see $\tilde{\Sigma} = M \Sigma M$ has the same trace as $\Sigma$, and it is feasible for $\tilde{\Sigma}_{\mathbb{S}_d}$: indeed, $\tilde{\Sigma} \succeq 0$ since it has the same spectrum as $\Sigma$, being similar matrices, and $\tilde{\Sigma}_{\mathbb{S}_d} - A\tilde{\Sigma} \succeq_{\mathbb{S}_d} 0$ because for every $j \in [d]$ we have
    \begin{align*}
    \tilde{\Sigma}_{\{j,j+1 \}} - &\tilde{\Sigma}_{|\{j,j+1 \}}  = \begin{pmatrix}
         1 & \tilde{\rho}_j \\
        \tilde{\rho}_j & 1
    \end{pmatrix} - \begin{pmatrix}
        \tilde{\Sigma}_{j,j}  & \tilde{\Sigma}_{j,j+1} \\
        \tilde{\Sigma}_{j,j+1} & \tilde{\Sigma}_{j+1,j+1}
    \end{pmatrix} \\
    & = \begin{pmatrix}
         1 & M_{j,j}M_{j+1,j+1}\rho_j \\
         M_{j,j}M_{j+1,j+1}\rho_j  & 1
    \end{pmatrix} - \begin{pmatrix}
         \Sigma_{j,j} & M_{j,j}M_{j+1,j+1}\Sigma_{j,j+1} \\
         M_{j,j}M_{j+1,j+1}\Sigma_{j,j+1}  & \Sigma_{j+1,j+1}
    \end{pmatrix} \\
    & = \begin{pmatrix}
         1 - \Sigma_{j,j} & M_{j,j}M_{j+1,j+1}(\rho_j - \Sigma_j) \\
         M_{j,j}M_{j+1,j+1}(\rho_j - \Sigma_j)  & 1 - \Sigma_{j+1,j+1}
    \end{pmatrix} \succeq 0,
    \end{align*}
    since $|M_{j,j}M_{j+1,j+1}(\rho_j - \Sigma_j)| = |\rho_j - \Sigma_j| \leq 1 - \Sigma_{j,j} =  1 - \Sigma_{j+1,j+1}$, due to the fact that $\Sigma$ is feasible for $\Sigma_\mathbb{S}$.
    As for the primal problem, it is sufficient to define $\tilde{X}_{\mathbb{S}} = AM \cdot X_{\mathbb{S}} \cdot AM$, where $\cdot$ acts pointwise, which essentially consists in changing the signs of the off-diagonal entries of $X_{\mathbb{S}}$ according to $M$. Let \[X_{\{j,j+1 \}} = \begin{pmatrix}
       x_{j, 11} & x_{j,12} \\
       x_{j,21} & x_{j,22}
    \end{pmatrix} \text{ and }
    \tilde{X}_{\{j,j+1 \}} = \begin{pmatrix}
       \tilde{x}_{j, 11} &\tilde{x}_{j,12} \\
       \tilde{x}_{j,21} &\tilde{x}_{j,22}
    \end{pmatrix} = \begin{pmatrix}
        x_{j, 11} &  M_{j,j}M_{j+1,j+1} x_{j, 12}\\
        M_{j,j}M_{j+1,j+1} x_{j, 21} & x_{j, 22}
    \end{pmatrix},
    \]
    for all $j \in [d]$. It is easy to show that $\tilde{X}_{\mathbb{S}}$ is feasible, and clearly leads to $
    \langle \tilde{X}_{\mathbb{S}}, \tilde{\Sigma}_{\mathbb{S}} \rangle_{\mathbb{S}} =  \langle X_{\mathbb{S}}, \Sigma_{\mathbb{S}} \rangle_{\mathbb{S}}
    $ since, for a generic pattern $j \in [d]$, we have \begin{align*}
        \langle \tilde{X}_{\{j,j+1 \}}, \tilde{\Sigma}_{\{j,j+1 \}} \rangle & = 
    \langle \begin{pmatrix}
       \tilde{x}_{j, 11} &\tilde{x}_{j,12} \\
       \tilde{x}_{j,21} &\tilde{x}_{j,22}
    \end{pmatrix}, \begin{pmatrix}
         1 & \tilde{\rho}_j \\
        \tilde{\rho}_j & 1
    \end{pmatrix}\rangle \\
    & = \langle \begin{pmatrix}
        x_{j, 11} &  M_{j,j}M_{j+1,j+1} x_{j, 12}\\
        M_{j,j}M_{j+1,j+1} x_{j, 21} & x_{j, 22}
    \end{pmatrix}, \begin{pmatrix}
         1 & M_{j,j}M_{j+1,j+1}\rho_j \\
         M_{j,j}M_{j+1,j+1}\rho_j  & 1
    \end{pmatrix}\rangle \\
    & = x_{j, 11} + x_{j, 22} + M_{j,j}^2M_{j+1,j+1}^2 x_{j, 12}\rho_j + M_{j,j}^2M_{j+1,j+1}^2 x_{j, 21}\rho_j \\
    & = x_{j, 11} + x_{j, 22} + x_{j, 12} \rho_j + x_{j, 21} \rho_j \\
    & = \langle \begin{pmatrix}
        x_{j, 11} & x_{j, 12} \\
        x_{j, 21} & x_{j, 22}
    \end{pmatrix}, \begin{pmatrix}
         1 & \rho_j \\
        \rho_j & 1
    \end{pmatrix}\rangle = \langle X_{\{j,j+1 \}}, \Sigma_{\{j,j+1 \}} \rangle.
    \end{align*}
    This completes the proof.
\end{proof} 

The last result we present on the $d$-cycle gives an explicit lower bound for $R$ in the case that $\Sigma_{\mathbb{S}_d}$ is incompatible. This is related to the results of \cite{barrett1993real} characterising exactly when the partial correlation matrix
\[\Sigma_{partial} = 
	\begin{pmatrix} 1 & \cos \theta_1 & \ast & \cdots & \cos \theta_d \\ \cos \theta_1 & 1 & \cos \theta_2 & \cdots & \ast \\ \ast & \cos \theta_2 & 1 & \ldots & \ast \\ \vdots & \vdots & \vdots & \ddots &\vdots \\ \cos \theta_d & \ast & \ast & \cdots & 1 \end{pmatrix}
\]
has a positive semi-definite completion. \cite{barrett1993real} shows that this is the case if and only if
\begin{align}
    \label{eq:barrett_charcterisation}
    \sum\limits_{j \in K} \theta_j \leq (|K| -1 ) \pi + \sum\limits_{j \not\in K} \theta_j
\end{align}
for all $K \subseteq [d]$ with $|K|$ odd. Remarkably, this shows that the parametrisation $\rho_{j,j+1}=\cos \theta_j$ allows us to characterise the feasibility of positive semi-definite matrix completion in terms of a finite number of linear inequalities. If we know that $0 \leq \theta_d \leq \theta_{d-1} \leq \ldots \leq \theta_1 \leq \pi$ then this reduces to checking
\[
	\sum_{j=1}^k \theta_j \leq (k-1) \pi + \sum_{j=k+1}^d \theta_j
\]
for all odd $k \in [d]$. Furthermore, if $0 \leq \theta_1, \ldots, \theta_d \leq \pi$ with at most one $\theta_j$ larger than $\pi/2$, then $\Sigma_{partial}$ has a positive semi-definite completion if and only if \[
2\max_{j \in [d]}\theta_j \leq \sum_{j=1}^d \theta_j.
\]
Proposition \ref{prop:reduciton3} shows that we can always work under this setting, so that the problem of whether $\Sigma_{partial}$ has a PSD completion or not is determined by one condition only, namely $2\max_{j \in [d]}\theta_j \leq \sum_{j=1}^d \theta_j$. This is a novel contribution \textit{per se}, since it is not present in \cite{barrett1993real}. 

We now show that, provided not too many of our input matrices are close to being singular, $R(\Sigma_{\mathbb{S}_d})$ can be bounded below by a finite maximum of linear functionals that is zero if and only if $\Sigma_{\mathbb{S}_d}$ is compatible. This lower bound constitutes another sanity check for our measure $R(\cdot)$, since the quantities appearing in the lower bound are a natural quantitive version of the qualitative conditions given in \cite{barrett1993real} to check whether the partial matrix $\Sigma_{partial}$ defined above admits a PSD completion. 

\begin{prop}\label{ex:cycle_LB}

    Consider the $d$-cycle with $\mathbb{S}=\mathbb{S}_d=\{\{1,2\},\ldots, \{d,1\}\}$ and suppose that
    \[
        \Sigma_{\{j,j+1\}} = \begin{pmatrix} 1 & \cos\theta_j \\ \cos\theta_j & 1 \end{pmatrix}.
    \]
   Assume further that there exist $c>0$ and two indices $k,j \in [d]$ such that $1-\rho_j^2 \geq c, 1-\rho_k^2 \geq c$, so that $\Sigma_{\{j,j+1\}}$ and $\Sigma_{\{k,k+1\}}$ are bounded away from singularity. Then, whenever $\Sigma_{\mathbb{S}_d}$ is incompatible, we have
    \[
        R(\Sigma_{\mathbb{S}_d}) \geq c' \mathop{\max\limits_{K \subseteq [d]}}\limits_{|K| \text{ odd}}\left(\sum\limits_{i \in K}\theta_i - (|K| -1) \pi - \sum\limits_{i \in K^c}\theta_i\right),
    \]
    where $c'>0$ depends only on $c$. 
\end{prop}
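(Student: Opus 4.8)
The plan is to combine the dual representation of $R(\cdot)$ with Barrett's characterisation of positive semi-definite completability of cycles. A preliminary observation is that $R(\Sigma_{\mathbb{S}_d})$, the non-degeneracy condition $1-\rho_j^2\ge c$, and the combinatorial quantity on the right-hand side are all invariant under the coordinate sign changes $X_i\mapsto -X_i$ underlying Proposition~\ref{prop:reduciton3}: such a flip negates the two correlations of the edges meeting at vertex $i$, i.e.\ replaces two of the $\theta_j$ by $\pi-\theta_j$, and one checks that this induces an involution on the odd-cardinality subsets $K\subseteq[d]$ under which the Barrett defect $\sum_{i\in K}\theta_i-(|K|-1)\pi-\sum_{i\in K^c}\theta_i$ is preserved. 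Hence we may assume without loss of generality that at most one $\theta_j$ exceeds $\pi/2$. Writing $\delta=R(\Sigma_{\mathbb{S}_d})$, Proposition~\ref{Prop:CorrDuality} and the attainment in~\eqref{eq:dual_decomposition} give a correlation matrix $\Sigma\in\mathcal{P}$, whose cycle entries we write as $\cos\varphi_j^*$, and a sequence $\Sigma_{\mathbb{S}_d}'\in\mathcal{P}_{\mathbb{S}_d}$ with edge-$j$ correlation $\rho_j'\in[-1,1]$, such that $\cos\theta_j=(1-\delta)\cos\varphi_j^*+\delta\rho_j'$ for every $j$; in particular $(\varphi_j^*)$ is Barrett-feasible and $|\cos\theta_j-(1-\delta)\cos\varphi_j^*|\le\delta$ for all $j$.

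Next I would reduce the combinatorial maximum. We may assume $\Sigma_{\mathbb{S}_d}$ incompatible, since otherwise the maximum is non-positive and the bound is trivial. Because at most one angle exceeds $\pi/2$, a short counting argument shows that any odd $K$ with positive Barrett defect is a singleton, so the maximum over odd $K$ equals $2\theta_{j_1}-\sum_j\theta_j=\theta_{j_1}-\sum_{j\ne j_1}\theta_j$, where $\theta_{j_1}=\max_j\theta_j$, and incompatibility amounts to $\theta_{j_1}>\sum_{j\ne j_1}\theta_j$. Subtracting the Barrett inequality for $(\varphi_j^*)$ at $K=\{j_1\}$ yields the master bound
\[
\theta_{j_1}-\sum_{j\ne j_1}\theta_j\;\le\;(\theta_{j_1}-\varphi_{j_1}^*)+\sum_{j\ne j_1}(\varphi_j^*-\theta_j).
\]

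It then remains to bound the right-hand side by a constant depending only on $c$ times $\delta$, with no dependence on $d$. For $j\ne j_1$ we have $\theta_j\le\pi/2$, and $\cos\varphi_j^*\ge(\cos\theta_j-\delta)/(1-\delta)$ gives $\varphi_j^*-\theta_j\le\int_{(\cos\theta_j-\delta)/(1-\delta)}^{\cos\theta_j}(1-x^2)^{-1/2}\,dx$; the interval has length $\delta(1-\cos\theta_j)/(1-\delta)$ and lies where $1-x^2$ is bounded below by a constant multiple of $1-\cos\theta_j$, so this is at most a constant times $\delta\sqrt{1-\cos\theta_j}\le\delta\,\theta_j$. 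Summing and using $\sum_{j\ne j_1}\theta_j<\theta_{j_1}\le\pi$ bounds the whole sum by $O(\delta)$. For $j=j_1$ the hypothesis enters: $\theta_{j_1}=\max_j\theta_j\ge\theta_{j_0}\ge\arcsin\sqrt c$ for an edge $j_0$ with $1-\rho_{j_0}^2\ge c$, so $\cos\theta_{j_1}$ — hence $(\cos\theta_{j_1}+\delta)/(1-\delta)$ for $\delta$ small in terms of $c$ — is bounded away from $1$, and the same $\arccos$-integral estimate gives $\theta_{j_1}-\varphi_{j_1}^*\le C_c\delta$. Feeding both bounds into the master inequality gives $\theta_{j_1}-\sum_{j\ne j_1}\theta_j\le C_c'\delta$, i.e.\ $R(\Sigma_{\mathbb{S}_d})\ge c'\bigl(\theta_{j_1}-\sum_{j\ne j_1}\theta_j\bigr)$; the regime where $\delta$ is not small is handled trivially, since the right-hand side never exceeds $\pi$.

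The main obstacle is the degenerate geometry near $\rho_j=\pm1$: there $\theta\mapsto\cos\theta$ is flat, so an $O(\delta)$ change in a correlation can shift the corresponding angle by up to $O(\sqrt\delta)$, which naively would only give $R\gtrsim B^2$ rather than $R\gtrsim B$. Two features rescue the linear rate. First, the reduction via Proposition~\ref{prop:reduciton3} forces the unique angle that can approach $\pi$ to be $\theta_{j_1}$ itself; its deviation is controlled either because $\theta_{j_1}>\pi/2$ — in which case the relevant $\arccos$-integral is small irrespective of proximity to $\pi$, the integration interval length shrinking accordingly — or because the non-degeneracy hypothesis keeps $\theta_{j_1}$ bounded away from $0$. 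Second, for $j\ne j_1$ the integration interval shrinks like $\delta\,\theta_j$ while the integrand only grows like $\theta_j^{-1}$, so each term contributes $O(\delta\,\theta_j)$ and the sum collapses against the incompatibility inequality $\sum_{j\ne j_1}\theta_j<\theta_{j_1}\le\pi$, eliminating any dependence on the number of edges. The delicate bookkeeping is verifying that Proposition~\ref{prop:reduciton3} leaves the right-hand side unchanged and that the two non-singular edges furnish exactly the control needed to close the $j=j_1$ term.
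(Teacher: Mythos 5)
Your argument is correct in its essentials, and it is genuinely different from --- and in one respect cleaner than --- the paper's proof. The paper argues by induction on $d$: it first treats $d=3$ by an explicit KKT analysis (and in fact there relies on a step stated only as ``we see numerically that this system of inequalities can never be satisfied''), then handles $d=4$ and the inductive step via the cycle-splitting Lemma~\ref{prop:reduction_1} (adding a chord $\{1,d-1\}$ and invoking convexity of $R$). You instead give a one-shot, dimension-free argument: use the attained dual decomposition $\cos\theta_j=(1-\delta)\cos\varphi_j^*+\delta\rho_j'$ (justified since Slater's condition holds, as noted after Proposition~\ref{Prop:CorrDuality}), subtract the Barrett inequality for the feasible angles $\varphi_j^*$ at $K=\{j_1\}$ from the Barrett \emph{defect} of the $\theta_j$, and then control each angular gap $|\theta_j-\varphi_j^*|$ by the $\arccos$-increment estimate. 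This isolates precisely what the non-degeneracy hypothesis is for (keeping the $j=j_1$ integration interval away from the endpoint $+1$), and the sum over $j\ne j_1$ collapses against the incompatibility bound $\sum_{j\ne j_1}\theta_j<\theta_{j_1}\le\pi$, giving a constant independent of $d$ directly rather than through an induction. Your observation that $K\mapsto K\triangle\{i-1,i\}$ is a parity-preserving involution on subsets that preserves the Barrett defect is correct and is the right way to verify the right-hand side is invariant under the sign flips of Proposition~\ref{prop:reduciton3}; the paper does not spell this out. The two small points you leave implicit do hold: (i) when all angles but one lie in $[0,\pi/2]$, any odd $K$ with $|K|\ge 3$ has defect at most $\pi-|K|\pi/2\le 0$ (or $\le\theta_1-\pi\le 0$ if $1\in K$), so the max is attained at the singleton $\{j_1\}$; and (ii) on the integration interval for $j\ne j_1$, $1-x\ge 1-\cos\theta_j$ and $1+x\ge(1-2\delta)/(1-\delta)$, which for $\delta$ below a fixed threshold gives $1-x^2\gtrsim 1-\cos\theta_j$, so the interval length $\delta(1-\cos\theta_j)/(1-\delta)$ multiplied by the integrand bound gives $O(\delta\sqrt{1-\cos\theta_j})=O(\delta\theta_j)$. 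The large-$\delta$ regime is indeed trivial since the Barrett defect is at most $\pi$. Overall your argument is sound and a worthwhile alternative; it would benefit from writing out the two checks above, but no new ideas are needed.
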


\begin{proof}[Proof of Proposition \ref{ex:cycle_LB}]
            We will prove the statement by induction, with base cases $d=3$, and $d=4$:
    \begin{itemize}
        \item[$d=3$] Suppose without loss of generality that $\theta_1, \theta_2$ are bounded away from singularity. Also, assume without loss of generality that $\theta_2, \theta_3 \leq \pi/2$, using Proposition \ref{prop:reduciton3}, so that incompatibility means $\theta_1 > \theta_2 + \theta_3$. We will prove the base case \[
        R(\Sigma_{\mathbb{S}_3}) \gtrsim \theta_2 - \theta_1 -\theta_3.
        \]
       by showing that \[
        R(\Sigma_{\mathbb{S}_3}) \geq \frac{\theta_1-\theta_2-\theta_3}{\theta_1-\theta_2}\frac{\cos\theta_2-\cos\theta_1}{2},
        \] 
        and since $\cos\theta_2-\cos\theta_1 \gtrsim_c \theta_1 - \theta_2$ being bounded away from singularity, the result would follow.  Now, fix arbitrary $\theta_1, \theta_2$ satisfying the hypothesis of the statement, and suppose $\theta_1-\theta_2 \leq \pi/2$. Observe that for $\theta_3 = 0$ and $\theta_3 = \theta_1-\theta_2$ the lower bound is satisfied with equality sign due to Example \ref{ex:all_but_one_separation} and Barrett's characterisation (\ref{eq:barrett_charcterisation}), respectively. Now, call
\[
h = \frac{\theta_1-\theta_2-\theta_3}{\theta_1-\theta_2},
\]
and observe that the thesis is equivalent to \[
\lambda^* = 1-R(\Sigma_{\mathbb{S}_d}) \leq 1 -   \frac{h}{2}(\cos\theta_2-\cos\theta_1),
\]
Now, thanks to the KKT representation of the optimal $\lambda^*$, in order to have $\lambda^* > 1 - h(\cos\theta_2-\cos\theta_1)/2$ we must have 
\begin{align*}
    \begin{cases}
    \cos\theta_1 < \cos\varphi_1^* < \frac{\cos\theta_1 + h(\cos\theta_2-\cos\theta_1)/2}{1-h(\cos\theta_2-\cos\theta_1)/2} \\
    \\
    \cos\theta_2 > \cos\varphi_2^* > \frac{\cos\theta_2 - h(\cos\theta_2-\cos\theta_1)/2}{1-h(\cos\theta_2-\cos\theta_1)/2}\\
    \\
    \cos\theta_3 > \cos\varphi_3^* > \frac{\cos\theta_3 - h(\cos\theta_2-\cos\theta_1)/2}{1-h(\cos\theta_2-\cos\theta_1)/2},
    \end{cases}
\end{align*}
with $\varphi^*_1 = \varphi^*_2 + \varphi^*_3$ due to Proposition \ref{prop:KKT_cycle} (iii). We see numerically that this system of inequalities can never be satisfied for $\theta_3 \in (0, \theta_1-\theta_2)$. Finally, taking into account all the possible ways in which a generic $3$-cycle can be reduced to a $3$-cycle with at most one negative correlation, as stated in Proposition \ref{prop:reduciton3}, we get \[
        R(\Sigma_{\mathbb{S}_3}) \gtrsim \max(\theta_1-\theta_2-\theta_3, \theta_2-\theta_1-\theta_3,\theta_3-\theta_1-\theta_2,\theta_1+\theta_2+\theta_3 - 2 \pi).
        \]
     
    \item[$d=4$] Suppose without loss of generality that one of the two angles bounded away from singularity is $\theta_1$, with $\theta_1 > \theta_2 + \theta_3 + \theta_4$, and $\theta_2, \theta_3, \theta_4 \in [0, \pi/2]$. As shown in Figure \ref{fig:barrettLB}, there are two possible cases: the first one (on the left) is when the two angles bounded away from singularity are adjacent, and the second one (on the right) when they are opposite to each other. We will use the following lemma: 
\begin{lemma}\label{prop:reduction_1}
    Consider the $d$-cycle with $\mathbb{S}_d=\{\{1,2\},\ldots, \{d,1\}\}$ and $\Sigma_{\mathbb{S}_d}:= (\Sigma_{\{1,2\}}, \cdots, \Sigma_{ \{d,1\}})$. Then, for every optimal $\Sigma$ of the dual problem, i.e. $\Sigma_{\mathbb{S}_d} = \lambda^* A\Sigma + (1-\lambda^*)\Sigma_{\mathbb{S}_d}'$, and for every $d \geq 4$,
    $$R(\Sigma_{\mathbb{S}_d}) \geq R(B_{\mathbb{S}_{d-1}}(\phi)) + R(E_{\mathbb{S}_3}(\phi)), \quad \forall \phi \in [\lambda^*\Sigma_{1,d-1}-R, \lambda^*\Sigma_{1,d-1}+R],$$
    where $B_{\mathbb{S}_{d-1}}(\phi)= (\Sigma_{\{1,2\}}, \cdots, \Sigma_{ \{d-2,d-1\}}, \Sigma_{ \{d-1,1\}}(\phi))$, $E_{\mathbb{S}_3}(\phi)= (\Sigma_{ \{d-1, d\}}, \Sigma_{ \{d,1\}}, \Sigma_{ \{d-1,1\}}(\phi))$ and $\Sigma_{ \{d-1,1\}}(\phi)$ is the $2 \times 2$ correlation matrix with off-diagonal entries equal to $\phi$. 
\end{lemma}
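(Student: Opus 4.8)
The plan is to push the optimal primal--dual decomposition of $\Sigma_{\mathbb{S}_d}$ down to the two sub-cycles and then invoke convexity of $R(\cdot)$. Write $R := R(\Sigma_{\mathbb{S}_d})$, $\lambda^* := 1-R$, and let $\rho_j$ denote the correlation of $\Sigma_{\mathbb{S}_d}$ on the edge $\{j,j+1\}$. The hypothesis reads $\Sigma_{\mathbb{S}_d} = \lambda^* A\Sigma + R\,\Sigma'_{\mathbb{S}_d}$ with $\Sigma$ the given $d\times d$ correlation matrix and $\Sigma'_{\mathbb{S}_d}$ a sequence of $2\times2$ correlation matrices with off-diagonal entries $\rho'_1,\dots,\rho'_d$ and $R(\Sigma'_{\mathbb{S}_d})=1$; blockwise this says $\rho_j=\lambda^*\Sigma_{j,j+1}+R\rho'_j$ for every cycle edge. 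Every $\phi$ in the stated interval can be written $\phi=\lambda^*\Sigma_{1,d-1}+R\nu$ with $\nu\in[-1,1]$. If $R=0$ the interval is $\{\Sigma_{1,d-1}\}$ and $B_{\mathbb{S}_{d-1}}(\phi)$, $E_{\mathbb{S}_3}(\phi)$ are marginalisations of the correlation matrices obtained by restricting $\Sigma$ to $\{1,\dots,d-1\}$ and $\{d-1,d,1\}$, hence compatible and the claim is trivial; so assume $R>0$ henceforth.

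Next I would transfer the decomposition. Let $\Sigma^{(B)}$ and $\Sigma^{(E)}$ be the principal submatrices of $\Sigma$ on $\{1,\dots,d-1\}$ and $\{d-1,d,1\}$, which are again correlation matrices; a blockwise check using $\rho_j=\lambda^*\Sigma_{j,j+1}+R\rho'_j$ and $\phi=\lambda^*\Sigma_{1,d-1}+R\nu$ shows
\[
 B_{\mathbb{S}_{d-1}}(\phi) = \lambda^* A\Sigma^{(B)} + R\,D_B(\nu),\qquad E_{\mathbb{S}_3}(\phi) = \lambda^* A\Sigma^{(E)} + R\,D_E(\nu),
\]
where $D_B(\nu)$ is the $(d-1)$-cycle with correlations $\rho'_1,\dots,\rho'_{d-2},\nu$ and $D_E(\nu)$ is the $3$-cycle with correlations $\rho'_{d-1},\rho'_d,\nu$. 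Since $A\Sigma^{(B)}$ and $A\Sigma^{(E)}$ are compatible (so have $R(\cdot)=0$), convexity of $R(\cdot)$ (Proposition~\ref{prop:properties}(i)) gives
\[
 R(B_{\mathbb{S}_{d-1}}(\phi)) \le R\cdot R(D_B(\nu)),\qquad R(E_{\mathbb{S}_3}(\phi)) \le R\cdot R(D_E(\nu)),
\]
so it suffices to prove $R(D_B(\nu))+R(D_E(\nu))\le1$.

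To evaluate these I would use that $R(\Sigma'_{\mathbb{S}_d})=1$ forces $\rho'_j\in\{-1,+1\}$ for all $j$ and $\prod_{j=1}^d\rho'_j=-1$ (Example~\ref{ex:explicit_theta_zero} extended to general $d$ via Proposition~\ref{prop:reduction_2}, or Proposition~\ref{prop:KKT_cycle}(i)). Changing signs of variables as in the proof of Proposition~\ref{prop:reduciton3} --- which leaves $R(\cdot)$ unchanged --- turns $D_B(\nu)$ into the $(d-1)$-cycle with correlations $1,\dots,1,p\nu$, where $p:=\prod_{j=1}^{d-2}\rho'_j$, and $D_E(\nu)$ into the $3$-cycle with correlations $1,1,q\nu$, where $q:=\rho'_{d-1}\rho'_d$. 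Collapsing the chains of $+1$ edges via Proposition~\ref{prop:reduction_2} to $3$-cycles of the form $\{1,1,\cos\theta\}$ and using $R(\{1,1,\cos\theta\})=\sin^2(\theta/2)=(1-\cos\theta)/2$ from Example~\ref{ex:explicit_theta_zero} then yields $R(D_B(\nu))=(1-p\nu)/2$ and $R(D_E(\nu))=(1-q\nu)/2$.

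Finally, $pq=\prod_{j=1}^d\rho'_j=-1$, so $q=-p$ and $R(D_B(\nu))+R(D_E(\nu))=\tfrac12(1-p\nu)+\tfrac12(1+p\nu)=1$; combined with the two convexity bounds this gives $R(B_{\mathbb{S}_{d-1}}(\phi))+R(E_{\mathbb{S}_3}(\phi))\le R = R(\Sigma_{\mathbb{S}_d})$ for every admissible $\phi$. I expect the main obstacle to be the sign bookkeeping in the penultimate step: verifying that the sign changes reducing $D_B(\nu)$ and $D_E(\nu)$ to all-ones-except-one cycles produce precisely the factors $p$ and $q$ on the remaining edge, and that $R(\Sigma'_{\mathbb{S}_d})=1$ really does pin down $\rho'_j=\pm1$ with product $-1$; the remaining steps are routine consequences of convexity and the primal--dual decomposition.
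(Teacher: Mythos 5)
Your argument is correct, and it takes a genuinely different route from the paper's proof. The paper's proof establishes three facts separately: that at the right endpoint $\phi^+ = \lambda^*\Sigma_{1,d-1}+R$ one has $R(E_{\mathbb{S}_3}(\phi^+))=0$ and $R(B_{\mathbb{S}_{d-1}}(\phi^+))\le R$, the mirror statement at the left endpoint $\phi^-$, and then convexity of the one-variable map $\Xi(\phi)=R(B_{\mathbb{S}_{d-1}}(\phi))+R(E_{\mathbb{S}_3}(\phi))$, which interpolates between the endpoints. You instead propagate the dual decomposition $\Sigma_{\mathbb{S}_d}=\lambda^* A\Sigma + R\,\Sigma'_{\mathbb{S}_d}$ through to each subcycle, writing $B_{\mathbb{S}_{d-1}}(\phi)=\lambda^* A\Sigma^{(B)}+R\,D_B(\nu)$ and $E_{\mathbb{S}_3}(\phi)=\lambda^* A\Sigma^{(E)}+R\,D_E(\nu)$ with $\phi=\lambda^*\Sigma_{1,d-1}+R\nu$, then apply convexity of $R$ on $\mathcal{M}_\mathbb{S}$ (Proposition~\ref{prop:properties}(i)) twice, and reduce the lemma to the exact identity $R(D_B(\nu))+R(D_E(\nu))=1$. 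That identity rests on the characterisation of maximal incompatibility: $R(\Sigma'_{\mathbb{S}_d})=1$ forces $\rho'_j\in\{\pm1\}$ with $\prod_j\rho'_j=-1$ (Proposition~\ref{prop:KKT_cycle}(i), Example~\ref{ex:explicit_theta_zero} and the reduction in Proposition~\ref{prop:reduction_2}), and the reduction preserves the product of the cycle correlations, giving $R$ of a $\pm1$-except-one cycle as $(1-\text{product})/2$. Both proofs hinge on the same structure ($\Sigma'$ being maximally incompatible), but your version handles the whole interval uniformly through matrix-level convexity and a closed-form computation, instead of checking the two endpoints and then using convexity of $\Xi$ to fill in; this is arguably more transparent, at the cost of needing the explicit $R$-formula for near-unit-correlation cycles. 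The sign bookkeeping you flagged does go through: sign flips preserve the product around each subcycle, so $D_B(\nu)\mapsto(1,\dots,1,p\nu)$ and $D_E(\nu)\mapsto(1,1,q\nu)$ with $p=\prod_{j\le d-2}\rho'_j$, $q=\rho'_{d-1}\rho'_d$, and $pq=-1$ yields the sum equal to $1$ exactly.
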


\begin{proof}[Proof of Lemma \ref{prop:reduction_1}]
 Suppose without loss of generality that $\theta_d \geq \theta_{d-1}$, and that $\theta_1 = \max_{i \in [d]} \theta_i$, with $\theta_2, \ldots, \theta_d \leq \pi/2$. Let $R \equiv R(\Sigma_{\mathbb{S}_d}) $, and let \[
 \Sigma_{\mathbb{S}_d} = (1-R)A\Sigma + R\Sigma'_{\mathbb{S}}  = \lambda^*A\Sigma + (1-\lambda^*)\Sigma'_{\mathbb{S}}
 \] 
 be a (not necessarily unique) dual representation of $\Sigma_{\mathbb{S}_d}$, and denote by $\Sigma_{1,d-1}$ the entry $(1, d-1)$ of $\Sigma$. We will prove the statement in three steps:
    \begin{enumerate}
        \item $R(B_{\mathbb{S}_{d-1}}(\lambda^*\Sigma_{1,d-1}+R)) \leq R(\Sigma_{\mathbb{S}_d})$ and $R(E_{\mathbb{S}_{3}}(\lambda^*\Sigma_{1,d-1}+R)) = 0$,
        \item $R(E_{\mathbb{S}_{3}}(\lambda^*\Sigma_{1,d-1}-R)) \leq R(\Sigma_{\mathbb{S}_d})$ and $R(B_{\mathbb{S}_{d-1}}(\lambda^*\Sigma_{1,d-1}-R)) = 0$,
        \item $\Xi(\phi) := R(B_{\mathbb{S}_{d-1}}(\phi)) + R(E_{\mathbb{S}_3}(\phi))$ is convex for all $\phi \in [-1,1]$.
        
    \end{enumerate}
    \begin{enumerate}
        \item As for the fact that $R(E_{\mathbb{S}_{3}}(\lambda^*\Sigma_{1,d-1}+R)) = 0$ observe that \begin{align*}
A^*E_{\mathbb{S}_{3}}(\lambda^*\Sigma_{1,d-1}+R) - I_3  &= \begin{pmatrix}
    1 & \lambda^*\Sigma_{1,d-1} + R & \rho_d \\
    \lambda^*\Sigma_{1,d-1} + R & 1 & \rho_{d-1} \\
    \rho_{d} & \rho_{d-1} & 1  
\end{pmatrix} = \\
&=\begin{pmatrix}
    1 & \lambda^*\Sigma_{1,d-1} + R & \lambda^*\Sigma_{1,d} + R \\
    \lambda^*\Sigma_{1,d-1} + R & 1 & \lambda^*\Sigma_{d-1,d} + R \\
    \lambda^*\Sigma_{1,d} + R & \lambda^*\Sigma_{d-1,d} + R & 1  
\end{pmatrix} \\
& \quad \quad = \lambda^* \begin{pmatrix}
    1 & \Sigma_{1,d-1} & \Sigma_{1,d} \\
    \Sigma_{1,d-1} & 1 & \Sigma_{d-1,d} \\
    \Sigma_{1,d} & \Sigma_{d-1,d} & 1  
\end{pmatrix} + (1-\lambda^*)\begin{pmatrix}
    1 & 1 & 1 \\
    1 & 1 & 1 \\
    1 & 1 & 1
\end{pmatrix},
\end{align*}
where the second equality follows from  the optimal choice of signs given in Proposition \ref{prop:KKT_cycle}  (iii) under the hypothesis $\theta_1 = \max_{i \in [d]} \theta_i$, with $\theta_2, \ldots, \theta_d \leq \pi/2$. This implies $R(E_{\mathbb{S}_{3}}(\lambda^*\Sigma_{1,d-1}+R)) = 0$ since $A^*E_{\mathbb{S}_{3}}(\lambda^*\Sigma_{1,d-1}+R) - I_3$, which is the $3 \times 3$ correlation matrix whose $2 \times 2$ marginals are precisely those in $E_{\mathbb{S}_{3}}(\lambda^*\Sigma_{1,d-1}+R)$, is PSD being the sum of two PSD matrices. As for 
$R(B_{\mathbb{S}_{d-1}}(\lambda^*\Sigma_{1,d-1}+R)) \leq R(\Sigma_{\mathbb{S}_d})$, observe that, if $\Sigma_{\mathbb{S}_d} = \lambda^* A\Sigma + (1-\lambda^*)\Sigma_{\mathbb{S}_d}'$, then 
\[
B_{\mathbb{S}_{d-1}}(\lambda^*\Sigma_{1,d-1}+R) = \lambda^* A\Sigma_{|(-d)} + (1-\lambda^*)\Sigma_{B_{\mathbb{S}_{d-1}}}'',
\]
where \[
\Sigma_{B_{\mathbb{S}_{d-1}}}'' = (\Sigma_{\{1,2 \}}', \ldots, \Sigma_{\{d-1,d-2\}}', \boldsymbol{1}_2 \boldsymbol{1}_2^T),
\]
which is maximally incompatible. To see why, observe that $\Sigma'_{\mathbb{S}_d}$ is maximally incompatible by definition of the dual representation, and since $\theta_1 = \max_{i \in [d]} \theta_i$, with $\theta_2, \ldots, \theta_d \leq \pi/2$, Proposition \ref{prop:KKT_cycle}  (iii) ensures that 
\[
\Sigma'_{\mathbb{S}_d} = (-\boldsymbol{1}_2 \boldsymbol{1}_2^T, +\boldsymbol{1}_2 \boldsymbol{1}_2^T, \ldots, +\boldsymbol{1}_2 \boldsymbol{1}_2^T),
\]
which leads to \[
\Sigma_{B_{\mathbb{S}_{d-1}}}'' =  (-\boldsymbol{1}_2 \boldsymbol{1}_2^T, +\boldsymbol{1}_2 \boldsymbol{1}_2^T, \ldots, +\boldsymbol{1}_2 \boldsymbol{1}_2^T).
\]
This shows that $\Sigma_{|(-d)}$ is feasible for $B_{\mathbb{S}_{d-1}}(\lambda^*\Sigma_{1,d-1}+R)$, and implies that $R(B_{\mathbb{S}_{d-1}}(\lambda^*\Sigma_{1,d-1}+R)) \leq R(\Sigma_{\mathbb{S}_d})$.
\item The arguments in the proof above can be followed \textit{mutatis mutandis} to show that $R(E_{\mathbb{S}_{3}}(\lambda^*\Sigma_{1,d-1}-R)) \leq R(\Sigma_{\mathbb{S}_d})$ and $R(B_{\mathbb{S}_{d-1}}(\lambda^*\Sigma_{1,d-1}-R)) = 0$.
\item  In order to show that $R(\Sigma_{\mathbb{S}_d}) \geq R(B_{\mathbb{S}_{d-1}}(\phi)) + R(E_{\mathbb{S}_3}(\phi)), \forall \phi \in I,$ we will make use of the fact that $R$ is convex and continuous, as stated in Proposition \ref{prop:properties} (i) (ii), i.e. \[
R\left(\mu\Sigma_\mathbb{S}^{(1)} + (1-\mu)\Sigma_\mathbb{S}^{(2)}\right) \leq \mu R\left(\Sigma_\mathbb{S}^{(1)}\right) + (1-\mu) R\left(\Sigma_\mathbb{S}^{(2)}\right), \text{ for all } \mu \in [0,1].
\]
Now, define \[
\Xi(\phi) = R(B_{\mathbb{S}_{d-1}}(\phi)) + R(E_{\mathbb{S}_3}(\phi)), \text{ for all } \phi \in I = [-1,1].
\]
It is easy to see that $\Xi(\phi)$ is convex in $I$ since, for all $\phi_1, \phi_2 \in I$, for all $\mu \in [0,1]$,
\begin{align*}
    \Xi(\mu\phi_1 + (1-\mu)\phi_2) & =  R(B_{\mathbb{S}_{d-1}}(\mu\phi_1 + (1-\mu)\phi_2) + R(E_{\mathbb{S}_3}(\mu\phi_1 + (1-\mu)\phi_2)) \\
    & = R(\mu B_{\mathbb{S}_{d-1}}(\phi_1) + (1-\mu) B_{\mathbb{S}_{d-1}}(\phi_2)) + R(\mu E_{\mathbb{S}_{3}}(\phi_1) + (1-\mu) E_{\mathbb{S}_{3}}(\phi_2)) \\
    & \leq \mu R(B_{\mathbb{S}_{d-1}}(\phi_1)) + (1-\mu) R(B_{\mathbb{S}_{d-1}}(\phi_2)) + \mu R(E_{\mathbb{S}_{3}}(\phi_1)) + (1-\mu) R(E_{\mathbb{S}_{3}}(\phi_2)) \\
    & = \mu\Xi(\phi_1) + (1-\mu)\Xi(\phi_2).
\end{align*}
This, implies that, for all $\mu \in [0,1]$, \begin{align*}
    R & \geq \mu R(E_{\mathbb{S}_{d-1}}(\lambda^*\Sigma_{1,d-1}-R)) + (1-\mu)R(B_{\mathbb{S}_{d-1}}(\lambda^*\Sigma_{1,d-1}-R)) \\
    & = \mu\Xi(\lambda^*\Sigma_{1,d-1}-R) + (1-\mu)\Xi(\lambda^*\Sigma_{1,d-1}+R) \geq \Xi(\lambda^*\Sigma_{1,d-1}+1 - 2\mu R) \\
    & =: \Xi(\phi) = R(B_{\mathbb{S}_{d-1}}(\phi)) + R(E_{\mathbb{S}_3}(\phi)),
\end{align*}
for all $\phi \in [\lambda^*\Sigma_{1,d-1}-R, \lambda^*\Sigma_{1,d-1}+R]$, as claimed. For general angles $(\theta_1, \ldots, \theta_d)$, it is sufficient to perform the transformation outlined in Proposition \ref{prop:reduciton3}, find $\phi$ and $I$ as above, and perform the inverse transformation. 
\end{enumerate}
\begin{figure}
    \centering
    \includegraphics[scale=0.4]{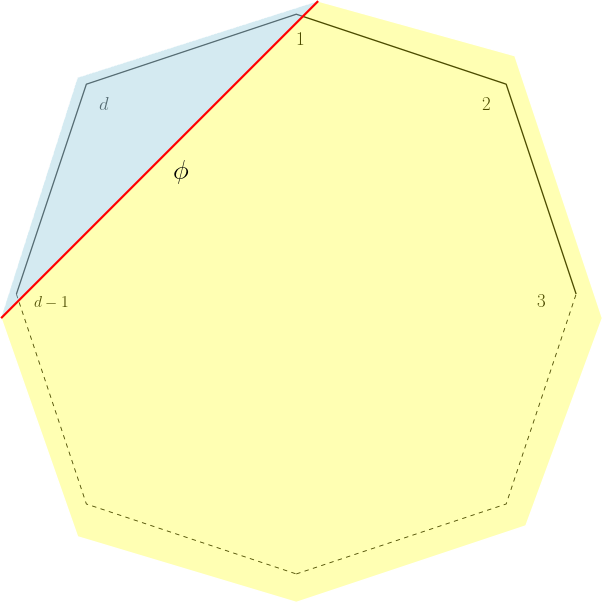}
    \caption{Illustration of Proposition \ref{prop:reduction_1}. We split the original $d$-cycle into two smaller cycles, adding the extra edge $\{1, d-1\}$ with associated correlation $\phi$. We end up with a $(d-1)$-cycle $B_{\mathbb{S}_{d-1}}(\phi)$ in yellow, and a $3$-cycle $E_{\mathbb{S}_{3}}(\phi)$ in blue, such that $R(\Sigma_{\mathbb{S}_d}) \geq R(B_{\mathbb{S}_{d-1}}(\phi)) + R(E_{\mathbb{S}_3}(\phi))$ for all $\phi \in [\lambda^*\Sigma_{1,d-1}-R, \lambda^*\Sigma_{1,d-1}+R]$.}
    \label{fig:red1}
\end{figure}
As we can see from Figure \ref{fig:red1}, this reduction corresponds to adding an edge in correspondence to $\{1, d-1\}$, so that the $d$-cycle $\Sigma_{{\mathbb{S}}_d}$ is divided into two smaller cycles, $B_{\mathbb{S}_d}(\phi)$ of dimension $d-1$, and $E_{\mathbb{S}_d}(\phi)$ of dimension 3. The result ensures the possibility of adding a correlation $\rho_{1, d-1} = \phi$ for the edge $\{1,d-1\}$ to make $B_{\mathbb{S}_d}(\phi)$ and $E_{\mathbb{S}_d}(\phi)$ maximally compatible, or better, at least as compatible as the original $d$-cycle, since $R(\Sigma_{\mathbb{S}_d}) \geq R(B_{\mathbb{S}_d}(\phi)) + R(E_{\mathbb{S}_d}(\phi))$.
\end{proof}

    In the first case, suppose we add an edge between $(2,4)$ with correlation $\cos(\theta_3+\theta_4)$. We first show that this is a valid choice of $\phi$ to invoke Proposition \ref{prop:reduction_1}. In this regard, observe that $R(E_{\mathbb{S}_3}(\phi)) = 0$ for all $\phi \in [\cos(\theta_{d-1} + \theta_d), \cos(\theta_{d-1} - \theta_d)]$, hence, since we proved $R(E_{\mathbb{S}_{d-1}}(\lambda^*\Sigma_{1,d-1}+R)) = 0$ in the proof of the lemma above, we must have $\cos(\theta_{d-1} + \theta_d) \leq \lambda^*\Sigma_{1,d-1}+R$. Similarly, $\cos(\theta_1 - \sum_{i = 2}^{d-2} \theta_i) \geq \lambda^*\Sigma_{1,d-1}-R$. This, together with the fact that $\cos(\theta_1 - \sum_{i = 2}^{d-2} \theta_i) \leq \cos(\theta_{d-1} + \theta_d)$ since $\theta_1 > \sum_{i = 2}^{d} \theta_i$, allows concluding that $\lambda^*\Sigma_{1,d-1}-R \leq \cos(\theta_1 - \sum_{i = 2}^{d-2} \theta_i) < \cos(\theta_{d-1} + \theta_d) \leq \lambda^*\Sigma_{1,d-1}+R$. Now, Proposition \ref{prop:reduction_1} ensures that \[R(\theta_1, \theta_2, \theta_3, \theta_4) \geq R(\theta_1, \theta_2, \theta_3 + \theta_4), \]
    and since $\theta_1, \theta_2$ are bounded away from singularity, we can employ the lower bound we found for $d=3$, and conclude
    \[
    R(\Sigma_{\mathbb{S}_4}) \geq \frac{\theta_1 - \theta_2 - (\theta_3+\theta_4)}{\theta_1 - \theta_2}\frac{\cos\theta_2-\cos\theta_1}{2},
    \]
    which gives the desired result. In the second case, we can proceed in the same way as before, and get
    \[
    R(\Sigma_{\mathbb{S}_4}) \geq \frac{\theta_1 - \theta_2 - (\theta_3+\theta_4)}{\theta_1 - (\theta_3+\theta_4)}\frac{\cos(\theta_3+\theta_4)-\cos\theta_1}{2}.
    \]
    Now, if $\sin^2(\theta_3+\theta_4) \geq c$ we are done, otherwise, $\theta_4$ must be bounded away from singularity. Indeed, since $\theta_3, \theta_4 \in [0, \pi/2]$, and $\sin^2(\theta_3) \geq c$ by hypothesis, in order to have $\sin^2(\theta_3+\theta_4) < c$ we must have $\sin^2\theta_4 \geq 1-c$. Now, since we can assume that $c$ is small enough, say $c \leq 1/2$, we conclude $\sin^2\theta_4 \geq 1-c > c$. This implies that $\theta_4$ is bounded away from singularity, and since it is adjacent to $\theta_1$, we can proceed as in the first case to get the desired result. 
    \begin{figure}
        \centering
        \includegraphics[scale=0.3]{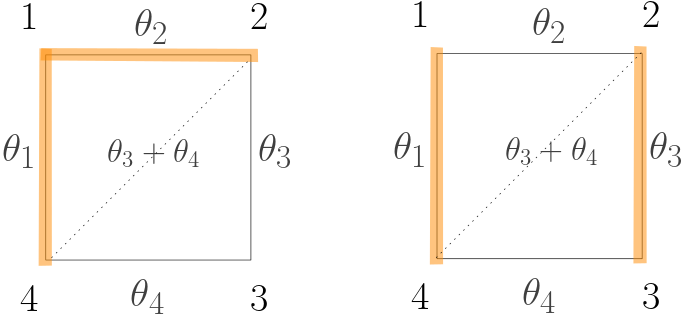}
        \caption{The two possible configurations of the two angles bounded away from singularity when $d=4$. On the left, the two angles are adjacent, while on the right they are opposite to each other.}
        \label{fig:barrettLB}
    \end{figure}
    \item[$d \geq 5$] Suppose again without loss of generality that $\theta_1$ is bounded away from singularity, and call $\theta_j$ the other one. Now, since $d \geq 5$, we can find $k \neq 1,j$ such that $\theta_k, \theta_{k+1}$ are not necessarily assumed to be bounded away from singularity. Then, proceeding as before, thanks to Proposition \ref{prop:reduction_1}, we have \[R(\theta_1, \ldots, \theta_d) \geq R(\theta_1, \ldots, \theta_{k-1}, \theta_{k}+\theta_{k+1}, \theta_{k+2}, \ldots, \theta_d), \]
    so that the induction step gives immediately that \begin{align*}
        R(\theta_1, \ldots, \theta_d) &\geq R(\theta_1, \ldots, \theta_{k-1}, \theta_{k}+\theta_{k+1}, \theta_{k+2}, \ldots, \theta_d) \\
        & \geq c' \left(\theta_1 -  (\theta_k + \theta_{k+1}) - \sum\limits_{i \neq 1,j,k,k+1} \theta_i\right) = c' \left(\theta_1  - \sum\limits_{i = 2}^d \theta_i\right),
    \end{align*}
    where $c'$ is a constant depending on $c$ only. Finally, taking into account all the possible ways in which a generic $d$-cycle can be reduced to a $d$-cycle with at most one negative correlation, as stated in Proposition \ref{prop:reduciton3}, we get \[
        R(\Sigma_{\mathbb{S}_d}) \geq c' \mathop{\max\limits_{K \subseteq [d]}}\limits_{|K| \text{ odd}}\left(\sum\limits_{i \in K}\theta_i - (|K| -1) \pi - \sum\limits_{i \in K^c}\theta_i\right),
    \]
    where $c'>0$ depends only on $c$.
    \end{itemize}
    \end{proof}
    
First, observe that this lower bound reduces to
\[
c' \left(\theta_1 - \sum_{i = 2}^d \theta_i \right)_+,
\] 
in the case that $\theta_1 = \max_{j \in [d]} \theta_j$ and $\theta_2,\ldots,\theta_d \leq \pi/2$, which we have already argued that we may assume without loss of generality. Furthermore, as a sanity check, the simple explicit expressions found in Example \ref{ex:all_but_one_separation}, in which we have seen that
        \[
        R\left(\Bigg\{\begin{pmatrix}
            1 & \cos\theta_1 \\
            \cos\theta_1 & 1
        \end{pmatrix},\begin{pmatrix}
            1 & \cos\theta_2 \\
            \cos\theta_2 & 1
        \end{pmatrix}, \begin{pmatrix}
            1 & 1 \\
            1 & 1
        \end{pmatrix}, \ldots, \begin{pmatrix}
            1 & 1 \\
            1 & 1
        \end{pmatrix}\Bigg\}\right) = (\cos\theta_2 - \cos\theta_1)/2,
        \]
        is in accordance with Proposition \ref{ex:cycle_LB}, since $\cos\theta_2 - \cos\theta_1 \gtrsim_c \theta_1 - \theta_2$ when $\theta_1, \theta_2$ are bounded away from $\{0, \pi \}$. \\

We now analyse two further examples, where the missingness patterns $\mathbb{S}$ is more complex. These examples are interesting \textit{per se}, but we have decided not to include them in the main body because they would have disrupted the flow of the presentation. We start from the case where we observe all possible patterns of cardinality $d-1$, and nothing else.

\begin{eg}\label{ex:all_except_one}
    Consider the set of patterns $\mathbb{S} = \{S_{(-1)}, \cdots, S_{(-d)}\}$, with $d \geq 2$, where $S_{(-i)} = \{1, \ldots, i-1, i+1, \ldots, d\}$. We show how $R(\Sigma_{\mathbb{S}})$ can be lower-bounded by the maximal inconsistency, or, more precisely, \[
    R(\Sigma_{\mathbb{S}}) \geq \frac{1}{2} \max_{i > j} \mathop{\max_{k > h}}_{k,h \neq i,j} |\rho_{ij}^{(k)} - \rho_{ij}^{(h)}| =: \Theta,
    \]
    where $\rho_{ij}^{(k)}$ is the correlation between $X_i$ and $X_j$ for the pattern $S_{(-k)}$, for $k \in [d] \setminus \{i,j\}$. 
    \end{eg}

    \begin{proof}[Proof of Example \ref{ex:all_except_one}]
    In order to prove the statement, suppose the maximum is $|\rho_{ij}^{(k)} - \rho_{ij}^{(h)}|$, and consider $X_{\mathbb{S}} = dY_{\mathbb{S}} - X_{\mathbb{S}}^{(0)}$, where $X_{\mathbb{S}}^{(0)} = \frac{1}{d-1}(I_{d-1}, \ldots, I_{d-1})$ and \[
    Y_{\mathbb{S}} = (0, \ldots, \underbrace{A_1}_{h}, 0, \ldots, 0, \underbrace{A_2}_{k}, 0, \ldots, 0),
    \]
    with \[
    (A_1)_{\tilde{i}, \tilde{j}} = \begin{cases}
        1/4 \quad \text{ if } (\tilde{i}, \tilde{j}) \in \{(i,i), (j,j), (i,j), (j,i)\} \\
        0 \quad \text{ otherwise},
    \end{cases}
    \]
    and 
    \[
    (A_2)_{\tilde{i}, \tilde{j}} = \begin{cases}
        1/4 \quad \text{ if } (\tilde{i}, \tilde{j}) \in \{(i,i), (j,j) \} \\
        -1/4 \quad \text{ if } (\tilde{i}, \tilde{j}) \in \{(i,j), (j,i) \} \\
        0 \quad \text{ otherwise}. 
    \end{cases}
    \]
    Then, provided $X_{\mathbb{S}}$ is feasible, we get precisely that 
    \[
    R(\Sigma_{\mathbb{S}}) \geq -\frac{1}{d}\langle X_{\mathbb{S}}, \Sigma_{\mathbb{S}}\rangle_{\mathbb{S}} = \frac{1}{2} \max_{i > j} \mathop{\max_{k > h}}_{k,h \neq i,j} |\rho_{ij}^{(k)} - \rho_{ij}^{(h)}|.
    \]
    All is left to prove is that $X_{\mathbb{S}}$ is indeed feasible: $X_{\mathbb{S}} + X_{\mathbb{S}}^{(0)} = dY_{\mathbb{S}} \succeq_{\mathbb{S}} 0$, and $A^*X_{\mathbb{S}}$ is diagonal with trace zero, hence we can choose $Y = -A^*X_{\mathbb{S}}  \in \mathcal{Y}$ in the primal characterisation so that $A^*X_{\mathbb{S}} + Y = \boldsymbol{O} \succeq 0$. \\
\end{proof}

    Observe that the same is true in the case where we also have a complete case pattern, i.e. $\mathbb{S} = \{[d], S_{(-1)}, \cdots, S_{(-d)}\}$, with $d \geq 2$, meaning that using the same strategy we can control $R$ with the maximal inconsistency. Related to this, it would be interesting to know if there is a case in which the incompatibility value $\Theta$ controls $R(\Sigma_{\mathbb{S}})$ both from above and below, meaning that $\Theta$ fully characterises $R(\Sigma_{\mathbb{S}})$.  In this regard, we have the following:

\begin{eg}\label{ex:all_but_one_equality}
     Consider $\mathbb{S} = \{ S_{(-1)}, \cdots, S_{(-d)}\}$ and $\Sigma_{\mathbb{S}} = (I_{d-1}, \ldots, I_{d-1}, A)$, where \[A =
    \begin{pmatrix}
        1 & \epsilon_1/2 & 0 & 0 & \cdots & 0 \\
        \epsilon_1/2 & 1 & \epsilon_2/2 & 0 & \cdots & 0 \\
        \vdots & \ddots & \ddots & \ddots \\
        \\
        \vdots & & &\ddots & \ddots & \ddots \\
        \\
        0 & \cdots & 0 & 0 & \epsilon_{d-1}/2 & 1
    \end{pmatrix},
    \]
    with $\epsilon_i \in [-1, 1]$. Then, \[
     R(\Sigma_{\mathbb{S}}) = \Theta = \frac{1}{2}\max_{i \in [d-1]}|\epsilon_i|.\]
\end{eg}

\begin{proof}[Proof of Example \ref{ex:all_but_one_equality}]
    If we consider \[\Sigma =
    \begin{pmatrix}
        1-\Theta & \epsilon_1/4 & 0 & \cdots & 0 & 0 & 0 & 0\\
        \epsilon_1/4 & 1-\Theta & \epsilon_2/4 & \cdots & 0 & 0 & 0 & 0\\
        \ddots & \ddots & \ddots \\
        \\
        \\
        & & & \ddots & \ddots & \ddots \\
        0 & \cdots & 0 & \cdots & \epsilon_{d-2}/4 & 1-\Theta & \epsilon_{d-1}/4 &  0 \\
        0 & \cdots & 0 & 0 & \cdots & \epsilon_{d-1}/4 & 1-\Theta & 0 \\
        0 & \cdots & 0 & \cdots & 0 & 0 & 0 & 1-\Theta
    \end{pmatrix} \in \mathbb{R}^{d,d},
    \]
    if $\Sigma$ were feasible we would be able to conclude $R(\Sigma_{\mathbb{S}}) = \frac{1}{2}\max_{i \in [d]}|\epsilon_i|$ being \[
    \Theta \geq R(\Sigma_{\mathbb{S}}) \geq \Theta = \frac{1}{2}\max_{i \in [d-1]}|\epsilon_i|.
    \]
    All is left to prove is that $\Sigma$ is feasible. First, $\Sigma \succeq 0$ since it is diagonally dominant, being $1-\max_{i \in [d]}|\epsilon_i|/2 \in [1/2, 1]$ and $\epsilon_i/4 \in [-1/4, 1/4]$. Finally, a generic element in $\Sigma_{\mathbb{S}} - A\Sigma$ is given by \[
    \begin{pmatrix}
        \max_{i \in [d]}|\epsilon_i|/2 & \alpha_1/4 & 0 & 0 & \cdots & 0\\
        \alpha_1/4 & \max_{i \in [d]}|\epsilon_i|/2 & \alpha_2/4 & 0 & \cdots & 0\\
        & \ddots & \ddots & \ddots \\
        & & \ddots & \ddots & \ddots \\
        & & &\ddots & \ddots & \ddots \\
        \\
        0 & \cdots & 0 & 0 & \alpha_{d-1}/4 & \max_{i \in [d]}|\epsilon_i|/2
    \end{pmatrix} \in \mathbb{R}^{d-1,d-1},\] where $\alpha_i \in \{ \pm \epsilon_i, 0\}$. This is again diagonally dominant since $\max_{i \in [d]}|\epsilon_i|/2 \geq |\alpha_j|/4 + |\alpha_{j+1}|/4$ for all $j \in [d-1]$, by definition of the maximum.
\end{proof}

This example is particularly important since it clearly shows that, in this case, testing compatibility is at least as hard as testing consistency. Indeed, $\Theta$ is a pointwise measure of consistency, and equals $0$ if and only if $\Sigma_\mathbb{S}$ is consistent. Nonetheless, the equality $R(\Sigma_\mathbb{S}) = \Theta$ holds for a very specific subclass of $\Sigma_\mathbb{S}$, while, in general, there could be cases for which $R(\Sigma_\mathbb{S})>0$, while $\Theta = 0$.

\begin{eg}\label{Ex:schatten1}
    Consider $\mathbb{S}=\{[d-2]\cup \{d-1\}, [d-2]\cup \{d\}\}$. Call $S_1$ and $S_2$ the two patterns, respectively, and suppose we observe the collection of correlation matrices given by $\Sigma_\mathbb{S} = (\Sigma_{S_1}, \Sigma_{S_2})$. If we call \[\tilde{\Sigma} = (\Sigma_{S_2})_{|[d-2]} - (\Sigma_{S_1})_{|[d-2]},\] where $(\Sigma_{S_i})_{|[d-2]}$ is the restriction of $\Sigma_{S_i}$ on the set $[d-2]$, for $i \in \{1,2\}$, then  \[
    R(\Sigma_{\mathbb{S}}) \geq \frac{1}{2d} \|\tilde{\Sigma}\|_*,
    \]
    where $\|\cdot\|_*$ is the nuclear norm, also known as the Schatten-1 norm. 
\end{eg}

\begin{proof}[Proof of Example \ref{Ex:schatten1}]
    Define \[
    X_{\mathbb{S}} = \left( \begin{pmatrix}
        X & \boldsymbol{0}_{d-2} \\
        \boldsymbol{0}_{d-2}^T & 0
    \end{pmatrix}, \begin{pmatrix}
        -X & \boldsymbol{0}_{d-2} \\
        \boldsymbol{0}_{d-2}^T & 0
    \end{pmatrix} \right),
    \]
    where $X \in \mathbb{R}^{d-2,d-2}$ and $\|X\|_2 \leq 1/2$. Observe that this choice of $ X_{\mathbb{S}}$ is feasible since $A^*X_{\mathbb{S}} = \boldsymbol{O} \succeq 0$, and \[
    X_{\mathbb{S}} + X_{\mathbb{S}}^0 = \left( \begin{pmatrix}
        X + \frac{1}{2}I_{d-2} & \boldsymbol{0}_{d-2} \\
        \boldsymbol{0}_{d-2}^T & 1
    \end{pmatrix}, \begin{pmatrix}
        -X + \frac{1}{2}I_{d-2} & \boldsymbol{0}_{d-2} \\
        \boldsymbol{0}_{d-2}^T & 1
    \end{pmatrix} \right) \succeq_{\mathbb{S}} 0,
    \]
    since $\|X\|_2 \leq 1/2$. It follows that  \[
    R(\Sigma_{\mathbb{S}}) \geq \mathop{\sup_{X=X^T}}\limits_{\|X\|_2 \leq 1/2} -\frac{1}{d} \langle X_{\mathbb{S}}, \Sigma_{\mathbb{S}} \rangle_{\mathbb{S}} = \mathop{\sup_{X=X^T}}\limits_{\|X\|_2 \leq 1/2} \frac{1}{d} \langle X, \tilde{\Sigma} \rangle = \frac{1}{2d} \|\tilde{\Sigma}\|_*,
    \]
    where $\|\cdot\|_*$ is the nuclear norm and equality follows since the spectral norm and the nuclear norm are dual with respect to the Frobenius inner product.
\end{proof}

\section{Another test under trace normalisation}\label{sec:test_cov}
In the main body we were dealing with the incompatibility measure $R$, which acts on correlation matrices, normalised in a such a way that diagonal elements are all equal to one.  Nonetheless, other standardisations are possible, and these lead to different compatibility measures. In this section, we will define another measure of compatibility $\tilde{R}(\cdot)$, study its properties, and use it to define a testing procedure. Similarly to Table \ref{Table:Notation} in the main body, refer to Table \ref{tab:definitions_2} for all the new algebraic definitions needed in this section.
\begin{table}[!ht]
    \centering
    \begin{tabularx}{\textwidth}{|c|X|X|}
        \hline
        \textbf{Notation} & \textbf{Definition} & \textbf{Meaning}\\
        \hline
       $\bar{\mathrm{tr}} : \mathcal{M}_\mathbb{S} \rightarrow \mathbb{R}$ &  $\bar{\mathrm{tr}}(X_\mathbb{S}) = \sum_{j=1}^d |\mathbb{S}_j|^{-1} \sum_{S \in \mathbb{S}_j} (X_S)_{jj}$ & Generalisation of the trace such that, if $\Sigma_\mathbb{S}$ is compatible, then $\bar{\mathrm{tr}}(\Sigma_\mathbb{S})$ is equal to the trace of the underlying true covariance matrix \\
       $\tilde{\mathcal{P}}$ & $ \{ \Sigma \in \mathcal{P}^* : \operatorname{tr}(\Sigma) = d\}$ & Set of PSD matrices with fixed scale \\
        $\mathcal{\tilde{P}}_\mathbb{S}$ & $ \{ \Sigma_\mathbb{S} \in \mathcal{P}_\mathbb{S}^* : \bar{\mathrm{tr}}(\Sigma_\mathbb{S}) = d\}$ & collections of PSD matrices with scale fixed \\
	$\tilde{\mathcal{P}}_\mathbb{S}^0$ & $ \{ A \Sigma : \Sigma \in \tilde{\mathcal{P}} \}$ & Same as $\mathcal{P}_\mathbb{S}^0$, but $\Sigma$ has fixed scale \\
        \hline
    \end{tabularx}
    \caption{Table with all the definitions needed in Appendix \ref{sec:test_cov}.}
    \label{tab:definitions_2}
\end{table}
\\

The linear operator $\bar{\mathrm{tr}}$ satisfies the following:
\begin{prop}
\label{Prop:BasicLinearProperties2}
The following hold:
\begin{itemize}
	\item[(i)] If we define $X_\mathbb{S}^0 \in \mathcal{M}_\mathbb{S}$ by taking $X_S^0$ to be the diagonal matrix with $(X_S^0)_{jj} = 1/|\mathbb{S}_j|$, we have
	\[
		\bar{\mathrm{tr}}(X_\mathbb{S}) = \langle X_\mathbb{S}, X_\mathbb{S}^0 \rangle_\mathbb{S}
	\]
	for all $X_\mathbb{S} \in \mathcal{M}_\mathbb{S}$.
	\item[(ii)] Suppose that $X_\mathbb{S}$ is consistent, meaning that $(X_{S_1})_{jj'} = (X_{S_2})_{jj'}$ whenever $S_1,S_2 \in \mathbb{S}_{jj'}$, and write $X^\mathrm{partial}$ for the incomplete $d \times d$ matrix with $(X^\mathrm{partial})_{jj'} = (X_S)_{jj'}$ for any $S \in \mathbb{S}_{jj'}$. Then
	\[
		\bar{\mathrm{tr}}(X_\mathbb{S}) = \mathrm{tr}(X^\mathrm{partial}) \quad \text{and} \quad \langle X_\mathbb{S}, Y_\mathbb{S} \rangle_\mathbb{S} = \langle  X^\mathrm{partial}, A^*Y_\mathbb{S} \rangle
	\]
for any  $Y_\mathbb{S} \in \mathcal{M}_\mathbb{S}$.
\end{itemize}
\end{prop}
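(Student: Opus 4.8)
\textbf{Proof plan for Proposition~\ref{Prop:BasicLinearProperties2}.} The plan is to verify each identity by a direct index computation, unwinding the definitions of $\bar{\mathrm{tr}}$, $\langle\cdot,\cdot\rangle_\mathbb{S}$, and $A^*$ recorded in Table~\ref{Table:Notation}. None of the statements requires any deep structural input; the work is purely bookkeeping over the index sets $\mathbb{S}_j$ and $\mathbb{S}_{jj'}$.

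For part~(i), I would start from the right-hand side and compute
\[
    \langle X_\mathbb{S}, X_\mathbb{S}^0 \rangle_\mathbb{S} = \sum_{S \in \mathbb{S}} \mathrm{tr}(X_S X_S^0).
\]
Since $X_S^0$ is diagonal with $(X_S^0)_{jj} = 1/|\mathbb{S}_j|$ for $j \in S$, the trace $\mathrm{tr}(X_S X_S^0)$ collapses to $\sum_{j \in S} (X_S)_{jj}/|\mathbb{S}_j|$. Summing over $S \in \mathbb{S}$ and then swapping the order of summation (writing $\sum_{S \in \mathbb{S}}\sum_{j \in S} = \sum_{j=1}^d \sum_{S \in \mathbb{S}_j}$, which is just the observation that $j \in S \iff S \in \mathbb{S}_j$) yields $\sum_{j=1}^d |\mathbb{S}_j|^{-1}\sum_{S \in \mathbb{S}_j}(X_S)_{jj} = \bar{\mathrm{tr}}(X_\mathbb{S})$, as required.

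For part~(ii), assume $X_\mathbb{S}$ is consistent so that $X^{\mathrm{partial}}$ is well defined on the positions $(j,j')$ with $\mathbb{S}_{jj'}\neq\emptyset$ (the diagonal is always among these whenever $\mathbb{S}_j\neq\emptyset$, which we may assume). For the first identity, in $\bar{\mathrm{tr}}(X_\mathbb{S}) = \sum_{j=1}^d |\mathbb{S}_j|^{-1}\sum_{S \in \mathbb{S}_j}(X_S)_{jj}$ each inner summand equals the constant value $(X^{\mathrm{partial}})_{jj}$ by consistency, so the inner average is exactly $(X^{\mathrm{partial}})_{jj}$ and the sum over $j$ gives $\mathrm{tr}(X^{\mathrm{partial}})$. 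For the second identity I would expand $\langle X_\mathbb{S}, Y_\mathbb{S}\rangle_\mathbb{S} = \sum_{S\in\mathbb{S}}\sum_{j,j'\in S}(X_S)_{jj'}(Y_S)_{jj'}$, replace $(X_S)_{jj'}$ by the constant $(X^{\mathrm{partial}})_{jj'}$ (valid since $S \in \mathbb{S}_{jj'}$ whenever $j,j'\in S$), interchange the order of summation to obtain $\sum_{j,j'} (X^{\mathrm{partial}})_{jj'} \sum_{S\in\mathbb{S}_{jj'}}(Y_S)_{jj'}$, and recognise the inner sum as $(A^*Y_\mathbb{S})_{jj'}$ by Proposition~\ref{Prop:BasicLinearProperties}; this is precisely $\langle X^{\mathrm{partial}}, A^*Y_\mathbb{S}\rangle$.

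The only point requiring a word of care — and hence the closest thing to an obstacle — is the handling of positions where $\mathbb{S}_{jj'} = \emptyset$: there $X^{\mathrm{partial}}$ is undefined, but such positions never arise in the sums above because the summation ranges are restricted to pairs $(j,j')$ both lying in some common pattern $S$, which forces $\mathbb{S}_{jj'}\neq\emptyset$; equivalently one may adopt the convention (as in the discussion following Proposition~\ref{Prop:ConsCompatCheck}) of padding $X^{\mathrm{partial}}$ with zeros there, which is harmless since $(A^*Y_\mathbb{S})_{jj'}=0$ at those positions as well. With this convention noted, all four identities follow from the rearrangements above.
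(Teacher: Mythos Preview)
Your proposal is correct and follows essentially the same approach as the paper: both proofs proceed by direct index computation, expanding the inner products and swapping the order of summation via the equivalence $j \in S \iff S \in \mathbb{S}_j$ (and its bivariate analogue). Your treatment is in fact slightly more careful than the paper's in that you explicitly address the positions with $\mathbb{S}_{jj'}=\emptyset$, which the paper leaves implicit.
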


\begin{proof}[Proof of Proposition \ref{Prop:BasicLinearProperties2}]
    Now for any $X_\mathbb{S} \in \mathcal{M}_\mathbb{S}$ we see that
\[
	\langle X_\mathbb{S}, X_\mathbb{S}^0 \rangle_\mathbb{S} = \sum_{S \in \mathbb{S}} \sum_{j \in S} (X_S)_{jj} (X_S^0)_{jj} = \sum_{j=1}^d |\mathbb{S}_j|^{-1} \sum_{S \in \mathbb{S}} \mathbbm{1}_{\{j \in S\}} (X_S)_{jj} = \bar{\mathrm{tr}}(X_\mathbb{S}),
\]
proving property (i). The first part of (ii) can be seen immediately from the definition of $\bar{\mathrm{tr}}$. For the second part, write
\[
	\langle X_\mathbb{S}, Y_\mathbb{S} \rangle_\mathbb{S} = \sum_{S \in \mathbb{S}} \sum_{j,j' \in S} (X_S)_{jj'} (Y_S)_{jj'} = \sum_{j,j'=1}^d (X^\mathrm{partial})_{jj'} \sum_{S \in \mathbb{S}} \mathbbm{1}_{\{j,j' \in S\}} (Y_S)_{jj'} = \langle  X^\mathrm{partial}, A^*Y_\mathbb{S} \rangle.
\]
\end{proof}

Now, suppose that $\Sigma_\mathbb{S}$ is such that $\bar{\mathrm{tr}}(\Sigma_\mathbb{S})=d$, where $\bar{\mathrm{tr}}(X_\mathbb{S}) = \sum_{j=1}^d |\mathbb{S}_j|^{-1} \sum_{S \in \mathbb{S}_j} (X_S)_{jj}$, with $\mathbb{S}_j:=\{S \in \mathbb{S} : j \in S\}$, and define 
$$\tilde{R}(\Sigma_\mathbb{S}) := \sup\biggl\{ -\frac{1}{d} \langle X_\mathbb{S}, \Sigma_\mathbb{S} \rangle_\mathbb{S} : X_\mathbb{S} + X_\mathbb{S}^0 \succeq_\mathbb{S} 0, A^* X_\mathbb{S} \succeq 0 \biggr\}.$$
This new measure of incompatibility has the following dual representation:
\begin{prop}
\label{Prop:TraceDuality}
For $\Sigma_\mathbb{S} \in \tilde{\mathcal{P}}_\mathbb{S}$ we have
\[
	\tilde{R}(\Sigma_\mathbb{S}) = \inf\{ \epsilon \in [0,1] : \Sigma_\mathbb{S} \in (1-\epsilon) \tilde{\mathcal{P}}_\mathbb{S}^0 + \epsilon \tilde{\mathcal{P}}_\mathbb{S} \}.
\]
\end{prop}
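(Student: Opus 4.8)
\textbf{Proof plan for Proposition \ref{Prop:TraceDuality}.}
The plan is to mimic the strategy used in the proof of Proposition \ref{Prop:CorrDuality}, since $\tilde{R}(\cdot)$ is the exact analogue of $R(\cdot)$ but with the trace-normalisation constraint $\operatorname{tr}(\Sigma)=d$ replacing the constant-diagonal constraint, and with the $\mathcal{Y}$-slack removed. First I would rewrite the right-hand side: exactly as in \eqref{eq:RHS_dual}, for $\Sigma_\mathbb{S} \in \tilde{\mathcal{P}}_\mathbb{S}$ one has
\[
\inf\{ \epsilon \in [0,1] : \Sigma_\mathbb{S} \in (1-\epsilon)\tilde{\mathcal{P}}_\mathbb{S}^0 + \epsilon\tilde{\mathcal{P}}_\mathbb{S}\} = 1 - \frac{1}{d}\sup\{ \operatorname{tr}(\Sigma) : \Sigma \in \mathcal{P}^*,\ \Sigma_\mathbb{S} - A\Sigma \succeq_\mathbb{S} 0 \},
\]
where the scaling constraint $\operatorname{tr}(\Sigma)=d$ on the ``compatible part'' can be dropped since it is automatic once the objective is maximised subject to $A\Sigma \preceq_\mathbb{S}\Sigma_\mathbb{S}$ and $\bar{\operatorname{tr}}(\Sigma_\mathbb{S})=d$ — indeed $\operatorname{tr}(\Sigma) = \bar{\operatorname{tr}}(A\Sigma) \le \bar{\operatorname{tr}}(\Sigma_\mathbb{S}) = d$ using Proposition \ref{Prop:BasicLinearProperties2}(ii) applied to the consistent sequence $A\Sigma$ together with $\Sigma_\mathbb{S}-A\Sigma\succeq_\mathbb{S} 0$ and $(\Sigma_\mathbb{S}-A\Sigma)$ having non-negative diagonal entries.

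Next I would establish weak duality, i.e. that this $\sup$ is bounded below by the primal optimum $1 - \frac{1}{d}\inf\{\langle Y_\mathbb{S},\Sigma_\mathbb{S}\rangle_\mathbb{S} : Y_\mathbb{S}\succeq_\mathbb{S} 0,\ A^*Y_\mathbb{S}\succeq I_d\}$ (obtained from the definition of $\tilde R$ by substituting $Y_\mathbb{S} = X_\mathbb{S}+X_\mathbb{S}^0$ and using $A^*X_\mathbb{S}^0 = I_d$, $\langle X_\mathbb{S}^0,\Sigma_\mathbb{S}\rangle_\mathbb{S} = \bar{\operatorname{tr}}(\Sigma_\mathbb{S}) = d$ from Proposition \ref{Prop:BasicLinearProperties2}(i)). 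For any feasible $Y_\mathbb{S}$ and any $\Sigma\in\mathcal{P}^*$ with $\Sigma_\mathbb{S}-A\Sigma\succeq_\mathbb{S}0$, I would write the chain
\[
\operatorname{tr}(\Sigma) = \langle I_d,\Sigma\rangle \le \langle A^*Y_\mathbb{S},\Sigma\rangle = \langle Y_\mathbb{S}, A\Sigma\rangle_\mathbb{S} = \langle Y_\mathbb{S},\Sigma_\mathbb{S}\rangle_\mathbb{S} - \langle Y_\mathbb{S},\Sigma_\mathbb{S}-A\Sigma\rangle_\mathbb{S} \le \langle Y_\mathbb{S},\Sigma_\mathbb{S}\rangle_\mathbb{S},
\]
where the first inequality uses $A^*Y_\mathbb{S}-I_d\succeq0$ and $\Sigma\succeq0$, and the last uses $Y_\mathbb{S}\succeq_\mathbb{S}0$ and $\Sigma_\mathbb{S}-A\Sigma\succeq_\mathbb{S}0$. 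Taking sup over $\Sigma$ and inf over $Y_\mathbb{S}$ gives weak duality. This is simpler than the Proposition \ref{Prop:CorrDuality} case because there is no $\mathcal{Y}$-slack to carry along.

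Finally I would prove strong duality by putting both problems in SDP standard form and checking Slater's condition, exactly as in the proof of Proposition \ref{Prop:CorrDuality}. The dual (maximisation over $\Sigma$) is rewritten by introducing slack variables $Z_S = (\Sigma_\mathbb{S}-A\Sigma)_S\succeq0$ and optimising $\langle C,X\rangle$ over block-diagonal $X = \operatorname{diag}(\Sigma,Z_{S_1},\dots,Z_{S_m})\succeq0$ subject to the linear constraints $\langle E_{jj'}+E_{S,jj'},X\rangle = (\Sigma_S)_{jj'}$; note that there is no $\langle E_{jj}-E_{11},X\rangle = 0$ family here, since the constant-diagonal condition is absent. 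The primal is then literally the SDP dual of this, and the key point is strict feasibility of the primal: $Y_\mathbb{S} = 2X_\mathbb{S}^0$ (or any scalar multiple large enough) satisfies $A^*Y_\mathbb{S} = 2I_d \succ I_d$ and $Y_\mathbb{S}\succ_\mathbb{S}0$, so Slater holds and standard SDP duality (Theorem 2.15 in \cite{blekherman2012sdp}, Theorem 3.1 in \cite{boyd_vandeberghe}) closes the gap. The main obstacle — and it is a minor one — is the bookkeeping in the standard-form rewriting and in verifying the scaling-constraint redundancy claim cleanly; both are routine given the machinery already set up for Proposition \ref{Prop:CorrDuality} and Proposition \ref{Prop:BasicLinearProperties2}.
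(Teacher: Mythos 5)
Your plan is correct and follows the same route as the paper's proof: rewrite the right-hand side as $1 - \tfrac{1}{d}\sup\{\operatorname{tr}(\Sigma): \Sigma\succeq 0,\ \Sigma_\mathbb{S}-A\Sigma\succeq_\mathbb{S}0\}$, establish weak duality via the Holder-type chain, put both problems in standard SDP form (without the $\mathcal{Y}$-slack and without the $\langle E_{jj}-E_{11},X\rangle=0$ constraints), and invoke Slater's condition. Two small points worth noting. First, your explicit verification that the scale constraints are redundant — $\operatorname{tr}(\Sigma)=\bar{\operatorname{tr}}(A\Sigma)\le\bar{\operatorname{tr}}(\Sigma_\mathbb{S})=d$ and $\bar{\operatorname{tr}}(\Sigma'_\mathbb{S})=d$ holding automatically — is a worthwhile clarification that the paper leaves implicit. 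Second, your choice $Y_\mathbb{S}=2X_\mathbb{S}^0$ for strict feasibility is in fact the correct one: it gives $A^*Y_\mathbb{S}=2I_d\succ I_d$ so that the slack variable $Z=A^*Y_\mathbb{S}-I_d$ is positive definite, whereas the paper asserts strict feasibility with $Y_\mathbb{S}=X_\mathbb{S}^0$, which yields $A^*Y_\mathbb{S}=I_d$ and hence $Z=0$ on the boundary of the cone; your scalar inflation is needed to satisfy Slater's condition in the block-diagonal standard form.
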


\begin{proof}[Proof of Proposition \ref{Prop:TraceDuality}]
As in the proof of Proposition \ref{Prop:CorrDuality}, the strategy is to write this optimisation problem 
\begin{equation}\label{eq:primal_cov}
    \sup\biggl\{ -\frac{1}{d} \langle X_\mathbb{S}, \Sigma_\mathbb{S} \rangle_\mathbb{S} : X_\mathbb{S} + X_\mathbb{S}^0 \succeq_\mathbb{S} 0, A^* X_\mathbb{S} \succeq 0 \biggr\}
\end{equation}
in standard SDP form, prove that the dual problem is precisely
\begin{equation}\label{eq:dual_cov}
    1 - \frac{1}{d} \sup\{ \mathrm{tr}(\Sigma) : \Sigma \in \mathcal{P}^*, \Sigma_\mathbb{S} - A \Sigma \succeq_\mathbb{S} 0\},
\end{equation}
and then show that Slater's condition is satisfied for the primal problem (\ref{eq:primal_cov}). Calling $Y_\mathbb{S} = X_\mathbb{S} + X_\mathbb{S}^0$, we have that 
\begin{align*}
     \sup\biggl\{& -\frac{1}{d} \langle X_\mathbb{S}, \Sigma_\mathbb{S} \rangle_\mathbb{S} : X_\mathbb{S} + X_\mathbb{S}^0 \succeq_\mathbb{S} 0, A^* X_\mathbb{S} \succeq 0 \biggr\} \\
     &= \sup\biggl\{ -\frac{1}{d} \langle Y_\mathbb{S}, \Sigma_\mathbb{S} \rangle_\mathbb{S} + \frac{1}{d} \underbrace{\langle X_\mathbb{S}^0, \Sigma_\mathbb{S} \rangle_\mathbb{S}}_{ =\bar{\mathrm{tr}}(\Sigma_\mathbb{S}) = d} : Y_\mathbb{S} \succeq_\mathbb{S} 0, A^* Y_\mathbb{S} \succeq I_d \biggr\} \\
     &= 1 -\frac{1}{d}\inf \biggl\{\langle Y_\mathbb{S}, \Sigma_\mathbb{S} \rangle_\mathbb{S}: Y_\mathbb{S} \succeq_\mathbb{S} 0, A^* Y_\mathbb{S} - Z = I_d, \text{ for some } Z \succeq 0 \biggr\}.
\end{align*}
We write this optimisation problem in standard SDP form as follows: enumerate $\mathbb{S}$ as $\{ S_1, \ldots, S_m \}$, and define 
\[
   X:=  \begin{pmatrix} Y_{S_1} & \cdots & 0 & 0 \\ \vdots & \ddots & \vdots & \vdots \\ 0 & \cdots &  Y_{S_m} & 0 \\  0 & \cdots & 0 & Z \end{pmatrix},
\]
 so that $\langle Y_\mathbb{S}, \Sigma_\mathbb{S} \rangle_\mathbb{S} = \langle X, C \rangle$, where 
\[
   C:=  \begin{pmatrix} \Sigma_{S_1} & \cdots & 0 & 0 \\ \vdots & \ddots & \vdots & \vdots \\ 0 & \cdots &  \Sigma_{S_m} & 0 \\  0 & \cdots & 0 & 0 \end{pmatrix}.
\]
As for the constraints, they are equivalent to $X \succeq 0$ and $\langle X, A^{jj'} \rangle = \delta_{jj'}$, for $j,j' \in [d]$, with
\[
   A^{jj'}:=  \begin{pmatrix} E_{S_1,jj'} & \cdots & 0 & 0 \\ \vdots & \ddots & \vdots & \vdots \\ 0 & \cdots &  E_{S_m, jj'} & 0 \\  0 & \cdots & 0 & -E_{jj'} \end{pmatrix},
\]
where $E_{jj'} =(\boldsymbol{e}_j\boldsymbol{e}_{j'}^T +\boldsymbol{e}_{j'}\boldsymbol{e}_j^T)/2$ is the symmetric matrix of the same dimension as $Z$ with its only non-zero entries being in the $(j,j')$-th and $(j',j)$-th positions, and where $E_{S,jj'} =(\boldsymbol{e}_{S,j}\boldsymbol{e}_{S,j'}^T +\boldsymbol{e}_{S,j'}\boldsymbol{e}_{S,j}^T)/2$ is the symmetric matrix of the same dimension as $Y_S$ with its only non-zero entries being in the $(j,j')$-th and $(j',j)$-th positions of $Y_S$. Then, the standard dual problem is 
\begin{align*}
    \sup\biggl\{& \sum\limits_{j,j' \in [d]} \delta_{j,j'} Y_{j,j'} : C - \sum\limits_{j,j' \in [d]}  Y_{j,j'} A^{jj'} \succeq 0 \biggr\} \\
    &=  \sup\biggl\{\tr(Y) : \Sigma_{\mathbb{S}} - \frac{1}{2}A\left(Y + Y^T\right) \succeq_{\mathbb{S}} 0, \left(Y + Y^T\right) \succeq 0\biggr\} \\
    &=  \sup\biggl\{\tr(W) : \Sigma_{\mathbb{S}} - AW \succeq_{\mathbb{S}} 0, W \succeq 0\biggr\},
\end{align*}
where we made the substitution $W = \left(Y + Y^T\right)/2$ and used the fact that $\tr(W) = \tr(Y)/2 + \tr(Y^T)/2 = \tr(Y)$. This shows that (\ref{eq:dual_cov}) is the dual problem of (\ref{eq:primal_cov}). As in the proof of Proposition \ref{Prop:CorrDuality}, the result follows upon noticing that the primal problem (\ref{eq:primal_cov}) is strictly feasible, since
$Y_{\mathbb{S}} = X^0_{\mathbb{S}} \succ_{\mathbb{S}} 0$ is such that $A^*Y_{\mathbb{S}} = I_d \succeq_{\mathbb{S}} I_d$, which ensures that strong duality holds.
\end{proof}
As before, we can prove some properties for $\tilde{R}(\cdot)$
\begin{prop}\label{prop:properties2}
    The following hold:
    \begin{enumerate}
        \item[(i)] $\tilde{R}$ is convex.
        \item[(ii)] $\tilde{R}$ is continuos.
        \item[(iii)] If $\mathbb{S} \subseteq \mathbb{S}'$ and $\Sigma_{\mathbb{S}} \subseteq \Sigma_{\mathbb{S}'}$, then $\tilde{R}(\Sigma_{\mathbb{S}}) \leq d' \tilde{R}(\Sigma_{\mathbb{S}'})/d$, where $d' = \operatorname{card}(\cup_{S \in \mathbb{S'}}S)$ and $d = \operatorname{card}(\cup_{S \in \mathbb{S}}S)$.
    \end{enumerate}
\end{prop}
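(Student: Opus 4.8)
The plan is to mirror the proof of Proposition~\ref{prop:properties} as closely as possible, adapting it to the trace-normalisation setting. For (i), convexity, I would argue exactly as in the proof of Proposition~\ref{prop:properties}(i): given $\tilde\Sigma_\mathbb{S} = \lambda\Sigma_\mathbb{S}^{(1)} + (1-\lambda)\Sigma_\mathbb{S}^{(2)}$ with $\bar{\mathrm{tr}}(\Sigma_\mathbb{S}^{(i)})=d$, note that $\bar{\mathrm{tr}}$ is linear so $\bar{\mathrm{tr}}(\tilde\Sigma_\mathbb{S})=d$ and $\tilde R$ is well-defined there; then use that the feasible set $\{X_\mathbb{S}: X_\mathbb{S}+X_\mathbb{S}^0\succeq_\mathbb{S} 0,\ A^*X_\mathbb{S}\succeq 0\}$ does not depend on $\Sigma_\mathbb{S}$, and that the supremum of a fixed-domain family of linear functionals is subadditive/positively homogeneous, so the supremum of the average is bounded by the average of the suprema. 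This is essentially a copy of the earlier argument.

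For (ii), continuity, I would follow the elliptope argument from the proof of Proposition~\ref{prop:properties}(ii): the domain $\tilde{\mathcal P}_\mathbb{S} = \{\Sigma_\mathbb{S}\succeq_\mathbb{S} 0: \bar{\mathrm{tr}}(\Sigma_\mathbb{S})=d\}$ is a closed convex subset of a finite-dimensional Euclidean space (the PSD conditions cut out closed convex cones, and the single linear constraint $\bar{\mathrm{tr}}=d$ is a closed affine slice). The only extra point to check relative to the correlation-matrix case is that $\tilde R$ is finite on this set; but this is immediate from the dual representation in Proposition~\ref{Prop:TraceDuality}, which gives $\tilde R(\Sigma_\mathbb{S})\in[0,1]$. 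Given finiteness on a convex set in $\mathbb{R}^s$, Corollary~10.1.1 in \cite{rockafellar-1970a} yields continuity, just as before. One should restrict to the relative interior / affine hull so that the finite-convex-function continuity statement applies cleanly; I would phrase it on the affine subspace $\{\bar{\mathrm{tr}}=d\}$.

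For (iii), monotonicity under enlarging $\mathbb{S}$, I would use the dual form $\tilde R(\Sigma_{\mathbb{S}'}) = 1 - \frac{1}{d'}\sup\{\mathrm{tr}(\Sigma): \Sigma\succeq 0,\ \Sigma_{\mathbb{S}'} - A_{\mathbb{S}'}\Sigma\succeq_{\mathbb{S}'}0\}$. Take an optimal $\tilde\Sigma$ for $\Sigma_{\mathbb{S}'}$, restrict it to the index set $\cup_{S\in\mathbb{S}}S$ to get $\tilde\Sigma_{|\mathbb{S}}$; as in the earlier proof, $\tilde\Sigma_{|\mathbb{S}}\succeq 0$ by Cauchy interlacing and $\Sigma_\mathbb{S} - A_\mathbb{S}\tilde\Sigma_{|\mathbb{S}}\succeq_\mathbb{S} 0$ because the inclusion $\Sigma_\mathbb{S}\subseteq\Sigma_{\mathbb{S}'}$ means each relevant block inequality is inherited. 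Hence $\tilde\Sigma_{|\mathbb{S}}$ is feasible for the $\mathbb{S}$-problem, so $\sup\{\mathrm{tr}:\ \mathbb{S}\text{-problem}\} \geq \mathrm{tr}(\tilde\Sigma_{|\mathbb{S}})$. The new wrinkle — and the main obstacle — is the normalisation mismatch: here the primal trace normalisation is $\bar{\mathrm{tr}}(\Sigma_\mathbb{S})=d$ rather than a fixed diagonal, so there is no constant-diagonal constraint forcing $\mathrm{tr}(\tilde\Sigma_{|\mathbb{S}})$ and $\mathrm{tr}(\tilde\Sigma)$ to be comparable beyond the trivial bound $\mathrm{tr}(\tilde\Sigma_{|\mathbb{S}}) \geq \mathrm{tr}(\tilde\Sigma) - (d'-d)\cdot(\text{something})$. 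I expect one must instead argue $\mathrm{tr}(\tilde\Sigma_{|\mathbb{S}}) \geq \frac{d}{d'}\mathrm{tr}(\tilde\Sigma)$ is \emph{not} generally true, which is precisely why the statement carries the factor $d'/d$: from $1 - \tilde R(\Sigma_\mathbb{S}) = \frac{1}{d}\sup\{\mathrm{tr}\} \geq \frac{1}{d}\mathrm{tr}(\tilde\Sigma_{|\mathbb{S}})$ and $\mathrm{tr}(\tilde\Sigma_{|\mathbb{S}}) \geq 0$, one gets $\tilde R(\Sigma_\mathbb{S}) \leq 1$, which combined with $\tilde R(\Sigma_{\mathbb{S}'}) \geq$ (something) is too weak; so the real content is to show $\mathrm{tr}(\tilde\Sigma_{|\mathbb{S}}) \geq \frac{d}{d'}\sup\{\mathrm{tr}:\mathbb{S}'\text{-problem}\}$ fails and instead bound $d(1-\tilde R(\Sigma_\mathbb{S})) \geq \mathrm{tr}(\tilde\Sigma_{|\mathbb{S}})$ while $d'(1-\tilde R(\Sigma_{\mathbb{S}'})) = \mathrm{tr}(\tilde\Sigma) \leq \mathrm{tr}(\tilde\Sigma_{|\mathbb{S}}) + (d'-d)$ using that diagonal entries of a feasible $\tilde\Sigma$ are at most $1$ after scaling — this last fact, that the feasibility constraint $\Sigma_{\mathbb{S}'} - A_{\mathbb{S}'}\Sigma\succeq_{\mathbb{S}'}0$ forces $\Sigma_{jj}\le(\Sigma_{S})_{jj}$ and the $\bar{\mathrm{tr}}=d'$ normalisation controls the average of $(\Sigma_S)_{jj}$ — is where I would spend the most care. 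Rearranging $d'(1-\tilde R(\Sigma_{\mathbb{S}'})) \leq d(1-\tilde R(\Sigma_\mathbb{S})) + (d'-d)$ gives $\tilde R(\Sigma_\mathbb{S}) \leq \frac{d'}{d}\tilde R(\Sigma_{\mathbb{S}'})$, as claimed, so the bookkeeping of the $(d'-d)$ term against the two normalisations is the crux.
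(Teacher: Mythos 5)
Parts (i) and (ii) are handled in the paper just as you propose, by appeal to the argument for Proposition~\ref{prop:properties}.

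For (iii), your sketch is the paper's \emph{secondary} argument, via the dual formulation, and you have correctly located its crux. After restricting the normalised optimal dual matrix $\Sigma'$ (with $\operatorname{tr}(\Sigma')=d'$) to the index set of $\mathbb{S}$, the step that carries the weight is the inequality $\sum_{i\in[d']\setminus[d]}\sigma_i^2\leq d'-d$, where $\sigma_i^2=(1-\lambda')\Sigma'_{ii}$ and $\lambda'=\tilde{R}(\Sigma_{\mathbb{S}'})$; given this, the algebra you write down closes the proof. You are right to be wary of the per-index bound $\sigma_i^2\leq 1$ that would deliver it: under $\bar{\mathrm{tr}}(\Sigma_{\mathbb{S}'})=d'$ the individual diagonal entries $(\Sigma_S)_{ii}$ are not forced to be at most one, unlike the constant-diagonal constraint in the correlation case, so this is a genuine gap that needs an explicit argument rather than a remark. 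The paper asserts this bound in a single line, so you would need to supply the missing justification yourself.

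You also miss that the paper's \emph{primary} proof of (iii) is a short primal feasibility argument, not a dual one: given any $X_{\mathbb{S}}$ feasible for the $\mathbb{S}$-problem, form $X_{\mathbb{S}'}:=(X_{\mathbb{S}},\boldsymbol{O},\ldots,\boldsymbol{O})$ by appending zero blocks for the patterns in $\mathbb{S}'\setminus\mathbb{S}$, verify this is feasible for the $\mathbb{S}'$-problem, and observe that $\langle X_{\mathbb{S}'},\Sigma_{\mathbb{S}'}\rangle_{\mathbb{S}'}=\langle X_{\mathbb{S}},\Sigma_{\mathbb{S}}\rangle_{\mathbb{S}}$. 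Taking the supremum over such $X_{\mathbb{S}}$ gives $\tilde{R}(\Sigma_{\mathbb{S}'})\geq\frac{d}{d'}\tilde{R}(\Sigma_{\mathbb{S}})$, which rearranges to the claim, and no trace bookkeeping appears at all. If you write this route up, be careful in the feasibility check: the reference sequence $X^0_{\mathbb{S}'}$ uses $1/|\mathbb{S}'_j|$ rather than $1/|\mathbb{S}_j|$, and since $|\mathbb{S}'_j|\geq|\mathbb{S}_j|$ the lower-bound constraint $X_S+(X^0_{\mathbb{S}'})_S\succeq 0$ for $S\in\mathbb{S}$ is tighter than the one $X_{\mathbb{S}}$ was chosen to satisfy, so this too requires a word of justification.
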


\begin{proof}[Proof of Proposition \ref{prop:properties2}]
     (i) and (ii) are essentially the same as in Proposition \ref{prop:properties}. To prove (iii), let $\tilde{X}_{\mathbb{S}}^{(1)}$ be a feasible point of $\{X_{\mathbb{S}} + X_{\mathbb{S}}^0 \succeq_{\mathbb{S}} 0 , A^*X_{\mathbb{S}} \succeq 0\}$, and define $\tilde{X}_{\mathbb{S}'}^{(2)}:= (\tilde{X}_{\mathbb{S}}^{(1)}, \boldsymbol{O}, \cdots, \boldsymbol{O})$, where we added a compatible zero matrix $\boldsymbol{O}$ for every element in $\mathbb{S}^C \cap \mathbb{S}'$. Then, $\tilde{X}_{\mathbb{S}'}^{(2)} + X_{\mathbb{S}'}^0 \succeq_{\mathbb{S}'} 0$ is equivalent to $\tilde{X}_{\mathbb{S}}^{(1)} + X_{\mathbb{S}}^0 \succeq_{\mathbb{S}} 0$ and $X_{\mathbb{S}' \setminus \mathbb{S}}^0 \succeq_{\mathbb{S}' \setminus \mathbb{S}} 0$, which are satisfied, while $A^*_{\mathbb{S}'} \tilde{X}_{\mathbb{S}'}^{(2)} = A^*_{\mathbb{S}} \tilde{X}_{\mathbb{S}}^{(1)} \succeq 0$, since $\tilde{X}_{\mathbb{S}}^{(1)}$ is feasible. Hence, $\tilde{X}_{\mathbb{S}'}^{(2)}$ is feasible for $\Sigma_{\mathbb{S}'}^{(2)}$, and the thesis follows from the fact that the normalising constant changes from $1/d$ to $1/d'$. Observe that the dual representation given by Proposition \ref{Prop:TraceDuality} allows proving the statement differently. Indeed, let $\Sigma \succeq 0 \in \mathbb{R}^{d',d'}$ be such that $\operatorname{tr}(\Sigma) = d'$ and \[
 \Sigma_{\mathbb{S}'}  = (1- \lambda')A_{\mathbb{S}'} \Sigma' + \lambda' \tilde{\Sigma}_{\mathbb{S}'},
 \]
  where $\tilde{\Sigma}_{\mathbb{S}'} \succeq_{\mathbb{S}'} 0$ and $\lambda' = R(\Sigma_{\mathbb{S}'})$. Then, since $\mathbb{S} \subseteq \mathbb{S}'$, we can automatically write also $\Sigma_{S}$ in this form as
  \[
  \Sigma_{\mathbb{S}}  = (1- \lambda')A_{\mathbb{S}} \Sigma + \lambda' \tilde{\Sigma}_{\mathbb{S}},
  \]
  where $\Sigma$ results from deleting all rows and columns of $\Sigma'$ associated to every element $i \notin \cup_{S \in \mathbb{S'}}S \setminus \cup_{S \in \mathbb{S}}S$, and is ensured to be non-negative definite by Cauchy interlacing theorem. Then, calling $\sigma_i^2$ the diagonal elements of $(1- \lambda')\Sigma'$,
  \begin{align*}
      \tilde{R}(\Sigma_{\mathbb{S}}) & \leq 1 - \frac{1}{d}\sum\limits_{i \in \cup_{S \in \mathbb{S}}S} \sigma_i^2 \leq 1 - \frac{1}{d}\left(\sum\limits_{i \in [d']} \sigma_i^2 - (d'-d)\right) \\
      & =  1 - \frac{1}{d}\left(d(1-\tilde{R}(\Sigma_{\mathbb{S}'})) - (d'-d)\right)  = \frac{d'}{d} \tilde{R}(\Sigma_{\mathbb{S}'}).
  \end{align*}
\end{proof}

We conclude the analysis of $\tilde{R}$ showing that it can be highly complex even for very simple settings.
\begin{figure}[ht]
    \centering
    \includegraphics[scale=0.3]{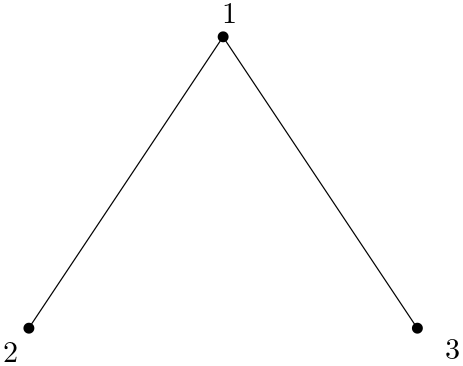}
    \caption{Graph associated to the pattern $\mathbb{S}=\{\{1,2\},\{1,3\}\}$.}
    \label{fig:ex123}
\end{figure}

\begin{eg}
\label{Ex:1213}
Consider $\mathbb{S}=\{\{1,2\},\{1,3\}\}$, which is associated to the graph in Figure \ref{fig:ex123}, and suppose without loss of generality that we observe
\[
    \Sigma_{\{1,2\}} = \begin{pmatrix} \sigma_1^2 & \rho_{12} \sigma_1 \sigma_2 \\ \rho_{12} \sigma_1 \sigma_2 & \sigma_2  \end{pmatrix}, \quad  \Sigma_{\{1,3\}} = \begin{pmatrix} \tilde{\sigma}_1^2 & \rho_{13} \tilde{\sigma}_1 \sigma_2 \\ \rho_{12} \tilde{\sigma}_1 \sigma_2 & \sigma_3^2  \end{pmatrix}
\]
with $\tilde{\sigma}_1^2 \geq \sigma_1^2$. Let $\theta,\phi \in [0,\pi/2]$ be such that $\cos \theta = \sigma_1/\tilde{\sigma_1}$ and $\cos \phi = |\rho_{13}|$. Then we have
\[
  R(\Sigma_\mathbb{S}) = \frac{1}{6}(\tilde{\sigma}_1^2 - \sigma_1^2) + \frac{1}{3} \sigma_3^2 \sin^2 \bigl( (\theta - \phi)_+ \bigr).
\]

\end{eg}
\begin{proof}[Proof of Example \ref{Ex:1213}]
    We prove this statement by giving an optimal choice of $X_\mathbb{S}$ for the primal problem and an optimal choice of $\Sigma$ for the dual problem. It turns out that the optimal $X_\mathbb{S}$ is of the form
\[
    X_\mathbb{S} = \left( \begin{pmatrix} \lambda & 0 \\ 0 & 0 \end{pmatrix}, uu^T - \begin{pmatrix} \lambda & 0 \\ 0 & 0 \end{pmatrix} \right)
\]
for $\lambda \in \mathbb{R}$ and $v \in \mathbb{R}^2$. Given $v \in \mathbb{R}^2$, we take $\lambda=1/2+3v_1^2/(1+3v_2^2)$ as this is the maximal value for which $X_\mathbb{S} + X_\mathbb{S}^0 \succeq_\mathbb{S} 0$. It is clear that $A^* X_\mathbb{S}  \succeq 0$, so this choice of $\lambda$ always leads to a feasible $X_\mathbb{S}$. When $\phi \geq \theta$ we will simply take $\lambda=1/2$ and $v=0$ to recover the same feasible solution as for $\bar{R}$ and the simple lower bound $R(\Sigma_\mathbb{S}) \geq (1/6)(\tilde{\sigma}_1^2 - \sigma_1^2)$. When $\phi=0$ (so that $|\rho_{13}|=1$) we take $v=\mu(\sigma_3/\tilde{\sigma}_1,-\operatorname{sgn}(\rho_{13}))$ with $\mu \rightarrow \infty$ to see that
\[
    R(\Sigma_\mathbb{S}) \geq\sup_{\mu \geq 0} \biggl( \frac{1}{6} + \mu^2 \frac{\sigma_3^2/\tilde{\sigma}_1^2}{1+3 \mu^2} \biggr) (\tilde{\sigma}_1^2 - \sigma_1^2) = (1/6)(\tilde{\sigma}_1^2 - \sigma_1^2) + (1/3) \sigma_3^2 (1-\sigma_1^2/\tilde{\sigma}_1^2),
\] 
which matches our claim. When $\theta > \phi>0$ we choose
\[
    v = \sqrt{\frac{\sin(\theta-\phi)}{\cos(\theta) \sin(\phi)}} \begin{pmatrix} (\sigma_3/\tilde{\sigma}_1)\cos(\theta-\phi) \\ -\operatorname{sgn}(\rho_{13})\cos(\theta) \end{pmatrix}.
\]
Using trigonometric identities, it can be seen that $\lambda = 1/2 + (\sigma_3^2/\tilde{\sigma}_1^2)\frac{\sin(\theta-\phi)\cos(\theta-\phi)}{\sin(\theta) \cos(\theta)}$ and
\begin{align*}
    &R(\Sigma_\mathbb{S}) \geq -(1/3) \langle X_\mathbb{S}, \Sigma_\mathbb{S} \rangle_\mathbb{S} \\
    & = (1/3) \bigl\{\lambda \tilde{\sigma}_1^2 \sin^2(\theta) - v_1^2 \tilde{\sigma}_1^2 - 2v_1v_2 \tilde{\sigma}_1 \sigma_3 \cos (\phi) \operatorname{sgn}(\rho_{13}) - v_2^2 \sigma_3^2 \bigr\} \\
    & = \frac{1}{6} \tilde{\sigma}_1^2 \sin^2(\theta) + \frac{1}{3} \sigma_3^2 \frac{\sin(\theta-\phi)}{\cos(\theta)\sin(\phi)} \bigl\{  \cos(\theta-\phi) \sin(\theta) \sin(\phi) - \cos^2(\theta-\phi) \\
    &\hspace{200pt} + 2 \cos(\theta-\phi) \cos(\theta)\cos(\phi) - \cos^2(\theta) \bigr\} \\
    & = \frac{1}{6} \tilde{\sigma}_1^2 \sin^2(\theta) + \frac{1}{3} \sigma_3^2 \sin^2(\theta-\phi).
\end{align*}
We have now provided the required lower bound in all cases, and turn to the upper bound through the dual problem. Start first with the case that $\phi \geq \theta$. Then $\tilde{\sigma}_1 |\rho_{13}| \leq \sigma_1$ so that
\[
    \Sigma = \begin{pmatrix} \sigma_1^2 &  \rho_{12} \sigma_1 \sigma_2 &\sigma_1 \sigma_3 \frac{\tilde{\sigma}_1\rho_{13}}{\sigma_1}   \\ \rho_{12} \sigma_1 \sigma_2 & \sigma_2^2 & \sigma_2 \sigma_3 \rho_{12} \frac{\tilde{\sigma}_1\rho_{13}}{\sigma_1} \\ \sigma_1 \sigma_3 \frac{\tilde{\sigma}_1\rho_{13}}{\sigma_1} & \sigma_2 \sigma_3 \rho_{12} \frac{\tilde{\sigma}_1\rho_{13}}{\sigma_1} & \sigma_3^2\end{pmatrix}
\]
is a valid covariance matrix. We have
\[
    \Sigma_\mathbb{S} -A\Sigma  = \left(\boldsymbol{O}_{2,2}, \begin{pmatrix} \tilde{\sigma}_1^2 - \sigma_1^2 & 0 \\ 0 & 0 \end{pmatrix} \right) \succeq_\mathbb{S} 0
\]
so $\Sigma$ is feasible. Thus, when $\phi \geq \theta$, we have
\[
    R(\Sigma_\mathbb{S}) \leq 1 - \frac{1}{3} \mathrm{tr}(\Sigma) = \frac{\tilde{\sigma}_1^2 + \sigma_1^2}{6} + \frac{\sigma_2^2 + \sigma_3^2}{3} - \frac{1}{3}(\sigma_1^2 + \sigma_2^2 + \sigma_3^2) = \frac{1}{6}(\tilde{\sigma}_1^2 - \sigma_1^2)
\]
as required. When $\phi<\theta$ we consider
\[
    \Sigma = \begin{pmatrix} \sigma_1^2 &  \rho_{12} \sigma_1 \sigma_2 &\sigma_1 \sigma_3 \cos(\theta-\phi) \operatorname{sgn}(\rho_{13})   \\ \rho_{12} \sigma_1 \sigma_2 & \sigma_2^2 & \sigma_2 \sigma_3 \rho_{12}  \cos(\theta-\phi) \operatorname{sgn}(\rho_{13}) \\ \sigma_1 \sigma_3 \cos(\theta-\phi) \operatorname{sgn}(\rho_{13}) & \sigma_2 \sigma_3 \rho_{12}  \cos(\theta-\phi) \operatorname{sgn}(\rho_{13}) & \sigma_3^2 \cos^2(\theta-\phi)\end{pmatrix},
\]
which is a covariance matrix so $\Sigma \succeq 0$. Clearly $(A\Sigma)_{\{1,2\}} = \Sigma_{\{1,2\}}$. It follows from trigonometric identities that
\begin{align*}
    \tilde{\sigma}_1 \sigma_3 \rho_{13} - \sigma_1 \sigma_3 \cos(\theta-\phi) \operatorname{sgn}(\rho_{13}) &= \tilde{\sigma}_1 \sigma_3\operatorname{sgn}(\rho_{13}) \{ \cos(\phi) - \cos(\theta) \cos(\theta-\phi) \} \\
    &= \tilde{\sigma}_1 \sigma_3\operatorname{sgn}(\rho_{13}) \sin(\theta) \sin(\theta - \phi)
\end{align*}
so that
\[
    \Sigma_{\{1,3\}}-(A \Sigma)_{\{1,3\}} = \begin{pmatrix} \tilde{\sigma}_1^2 \sin^2(\theta) & \tilde{\sigma}_1 \sigma_3\operatorname{sgn}(\rho_{13}) \sin(\theta) \sin(\theta - \phi) \\ \tilde{\sigma}_1 \sigma_3\operatorname{sgn}(\rho_{13}) \sin(\theta) \sin(\theta - \phi) & \sigma_3^2 \sin^2(\theta-\phi) \end{pmatrix},
\]
which is a covariance matrix so is positive semi-definite. Thus $\Sigma$ is feasible and when $\phi < \theta$ we have
\begin{align*}
    R(\Sigma_\mathbb{S}) &\leq 1 - \frac{1}{3} \mathrm{tr}(\Sigma) = \frac{\tilde{\sigma}_1^2 + \sigma_1^2}{6} + \frac{\sigma_2^2 + \sigma_3^2}{3} - \frac{1}{3}\{\sigma_1^2 + \sigma_2^2 + \sigma_3^2 \cos^2(\theta-\phi)\} \\
    &= \frac{1}{6}(\tilde{\sigma}_1^2 - \sigma_1^2) + \frac{1}{3} \sin^2(\theta-\phi),
\end{align*}
as required.
\end{proof}

Now, the goal of this subsection is to develop an analogous oracle test for the measure $\tilde{R}$, under the usual hypothesis of $\Sigma_{\mathbb{S}} \succeq_{\mathbb{S}} c I_{\mathbb{S}}$, with $ c > 0$. In this case, the maximum is attained in the set $$ \mathcal{H}_c := \{X_{\mathbb{S}} + X_{\mathbb{S}}^0 \succeq_{\mathbb{S}} 0 , A^*X_{\mathbb{S}} \succeq 0, \langle X_{\mathbb{S}} + X_{\mathbb{S}}^0, cI_{\mathbb{S}} \rangle_{\mathbb{S}} \leq d\},$$
hence the only difference with $\mathcal{F}_c$ is that $A^*X_{\mathbb{S}} + Y \succeq 0 \text{ for some } Y \in \mathcal{Y}$ is substituted by $A^*X_{\mathbb{S}} \succeq 0$. Hence, since in the previous subsection we discarded the condition $A^*X_{\mathbb{S}} + Y \succeq 0 \text{ for some } Y \in \mathcal{Y}$, if we now discard $A^*X_{\mathbb{S}} \succeq 0$, all the previous steps remain valid for controlling $\mathbb{P}_{H_0} \left\{\tilde{R}(\hat{\Sigma}_{\mathbb{S}}) \geq C_{\alpha}  \right\}$, so that we can again reduce this problem to bounding $\max_{S \in \mathbb{S}} \|\hat{\Sigma}_{S} - \Sigma_{S}\|_2$, with the only difference that now $\Sigma_{S}$ is related to the corresponding covariance matrix through a different normalisation. Repeating the same steps that lead to the proof of Theorem \ref{thm:oracle_test}, we can prove the following result, which gives the right separation to test compatibility based on $\tilde{R}$. 
\begin{prop}\label{Prop:oracle_test_cov}
    Suppose we observe $X_{S,1}, \ldots, X_{S, n_S} \overset{\text{i.i.d.}}{\sim} P_S, \forall S \in \mathbb{S}$ independently, where each $P_S$ is $\nu$-subgaussian with mean $\mu_S$ and $\nu \gg 1$, with the collection of population covariance matrices $\Sigma_{\mathbb{S}}$ satisfying $\operatorname{\bar{tr}}(\Sigma_\mathbb{S}) = d$, and $\Sigma_{\mathbb{S}} \succeq_\mathbb{S} cI_\mathbb{S}$, for a given $c >0$. Let $\hat{\Sigma}_{\mathbb{S}}$ be the collection of sample covariance matrix associated to each pattern $S \in \mathbb{S}$, $n_\mathbb{S}$ the collection of sample sizes, and suppose that also $\hat{\Sigma}_{\mathbb{S}}$ are normalised so that $\operatorname{\bar{tr}}(\hat{\Sigma}_\mathbb{S}) = d$.
Then, for all $\alpha \in (0,1)$, the test that rejects $H_0 : \tilde{R}(\Sigma_\mathbb{S}) = 0$ if and only if $\tilde{R}(\hat{\Sigma}_{\mathbb{S}}) \geq C_{\alpha}$ has Type I error bounded by $\alpha$, where 
    \[
C_{\alpha} = \frac{C_1 \nu^2}{c} \max_{S \in \mathbb{S}} \sqrt{\frac{|S| + \log(|\mathbb{S}|/\alpha)}{n_S}} \vee \frac{|S| + \log(|\mathbb{S}|/\alpha)}{n_S},
    \]
    and $C_1 > 0$ is a universal constant. Moreover, for $\beta \in (0,1)$, if $\tilde{R}(\Sigma_\mathbb{S}) > C_{\alpha} + C_{\beta}$, then $\mathbb{P}\{\tilde{R}(\hat{\Sigma}_{\mathbb{S}}) \leq C_{\alpha}\} \leq \beta$.
\end{prop}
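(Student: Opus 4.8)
\textbf{Proof plan for Proposition~\ref{Prop:oracle_test_cov}.}
The plan is to mirror the proof of Theorem~\ref{thm:oracle_test}, but with the considerable simplifications afforded by working with $\tilde{R}$ and covariance matrices rather than $R$ and correlation matrices. First I would reduce the Type I error control to a concentration statement. Under $H_0$ we have $\tilde{R}(\Sigma_\mathbb{S})=0$, so it suffices to bound $\mathbb{P}_{H_0}(\tilde{R}(\hat\Sigma_\mathbb{S})\geq C_\alpha)$. As in the proof of Theorem~\ref{thm:oracle_test}, I would split according to whether $\hat\Sigma_\mathbb{S}\succeq_\mathbb{S} cI_\mathbb{S}$: the event that this fails is controlled by $\mathbb{P}_{H_0}(\|\hat\Sigma_\mathbb{S}-\Sigma_\mathbb{S}\|_{2,\mathbb{S}}\geq c)$ using $\Sigma_\mathbb{S}\succeq_\mathbb{S} 2cI_\mathbb{S}$ (again replacing $c$ by $2c$ at the outset), and on the complementary event the optimal $X_\mathbb{S}$ in the primal problem defining $\tilde R(\hat\Sigma_\mathbb{S})$ can be taken in the compact set $\mathcal{H}_c$, so that $\sum_{S\in\mathbb{S}}\|X_S+X_S^0\|_* \leq d/c$ by the argument preceding~\eqref{Eq:CompactFeasibleSet} adapted to $\mathcal{H}_c$ (the bound uses $\hat\Sigma_\mathbb{S}\succeq_\mathbb{S} cI_\mathbb{S}$ exactly as before).

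Next I would carry out the linearisation. On the good event, writing $\hat R=\tilde R(\hat\Sigma_\mathbb{S})$ and $R=\tilde R(\Sigma_\mathbb{S})$, we have $|\hat R-R|\leq \sup_{X_\mathbb{S}\in\mathcal{H}_c}|d^{-1}\langle X_\mathbb{S},\hat\Sigma_\mathbb{S}-\Sigma_\mathbb{S}\rangle_\mathbb{S}|$; adding and subtracting $X_\mathbb{S}^0$ and using that $\operatorname{\bar{tr}}(\hat\Sigma_\mathbb{S})=\operatorname{\bar{tr}}(\Sigma_\mathbb{S})=d$ so that $\langle X_\mathbb{S}^0,\hat\Sigma_\mathbb{S}-\Sigma_\mathbb{S}\rangle_\mathbb{S}=0$ (this is Proposition~\ref{Prop:BasicLinearProperties2}(i)), Hölder's inequality~\eqref{Eq:Holder} gives $|\hat R-R|\leq (c/\cdot)^{-1}\|\hat\Sigma_\mathbb{S}-\Sigma_\mathbb{S}\|_{2,\mathbb{S}}$, i.e. the oscillation is controlled by $c^{-1}\max_{S\in\mathbb{S}}\|\hat\Sigma_S-\Sigma_S\|_2$. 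So the whole problem reduces, via a union bound over $S\in\mathbb{S}$, to the one-sample concentration of $\|\hat\Sigma_S-\Sigma_S\|_2$ for a $\nu$-subgaussian sample of size $n_S$. Here, instead of invoking the correlation-matrix bound of Proposition~\ref{prop:corr_concentration}, I would invoke the standard covariance concentration inequality (Proposition~\ref{prop:concentration_cov} in Appendix~\ref{sec:technical_ineq}, or equivalently Theorem~6.5 in \cite{wainwright2019high}), which yields $\|\hat\Sigma_S-\Sigma_S\|_2\lesssim \nu^2\bigl(\sqrt{(|S|+\log(1/t))/n_S}\vee (|S|+\log(1/t))/n_S\bigr)$ with probability $\geq 1-t$. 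Taking $t=\alpha/|\mathbb{S}|$ in the union bound and absorbing constants gives exactly the stated form of $C_\alpha$, with the factor $1/c$ coming from the linearisation step. Choosing $C_0$-type constants large enough so that each of the two contributing probabilities is at most $\alpha/2$ completes the Type I error bound.

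For the power statement, I would argue as in the last paragraph of the proof of Theorem~\ref{thm:oracle_test}: by the same concentration inequality applied under the true (possibly alternative) distribution, on an event of probability $\geq 1-\beta$ we have $|\tilde R(\hat\Sigma_\mathbb{S})-\tilde R(\Sigma_\mathbb{S})|\leq C_\beta$, where $C_\beta$ has the same form as $C_\alpha$ with $\alpha$ replaced by $\beta$; note the linearisation bound $|\hat R-R|\le c^{-1}\max_S\|\hat\Sigma_S-\Sigma_S\|_2$ is valid here too, since strict feasibility still forces the optimum of the primal to lie in a set of the type $\mathcal{H}_c$ once $\Sigma_\mathbb{S}\succeq_\mathbb{S} cI_\mathbb{S}$ (and one handles the event $\hat\Sigma_\mathbb{S}\not\succeq_\mathbb{S} cI_\mathbb{S}$ as before). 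Hence if $\tilde R(\Sigma_\mathbb{S})>C_\alpha+C_\beta$ then $\tilde R(\hat\Sigma_\mathbb{S})>C_\alpha$ with probability $\geq 1-\beta$, which is the claim. The only mild obstacle I anticipate is bookkeeping: verifying carefully that every place where the Theorem~\ref{thm:oracle_test} argument used $\operatorname{\bar{tr}}(\hat\Sigma_\mathbb{S})=d$ (for correlation matrices this was automatic, here it holds by the explicit renormalisation hypothesis) and the feasible-set bound $\|X_\mathbb{S}+X_\mathbb{S}^0\|_{*,\mathbb{S}}\leq d/c$ still go through for $\tilde R$ and $\mathcal{H}_c$; but since $\mathcal{H}_c$ differs from $\mathcal{F}_c$ only by replacing ``$A^*X_\mathbb{S}+Y\succeq 0$ for some $Y\in\mathcal{Y}$'' with the stronger ``$A^*X_\mathbb{S}\succeq 0$'', every bound proved by \emph{enlarging} $\mathcal{F}_c$ (discarding that constraint) applies verbatim after discarding $A^*X_\mathbb{S}\succeq 0$ instead. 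No genuinely new idea is required beyond swapping in the covariance concentration bound in place of Proposition~\ref{prop:corr_concentration}.
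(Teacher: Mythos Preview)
Your proposal is correct and follows essentially the same approach as the paper: the paper's own proof is just the remark that one repeats the proof of Theorem~\ref{thm:oracle_test} verbatim, discarding the constraint $A^*X_\mathbb{S}\succeq 0$ (in place of $A^*X_\mathbb{S}+Y\succeq 0$) and invoking the covariance concentration inequality of Proposition~\ref{prop:concentration_cov} instead of Proposition~\ref{prop:corr_concentration}. You have correctly identified all the relevant bookkeeping (the role of $\operatorname{\bar{tr}}(\hat\Sigma_\mathbb{S})=d$, the compact feasible set $\mathcal{H}_c$, and why the enlargement argument from $\mathcal{F}_c$ carries over).
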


The proof is essential analogous to the one of Theorem \ref{thm:oracle_test}, except for the fact that now we used directly Proposition \ref{prop:concentration_cov} in Appendix \ref{sec:technical_ineq} instead of Proposition \ref{prop:corr_concentration}. Also, observe that the separation rate in this case is analogous to the one we found in Theorem \ref{thm:oracle_test}, being of the order of 
\[
C_{\alpha} \lesssim  \max_{S \in \mathbb{S}} \sqrt{\frac{|S| + \log(|\mathbb{S}|/\alpha)}{n_S}},
\]
under $n_S \gtrsim |S|$ for all $|S| \in \mathbb{S}$, which is necessary to have a consistent test. As far as the drawbacks are concerned, notice that here we need to normalise the sample covariance matrix a priori, so that $\operatorname{\bar{tr}}(\hat{\Sigma}_{\mathbb{S}}) = d$, which is somehow annoying. What is even more disturbing is the hypothesis that the subgaussian proxy $\nu^2$ needs to be significantly bigger than one, due to the fact that for a $\nu$-subgaussian random variable $X$ we have $\operatorname{Var}[X] \leq \nu^2$. Hence, the hypothesis $\nu^2 \gg 1$ is necessary to have a little flexibility in the variances, while still satisfying $\operatorname{\bar{tr}}(\hat{\Sigma}_{\mathbb{S}}) = d$. There is no reason to assume that $\nu^2 \gg 1$, so that this is another point in favour of the incompatibility measure $R$. 

\section{Auxiliary results in SDP}\label{sec:SDP}
Semi-definite programs are linear optimisation problems over spectrahedra, i.e. sets of the form $$
S=\left\{\left(x_1, \ldots, x_m\right) \in \mathbb{R}^m: A_0+\sum_{i=1}^m A_i x_i \succeq 0\right\},
$$
for some given symmetric matrices $A_0, A_1, \ldots, A_m$. An SDP problem in standard primal form is written as
$$
\begin{aligned}
\begin{cases}
    \operatorname{minimize} \quad \langle C, X\rangle \\
\text {subject to } \quad X \succeq 0 \text{ and }\left\langle A_i, X\right\rangle=b_i, \quad i \in [m], \\
\end{cases}
\end{aligned}
$$
where $C, A_i$ are given symmetric matrices, and $b_i$ are given scalars. For every semi-definite program in primal form, there is another associated SDP, called the dual problem, that can be stated as
$$
\begin{aligned}
\begin{cases}
    \text { maximize } & b^T y \\
\text { subject to } & \sum_{i=1}^m A_i y_i \preceq C,
\end{cases}
\end{aligned}
$$
where $b=\left(b_1, \ldots, b_m\right)$, and $y=\left(y_1, \ldots, y_m\right)$ are the dual decision variables. As in linear programming, the so-called weak duality holds, meaning that if $X$ and $y$ are any two feasible solutions of the primal and dual problems respectively, we have
$$
\langle C, X\rangle-b^T y=\langle C, X\rangle-\sum_{i=1}^m y_i\left\langle A_i, X\right\rangle=\left\langle C-\sum_{i=1}^m A_i y_i, X\right\rangle \geq 0.
$$
Unfortunately, the equality is not always satisfied in general (see Example 2.14. in \cite{blekherman2012sdp}), but under some mild conditions, strong duality holds. One of such conditions is Slater's condition, where either the primal or the dual problem is required to be strictly feasible, meaning that there exists either $X \succ 0$ for the primal problem satisfying $\langle A_i, X \rangle=b_i$, for $ i \in [m]$, or $y$ for the dual satisfying $\sum_{i=1}^m A_i y_i \prec C$. If this is the case, it can be shown that strong duality holds (Theorem 2.15. in  \cite{blekherman2012sdp}, Theorem 3.1. in \cite{boyd_vandeberghe}). Furthermore, if the primal is strictly feasible, then the dual optimum is attained, and viceversa. In the proof, we show that it is possible to define $R$ as the optimal value of an SDP problem written in primal form, find its dual and show that Slater's condition is satisfied. This, apart from enabling us to prove Proposition \ref{Prop:CorrDuality}, ensures that $R$ can be computed explicitly using standard SDP libraries, which are available for almost all programming languages. As for the computational cost, for SDP problems in their general setting, without extra assumptions like strict complementarity, no polynomial-time algorithms are known, and there are examples of SDPs for which every solution needs exponential space \citep{porkolab}. Moreover, \cite{Ramana1997AnED} showed that SDP lies either in the intersection of NP and co-NP, or outside the union of NP and co-NP, and nothing better than this is known. Luckily, if Slater's condition is satisfied, like in our case, then the primal-dual interior point method has a computational complexity which is polynomial in the number of constraints and the dimension of the unknown square matrix (Section 6.4.1. of \cite{nesterov94}, Section 5.7. of \cite{boyd_vandeberghe}), which ensures that $R$ can be always computed efficiently without additional assumptions.

Finally, we recall Farkas' lemma for SDP problems, and its proof, following Lemma 6.3.2 in \cite{Lovasz2003}.
\begin{prop}[Farkas' lemma for Semi-definite Programming]\label{Prop:farkas_lemma}
Let $A_1, \ldots, A_n$ be symmetric $m \times m$ matrices. The system
\[
x_1 A_1+\ldots+x_n A_n \succ 0
\]
has no solution in $x_1, \ldots, x_n$ if and only if there exists a symmetric matrix $Y \neq 0$ such that
\[
\begin{aligned}
\begin{cases}
    \langle A_1, Y \rangle =0 \\
    \langle A_2, Y \rangle =0 \\
    \quad \vdots \\
    \langle A_n, Y \rangle =0 \\
    Y \succeq 0 . 
\end{cases}
\end{aligned}
\]
\end{prop}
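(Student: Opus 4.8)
The plan is to prove Farkas' lemma for SDP by viewing it as a separation statement between two convex sets and invoking the hyperplane separation theorem, following the standard argument (Lemma 6.3.2 in \cite{Lovasz2003}). The ``if'' direction is the easy one: suppose such a nonzero $Y \succeq 0$ exists with $\langle A_i, Y\rangle = 0$ for all $i$. If there were a solution $x$ with $\sum_i x_i A_i \succ 0$, then since $Y \succeq 0$ and $Y \neq 0$ we would have $\langle \sum_i x_i A_i, Y\rangle > 0$ (a positive definite matrix has strictly positive Frobenius inner product with every nonzero PSD matrix, as can be seen by diagonalising $Y$ and noting at least one eigenvalue is positive). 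But $\langle \sum_i x_i A_i, Y\rangle = \sum_i x_i \langle A_i, Y\rangle = 0$, a contradiction. So no solution exists.

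For the ``only if'' direction, suppose the system $\sum_i x_i A_i \succ 0$ has no solution. Consider the linear subspace $L := \{\sum_{i=1}^n x_i A_i : x \in \mathbb{R}^n\}$ of the space $\mathcal{M}_m$ of symmetric $m \times m$ matrices, and the cone $\mathcal{P}^*_m$ of PSD matrices. The hypothesis says $L$ does not meet the interior of $\mathcal{P}^*_m$, i.e. $L \cap \operatorname{int}(\mathcal{P}^*_m) = \emptyset$. Since $\operatorname{int}(\mathcal{P}^*_m)$ is a nonempty open convex set and $L$ is a convex set disjoint from it, the separating hyperplane theorem gives a nonzero linear functional on $\mathcal{M}_m$ — which, using the Frobenius inner product, is of the form $X \mapsto \langle X, Y\rangle$ for some nonzero symmetric $Y$ — and a scalar $c$ such that $\langle X, Y\rangle \le c$ for all $X \in L$ and $\langle X, Y\rangle \ge c$ for all $X \in \operatorname{int}(\mathcal{P}^*_m)$, hence for all $X \in \mathcal{P}^*_m$ by taking closures.

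From this separation I extract the three required properties. Since $L$ is a subspace, boundedness of $\langle \cdot, Y\rangle$ above on $L$ forces $\langle X, Y\rangle = 0$ for all $X \in L$; in particular $\langle A_i, Y\rangle = 0$ for each $i$, and also $c \ge 0$. Since $\langle X, Y\rangle \ge c \ge 0$ for all $X \in \mathcal{P}^*_m$, and $\mathcal{P}^*_m$ is a cone (so $tX \in \mathcal{P}^*_m$ for $t > 0$, giving $t\langle X, Y\rangle \ge c$ for all $t$, forcing $\langle X,Y\rangle \geq 0$), we get $\langle X, Y\rangle \ge 0$ for every PSD $X$, which is precisely the statement that $Y \succeq 0$ (testing against $X = vv^T$ gives $v^T Y v \ge 0$ for all $v$). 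This yields the desired $Y \neq 0$, $Y \succeq 0$, $\langle A_i, Y\rangle = 0$ for all $i$.

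The main obstacle — really the only subtle point — is making sure the separating hyperplane argument is applied correctly: one must separate $L$ from the \emph{open} set $\operatorname{int}(\mathcal{P}^*_m)$ (which is nonempty, consisting of the positive definite matrices), not from $\mathcal{P}^*_m$ itself, since $L$ may well intersect the boundary of the PSD cone (e.g. $L$ could contain $0$). Using the open set is what guarantees a genuine separating functional exists from the disjointness hypothesis. After that, the extraction of the three conditions from the separation inequality is routine, using only that $L$ is a linear subspace and $\mathcal{P}^*_m$ is a closed convex cone, together with the self-duality of the PSD cone under the Frobenius inner product. I would also remark that the strictness in ``$\sum x_i A_i \succ 0$'' on the primal side corresponds exactly to separating from the interior, which is why the conclusion involves only non-strict $Y \succeq 0$.
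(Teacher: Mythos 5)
Your argument is correct and follows essentially the same route as the paper's proof: observe that the non-solvability means the subspace $\mathcal{L}=\{\sum_i x_i A_i\}$ misses $\operatorname{int}(\mathcal{P}^*_m)$, apply the separating hyperplane theorem to get a nonzero symmetric $Y$, use that $\mathcal{L}$ is a subspace to force $\langle A_i, Y\rangle = 0$, and invoke the self-duality of the PSD cone to conclude $Y \succeq 0$. The only difference is cosmetic: you also spell out the easy ``if'' direction, which the paper leaves implicit, and you carry the separating constant $c$ through explicitly rather than appealing directly to a hyperplane through the origin.
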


\begin{proof}
The set $\mathcal{P}^*_m$ of $m \times m$ positive semi-definite matrices forms a closed convex cone. If
$$
x_1 A_1+\ldots+x_n A_n \succ 0
$$
has no solution, then the linear subspace $\mathcal{L}$ of matrices of the form $x_1 A_1+\ldots x_n A_n$ is disjoint from the interior of $\mathcal{P}^*_m$, which in turn implies that $\mathcal{L}$ is contained in a hyperplane that is disjoint from the interior of $\mathcal{P}^*_m$. This hyperplane can be described as $\{X \in \mathcal{P}^*_m: \langle Y, X \rangle =0\}$ for a certain symmetric $Y$, where we may assume that $\langle Y, X \rangle \geq 0$ for every $X \in\mathcal{P}^*_m$. Then, since a matrix $A$ is positive semi-definite if and only if $\langle A, B \rangle \geq 0$ for every positive semi-definite matrix $B$, we conclude that $Y \neq 0$, $Y \succeq 0$, and, since  $A_i$ belong to $\mathcal{L}$, that $\langle A_i, Y \rangle =0$. 
\end{proof}

\section{Technical inequalities}\label{sec:technical_ineq}
\begin{prop}[Tail bound for a sum of subgausssian RVs]\label{prop:hoeffding}
Suppose that the variables $X_i, i=1, \ldots, n$, are independent, and $X_i$ has mean $\mu_i$ and sub-Gaussian parameter $\sigma_i$. Then for all $t \geq 0$, we have
$$
\mathbb{P}\left\{\sum_{i=1}^n\left(X_i-\mu_i\right) \geq t\right\} \leq \exp \left\{-\frac{t^2}{2 \sum_{i=1}^n \sigma_i^2}\right\}.
$$
\end{prop}
\begin{proof}
    See Proposition 2.5 in \cite{wainwright2019high}.
\end{proof}

\begin{prop}[Euclidean norm of a subgaussian RV]\label{prop:norm_SG}
    Let $X \in \mathbb{R}^d$ be a subgaussian random vector with proxy $\sigma^2$. Then, for all $\delta \in (0,1)$, we have
    \[
    \mathbb{P}\left\{\| X \|_2 > 4\sigma \sqrt{d} + 2\sigma \sqrt{\log (1/\delta)}\right\} \leq \delta.
    \]
    Equivalently, for all $t > 0$ we have 
    \[
    \mathbb{P}\left\{\| X \|_2 > t \right\} \leq 5^d \exp \left\{-t^2/8\sigma^2 \right\}.
    \]
\end{prop}
\begin{proof}
    We break the proof up into two steps:  use a discretisation argument to reduce the problem to the task of computing the maximum of finitely many random variables, and then use standard concentration inequalities. Firstly, let $N_\epsilon$ be an $\epsilon$-net of the $d$-dimensional sphere $\mathbb{S}^{d-1}$. Then, 
    \[
    \| X \|_2 \leq \frac{1}{1-\epsilon} \max_{v \in N_\epsilon} v^T X.
    \]
    This follows from a discretization argument, similar to the one used in the proof of Proposition \ref{prop:concentration_cov}. Choosing $\epsilon = 1/2$ gives
    \[
    \mathbb{P}\left\{\| X \|_2 > t \right\} \leq |N_{1/2}| \exp \left\{-\frac{t^2}{8 \sigma^2} \right\} \leq 5^d \exp \left\{-\frac{t^2}{8 \sigma^2} \right\}.
    \]
    Inverting the bound yields the first claim.
\end{proof}

\begin{prop}[Tail bound for Binomial RVs]\label{prop:ineq_binomial}
    Let $X \sim \operatorname{Bin}(n, n^{-1})$. Then, for all $t \geq 1$, we have 
    \[
    \mathbb{P}\left\{X > t\right\} \leq \frac{e^{t-1}}{t^t}.
    \]
\end{prop}
\begin{proof}
    By the standard Chernoff argument, for all $\lambda > 0$ we have \begin{align*}
        \mathbb{P}\left\{X > t\right\} & \leq e^{-\lambda t} \mathbb{E}[e^{\lambda X}] = e^{-\lambda t} (1 - n^{-1} + n^{-1} e^\lambda)^n \\
        & \leq  e^{-\lambda t} e^{e^\lambda - 1} = e^{-\lambda t + e^\lambda - 1},
    \end{align*}
    where in the last inequality we used the fact that $1+x \leq e^x$. Choosing $\lambda = \log t$ concludes the proof.
\end{proof}

\begin{prop}[Tail bound for a sum of subexponential RVs]\label{prop:SE_tail_bound}
Consider an independent sequence $\left\{X_k\right\}_{k=1}^n$ of random variables, such that $X_k$ has mean $\mu_k$, and is sub-exponential with parameters $\left(v_k, \alpha_k\right)$. Then, $\sum_{k=1}^n\left(X_k-\mu_k\right)$ is sub-exponential with the parameters $\left(v_*, \alpha_*\right)$, where
$$
\alpha_*:=\max _{k=1, \ldots, n} \alpha_k \quad \text { and } \quad v_*:=\sqrt{\sum_{k=1}^n v_k^2},
$$
and
$$
\mathbb{P}\left\{ \left|\frac{1}{n} \sum_{i=1}^n\left(X_k-\mu_k\right) \right|\geq t\right\} \leq \begin{cases} 2e^{-\frac{n^2 t^2}{2 v_*^2}} & \text { for } 0 \leq t \leq \frac{v_*^2}{n \alpha_*} \\ 2e^{-\frac{n t}{2 \alpha_*}} & \text { for } t>\frac{v_*^2}{n \alpha_*}.\end{cases}
$$
\end{prop}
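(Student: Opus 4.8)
\textbf{Proof proposal for Proposition~\ref{prop:SE_tail_bound}.}

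The plan is to combine the standard Chernoff argument with the subexponential moment generating function bound. First I would recall that, by definition, each centred variable $X_k - \mu_k$ satisfies $\mathbb{E}[e^{\lambda(X_k - \mu_k)}] \leq e^{v_k^2 \lambda^2/2}$ for all $|\lambda| < 1/\alpha_k$. By independence, for $|\lambda| < 1/\alpha_* = \min_k 1/\alpha_k$ we may multiply these bounds to obtain
\[
\mathbb{E}\Bigl[e^{\lambda \sum_{k=1}^n (X_k - \mu_k)}\Bigr] = \prod_{k=1}^n \mathbb{E}\bigl[e^{\lambda(X_k-\mu_k)}\bigr] \leq \prod_{k=1}^n e^{v_k^2 \lambda^2/2} = e^{v_*^2 \lambda^2/2},
\]
which shows that $\sum_{k=1}^n (X_k - \mu_k)$ is $(v_*,\alpha_*)$-subexponential, giving the first claim.

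Next I would apply Markov's inequality to the exponential moment. Writing $S_n = \sum_{k=1}^n (X_k - \mu_k)$, for any $\lambda \in (0, 1/\alpha_*)$ we have $\mathbb{P}(S_n \geq n t) \leq e^{-\lambda n t} \mathbb{E}[e^{\lambda S_n}] \leq e^{-\lambda n t + v_*^2 \lambda^2/2}$. The exponent is minimised, ignoring the constraint $\lambda < 1/\alpha_*$, at $\lambda = n t / v_*^2$. If $n t / v_*^2 \leq 1/\alpha_*$, i.e.\ $t \leq v_*^2/(n\alpha_*)$, this choice is admissible and yields $\mathbb{P}(S_n \geq n t) \leq e^{-n^2 t^2/(2 v_*^2)}$. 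If instead $t > v_*^2/(n\alpha_*)$, the unconstrained optimiser is infeasible; since the exponent $-\lambda n t + v_*^2\lambda^2/2$ is decreasing in $\lambda$ on $(0, nt/v_*^2)$, I would take $\lambda \uparrow 1/\alpha_*$ (any value slightly below works, then pass to the limit), obtaining the bound $e^{-nt/\alpha_* + v_*^2/(2\alpha_*^2)} \leq e^{-nt/(2\alpha_*)}$, where the last inequality uses $v_*^2/(2\alpha_*^2) \leq nt/(2\alpha_*)$, which is exactly the regime condition $t > v_*^2/(n\alpha_*)$. A symmetric argument applied to $-S_n$ (which is $(v_*,\alpha_*)$-subexponential for the same reasons) controls the lower tail $\mathbb{P}(S_n \leq -nt)$, and a union bound over the two one-sided events produces the factor $2$ in each case, giving $\mathbb{P}(|S_n/n| \geq t)$ the stated piecewise bound.

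There is no real obstacle here — this is a textbook Bernstein-type argument — but the one place requiring mild care is the two-regime split: one must check that when the unconstrained Chernoff optimiser violates $\lambda < 1/\alpha_*$, the boundary choice $\lambda = 1/\alpha_*$ still delivers the linear-in-$t$ exponent with the right constant, which is precisely where the condition $t > v_*^2/(n\alpha_*)$ is used to absorb the quadratic term $v_*^2/(2\alpha_*^2)$. I would present this explicitly rather than leaving it as ``standard,'' since it is the only non-mechanical step, and then note that the subexponentiality of $-S_n$ (hence the lower-tail bound) is immediate by replacing $\lambda$ with $-\lambda$ in the mgf bound.
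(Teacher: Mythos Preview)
Your proposal is correct and is precisely the standard Chernoff/Bernstein argument; the paper itself does not give a proof but simply cites Proposition~2.9 in \cite{wainwright2019high}, whose proof is exactly the one you outline. There is nothing to add.
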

\begin{proof}
    See equation (2.18) in \cite{wainwright2019high}.
\end{proof}

\begin{prop}[The square of a subgaussian is subexponential]\label{prop:squareSG}
    If $X$ is $\sigma$-subgaussian, then $X^2$ is subexponential with parameters $(\nu, \alpha) = (4\sqrt{2}\sigma^2, 4\sigma^2)$.
\end{prop}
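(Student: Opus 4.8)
The plan is to establish the moment generating function bound $\mathbb{E}[e^{\lambda(X^2 - \mathbb{E}[X^2])}] \leq e^{\nu^2\lambda^2/2}$ for $|\lambda| < 1/\alpha$ with the claimed constants $\nu = 4\sqrt 2\,\sigma^2$ and $\alpha = 4\sigma^2$. Without loss of generality we may take $\sigma = 1$ by rescaling (replace $X$ by $X/\sigma$, noting $X$ is $\sigma$-subgaussian iff $X/\sigma$ is $1$-subgaussian, and $X^2$ is $(\nu,\alpha)$-subexponential iff $X^2/\sigma^2$ is $(\nu/\sigma^2,\alpha/\sigma^2)$-subexponential), so it suffices to prove $X^2$ is subexponential with parameters $(4\sqrt2, 4)$ when $X$ is $1$-subgaussian.

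First I would record the standard tail and moment consequences of subgaussianity: a $1$-subgaussian $X$ satisfies $\mathbb{P}(|X| \geq t) \leq 2e^{-t^2/2}$, and hence $\mathbb{E}[|X|^p] \leq p\,2^{p/2}\Gamma(p/2)$ for $p \geq 1$, or more crudely $\mathbb{E}[X^{2k}] \leq 2\cdot k!\,(2)^{k} \cdot$(const); the cleanest route is via $\mathbb{E}[e^{s X^2}] \leq (1-2s)^{-1/2}$ or a similar bound valid for $s$ in a neighbourhood of $0$ — in fact one can show $\mathbb{E}[e^{sX^2}] \le e^{2s}$ for $0 \le s \le 1/4$ by a direct integration against the subgaussian tail bound. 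Then I would expand $\mathbb{E}[e^{\lambda(X^2 - \mathbb{E}X^2)}]$ in a power series, using $|\mathbb{E}X^2| \le 1$ (since $\operatorname{Var}(X) \le 1$ for $1$-subgaussian $X$, and the mean of $X$ need not be zero — care is needed here, but $\mathbb{E}[X^2] = \operatorname{Var}(X) + (\mathbb{E}X)^2$; actually for a $1$-subgaussian variable with mean $\mu$ one still has good control, and in the application $X = X_{ij}$ has whatever mean it has, so I would keep $\mathbb{E}[X^2]$ as an unspecified nonnegative quantity bounded appropriately). Controlling the centred MGF then reduces to bounding $\sum_{k \ge 2} \frac{\lambda^k}{k!}\big(\mathbb{E}[X^{2k}] - \text{lower order}\big)$ by $e^{\nu^2\lambda^2/2} - 1 - \lambda\cdot 0$, i.e. showing the series is $\le \nu^2\lambda^2/2 \cdot(1 + \text{small})$ for $|\lambda| < 1/\alpha$.

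The main obstacle is bookkeeping the constants so that $(4\sqrt2, 4)$ comes out cleanly rather than some larger pair. The cleanest technique is probably not the power series but the following: write $Z = X^2 - \mathbb{E}[X^2]$ and use that for any random variable with $\mathbb{E}[e^{s|Z|}] \le 2$ for $|s| \le s_0$ one gets a subexponential bound with explicit parameters (a lemma of this type appears in Vershynin or Wainwright). Concretely, from $\mathbb{E}[e^{sX^2}] \le e^{2s}$ for $0 \le s \le 1/4$ one deduces $\mathbb{E}[e^{s(X^2 - \mathbb{E}X^2)}] \le e^{2s - s\mathbb{E}X^2} \le e^{2s}$, and then a symmetrization or a direct Taylor argument on $e^{sZ} \le 1 + sZ + s^2 Z^2 e^{s|Z|}$ type inequalities upgrades this to the required Gaussian-type MGF bound on the whole two-sided range $|\lambda| < 1/(4\sigma^2)$. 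I would present the argument in the rescaled $\sigma=1$ normalization, cite the subgaussian tail bound $\mathbb{P}(|X|\ge t)\le 2e^{-t^2/2}$, carry out the single integral giving $\mathbb{E}[e^{sX^2}]\le e^{2s}$ for $s\in[0,1/4]$, and then invoke the standard conversion lemma to read off $(\nu,\alpha) = (4\sqrt2\,\sigma^2, 4\sigma^2)$.
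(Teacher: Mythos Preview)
Your proposal has a concrete gap in the step you single out as the ``single integral''. Integrating the sub-Gaussian tail bound $\mathbb{P}(|X|\ge t)\le 2e^{-t^2/2}$ gives
\[
\mathbb{E}[e^{sX^2}] \;\le\; 1 + 4s\int_0^\infty u\,e^{-(1/2-s)u^2}\,du \;=\; 1+\frac{4s}{1-2s},
\]
and this is \emph{strictly larger} than $e^{2s}$ for every $s>0$ (already for small $s$ the left side is $\approx 1+4s$ versus $\approx 1+2s$; at $s=1/4$ it is $3$ versus $e^{1/2}\approx 1.65$). So the inequality $\mathbb{E}[e^{sX^2}]\le e^{2s}$ on $[0,1/4]$ does not follow from the tail bound by direct integration. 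A bound of the right shape, $\mathbb{E}[e^{sX^2}]\le (1-2s)^{-1/2}$, can be obtained instead via the Gaussian-linearisation identity $e^{sX^2}=\mathbb{E}_g[e^{\sqrt{2s}\,gX}]$ together with the MGF definition of sub-Gaussianity, but that is a different argument and still only one-sided. Even granting such a bound, your ``standard conversion lemma'' is left unspecified, and the conversions in Vershynin/Wainwright that pass through Orlicz norms or Bernstein-type conditions typically lose absolute constants; there is no reason to expect them to land exactly on $(4\sqrt 2\,\sigma^2,4\sigma^2)$.

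The paper does not go this route at all. It stays with the power-series expansion you briefly mention and then abandon: expand $\mathbb{E}[e^{\lambda(X^2-\mu)}]$, control the $r$th term via the explicit moment bound $\mathbb{E}[|X|^{2r}]\le 2r\cdot 2^{r}\sigma^{2r}(r-1)!$, and sum the resulting geometric tail to get
\[
\mathbb{E}[e^{\lambda(X^2-\mu)}]\;\le\;1+\frac{8\lambda^2\sigma^4}{1-2\lambda\sigma^2}\;\le\;1+16\lambda^2\sigma^4\;\le\;e^{16\lambda^2\sigma^4}
\quad\text{for }|\lambda|\le\frac{1}{4\sigma^2},
\]
from which $(\nu,\alpha)=(4\sqrt 2\,\sigma^2,4\sigma^2)$ can be read off directly. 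This is precisely the computation your proposal gestures at (``bounding $\sum_{k\ge 2}\dots$'') before switching strategies; the fix is simply to carry it out rather than detour through the uncentred MGF.
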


\begin{proof}
Using the definitions of the Orlicz norm $\|\cdot\|_{\psi_1}$ and $\|\cdot\|_{\psi_2}$ (see \cite{wainwright2019high, vershynin19hdp}), it is easy to prove that the product of two subgaussian RVs is subexponential (Lemma 2.7.7. in \cite{vershynin19hdp}), and that $X$ is subgaussian if and only if $X^2$ is subexponential (Lemma 2.7.6. in \citet{vershynin19hdp}). As for its subexponential parameters, assuming WLOG that $X$ has mean zero, we know that
$$
\mathbb{E}\left[e^{\lambda X}\right] \leq e^{\frac{1}{2} \lambda^2 \sigma^2}, \quad \text { for all } \lambda \in \mathbb{R}. 
$$
Our goal is to find a similar bound for the moment generating function of $X^2$, and, to this aim, we will make use of the fact that the moments of $X$ are bounded as follows
$$ \mathbb{E}\left[|X|^r\right] \leq r 2^{r / 2} \sigma^r \Gamma(r / 2), \quad \text { for all } r > 0,
$$ where $\Gamma(r)$ is the Gamma function.
Now, calling $\mu=\mathbb{E}[X^2]$, by power series expansion and since $\Gamma(r)=(r-1)!$ for an integer $r$, we have
$$
\begin{aligned}
\mathbb{E}\left[e^{\lambda\left(X^2-\mu\right)}\right] & =1+\lambda \mathbb{E}\left[X^2-\mu\right]+\sum_{r=2}^{\infty} \frac{\lambda^r \mathbb{E}\left[\left(X^2-\mu\right)^r\right]}{r !} \\
& \leq 1+\sum_{r=2}^{\infty} \frac{\lambda^r \mathbb{E}\left[|X|^{2 r}\right]}{r !} \leq 1+\sum_{r=2}^{\infty} \frac{\lambda^r 2 r 2^r \sigma^{2 r} \Gamma(r)}{r !} \\
& =1+\sum_{r=2}^{\infty} \lambda^r 2^{r+1} \sigma^{2 r} =1+\frac{8 \lambda^2 \sigma^4}{1-2 \lambda \sigma^2}.
\end{aligned}
$$
By making $|\lambda| \leq 1 /4 \sigma^2$, we have $1 /\left(1-2 \lambda \sigma^2\right) \leq 2$. Finally, since for every $\alpha \in \mathbb{R}$ it holds $1+\alpha \leq e^\alpha$, we have that the MGF of $X^2$ satisfies
$$
\mathbb{E}\left[e^{\lambda\left(X^2-\mathbb{E}\left[X^2\right]\right)}\right] \leq e^{16 \lambda^2 \sigma^4}, \quad \text{ for all } |\lambda| \leq 1 /\left(4 \sigma^2\right).
$$
Thus, we obtained a bound for the moment generating function of the subexponential variable $X^2$, that is similar to that of subgaussian variables but holds only for a small range of $\lambda$.
\end{proof}

\begin{prop}[Concentration inequality for Covariance Matrices]\label{prop:concentration_cov}
Let $X_1, \ldots, X_n$ be an i.i.d sequence of $\sigma$-subgaussian random vectors with zero mean and covariance matrix $\Omega$, and let $\hat{\Omega}_n:=\frac{1}{n} \sum_{i=1}^n X_i X_i^T$ be the sample covariance matrix. Then there exists a universal constant $C>0$ such that, for $\delta \in(0,1)$, with probability at least $1-\delta$
$$
\|\hat{\Omega}_n-\Omega\|_2 \leq C \sigma^2 \max \left\{\sqrt{\frac{d+\log (2 / \delta)}{n}}, \frac{d+\log (2 / \delta)}{n}\right\}.
$$
Equivalently, for all $t > 0$ we have
\[
\mathbb{P}\left\{\|\hat{\Omega}_n-\Omega\|_2 \geq t\right\} \leq 2 \cdot 9^d \exp \left\{-n \min \left\{\left(\frac{t}{16 \sigma^2}\right)^2, \frac{t}{16 \sigma^2}\right\}\right\}.
\]
\end{prop}

\begin{proof}
    We break the proof up into two steps:  use a discretisation argument to reduce the problem to the task of computing the maximum of finitely many random variables, and then use standard concentration inequalities. Firstly, let $A \in S^{d \times d}$ and let $N_\epsilon$ be an $\epsilon$-net of the $d$-dimensional sphere $\mathbb{S}^{d-1}$. Then
$$
\|A\|_2 \leq \frac{1}{1-2 \epsilon} \max _{y \in N_\epsilon}\left|y^T A y\right|. 
$$
Indeed, let $y \in N_\epsilon$ satisfy $\|x-y\| \leq \epsilon$. Then
$$
\begin{aligned}
\left|xAx - y^T A y\right| & =\left|x^T A(x-y)+y^T A(x-y)\right| \\
& \leq\left|x^T A(x-y)\right|+\left|y^T A(x-y)\right|
\end{aligned}
$$
Looking at $\left|x^T A(x-y)\right|$ we have
$$
\begin{aligned}
\left|x^T A(x-y)\right| & \leq\|A(x-y) \mid\|\|x\| \\
& \leq\|A\|_2 \underbrace{\|x-y\|}_{\leq \epsilon} \underbrace{\|x\|}_{=1} \\
& \leq\|A\|_2 \epsilon
\end{aligned}
$$
Applying the same argument to $\left|y^T A(x-y)\right|$ gives us $\left|x^{A} x-y^T A y\right| \leq 2 \epsilon\| A \|_2$. To complete the proof, we see that $\|A\|_2=\max _{x \in \mathbb{S}^{d-1}} x^T A x \leq 2 \epsilon\|A\|_2+\max _{y \in N_\epsilon} y^T A y$. Rearranging the equation gives $\|A\|_2 \leq$ $\frac{1}{1-2 \epsilon} \max _{y \in N_\epsilon} y^T A y$ as desired. Then, if we apply this result to $\hat{\Omega}_n-\Omega$ with $\epsilon=1 / 4$ we have
$$
\|\hat{\Omega}_n-\Omega\|_2 \leq 2 \max _{v \in N_{1 / 4}}\left|v^T\left(\hat{\Omega}_n-\Omega\right) v\right|
$$
Additionally, we know that $\operatorname{card}(N_{1 / 4}) \leq 9^d$ (see Lemma 5.7 and Example 5.8 in \cite{wainwright2019high}. From here, we can apply standard concentration tools to get
$$
\begin{aligned}
\mathbb{P}\left\{\|\hat{\Omega}_n-\Omega\|_2 \geq t\right\} & \leq \mathbb{P}\left(\max _{v \in N_{1 / 4}}\left|v^T\left(\hat{\Omega}_n-\Omega\right) v\right| \geq t / 2\right) \\
& \leq \operatorname{card}(N_{1 / 4}) \cdot \mathbb{P}\left(\left|v_i^T\left(\hat{\Omega}_n-\Omega\right) v_i\right| \geq t / 2\right),
\end{aligned}
$$
where $v_i$ is a unit vector on the $d$-dimensional sphere.
Now, $v_i^T\left(\hat{\Omega}_n-\Omega\right) v_i$ can be rewritten as 
$$
\begin{aligned}
v_i^T\left(\hat{\Omega}_n-\Omega\right) v_i & =\frac{1}{n} \sum_{j=1}^n\left(v_i^T X_j\right)^2-\mathbb{E}\left[\left(v_i^T X_j\right)^2\right] \\
& =\frac{1}{n} \sum_{j=1}^n Z_j-\mathbb{E}\left[Z_j\right],
\end{aligned}
$$
where the $Z_j-\mathbb{E}\left[Z_j\right]$ are independent subexponential of parameters $(\nu, \alpha) = \left(4\sqrt{2}\sigma^2, 4 \sigma^2\right)$, since $v_i^T X_j$ are $\sigma$-subgaussian by definition of subgaussian random vector. Applying the subexponential tail bound in Proposition \ref{prop:SE_tail_bound} gives us
$$
\mathbb{P}\left\{\left|v_i^T\left(\hat{\Omega}_n-\Omega\right) v_i\right| \geq t / 2\right\} \leq 2 \exp \left\{-n \min \left\{\left(\frac{t}{16 \sigma^2}\right)^2, \frac{t}{16 \sigma^2}\right\}\right\}.
$$
so that
$$
\mathbb{P}\left\{\|\hat{\Omega}_n-\Omega\|_2 \geq t\right\} \leq 2 \cdot 9^d \exp \left\{-n \min \left\{\left(\frac{t}{16 \sigma^2}\right)^2, \frac{t}{16 \sigma^2}\right\}\right\}.
$$
Inverting the bound gives the first result. For further reference, please refer to Chapter 3 in \cite{wainwright2019high}.
\end{proof}

    \begin{prop}[Theorem 3.1 in \cite{rudelson07rank1}]\label{prop:rudelson}
    Let $X$ be a random vector in $\mathbb{R}^d$, which is uniformly bounded almost everywhere: $\|X\|_2 \leq M$. Assume for normalisation that $\|\mathbb{E}X X^T\|_2 \leq 1$. Let $X_1 \ldots X_n$ be independent copies of $X$. Then, for every $t \in(0,1)$, there exists a universal constant $K > 0$ such that
$$
\mathbb{P}\left\{\left\|\frac{1}{n} \sum_{i=1}^n X_i X_i^T -\mathbb{E} X X^T \right\|_2>t\right\} \leq 2 \exp \left\{- \frac{K n t^2}{M^2 \log n}\right\}.
$$
\end{prop}

\end{appendices}
\end{document}